\newcommand{\FF}{\mathbb{F}}
\newcommand{\GG}{\mathbb{G}}
\newcommand{\NN}{\mathbb{N}}
\newcommand{\ZZ}{\mathbb{Z}}
\newcommand{\St}{\mathrm{St}}
\newcommand{\cA}{\mathcal{A}}
\newcommand{\cB}{\mathcal{B}}
\newcommand{\cF}{\mathcal{F}}
\newcommand{\cG}{\mathcal{G}}
\newcommand{\cH}{\mathcal{H}}
\newcommand{\cI}{\mathcal{I}}
\newcommand{\cN}{\mathcal{N}}
\newcommand{\cU}{\mathcal{U}}
\newcommand{\cV}{\mathcal{V}}
\newcommand{\fA}{\mathfrak{A}}
\newcommand{\fE}{\mathfrak{E}}
\newcommand{\fF}{\mathfrak{F}}
\newcommand{\fG}{\mathfrak{G}}
\newcommand{\fg}{\mathfrak{g}}
\newcommand{\fM}{\mathfrak{M}}
\newcommand{\fsl}{\mathfrak{sl}}
\newcommand{\fu}{\mathfrak{u}}
\newcommand{\dact}{\boldsymbol{.}}
\newcommand{\lra}{\longrightarrow}
\DeclareMathOperator{\ad}{ad}
\DeclareMathOperator{\Ad}{Ad}
\DeclareMathOperator{\Char}{char}
\DeclareMathOperator{\cx}{cx}
\DeclareMathOperator{\dep}{dp}
\DeclareMathOperator{\Dist}{Dist}
\DeclareMathOperator{\Ext}{Ext}
\DeclareMathOperator{\gr}{gr}
\DeclareMathOperator{\HH}{H}
\DeclareMathOperator{\height}{ht}
\DeclareMathOperator{\Ht}{Ht}
\DeclareMathOperator{\Hom}{Hom}
\DeclareMathOperator{\im}{im}
\DeclareMathOperator{\id}{id}
\DeclareMathOperator{\Lie}{Lie}
\DeclareMathOperator{\modd}{mod}
\DeclareMathOperator{\ph}{ph}
\DeclareMathOperator{\Proj}{Proj}
\DeclareMathOperator{\PSL}{PSL}
\DeclareMathOperator{\Rad}{Rad}
\DeclareMathOperator{\SL}{SL}
\DeclareMathOperator{\Soc}{Soc}
\DeclareMathOperator{\supp}{supp}
\DeclareMathOperator{\Top}{Top}
\DeclareMathOperator{\tr}{tr}
\DeclareMathOperator{\wt}{wt}
\numberwithin{equation}{section}
\newtheorem{Theorem}[equation]{Theorem}
\newtheorem{Lemma}[equation]{Lemma}
\newtheorem{Proposition}[equation]{Proposition}
\newtheorem{Corollary}[equation]{Corollary}
\newtheorem*{thm*}{Theorem}
\theoremstyle{remark}
\newtheorem*{Remark}{Remark}
\newtheorem*{Definition}{Definition}
\newtheorem*{Example}{Example}
\newtheorem*{Examples}{Examples}
\numberwithin{equation}{section}
\theoremstyle{Theorem}
\newtheorem{Thm}{Theorem}[subsection]
\newtheorem{Lem}[Thm]{Lemma}
\newtheorem{Prop}[Thm]{Proposition}
\newtheorem{Cor}[Thm]{Corollary}
\begin{document}

\title{Complexity, Periodicity and One-Parameter Subgroups}

\author[R. Farnsteiner]{Rolf Farnsteiner}

\address{Mathematisches Seminar, Christian-Albrechts-Universit\"at zu Kiel, Ludewig-Meyn-Str. 4, 24098 Kiel, Germany}
\email{rolf@math.uni-kiel.de}
\thanks{Supported by the D.F.G. priority program SPP1388 `Darstellungstheorie'.}
\subjclass[2000]{Primary 14L15, 16G70, Secondary 16T05}
\date{\today}

\makeatletter
%\@addtoreset{subabschnitt}{abschnitt}
\makeatother

\begin{abstract} Using the variety of infinitesimal one-parameter subgroups introduced in \cite{SFB1,SFB2} by Suslin-Friedlander-Bendel, we define a numerical invariant for representations of
an infinitesimal group scheme $\cG$. For an indecomposable $\cG$-module $M$ of complexity $1$, this number, which may also interpreted as the height of a ``vertex" $\cU_M
\subseteq \cG$, is related to the period of $M$. In the context of the Frobenius category of $G_rT$-modules associated to a smooth reductive group $G$ and a maximal torus $T \subseteq G$, 
our methods give control over the behavior of the Heller operator of such modules, as well as precise values for the periodicity of their restrictions to $G_r$. Applications include the structure 
of stable Auslander-Reiten components of $G_rT$-modules as well as the distribution of baby Verma modules. \end{abstract}

\maketitle

\setcounter{section}{-1}

\section{Introduction} \label{S:IP}
This paper is concerned with representations of finite group schemes that are defined over an algebraically closed field $k$ of positive characteristic $p>0$. Given such a group scheme $\cG$
and a finite-dimensional $\cG$-module $M$, Friedlander and Suslin showed in their groundbreaking paper \cite{FS} that the cohomology space $\HH^\ast(\cG,M)$ is a finite module over
the finitely generated commutative $k$-algebra $\HH^\bullet(\cG,k) := \bigoplus_{n\ge 0} \HH^{2n}(\cG,k)$. This result has seen a number of applications which have provided deep
insight into the representation theory of $\cG$.

By work of Alperin-Evens \cite{AE}, Carlson \cite{Ca2} and Suslin-Friedlander-Bendel \cite{SFB1,SFB2}, the notions of complexity, periodicity and infinitesimal one-parameter subgroups are
closely related to properties of the even cohomology ring $\HH^\bullet(\cG,k)$. In this paper, we employ the algebro-geometric techniques expounded in \cite{FS} and \cite{SFB1,SFB2} in
order to obtain information on the period of periodic modules, and the structure of the stable Auslander-Reiten quivers of algebraic groups. Our methods are most effective when covering 
techniques related to $G_rT$-modules can be brought to bear. By way of illustration, we summarize some of our results in the following:

\bigskip

\begin{thm*} Let $G$ be a smooth, reductive group scheme with maximal torus $T \subseteq G$. Suppose that $M$ is an indecomposable $G_rT$-module of complexity $\cx_{G_rT}(M)=1$. 
Then the following statements hold:

{\rm (1)} \ There exists a unipotent subgroup $\cU_M \subseteq G_r$ of height $h_M$ and a root $\alpha$ of $G$ such that $\Omega_{G_rT}^{2p^{r-h_M}}(M) \cong
M\!\otimes_k\!k_{p^r\alpha}$.

{\rm (2)} \ The restriction $M|_{G_r}$ is periodic with period $2p^{r-h_M}$.\end{thm*}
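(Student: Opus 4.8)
The plan is to reduce the whole statement to a computation inside a single root subgroup, and to feed in the graded structure of $\HH^\bullet(G_r,k)$ near a ``root line'' --- the latter carrying essentially all of the content.

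First I would observe that, since $G_r$ is normal in $G_rT$ with linearly reductive quotient $G_rT/G_r\cong T/T_r$, the inflation--restriction machinery gives $\cx_{G_r}(M|_{G_r})=\cx_{G_rT}(M)=1$, so $M|_{G_r}$ is periodic. Being a $G_rT$-module, its cohomological support variety $\cV_r(G_r)_M$ is a $T$-stable, one-dimensional conical subvariety of $\cV_r(G_r)$. Using the Suslin--Friedlander--Bendel description of $\cV_r(G_r)$ via infinitesimal one-parameter subgroups, together with the root space decomposition of $\fg=\Lie(G)$ --- whose only $T$-stable one-dimensional conical subvarieties are the lines $k\nu$ with $\nu$ a one-parameter subgroup into a root subgroup $U_\alpha$ --- I would show that $\cV_r(G_r)_M$ is the line spanned by some $\nu\colon \GG_{a(r)}\to U_{\alpha,r}$ (replacing $M$ by a conjugate if need be). Writing $\nu=\mu\circ F^{r-h_M}$ with $F^{r-h_M}\colon\GG_{a(r)}\to\GG_{a(h_M)}$ and $\mu$ a closed immersion not factoring through the Frobenius morphism, one obtains the unipotent subgroup $\cU_M:=\mu(\GG_{a(h_M)})=U_{\alpha,h_M}\subseteq G_r$ of height $h_M$ occurring in the statement.

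The technical heart --- and, I expect, the main obstacle --- is the cohomological input that the line $\cV_r(G_r)_M$ is the common zero locus of all but one of a homogeneous system of generators of $\HH^\bullet(G_r,k)_{\mathrm{red}}$, the remaining generator $\zeta$ lying in degree $2p^{r-h_M}$ and carrying $T$-weight $p^r\alpha$. The degree assertion rests on the van der Kallen / Suslin--Friedlander--Bendel grading of $\HH^\bullet(G_r,k)$ (new polynomial generators appearing in degrees $2,2p,\dots,2p^{r-1}$ according to the Frobenius level), while the weight assertion is a bookkeeping computation with the $T$-action, anchored at $\HH^2(U_{\alpha,1},k)$ --- a Frobenius twist of $\fg_\alpha^\ast$, of $T$-weight $p\alpha$ --- and iterated across Frobenius levels. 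Granting this, the localisation of $\HH^\bullet(G_r,k)$ along $\cV_r(G_r)_M$ is generated over $k$ by $\zeta$ in its lowest positive degree, so for the indecomposable module $M$ the periodicity class may be taken to be $\zeta$ and cannot be replaced by a class of strictly smaller degree; hence $M|_{G_r}$ has period exactly $2p^{r-h_M}$, which is (2).

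Finally, for (1): cup product with $\zeta$ produces a $T$-equivariant stable isomorphism $\Omega_{U_{\alpha,r}T}^{2p^{r-h_M}}(M)\cong M\otimes_k k_{p^r\alpha}$ over the subgroup scheme $U_{\alpha,r}T$, since the degree of $\zeta$ realises the period and its $T$-weight is $p^r\alpha$. To descend this to $G_rT$ I would argue that, by (2), the $G_rT$-module $\Omega_{G_rT}^{2p^{r-h_M}}(M)$ restricts to $M|_{G_r}$, as does $M\otimes_k k_{p^r\alpha}$ because $p^r\alpha\in p^rX(T)$ acts trivially on $G_r$; both are indecomposable $G_rT$-modules, so the Clifford theory for $G_r\trianglelefteq G_rT$ shows they differ by a twist with a character in $p^rX(T)$, and restricting the candidate isomorphism to $U_{\alpha,r}T$ and comparing with the one already obtained there forces that character to be trivial. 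This gives $\Omega_{G_rT}^{2p^{r-h_M}}(M)\cong M\otimes_k k_{p^r\alpha}$, proving (1). The delicate points throughout are the identification of the $T$-stable support line with a root line --- which requires care when $\fg$ has several root lengths or $p$ is small --- and the determination of the degree and $T$-weight of $\zeta$, for which the explicit description of $\HH^\bullet(G_r,k)$ developed earlier is indispensable.
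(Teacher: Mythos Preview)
Your approach to part (1) is broadly in the right spirit --- produce a cohomology class $\zeta$ of degree $2p^{r-h_M}$ and $T$-weight $-p^r\alpha$ that does not vanish on the support line, and then run Carlson's $L_\zeta$-trick. The paper does exactly this (Proposition~\ref{PH4} and Theorem~\ref{MC1}), but applies the Carlson argument \emph{directly in $\modd G_rT$}: since $\zeta$ is a $T$-weight vector it yields a $G_rT$-map $\hat{\zeta}\colon \Omega^{2p^{r-h_M}}_{G_rT}(k)\otimes k_{-p^r\alpha}\to k$, and tensoring the resulting short exact sequence with $M$ gives $\Omega^{2p^{r-h_M}}_{G_rT}(M)\cong M\otimes k_{p^r\alpha}$ immediately. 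Your detour through $U_{\alpha,r}T$ and subsequent ``descent'' is unnecessary, and the descent step (comparing twists by restricting to $U_{\alpha,r}T$) is not made precise. Also, your geometric claim that $T$-stable one-dimensional conical subvarieties of $V_r(G)$ are root lines is true but needs more care for $r>1$, since $V_r(G)$ is not simply $\mathfrak g$; the paper instead shows $\cU_M$ is $T$-invariant (Theorem~\ref{PH3}) and invokes \cite{CPS} to place it inside some $(U_\alpha)_r$.

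The serious gap is in part (2). You argue that the localisation of $\HH^\bullet(G_r,k)$ along the support line is generated by $\zeta$ in degree $2p^{r-h_M}$, whence ``the periodicity class may be taken to be $\zeta$ and cannot be replaced by a class of strictly smaller degree''. But a periodicity isomorphism $\Omega^s_{G_r}(M)\cong M$ corresponds to a unit in $\Ext^s_{G_r}(M,M)$, and there is no reason this unit should lie in the image of $\Phi_M\colon\HH^\bullet(G_r,k)\to\Ext^*_{G_r}(M,M)$. The degree structure of $\HH^\bullet(G_r,k)/\sqrt{\ker\Phi_M}$ only gives the \emph{upper} bound $s\mid 2p^{r-h_M}$; it does not force equality. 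Indeed, the Examples following Corollary~\ref{PH2} exhibit infinitesimal groups where the bound is not sharp, so any argument for sharpness must use reductivity in an essential way --- which yours does not.

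The paper's proof of (2) (Theorem~\ref{MC3}) proceeds quite differently and uses (1) as input rather than the reverse. Writing $s\ell=2p^{r-h_M}$ for the period $s$, Gordon--Green theory gives $\Omega^s_{G_rT}(M)\cong M\otimes k_{p^r\gamma}$ for some $\gamma\in X(T)$; iterating $\ell$ times and comparing with (1) forces $\alpha=\ell\gamma$. The point is then purely root-theoretic: if the root system has a component of rank $\ge 2$, conjugate $\alpha$ to a simple root and pair with an adjacent coroot to get $-1=\ell\cdot\langle\gamma,\alpha_j^\vee\rangle$, whence $\ell=1$; if all components are $A_1$, then $\ell\in\{1,2\}$, and $\ell=2$ is excluded by the linkage principle (since $\frac{1}{2}\alpha\notin\ZZ\Psi$ but must lie in $\ZZ\Psi$ modulo $p^r$). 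This indivisibility argument for roots is the missing idea.
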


\bigskip
\noindent
Here $\cx_{G_rT}(M)$ refers to the polynomial rate of growth of a minimal projective resolution of $M$ and $\Omega_{G_rT}$ is the Heller operator of the Frobenius category of 
$G_rT$-modules.

Our article can roughly be divided into two parts. Sections \ref{S:UB}-\ref{S:BAR} are mainly concerned with the category $\modd \cG$ of finite-dimensional modules of a finite group scheme
$\cG$. The Frobenius category $\modd G_rT$ of compatibly graded $G_r$-modules is dealt with in the remaining two sections.

In Section \ref{S:UB} we exploit the detailed information provided by the Friedlander-Suslin Theorem \cite{FS} in order to provide an upper bound for the complexity of a $\cG$-module in 
terms of the dimension of certain $\Ext$-groups. Section \ref{S:VG} lays the foundation for the later developments by collecting basic results concerning the cohomology rings of the Frobenius 
kernels $\GG_{a(r)}$ of the additive group $\GG_a$.

The period of a periodic module is known to be closely related to the degrees of homogeneous generators of the ring $\HH^\bullet(\cG,k)$. The Friedlander-Suslin Theorem thus implies that,
for any infinitesimal group $\cG$ of height $\height(\cG)$, the number $2p^{\height(\cG)-1}$ is a multiple of the period of any periodic $\cG$-module. In Section \ref{S:PH}, we analyze
this feature more closely, showing how the period may be bounded by employing infinitesimal one-parameter subgroups of $\cG$. The relevant notion is that of the projective height of a
module, which, for an indecomposable $\cG$-module $M$ of complexity $1$, coincides with the height of a certain unipotent subgroup $\cU_M \subseteq \cG$.

Sections \ref{S:EC} and \ref{S:Rep} are concerned with the Auslander-Reiten theory of finite group schemes. Following a discussion of components of Euclidean tree class, we determine in
Section \ref{S:Rep} those AR-components of the Frobenius kernels $\SL(2)_r$, that contain a simple module. Applications concerning blocks and AR-components of Frobenius kernels of
reductive groups are given in Section \ref{S:BAR}. In particular, we attach to every representation-finite block $\cB \subseteq k\cG$ a unipotent ``defect group" $\cU_\cB \subseteq \cG$,
whose height is linked to the structure of $\cB$.

By providing an explicit formula for the Nakayama functor of the Frobenius category of graded modules over certain Hopf algebras, Section \ref{S:NF} initiates our study of $G_rT$-modules. In 
Section \ref{S:AR} we come to the second central topic of our paper, the Auslander-Reiten theory of the groups $G_r$ and $G_rT$, defined by the r-th Frobenius kernel of a smooth group $G$, 
and a maximal torus $T \subseteq G$. Our first main result, Theorem \ref{MC1}, links the projective height of a $G_rT$-module of complexity $1$ to the behavior of powers of the Heller 
operator $\Omega_{G_rT}$. In particular, the Frobenius category $\modd G_rT$ of a reductive group $G$ is shown to afford no $\Omega_{G_rT}$-periodic modules, and the
$\Omega_{G_r}$-periods of $G_rT$-modules of complexity $1$ are determined by their projective height (cf.\ the Theorem above). It is interesting to compare this fact with the periods of
periodic modules over finite groups, which are given by the minimal ranks of maximal elementary abelian $p$-groups, see \cite[(2.2)]{BC}. The aforementioned results provide insight into the 
structure of the components of the stable Auslander-Reiten quiver of $\modd G_rT$ and the distribution of baby Verma modules:

\bigskip

\begin{thm*} Suppose that $G$ is defined over the Galois field $\FF_p$ with $p\ge 7$. Let $T \subseteq G$ be a maximal torus. 

{\rm (1)} \ If $\Theta$ is a component of the stable Auslander-Reiten quiver of $\modd G_rT$, then $\Theta \cong \ZZ[A_\infty],$ $ \ZZ[A^\infty_\infty],\, \ZZ[D_\infty]$.

{\rm (2)} \ If $\Theta$ contains two baby Verma modules $\widehat{Z}_r(\lambda) \not \cong \widehat{Z}_r(\mu)$, then $\cx_{G_rT}(\widehat{Z}_r(\lambda))=1$, and there exists a simple root $\alpha$ of $G$ such that $\{\widehat{Z}_r(\lambda\!+\!np^r\alpha) \ ; \ n \in \ZZ\}$ is the set of baby Verma modules belonging to $\Theta$.

{\rm (3)} \ A stable Auslander-Reiten component of $\modd G_r$ contains at most one baby Verma module. \end{thm*}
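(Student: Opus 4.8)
The plan is to deduce all three statements from the structure theory of Sections~\ref{S:EC} and~\ref{S:AR}---notably the non-periodicity of $\modd G_rT$ and the relation $\Omega_{G_rT}^{2p^{r-h_M}}(M)\cong M\otimes_k k_{p^r\alpha}$ for indecomposables $M$ with $\cx_{G_rT}(M)=1$ (Theorem~\ref{MC1} and the first Theorem of the Introduction)---together with the covering theory relating $\modd G_rT$ and $\modd G_r$, whose Galois group is the free abelian group $p^rX(T)$ acting on $\modd G_rT$ by the character twists $M\mapsto M\otimes_k k_{p^r\nu}$. Throughout I shall use that the twists $M\mapsto M\otimes_k k_\delta$ ($\delta\in X(T)$) are self-equivalences of $\modd G_rT$ permuting its stable Auslander--Reiten components; that they act \emph{freely} on isomorphism classes of indecomposables, since an indecomposable object of $\modd G_rT$, being finite-dimensional and $X(T)$-graded, satisfies $M\otimes_k k_\delta\not\cong M$ whenever $\delta\ne0$; and that $\widehat{Z}_r(\nu)\otimes_k k_\delta\cong\widehat{Z}_r(\nu+\delta)$, induction from $B_rT$ being compatible with twisting by a $T$-character.

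\textbf{Part (1).} Since $\modd G_rT$ is a Frobenius category with noetherian cohomology (Friedlander--Suslin), the structure theory of stable components (Riedtmann, Happel--Preiser--Ringel), refined as in Section~\ref{S:EC}, shows that a component $\Theta$ is either a tube $\ZZ[A_\infty]/(\tau^n)$, or has the form $\ZZ[\Delta]$ with $\Delta\in\{A_\infty,A^\infty_\infty,D_\infty,B_\infty,C_\infty\}$ or $\Delta$ Euclidean; here $B_\infty,C_\infty$ are excluded as usual for cocommutative Hopf algebras, and Euclidean tree class is excluded by the analysis of Section~\ref{S:EC} together with the non-periodicity of Theorem~\ref{MC1}. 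To rule out tubes, I would argue that a module $M$ in a tube satisfies $\tau^nM\cong M$ for some $n\ge1$ and has $\cx_{G_rT}(M)=1$; writing the Nakayama functor of $\modd G_rT$ as $(-\otimes_k k_\gamma)$ with $\gamma$ the weight computed in Section~\ref{S:NF}, the relation $\tau^nM\cong M$ gives $\Omega_{G_rT}^{2n}(M)\cong M\otimes_k k_{-n\gamma}$, while the first Theorem of the Introduction gives $\Omega_{G_rT}^{2p^{r-h_M}}(M)\cong M\otimes_k k_{p^r\alpha}$ for a root $\alpha$. Evaluating both relations on a common power of $\Omega_{G_rT}$ and using freeness of the twist action forces $\gamma$ to be of the form $\pm p^{h_M}\alpha$, which the explicit value of $\gamma$ (Section~\ref{S:NF}) excludes. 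Hence $\Theta\cong\ZZ[A_\infty]$, $\ZZ[A^\infty_\infty]$ or $\ZZ[D_\infty]$.

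\textbf{Parts (2) and (3).} For (2), suppose $\widehat{Z}_r(\lambda),\widehat{Z}_r(\mu)\in\Theta$ with $\delta:=\mu-\lambda\ne0$. Then $\widehat{Z}_r(\mu)=\widehat{Z}_r(\lambda)\otimes_k k_\delta$, so $\Theta$ is invariant under $(-\otimes_k k_\delta)$ and contains the infinitely many pairwise non-isomorphic modules $\widehat{Z}_r(\lambda+n\delta)$, $n\in\ZZ$, all of the same dimension. The crucial step is to conclude $\cx_{G_rT}(\widehat{Z}_r(\lambda))=1$: within a component of type $\ZZ[A_\infty]$, $\ZZ[A^\infty_\infty]$ or $\ZZ[D_\infty]$ an infinite family of non-isomorphic modules of bounded dimension can occur only in the complexity-one case, so the family above forces $\cx_{G_rT}(\widehat{Z}_r(\lambda))=1$. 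Granting this, $\Theta$ is of type $\ZZ[A_\infty]$, hence $\mathrm{Aut}(\Theta)=\langle\tau\rangle$; and the first Theorem of the Introduction, sharpened for baby Verma modules---where the root $\alpha$ there may be taken simple, as one reads off from the $\widehat{Z}_r$-filtration of the projective cover $\widehat{Q}_r(\lambda)$---yields $\Omega_{G_rT}^{2p^{r-h}}(\widehat{Z}_r(\lambda))\cong\widehat{Z}_r(\lambda)\otimes_k k_{p^r\alpha}=\widehat{Z}_r(\lambda+p^r\alpha)$, and (by the results of Section~\ref{S:AR}) this module lies in $\Theta$. Iterating then puts every $\widehat{Z}_r(\lambda+np^r\alpha)$ into $\Theta$, while any baby Verma $\widehat{Z}_r(\nu)\in\Theta$ has $\nu-\lambda$ in the twist-stabiliser $H_\Theta=\{\eta\mid\Theta\otimes_k k_\eta=\Theta\}$, a rank-one subgroup of $\mathrm{Aut}(\Theta)=\langle\tau\rangle$ containing $\ZZ p^r\alpha$; the analysis of the Heller operator on baby Verma modules in Section~\ref{S:AR} identifies $H_\Theta$ with $\ZZ p^r\alpha$ exactly, which proves (2). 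For (3): restriction $\modd G_rT\to\modd G_r$ identifies the stable AR-quiver of $\modd G_r$ with the $p^rX(T)$-orbit quiver of that of $\modd G_rT$, and is surjective on components and on the modules of each component. Were a component $\bar{\Theta}$ of $\modd G_r$ to contain baby Verma modules $Z_r(\bar{\lambda})\ne Z_r(\bar{\mu})$, I would lift them to $\widehat{Z}_r(\lambda),\widehat{Z}_r(\mu)$ in one and the same component $\Theta$ over $\bar{\Theta}$, with $\lambda\equiv\bar{\lambda}$ and $\mu\equiv\bar{\mu}$ modulo $p^rX(T)$; these are non-isomorphic, so (2) gives $\mu=\lambda+np^r\alpha$, whence $\bar{\mu}=\bar{\lambda}$---a contradiction.

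The main obstacle is the part of (2) highlighted above: showing that $\cx_{G_rT}(\widehat{Z}_r(\lambda))=1$ once $\Theta$ carries a second baby Verma module, and then identifying the twist-stabiliser $H_\Theta$ precisely as $\ZZ p^r\alpha$ with $\alpha$ \emph{simple} (equivalently, that $\widehat{Z}_r(\lambda+p^r\alpha)$ already lies in $\Theta$ and that $p^r\alpha$ generates $H_\Theta$). Both depend on detailed information---to be established in Section~\ref{S:AR}---about the action of $\Omega_{G_rT}$ on baby Verma modules and the $\widehat{Z}_r$-filtrations of their projective covers. With that in hand, Parts~(1) and (3), as well as the remaining formal steps of (2), follow from the classification of stable components and the non-periodicity statement of Theorem~\ref{MC1}.
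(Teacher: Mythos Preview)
Your approach to Part~(2) contains a genuine error at its first step. You assert that $\widehat{Z}_r(\mu)\cong\widehat{Z}_r(\lambda)\otimes_k k_\delta$ for $\delta=\mu-\lambda$, citing compatibility of induction with twisting by a $T$-character. But tensoring with $k_\delta$ is an endofunctor of $\modd G_rT$ only when $k_\delta$ is a one-dimensional $G_rT$-module, i.e., when $\delta\in X(G_rT)$; for semisimple $G$ this forces $\delta\in p^rX(T)$, which is precisely what you are trying to prove. For generic $\delta\in X(T)$ no such twist exists, so neither the isomorphism nor the conclusion that $\Theta$ is $(-\otimes_k k_\delta)$-invariant makes sense. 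Your subsequent bounded-dimension argument for $\cx_{G_rT}(\widehat{Z}_r(\lambda))=1$ therefore has no foundation, and is in any case unjustified: a $\ZZ[A_\infty]$-component of complexity $\ge 2$ may well contain an infinite $\tau$-orbit of modules of constant dimension.

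The paper's route is entirely different. The decisive external input is \cite[(4.5)]{FR}: non-projective baby Verma modules are quasi-simple, so their component has tree class $A_\infty$ and they sit at its end. Combined with $\tau_{G_rT}=\Omega^2_{G_rT}$ (Corollary~\ref{HA3}), two baby Verma modules in the same component must satisfy $\Omega^{2m}_{G_rT}(\widehat{Z}_r(\lambda))\cong\widehat{Z}_r(\mu)$ for some $m$. The substance of the argument is then Theorem~\ref{VM3}, whose lengthy proof---via Carlson modules, reduction to Levi subgroups of semisimple rank one, and explicit $\SL(2)_rT$-computations---shows that this relation forces $\dep(\lambda)=r$ and $\mu=\lambda+mp^r\alpha$ for a simple root $\alpha\in\Sigma\setminus\Psi^r_\lambda$. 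Only \emph{after} this does one conclude $\cx_{G_rT}(\widehat{Z}_r(\lambda))=1$, since $\mu-\lambda\in p^rX(T)$ gives $Z_r(\mu)\cong Z_r(\lambda)$ and hence $\Omega_{G_r}$-periodicity.

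Your Part~(1) is also incomplete: Section~\ref{S:EC} and non-periodicity (Corollary~\ref{MC2}) together rule out tubes and finite components, but not $\ZZ[\tilde{D}_n]$ or $\ZZ[\tilde{E}_n]$, which have no $\tau$-periodic vertices. The paper's proof (Theorem~\ref{WT3}) instead pushes such a component down to $\modd G_r$, invokes \cite[(4.1)]{Fa3} to identify the image as $\ZZ[\tilde{A}_{12}]$, and then uses the $\SL(2)$-analysis of Section~\ref{S:Rep} together with part~(1) of Theorem~\ref{WT3} to reach a contradiction. Your Part~(3) matches Corollary~\ref{VM4} and is fine once Part~(2) is proved correctly.
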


\noindent
By part (2) above, a stable AR-component $\Theta$ of $\modd G_rT$ whose rank variety $V_r(G)_\Theta$ has dimension $\ge 2$ contains at most one baby Verma module, while for $\dim 
V_r(G)_\Theta = 1$ the presence of such a module implies that the baby Verma modules of $\Theta$ form the $\Omega^2_{G_rT}$-orbit of quasi-simple modules.

\bigskip
\noindent
Given a finite group scheme $\cG$ with coordinate ring $k[\cG]$, we let $k\cG :=k[\cG]^\ast$ be its {\it algebra of measures}. By general theory, the representations of this algebra coincide
with those of the group scheme $\cG$, and we denote by $\modd \cG$ the category of finite-dimensional $\cG$-modules. For a $\cG$-module $M$, we let $\cx_\cG(M)$ denote the {\it
complexity} of $M$. By definition, $\cx_\cG(M) = \gr(P^\bullet)$ coincides with the polynomial rate of growth of a minimal projective resolution $P^\bullet$ of $M$. Recall that the {\it
growth} of a sequence $\cV := (V_n)_{n\ge 0}$ of finite-dimensional $k$-vector spaces is defined via
\[ \gr(\cV) := \min \{c \in \NN_0\cup \{\infty\} \ ; \ \exists \, \lambda > 0 \ \text{such that} \ \dim_kV_n \le \lambda n^{c-1} \ \ \forall \ n \ge 0\}.\]
If $M$ is a $\cG$-module, then
\[ \cx_\cG(M) = \gr((\Omega_\cG^n(M))_{n\ge 0}),\]
where $\Omega_\cG$ denotes the Heller operator of the self-injective algebra $k\cG$. In particular, a $\cG$-module $M$ is projective if and only if $\cx_\cG(M)=0$.

Recall that $\Omega_\cG$ induces an auto-equivalence on the stable category $\underline{\modd} \cG$, whose objects are those of $\modd \cG$ and whose morphisms spaces $\underline{\Hom}_\cG(M,N) = \Hom_\cG(M,N)/P(M,N)$ are the factor groups of $\Hom_\cG(M,N)$ by the subspace $P(M,N)$ of those morphisms that factor through a projective module.

Thanks to the Friedlander-Suslin Theorem \cite{FS}, we can associate to every finite-dimensional $\cG$-module $M$ its {\it cohomological support variety} $\cV_\cG(M)$. By definition,
$\cV_\cG(M)$ is the variety $Z(\ker \Phi_M)$, associated to the kernel of the canonical homomorphism
\[ \Phi_M : \HH^\bullet(\cG,k) \lra \Ext^ \ast_\cG(M,M) \ \ ; \ \ [f] \mapsto [f\!\otimes\! \id_M].\]
It is well-known that $\cV_\cG(M)$ is a conical variety such that
\[ \dim \cV_\cG(M) = \cx_\cG(M).\]
The reader is referred to \cite{Be2} for basic properties of support varieties. We shall use \cite{Ja3} and \cite{ARS,ASS} as general references for representations of algebraic groups and associative algebras, respectively.

\bigskip

\section{An Upper Bound for the Complexity} \label{S:UB}
Throughout this section, we let $\cG$ denote an infinitesimal group, defined over an algebraically closed field $k$ of characteristic $p>0$. For such a group, the associated Hopf algebra
$k\cG$ coincides with the algebra $\Dist(\cG)$ of distributions on $\cG$.

In the sequel, all $\cG$-modules are assumed to be finite-dimensional. Given $r \ge 0$, we let $\cG_r$ be the r-th Frobenius kernel of $\cG$ and define the {\it height} of $\cG$ via
$\height(\cG) := \min \{r \in \NN_0 \ ; \ \cG_r = \cG\}$. The following result establishes an upper bound for $\cx_\cG(M)$ in terms of self-extensions.

\bigskip

\begin{Theorem} \label{UB1} Suppose that $\cG$ has height $r$. If $M$ is a $\cG$-module, then
\[ \cx_\cG(M) \le \dim_k \Ext^{2np^{r-1}}_\cG(M,M)\]
for every $n \ge 1$. \end{Theorem}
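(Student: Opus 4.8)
The plan is to exploit the Friedlander-Suslin finite generation theorem together with the structure of $\HH^\bullet(\cG,k)$ for an infinitesimal group of height $r$. The key external input is that, by \cite{FS}, the even cohomology ring $R := \HH^\bullet(\cG,k)$ is a finitely generated commutative $k$-algebra and $\Ext^\ast_\cG(M,M)$ is a finite $R$-module; moreover, from the detailed analysis of the spectral sequences used there, one knows that $R$ is generated in degrees bounded by $2p^{r-1}$, so that in particular $R$ is integral over its subalgebra generated by the degree-$2p^{r-1}$ part. I would therefore begin by reducing the estimate for $\cx_\cG(M) = \dim \cV_\cG(M)$ to a statement about the Hilbert-type growth of a specific graded module.

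First I would recall that $\cx_\cG(M)$ equals the rate of growth of the sequence $\bigl(\Ext^n_\cG(M,M)\bigr)_{n \ge 0}$, which in turn is the rate of growth of $\bigl(\Ext^n_\cG(M,M)\bigr)_{n \in 2p^{r-1}\NN}$ after passing to the subalgebra $S \subseteq R$ generated in degree $d := 2p^{r-1}$: since $R$ is a finite $S$-module (integrality in a single degree), the $S$-module $E := \Ext^\ast_\cG(M,M)$ is finitely generated, and the Krull dimension of $E$ over $S$ equals $\cx_\cG(M)$. Now $S$, being generated by finitely many elements of the single degree $d$, is a graded quotient of a polynomial ring $k[x_1,\dots,x_m]$ with all $x_i$ in degree $d$; after regrading by $d$ this is the standard-graded situation, and $E$ (regraded) becomes a finitely generated graded module over a standard graded $k$-algebra. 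The essential point is then the elementary fact that for such a module the dimension $c$ of its support satisfies: the $n$-th graded piece has dimension bounded by a polynomial in $n$ of degree $c-1$, i.e. $\gr$ of the sequence of graded pieces equals $c$. Combining, $\cx_\cG(M) = \gr\bigl((\Ext^{dn}_\cG(M,M))_{n \ge 1}\bigr)$.

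Next I would extract from this the pointwise bound. By definition of $\gr$, there is a $\lambda > 0$ with $\dim_k \Ext^{dn}_\cG(M,M) \le \lambda n^{\,\cx_\cG(M)-1}$ for all $n$; but for a finitely generated graded module over a standard graded algebra the function $n \mapsto \dim_k E_{dn}$ is eventually a polynomial (Hilbert polynomial) of degree exactly $c-1 = \cx_\cG(M)-1$, and the point is that this polynomial is \emph{nonzero} on positive integers for $n$ large — in fact, since each Ext group is finite-dimensional and the Hilbert polynomial of a module of dimension $c$ has positive leading coefficient, we get $\dim_k \Ext^{dn}_\cG(M,M) \ge 1$, hence $\ge$ a constant times $n^{c-1}$, for $n \gg 0$. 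The cheap way to land the stated inequality is subtler: the theorem asserts the bound for \emph{every} $n \ge 1$, not just $n \gg 0$. Here I would use the conical (hence homogeneous) structure of $\cV_\cG(M)$ together with the fact that, because $\HH^\bullet(\cG,k)$ is generated in degrees $\le d$, a homogeneous system of parameters for $E/\mathrm{ann}$ can be chosen in degree $d$ (prime avoidance over the infinite field $k$, among degree-$d$ elements since those already generate the relevant ideal up to integrality); then $\Ext^{dn}_\cG(M,M)$ contains the degree-$dn$ part of the polynomial subring $k[\theta_1,\dots,\theta_c] \hookrightarrow E$ generated by such a regular sequence acting on a nonzero element, and the number of monomials of degree $n$ in $c$ variables is $\binom{n+c-1}{c-1} \ge n^{c-1}/(c-1)!$, while trivially $\binom{n+c-1}{c-1} \ge c \ge $ well, at any rate $\ge 1$; more to the point one checks directly $\dim_k \Ext^{dn}_\cG(M,M) \ge c$, and since $c = \cx_\cG(M)$ this gives $\cx_\cG(M) \le \dim_k \Ext^{dn}_\cG(M,M)$ for all $n \ge 1$.

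The main obstacle, and the step I expect to require the most care, is the last one: getting the inequality to hold for \emph{all} $n \ge 1$ uniformly rather than asymptotically. Asymptotic bounds come for free from Hilbert-polynomial theory, but the sharp statement needs that the module $E$ has no "gaps" in the relevant arithmetic progression of degrees — equivalently that one can realize a polynomial subalgebra on $c$ homogeneous generators sitting inside $E$ with generators concentrated in degree $d = 2p^{r-1}$. This is exactly where finite generation of $\HH^\bullet(\cG,k)$ in degrees $\le 2p^{r-1}$ (the quantitative form of \cite{FS} for height-$r$ infinitesimal groups) is indispensable, and I would spend the bulk of the write-up making that selection of parameters precise and checking that the resulting monomials remain linearly independent in $\Ext^{dn}_\cG(M,M)$.
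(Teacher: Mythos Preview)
Your approach can be made to work, but you take a detour and worry about a difficulty that the paper's argument sidesteps entirely.

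The paper fixes $n$ at the outset. With $S \subseteq \Ext^*_\cG(M,M)$ the commutative subalgebra from \cite[(1.5)]{FS} (generated in degrees $2p^i$, $0 \le i \le r-1$, with $\Ext^*_\cG(M,M)$ finite over $S$), one passes to $T_{(n)} := k[S_{2np^{r-1}}]$. Since every generator of $S$ has degree dividing $2np^{r-1}$, a suitable power of each lies in $S_{2np^{r-1}}$, so the extension $T_{(n)} \subseteq S$ is integral, hence finite; thus $\Ext^*_\cG(M,M)$ is finite over $T_{(n)}$ as well. The chain
\[ \cx_\cG(M) = \gr(\Ext^*_\cG(M,M)) = \gr(T_{(n)}) \le \dim_k S_{2np^{r-1}} \le \dim_k \Ext^{2np^{r-1}}_\cG(M,M) \]
then finishes the proof: the penultimate step is the trivial fact that an algebra generated by $m$ elements of a single degree is a quotient of a polynomial ring in $m$ variables and therefore has growth at most $m$. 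There is no Noether normalization, no Hilbert polynomial, no system of parameters, and crucially no ``for all $n$'' versus ``$n \gg 0$'' issue --- each $n$ is handled on its own.

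Your route, by contrast, fixes $d = 2p^{r-1}$ once and then tries to bound $\dim_k \Ext^{dn}$ from below for every $n$ simultaneously by embedding a polynomial ring $k[\theta_1,\ldots,\theta_c]$ with the $\theta_i$ in degree $d$. This can be completed, but your write-up conflates two notions: a ``regular sequence acting on a nonzero element'' of $E$ would require Cohen--Macaulayness of $E$, which you do not have. What you actually need is $c$ algebraically independent elements of the \emph{subalgebra} $A = \im(\Phi_M)$ lying in degree $d$. Graded Noether normalization over the infinite field $k$, applied to $k[A_d]$ (which has the same Krull dimension as $A$ by exactly the integrality step above), supplies these, and then the degree-$n$ monomials in the $\theta_i$ give $\binom{n+c-1}{c-1} \ge c$ linearly independent elements of $A_{dn} \subseteq \Ext^{dn}_\cG(M,M)$. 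So your plan does close --- but only after you have reproduced the paper's integrality step and then layered Noether normalization on top of it, machinery that the paper's one-line growth bound renders unnecessary.
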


\begin{proof} We fix a natural number $n \in \NN$. According to \cite[(1.5)]{FS}, there exists a commutative, graded subalgebra $S \subseteq \Ext_\cG^\ast(M,M)$ of the Yoneda algebra
$\Ext^\ast_\cG(M,M)$ of $M$ such that

(a) \ $S$ is generated by $\bigoplus_{i=0}^{r-1} S_{2p^i}$, and

(b) \ $\Ext^\ast_\cG(M,M)$ is a finitely generated $S$-module.

\noindent
Thus, $S$ is finitely generated, and an integral extension of the subalgebra $T_{(n)} := k[S_{2np^{r-1}}]$, generated by the subspace $S_{2np^{r-1}}$ of homogeneous elements of
degree $2np^{r-1}$, cf.\ \cite[(9.1)]{Ma}. Owing to \cite[(4.5)]{Ei}, $S$ is a finitely generated $T_{(n)}$-module, so that $\Ext^\ast_\cG(M,M)$ also enjoys this property. In view
of \cite[(5.3.5)]{Be2}, passage to growths now yields
\[ \cx_{\cG}(M) = \gr(\Ext^\ast_\cG(M,M)) = \gr(T_{(n)}) \le \dim_k S_{2np^{r-1}} \le \dim_k \Ext_\cG^{2np^{r-1}}(M,M),\]
as desired. \end{proof}

\bigskip

\begin{Examples} Suppose that $p \ge 3$.

(1) For $r>0$, we consider the infinitesimal group $\cG := \SL(2)_1T_r$, whose factors are the first and r-th Frobenius kernels of $\SL(2)$ and its standard maximal torus $T \subseteq \SL(2)$ of diagonal matrices, respectively. We denote by $\alpha$ the positive root of $\SL(2)$ (relative to the Borel subgroup of upper triangular matrices) and recall that $\ZZ \lra X(T) \ ; \ n\mapsto \frac{n}{2}\alpha$ is an isomorphism between $\ZZ$ and the character group $X(T)$ of $T$. The character group of $T_r$ may thus be identified with $\ZZ/(p^r)$. Let $\lambda \in X(T_r)\setminus \{ip-1 \ ; \ 1 \le i \le p^{r-1}\}$ be a weight. Then the baby Verma module
\[ \widehat{Z}_1(\lambda) := \Dist(\SL(2)_1)\!\otimes_{\Dist(B_1)}\!k_\lambda\]
is a $\cG$-module of complexity $\cx_\cG(\widehat{Z}_1(\lambda)) = 1$. According to \cite[(4.5)]{Fa5} and its proof, we have
\[ \Omega^2_\cG(\widehat{Z}_1(\lambda)) \cong \widehat{Z}_1(\lambda)\! \otimes_k \! k_{p\bar{\alpha}} \cong \widehat{Z}_1(\lambda+p\bar{\alpha}),\]
where $\bar{\alpha} \in X(T_r)\cong X(T)/p^rX(T)$ denotes the restriction of the positive root $\alpha \in X(T)$ to $T_r$. Thus, $\bar{\alpha}$ corresponds to $2 \in \ZZ/(p^r)$. Since
each Verma module $\widehat{Z}_1(\lambda)$ has length $2$ with composition factors $\widehat{L}_1(\lambda)$ and $\widehat{L}_1(2p-2-\lambda)$, the choice of $\lambda$ yields
$\Hom_{\cG}(\widehat{Z}(\lambda+np\bar{\alpha}),\widehat{Z}(\lambda)) = (0)$ for $1 \le n \le p^{r-1}-1$. Consequently, we have
\[ \Ext^{2n}_{\cG}(\widehat{Z}_1(\lambda),\widehat{Z}_1(\lambda)) \cong \underline{\Hom}_\cG(\Omega^{2n}_{\cG}(\widehat{Z}_1(\lambda)),\widehat{Z}_1(\lambda)) \cong
\underline{\Hom}_\cG(\widehat{Z}_1(\lambda+np\bar{\alpha})),\widehat{Z}_1(\lambda)) = (0)\]
for each of these $n$, so that none of the Ext-groups $\Ext^{2n}_\cG(\widehat{Z}_1(\lambda),\widehat{Z}_1(\lambda))$ of degree $<2p^{r-1}$ provides an upper bound for the
complexity of the $\cG$-module $\widehat{Z}_1(\lambda)$.

(2) Let $\cG = \GG_{a(2)}$ be the second Frobenius kernel of the additive group $\GG_{a}$. It is well-known (cf. \cite[(3.5)]{Ev}) that
\[ \Ext^\ast_{\GG_{a(2)}}(k,k) \cong \HH^\ast(\GG_{a(2)},k) \cong k[X_1,X_2]\!\otimes_k\!\Lambda(Y_1,Y_2),\]
where the generators $X_1,X_2$ of the polynomial ring and $Y_1,Y_2$ of the exterior algebra have degrees $2$ and $1$, respectively. Consequently,
\[ \dim_k\Ext^{2p}_{\GG_{a(2)}}(k,k) = \dim_kk[X_1,X_2]_{2p}+ 2\dim_kk[X_1,X_2]_{2p-1} + \dim_kk[X_1,X_2]_{2p-2} = 2p+1,\]
while $\cx_{\GG_{a(2)}}(k)=2$. \end{Examples}

\bigskip
\noindent
Recall that a group scheme $\cG$ is referred to as {\it representation-finite} if and only if $\modd \cG$ has only finitely many isoclasses of indecomposable objects. An indecomposable
$\cG$-module is said to be {\it periodic} if there exists $n \ge 1$ such that $\Omega^n_\cG(M) \cong M$. The first part of the following result refines \cite[(7.6.1)]{SFB2}.

\bigskip

\begin{Corollary} \label{UB2} Let $\cG$ be an infinitesimal group of height $r$. Then the following statements hold:

{\rm (1)} \ A $\cG$-module $M$ is projective if and only if $\Ext_\cG^{2np^{r-1}}(M,M) = (0)$ for some $n \ge 1$.

{\rm (2)} \ The group $\cG$ is diagonalizable if and only if $\HH^{2np^{r-1}}(\cG,k) = (0)$ for some $n \ge 1$.

{\rm (3)} \ The group $\cG$ is representation-finite if and only if $\dim_k \HH^{2np^{r-1}}(\cG,k) \le 1$ for some $n \ge 1$. \end{Corollary}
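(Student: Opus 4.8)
The plan is to derive all three statements from Theorem \ref{UB1} together with standard structural facts about cohomology of infinitesimal groups. For part (1), one implication is trivial: if $M$ is projective then $\Ext^i_\cG(M,M)=(0)$ for all $i\ge 1$, so in particular $\Ext^{2np^{r-1}}_\cG(M,M)=(0)$ for every $n\ge 1$. For the converse, suppose $\Ext^{2np^{r-1}}_\cG(M,M)=(0)$ for some $n\ge 1$. Then Theorem \ref{UB1} applied with that $n$ gives $\cx_\cG(M)\le \dim_k\Ext^{2np^{r-1}}_\cG(M,M)=0$, hence $\cx_\cG(M)=0$, which as recalled in the introduction means $M$ is projective.

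For part (2), I would use that $\cG$ is diagonalizable if and only if $k\cG$ is semisimple, equivalently every $\cG$-module is projective. Applying part (1) to $M=k$: if $\HH^{2np^{r-1}}(\cG,k)=\Ext^{2np^{r-1}}_\cG(k,k)=(0)$ for some $n\ge 1$, then $k$ is a projective $\cG$-module, which forces $k\cG$ to be semisimple (the trivial module being projective over a self-injective algebra implies all modules are, since a self-injective algebra with semisimple-projective trivial module is semisimple — more cleanly, $\cx_\cG(k)=0$ together with the fact that $\cV_\cG(M)\subseteq\cV_\cG(k)$ for all $M$ gives $\cx_\cG(M)=0$ for all $M$). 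Conversely, if $\cG$ is diagonalizable then $\HH^i(\cG,k)=(0)$ for all $i\ge 1$. For part (3), the key input is that $\cG$ is representation-finite if and only if $\cx_\cG(M)\le 1$ for all $M$, equivalently $\dim\cV_\cG(k)=\cx_\cG(k)\le 1$ (finite representation type is detected by the support variety of the trivial module having dimension at most one; see e.g.\ \cite{SFB2}). Now if $\dim_k\HH^{2np^{r-1}}(\cG,k)\le 1$ for some $n$, then Theorem \ref{UB1} with $M=k$ gives $\cx_\cG(k)\le 1$, whence $\cG$ is representation-finite; conversely, if $\cG$ is representation-finite then $\cx_\cG(k)\le 1$, and the growth of $\HH^\bullet(\cG,k)$ equals $\cx_\cG(k)\le 1$, so $\dim_k\HH^m(\cG,k)$ is bounded, and in fact a closer look (using that $\HH^\bullet(\cG,k)$ has Krull dimension $\le 1$, so is finite over a polynomial ring in at most one variable of some even degree $2d$, and one can arrange $2d\mid 2np^{r-1}$ for suitable $n$) yields that some graded piece $\HH^{2np^{r-1}}(\cG,k)$ has dimension $\le 1$.

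The main obstacle is the "only if" direction of part (3): producing a specific degree $2np^{r-1}$ in which the cohomology is at most one-dimensional, rather than merely bounded. Boundedness of $\dim_k\HH^m(\cG,k)$ follows immediately from $\gr(\HH^\bullet(\cG,k))\le 1$, but pinning the bound down to $1$ in a degree divisible by $2p^{r-1}$ requires knowing the structure of $\HH^\bullet(\cG,k)$ more precisely — that after passing to a suitable Veronese subalgebra it becomes a finitely generated module over a polynomial ring $k[\zeta]$ with $\deg\zeta$ a divisor of $2np^{r-1}$ for appropriate $n$, together with a rank-one statement in high degrees. This is where one invokes the Friedlander–Suslin generation theorem (the same subalgebra $S\subseteq\Ext^\ast_\cG(k,k)$ from the proof of Theorem \ref{UB1}, now with $M=k$) and the one-dimensionality of $\Proj\HH^\bullet(\cG,k)$ in the representation-finite case to conclude that the Hilbert function stabilizes at $1$ along the relevant arithmetic progression.
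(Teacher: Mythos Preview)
Your arguments for (1), (2), and the ``if'' direction of (3) are correct and match the paper's proof. The gap is precisely where you flag it: the ``only if'' direction of (3). Your proposed route through Krull dimension and Hilbert functions does not close the argument. Knowing that $\HH^\bullet(\cG,k)$ has Krull dimension $\le 1$ tells you only that the Hilbert function is bounded, not that it is eventually $\le 1$ along any arithmetic progression: a graded ring of Krull dimension $1$ can perfectly well be a free module of rank $\ge 2$ over a polynomial subring (e.g.\ $k[x]\oplus k[x]y$ with $y^2=0$), and then no graded piece of large degree has dimension $\le 1$. Invoking ``one-dimensionality of $\Proj$'' does not help, since that is a statement about the reduced ring, while the bound you need is on the full cohomology.

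The paper's argument bypasses this entirely via periodicity. If $\cG$ is representation-finite and not diagonalizable, then the non-projective trivial module $k$ is periodic. By the Friedlander--Suslin theorem, $\HH^\bullet(\cG,k)$ is generated in degrees $2p^i$ for $0\le i\le r-1$; the standard fact that the period of a periodic module divides the lcm of the degrees of a system of homogeneous generators of the cohomology ring (cf.\ \cite[(5.10.6)]{Be2}) then forces the period of $k$ to divide $2p^{r-1}$. Hence $\Omega_\cG^{2p^{r-1}}(k)\cong k$, and
\[
\HH^{2p^{r-1}}(\cG,k)\;\cong\;\underline{\Hom}_\cG\bigl(\Omega_\cG^{2p^{r-1}}(k),k\bigr)\;\cong\;\underline{\Hom}_\cG(k,k)\;\cong\;k,
\]
the last step because $k$ is not projective. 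So one obtains $\dim_k\HH^{2p^{r-1}}(\cG,k)=1$ directly, with $n=1$. This periodicity step is the missing idea in your sketch.
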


\begin{proof} (1) According to (\ref{UB1}) we have $\cx_\cG(M) = 0$, so that $M$ is projective.

(2) By part (1), the trivial $\cG$-module is projective, so that the algebra of measures $k\cG$ of $\cG$ is semi-simple. Our assertion now follows from Nagata's theorem
\cite[(IV,\S3,(3.6))]{DG}.

(3) If $\cG$ is representation-finite and not diagonalizable, then the trivial $\cG$-module $k$ is periodic. By the Friedlander-Suslin Theorem \cite[(1.5)]{FS}, the even cohomology ring
$\HH^\bullet(\cG,k)$ is generated in degrees $2p^i$, with $i \in \{0,\ldots,r\!-\!1\}$. In view of \cite[(5.10.6)]{Be2}, the period of $k$ divides $2p^{r-1}$, so that
$\Omega_\cG^{2p^{r-1}}(k)\cong k$. This readily yields $\dim_k \HH^{2p^{r-1}}(\cG,k) = 1$.

Let $n \in \NN$ be such that $\dim_k \HH^{2np^{r-1}}(\cG,k) \le 1$. Then Theorem \ref{UB1} implies $\cx_\cG(k) \le 1$ and our assertion is a consequence of \cite[(1.1),(2.7)]{FV1}. \end{proof}

\bigskip

\begin{Corollary} \label{UB3} Let $M$ be a $\cG$-module. Then the following statements hold:

{\rm (1)} \ If $M$ is simple and such that $\Top(\Omega^{2np^{r-1}}_\cG(M))$ is simple for some $n \ge 1$, then $\Omega^{2np^{r-1}}_\cG(M) \cong M$.

{\rm (2)} \  If $M$ is indecomposable of length $\ell(M)=2$ and $\ell(\Omega^{2np^{r-1}}_\cG(M)) \le 2$ for some $n \ge 1$, then $M$ is projective or periodic. \end{Corollary}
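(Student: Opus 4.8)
The plan is to bootstrap from Theorem \ref{UB1} and Corollary \ref{UB2}(1), using the general principle that a periodic module has very rigid "shape" data (top, socle, length) along its $\Omega$-orbit, and conversely that controlling such data forces periodicity. For part (1), first I would observe that if $M$ is simple, then $\Omega_\cG(M)$ is the kernel of a projective cover $P(M) \to M$, hence is the unique (up to iso) indecomposable syzygy, and more generally each $\Omega^i_\cG(M)$ is indecomposable. Now apply Theorem \ref{UB1}: since $\Ext^{2np^{r-1}}_\cG(M,M)$ need not vanish, we cannot conclude projectivity, but we can use the hypothesis that $N := \Omega^{2np^{r-1}}_\cG(M)$ has simple top. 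The key identification is
\[
\underline{\Hom}_\cG\bigl(\Omega^{2np^{r-1}}_\cG(M),M\bigr) \;\cong\; \Ext^{2np^{r-1}}_\cG(M,M),
\]
so if this $\Ext$-group is nonzero there is a nonzero stable map $N \to M$; since $\Top(N)$ is simple and $M$ is simple, any nonzero map $N \to M$ is surjective with projective-free kernel, forcing (by a length/self-injectivity argument, or by noting $\dim N = \dim M$ as both are the $2np^{r-1}$-fold syzygy data must balance) an isomorphism $N \cong M$. If instead $\Ext^{2np^{r-1}}_\cG(M,M) = (0)$, then Corollary \ref{UB2}(1) makes $M$ projective, whence $N = (0)$, contradicting $\Top(N)$ simple — so this case does not occur, and we are done.

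For part (2), I would argue by contradiction: assume $M$ is neither projective nor periodic, so $\cx_\cG(M) = 1$ by Theorem \ref{UB1} applied together with the hypothesis $\ell(\Omega^{2np^{r-1}}_\cG(M)) \le 2$ — indeed, Theorem \ref{UB1} gives $\cx_\cG(M) \le \dim_k \Ext^{2np^{r-1}}_\cG(M,M)$, and since $\ell(M) = 2$ I want to bound the right-hand side by something forcing complexity exactly $1$. The module $\Omega^{2np^{r-1}}_\cG(M) =: N$ is indecomposable of length $\le 2$; if $\ell(N) = 1$ then $N$ is simple, and since $N$ is a $2np^{r-1}$-fold syzygy of the length-$2$ module $M$, comparing with the (already established, or directly provable) fact that syzygies of a complexity-$1$ non-projective module have bounded dimension, one gets a periodicity statement. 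The cleanest route: since $\cx_\cG(M) = 1$, the dimensions $\dim_k \Omega^i_\cG(M)$ are bounded, so the $\Omega$-orbit of $M$ is finite, hence $M$ is periodic. If instead $\ell(N) = 2$, then along the whole orbit $(\Omega^{i\cdot 2np^{r-1}}_\cG(M))_{i \ge 0}$ the lengths stay $\le 2$; combined with $\cx_\cG(M) = 1$ (bounded dimensions) this again produces a repetition in the orbit.

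The main obstacle, and the step requiring the most care, is establishing $\cx_\cG(M) \le 1$ from the length hypothesis in part (2): one must rule out $\cx_\cG(M) \ge 2$, i.e.\ show that if $M$ has complexity $\ge 2$ then the syzygy $\Omega^{2np^{r-1}}_\cG(M)$ cannot have length $\le 2$. The idea is that $\dim_k \Ext^{2np^{r-1}}_\cG(M,M) \ge 2$ whenever $\cx_\cG(M) \ge 2$ (this is essentially the content of the growth estimate in Theorem \ref{UB1}, since $T_{(n)}$ has Krull dimension $\ge 2$, hence $\dim_k S_{2np^{r-1}} \ge 2$), and then $\underline{\Hom}_\cG(N, M)$ has dimension $\ge 2$ where $N = \Omega^{2np^{r-1}}_\cG(M)$; but a module of length $\le 2$ admits at most a one-dimensional space of stable maps into a fixed indecomposable of length $2$ unless projective summands intervene — and a syzygy is projective-free — giving the contradiction. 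Once $\cx_\cG(M) \le 1$ is in hand, the finite-orbit argument for periodicity is routine, and the projective case is absorbed into "$\cx_\cG(M) = 0$".
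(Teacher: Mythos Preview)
Your overall strategy --- identify $\Ext^{2np^{r-1}}_\cG(M,M)$ with $\underline{\Hom}_\cG(N,M)$ for $N := \Omega^{2np^{r-1}}_\cG(M)$, bound this space via Schur's Lemma using the length hypotheses, and feed the bound into Theorem~\ref{UB1} --- is exactly the paper's. But there are genuine problems in the execution.

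In part~(1) your passage from ``there is a nonzero map $N \to M$'' to ``$N \cong M$'' fails. The assertion $\dim_k N = \dim_k M$ is simply false in general: the dimensions of iterated syzygies of a simple module can grow without bound (this is precisely what complexity $\ge 2$ means), and nothing in the hypotheses prevents $\Rad(N)$ from being large. There is no ``self-injectivity'' or ``length'' argument that produces $N \cong M$ here. What your argument actually proves is only $\Top(N) \cong M$, together with $\cx_\cG(M) \le 1$ (via Theorem~\ref{UB1}, since $\dim_k\underline{\Hom}_\cG(N,M) \le \dim_k\Hom_\cG(\Top(N),M) \le 1$). In fact the paper's own proof stops at exactly this point: it concludes ``$S \cong M$, as desired'' where $S = \Top(N)$, so the stated conclusion $N \cong M$ is not established there either. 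Your attempt to close this gap is based on a false dimension claim.

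For part~(2), your final paragraph has the right idea, but the middle of the argument is circular (you invoke $\cx_\cG(M) = 1$ before proving it) and overclaims (the hypothesis gives $\ell(N) \le 2$ for one $n$, not along an entire orbit). The paper's proof is a clean dichotomy that you should adopt: either $N \cong M$, whence $M$ is periodic; or $N \not\cong M$, in which case no map $N \to M$ can be surjective (since $\ell(N) \le 2 = \ell(M)$ would force an isomorphism), so every map lands in the simple module $\Soc(M)$, giving $\Hom_\cG(N,M) \cong \Hom_\cG(\Top(N),\Soc(M))$ of dimension $\le 1$ by Schur. Then Theorem~\ref{UB1} yields $\cx_\cG(M) \le 1$ and \cite[(5.10.4)]{Be2} finishes. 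Note that your blanket claim ``a module of length $\le 2$ admits at most a one-dimensional space of stable maps into a fixed indecomposable of length $2$'' requires the exclusion of $N \cong M$ to be safe; the dichotomy handles this upfront.
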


\begin{proof} By general theory, we have
\[ \Ext^{2np^{r-1}}_\cG(M,M) \cong \underline{\Hom}_\cG(\Omega^{2np^{r-1}}_\cG(M),M).\]
(1) By assumption, the modules $M$ and $S := \Top(\Omega^{2np^{r-1}}_\cG(M))$ are simple, so that Schur's Lemma implies
\[ \dim_k \Ext_\cG^{2np^{r-1}}(M,M) \cong \dim_k \Hom_\cG(S,M) = \delta_{[S],[M]},\]
where the brackets indicate isomorphism classes. If $S\not \cong M$, then Corollary \ref{UB2} yields that $M$ is projective, whence $S=(0)$, a contradiction. Consequently, $S \cong M$, as
desired.

(2) If $M$ is indecomposable of length $2$, then either $\Omega^{2np^{r-1}}_\cG(M)
\cong M$ and $M$ is periodic, or $\Hom_\cG(\Omega^{2np^{r-1}}_\cG(M),M) \cong \Hom_\cG(\Omega^{2np^{r-1}}_\cG(M),\Soc(M)) \cong
\Hom_\cG(\Top(\Omega^{2np^{r-1}}_\cG(M)),\Soc(M))$. Schur's Lemma in conjunction with (\ref{UB1}) then implies $\cx_\cG(M) \le 1$, so that $M$ is periodic or projective (cf.\
\cite[(5.10.4)]{Be2}). \end{proof}

\bigskip

\section{Varieties for $\GG_{a(r)}$-Modules}\label{S:VG}
In Section \ref{S:PH} we shall study questions concerning the periodicity of $\cG$-modules by considering their rank varieties of infinitesimal one-paramenter subgroups of $\cG$. These are
defined via the groups
\[ \GG_{a(r)} := {\rm Spec}_k(k[T]/(T^{p^r}))\ \ \ \ \ \ \ (r \ge 1).\]
We denote the canonical generator of the coordinate ring by $t$. The algebra of measures $k\GG_{a(r)}$ of $\GG_{a(r)}$ is isomorphic to
\[k[U_0,\ldots,U_{r-1}]/(U_0^p,\ldots,U_{r-1}^p),\]
with $U_i + (U_0^p,\ldots,U_{r-1}^p)$ corresponding to the linear form $u_i$ on $k[\GG_{a(r)}]$ that sends $t^j$ onto $\delta_{p^i,j}$.

{\it Throughout, we assume that $p\ge 3$}. We write $\HH^\ast(k[u_i],k) = k[x_{i+1}]\!\otimes_k\!\Lambda(y_i)$ with $\deg(x_{i+1}) = 2$ and $\deg(y_i) = 1$. The K\"unneth
formula then provides an isomorphism
\[ k[x_1,\ldots,x_r]\!\otimes_k\!\Lambda(y_0,\ldots,y_{r-1}) \cong \HH^\ast(\GG_{a(r)},k) \cong \bigotimes_{i=0}^{r-1}\HH^\ast(k[u_i],k)\]
of graded $k$-algebras, where $\Lambda(y_0,\ldots,y_{r-1})$ denotes the exterior algebra in the variables $y_0,\ldots,y_{r-1}$. In this identification,
\[\HH^\ast(k[u_i],k) \cong k\!\otimes_k \cdots \otimes_k\! k\! \otimes_k\!\HH^\ast(k[u_i],k)\!\otimes_k \cdots \otimes_k\!k\]
corresponds to the image of the map $\HH^\ast(k[u_i],k) \lra \HH^\ast(\GG_{a(r)},k)$, defined by the algebra homomorphism $k\GG_{a(r)} \lra k[u_i] \ ; \ u_j \mapsto \delta_{i,j}u_i$.

\bigskip
\noindent
The above notation derives from the grading associated to the action of a torus $T$ on $\GG_{a(r)}$. If $T$ operates via a character $\alpha : T \lra k^\times$, i.e.,
\[ t\dact x = \alpha(t)x \ \ \ \ \ \ \ \ \forall \ t \in T, \ x \in \GG_{a(r)},\]
then, thanks to  \cite[(I.4.27)]{Ja3} (see also \cite[(4.1)]{CPSK}), the induced action of $T$ on $\HH^\ast(\GG_{a(r)},k)$ can be computed as follows:

\bigskip

\begin{Lemma} \label{OPG1} The following statements hold:

{\rm (1)} \ $x_i \in \HH^\bullet(\GG_{a(r)},k)_{-p^i\alpha}$ for $1 \le i \le r$.

{\rm (2)} \ $y_i \in \HH^\ast(\GG_{a(r)},k)_{-p^i\alpha}$ for  $0 \le i \le r\!-\!1$. \hfill $\square$ \end{Lemma}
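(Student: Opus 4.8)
The plan is to pin down how the torus $T$ acts on each tensor factor $\HH^\ast(k[u_i],k)$ in the Künneth decomposition, and then transport this weight information through the algebra isomorphism. First I would note that, by the compatibility of the Künneth isomorphism with the $T$-action (each projection $k\GG_{a(r)}\lra k[u_i]$ is $T$-equivariant once we record how $T$ acts on the $u_i$), it suffices to compute the $T$-weights of $x_{i+1}$ and $y_i$ inside $\HH^\ast(k[u_i],k)$, i.e. to reduce to the case $r=1$ of a single generator $u_i$. The action $t\dact x=\alpha(t)x$ on $\GG_{a(r)}$ dualizes to an action on $k[\GG_{a(r)}]$ under which $t^j$ has weight $j\alpha$ — more precisely $t\dact t^j=\alpha(t)^{-j}t^j$ or $\alpha(t)^j t^j$ depending on the sign convention for the dual action — and hence the linear form $u_i$, which is dual to $t^{p^i}$, carries weight $-p^i\alpha$ (the sign being exactly the point where one must be careful and follow \cite[(I.4.27)]{Ja3}).

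Next I would recall the explicit description of $\HH^\ast(k[u],k)$ for the truncated polynomial algebra $k[u]/(u^p)$ with $p\ge 3$: it is $k[x]\otimes_k\Lambda(y)$ where $y\in\HH^1$ is represented by the dual basis element ``$u^\vee$'' of the bar complex (or equivalently corresponds to the extension $0\to k\to k[u]/(u^p)\text{-thing}\to k\to 0$ built from multiplication by $u$), and $x\in\HH^2$ is the Bockstein-type class, essentially the $p$-th power/Yoneda class again built from $u$. The key computational input is that both $x$ and $y$ are, up to scalars that do not affect the $T$-weight, ``linear in $u$'': under the action rescaling $u$ by $\alpha(t)^{-1}$, the class $y$ scales by $\alpha(t)^{-1}$ and the class $x$ (being of cohomological degree $2$ but arising from the same generator $u$, not from $u^2$) also scales by $\alpha(t)^{-1}$ rather than $\alpha(t)^{-2}$. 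This is precisely the phenomenon recorded in \cite[(4.1)]{CPSK}. Concretely, one realizes $x$ as the connecting map in a minimal resolution of $k$ over $k[u]/(u^p)$, whose differentials alternate between multiplication by $u$ and by $u^{p-1}$; the degree-$2$ class $x$ corresponds to going around once, and the $T$-equivariant structure forces its weight to be that of $u^p/u^{p-1}=u$, i.e. $-p^i\alpha$ in the $i$-th factor.

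Then I would assemble: under the isomorphism $k[x_1,\dots,x_r]\otimes\Lambda(y_0,\dots,y_{r-1})\cong\HH^\ast(\GG_{a(r)},k)$, the element $x_i$ is the image of the degree-$2$ generator of the $i$-th factor $\HH^\ast(k[u_{i-1}],k)$ (note the index shift $\deg x_{i+1}=2$ with $x_{i+1}$ coming from $u_i$), which by the above has weight $-p^i\alpha$; likewise $y_i$ is the image of the degree-$1$ generator of $\HH^\ast(k[u_i],k)$, of weight $-p^i\alpha$. Since the Künneth/algebra isomorphism is $T$-equivariant, this gives exactly the two claimed statements.

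The main obstacle I anticipate is not the structural bookkeeping but getting the sign and exponent right in the single-variable computation: one must verify that the degree-$2$ generator $x_{i+1}$ really sits in weight $-p^i\alpha$ and not $-2p^i\alpha$ (the ``naive'' answer if one forgot that $\HH^\ast(k[u]/(u^p),k)$ is not a polynomial ring on a degree-$2$ class but has the exterior generator built in, so the even generator is tied to $u$ and not to $u^2$). This is handled cleanly by appealing to \cite[(I.4.27)]{Ja3} and \cite[(4.1)]{CPSK}, which compute the $T$-action on $\HH^\ast(\GG_{a(r)},k)$ directly; given those references the lemma is essentially a matter of matching conventions, so the proof will be short.
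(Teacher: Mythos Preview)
Your reduction via K\"unneth to a single factor $\HH^\ast(k[u_i],k)$ is the right move and matches what the references \cite[(I.4.27)]{Ja3} and \cite[(4.1)]{CPSK} do. The problem lies in your single-variable computation.

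You assert that in $\HH^\ast(k[u]/(u^p),k)=k[x]\otimes_k\Lambda(y)$ the generators $x$ and $y$ carry the \emph{same} $T$-weight, and you frame the only obstacle as ruling out $-2p^i\alpha$ in favour of $-p^i\alpha$ for $x_{i+1}$. In fact the weight of $x_{i+1}$ is neither of these: it is $-p^{i+1}\alpha$. The degree-$2$ class is governed not by $u$ or by $u^2$ but by the relation $u^p=0$; equivalently $\HH^2(\GG_{a(1)},k)\cong(\fg^\ast)^{(1)}$ is the \emph{Frobenius twist} of $\fg^\ast$, so its weight is $p$ times that of $y$. In your resolution picture, passing from cohomological degree $0$ to degree $2$ composes multiplication by $u$ with multiplication by $u^{p-1}$, a total shift by $u^p$; hence the dual class $x$ has weight $-p\beta$ when $u$ has weight $\beta$, not the $-\beta$ your ``$u^p/u^{p-1}=u$'' heuristic produces.

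Your proposal is in fact internally inconsistent on this point: in the final paragraph you say $x_{i+1}$ has weight $-p^i\alpha$, while in the assembly paragraph you say $x_i$ (coming from $u_{i-1}$) has weight $-p^i\alpha$. Both statements cannot follow from ``$x$ has the weight of its $u$'', since $u_{i-1}$ and $u_i$ have different weights. Once the single-factor computation is corrected to $\wt(y_i)=-p^i\alpha$ and $\wt(x_{i+1})=-p\cdot p^i\alpha=-p^{i+1}\alpha$, the lemma is immediate; this extra factor of $p$ is precisely the content of \cite[(I.4.27)]{Ja3}.
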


\bigskip
\noindent
Given $s \le r$, we consider the standard embedding $\GG_{a(s)} \hookrightarrow \GG_{a(r)}$, whose comorphism is the projection
\[ \pi : k[T]/(T^{p^r}) \lra  k[T]/(T^{p^s}) \ \ ; \ \ f + (T^{p^r}) \mapsto f + (T^{p^s}).\]
The resulting embedding of algebras of measures is thus given by
\[ k\GG_{a(s)} \lra k\GG_{a(r)} \ \ ; \ \ u_i \mapsto u_i \ \ \ \ \ \ 0 \le i \le s-1.\]
Let $F : \GG_{a(r)} \lra \GG_{a(r-1)} \  ; \ x \mapsto x^p$ be the Frobenius homomorphism. Setting $u_{-1} := 0$, we see that the corresponding homomorphism of Hopf algebras is given
by
\[ F : k\GG_{a(r)} \lra k\GG_{a(r-1)} \ \ ; \ \ u_i \mapsto u_{i-1} \ \ \ \ \ \ 0 \le i \le r-1.\]
We recall the notion of a $p$-point, introduced by Friedlander-Pevtsova \cite{FPe}. Let $\fA_p = k[X]/(X^p)$ be the truncated polynomial ring with canonical generator $u := X+(X^p)$.
An algebra homomorphism $\alpha : \fA_p \lra k\cG$ is a {\it $p$-point} of $\cG$ if

(a) \ $\alpha$ is left flat, and

(b) \ there exists an abelian unipotent subgroup $\cU \subseteq \cG$ such that $\im \alpha \subseteq k\cU$.

\noindent
If $\alpha : \fA_p \lra k\cG$ is an algebra homomorphism, then $\alpha^\ast : \modd \cG \lra \modd \fA_p$ denotes the associated pull-back functor. Two $p$-points $\alpha$ and $\beta$
are {\it equivalent} if for every $M \in \modd \cG$ the module $\alpha^\ast(M)$ is projective precisely when $\beta^\ast(M)$ is projective. We denote by $P(\cG)$ the space of equivalence
classes of $p$-points.

The cohomological interpretation of $p$-points is based on the induced algebra homomorphisms $\alpha^\bullet : \HH^\bullet(\cG,k) \lra \HH^\bullet(\fA_p,k)$.  Following Friedlander-Pevtsova, we define for $M \in \modd \cG$ the {\it $p$-support} of $M$ via
\[P(\cG)_M := \{ [\alpha] \in P(\cG) \ ; \ \alpha^\ast(M) \ \text{is not projective}\}.\]
According to \cite[(3.10),(4.11)]{FPe}, the sets $P(\cG)_M$ are the closed sets of a noetherian topology on $P(\cG)$ and the map
\[ \Psi_\cG : P(\cG) \lra \Proj(\cV_\cG(k)) \ \ ; \ \ [\alpha] \mapsto \ker \alpha^\bullet\]
is a homeomorphism with $P(\cG)_M = \Psi_\cG(\Proj(\cV_\cG(M))$ for every $M \in \modd \cG$.  Moreover, $\Psi_\cG$ is natural with respect to flat maps $\cH \lra \cG$ of
finite group schemes.

Given $f \in \HH^\bullet(\GG_{a(r)},k)$, we let $Z(f)$ be the zero locus of $f$, that is, the set of the maximal ideals of $\HH^\bullet(\GG_{a(r)},k)$ containing $f$.

\bigskip

\begin{Lemma} \label{OPG2} Let $N$ be a $\GG_{a(r)}$-module such that $\cV_{\GG_{a(r)}}(N) \ne \{0\} = \cV_{\GG_{a(r-1)}}(N)$.
Then we have
\[ Z(x_r)\cap \cV_{\GG_{a(r)}}(N) \subsetneq \cV_{\GG_{a(r)}}(N).\] \end{Lemma}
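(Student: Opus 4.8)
The plan is to translate the statement into the language of $p$-points and rank varieties, where the hypothesis acquires a concrete meaning. Recall that the Frobenius homomorphism $F : \GG_{a(r)} \lra \GG_{a(r-1)}$ sends $u_i \mapsto u_{i-1}$, and on cohomology the dual map $F^\bullet : \HH^\bullet(\GG_{a(r-1)},k) \lra \HH^\bullet(\GG_{a(r)},k)$ sends $x_i \mapsto x_{i+1}$ (and $y_i \mapsto y_{i+1}$). The hypothesis $\cV_{\GG_{a(r-1)}}(N) = \{0\}$ says that $N$, viewed via $F$ as a $\GG_{a(r-1)}$-module, is projective over $\GG_{a(r-1)}$; equivalently, every $p$-point of $\GG_{a(r)}$ that factors through $F$ lies outside $P(\GG_{a(r)})_N$. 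The hypothesis $\cV_{\GG_{a(r)}}(N) \ne \{0\}$ says $N$ is not projective over $\GG_{a(r)}$, so $\Proj(\cV_{\GG_{a(r)}}(N))$ is a nonempty closed subset of $\Proj(\cV_{\GG_{a(r)}}(k))$.

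First I would show that the subvariety $Z(x_r) \cap \cV_{\GG_{a(r)}}(N)$ is, projectively, precisely the image under $F$ (in the appropriate sense) of a support variety over $\GG_{a(r-1)}$—or at least is contained in it. The key point: the ideal generated by $x_r$ in $\HH^\bullet(\GG_{a(r)},k)$ has zero locus consisting exactly of those maximal ideals that pull back from $\HH^\bullet(\GG_{a(r-1)},k)/(\text{nothing})$—more precisely, by the structure of the cohomology ring $k[x_1,\dots,x_r]\otimes\Lambda(y_0,\dots,y_{r-1})$, the hypersurface $Z(x_r)$ in $\Proj$ is the image of $\Proj$ of the subring generated by $x_1,\dots,x_{r-1}$ together with the exterior part, and this subring is (a quotient of) $\im F^\bullet$ up to the exterior generator $y_0$, which is nilpotent and hence irrelevant for the reduced variety. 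Using the naturality of $\Psi_\cG$ with respect to the flat map $F$—together with the fact that $F$ is indeed flat, since $k\GG_{a(r)}$ is free over the image of $k\GG_{a(r-1)}$—I would identify $Z(x_r) \cap \cV_{\GG_{a(r)}}(N)$ with the locus of $p$-points of $\GG_{a(r)}$ that are equivalent to $p$-points factoring through $F$ and that do not annihilate $N$; but by the hypothesis $\cV_{\GG_{a(r-1)}}(N)=\{0\}$, all such $p$-points \emph{do} annihilate $N$ (i.e.\ pull $N$ back to a projective $\fA_p$-module), so this locus is empty.

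Hence $Z(x_r) \cap \cV_{\GG_{a(r)}}(N)$, passed to $\Proj$, is the empty set, which means $Z(x_r) \cap \cV_{\GG_{a(r)}}(N) \subseteq \{0\}$. Since $\cV_{\GG_{a(r)}}(N) \ne \{0\}$ by hypothesis, we conclude $Z(x_r) \cap \cV_{\GG_{a(r)}}(N) \subsetneq \cV_{\GG_{a(r)}}(N)$, as desired. In fact this argument shows something slightly stronger—that the intersection reduces to the origin—but the stated proper inclusion is all that is needed.

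The main obstacle I anticipate is making the identification in the second paragraph fully rigorous: one must be careful about the role of the exterior generators $y_i$ (they contribute nilpotents, so they do not affect $Z(x_r)$ as a reduced variety, but this should be checked) and about exactly which subvariety $Z(x_r)$ cuts out relative to the image of $F^\bullet$. An alternative, possibly cleaner route avoiding $p$-points entirely: argue directly with support varieties using the long exact sequence / rank-variety description of Suslin–Friedlander–Bendel, observing that a module killed by the "top" cohomology class $x_r$ has its support contained in the image of the variety of $\GG_{a(r-1)}$, and then invoke $\cV_{\GG_{a(r-1)}}(N)=\{0\}$ directly. Either way the crux is the compatibility of the hypersurface $Z(x_r)$ with the Frobenius-induced map on varieties.
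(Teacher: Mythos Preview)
Your primary line of argument contains a genuine confusion that derails it: you conflate the Frobenius quotient $F:\GG_{a(r)}\to\GG_{a(r-1)}$ with the standard inclusion $\iota:\GG_{a(r-1)}\hookrightarrow\GG_{a(r)}$. The hypothesis $\cV_{\GG_{a(r-1)}}(N)=\{0\}$ refers to the restriction of $N$ along $\iota$ (the $(r\!-\!1)$-st Frobenius kernel), not to anything obtained ``via $F$''; indeed $F$ goes the wrong way to restrict a module. More damagingly, your identification of $Z(x_r)$ with the image of $F^\bullet$ is false: since $F^\bullet(x_i)=x_{i+1}$, the reduced image of $F^\bullet$ is $k[x_2,\ldots,x_r]$, whose associated closed subvariety is the $x_1$-axis, not the hyperplane $\{x_r=0\}$. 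So the mechanism you describe for collapsing $Z(x_r)\cap\cV_{\GG_{a(r)}}(N)$ does not work as written.

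Your ``alternative, possibly cleaner route'' is in fact the correct fix and is a genuinely different (and shorter) proof than the paper's. The restriction map $\iota^\bullet:\HH^\bullet(\GG_{a(r)},k)\to\HH^\bullet(\GG_{a(r-1)},k)$ sends $x_i\mapsto x_i$ for $i<r$ and $x_r\mapsto 0$; on reduced rings this is the surjection $k[x_1,\ldots,x_r]\twoheadrightarrow k[x_1,\ldots,x_{r-1}]$, so ${\rm res}^\ast$ embeds $\cV_{\GG_{a(r-1)}}(k)$ onto $Z(x_r)$. By \cite[(7.1)]{SFB2} (as used throughout the paper) one then has $Z(x_r)\cap\cV_{\GG_{a(r)}}(N)=\cV_{\GG_{a(r-1)}}(N)=\{0\}$, giving the strict inclusion immediately. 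By contrast, the paper argues pointwise with $p$-points: it takes $[\alpha]\in P(\GG_{a(r)})_N$, uses the inclusion $\iota$ to force the top coefficient $a_{r-1}\ne 0$, and then applies the \emph{iterated} Frobenius $F^{r-1}:\GG_{a(r)}\to\GG_{a(1)}$ (which sends $x_1\mapsto x_r$ on cohomology) to conclude $\alpha^\bullet(x_r)\ne 0$, producing a single point of $\cV_{\GG_{a(r)}}(N)$ outside $Z(x_r)$. Your corrected argument avoids $p$-points entirely and yields the stronger conclusion $Z(x_r)\cap\cV_{\GG_{a(r)}}(N)=\{0\}$ in one stroke; the paper's approach is more hands-on but reaches the stated proper inclusion with less structural input.
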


\begin{proof} Let $\alpha : \fA_p \lra k\GG_{a(r)}$ be a $p$-point such that $[\alpha] \in P(\GG_{a(r)})_N$. In view of \cite[(2.2)]{FPe}, we may assume that $\alpha$ sends the generator
$u \in \fA_p$ onto
\[ \alpha(u) = a_0u_0+\cdots+ a_{r-1}u_{r-1} \ \ \  \ (a_i \in k).\]
Since $P(\GG_{a(r-1)})_N = \emptyset$ (cf.\ \cite[(4.11)]{FPe}), we conclude that $a_{r-1} \ne 0$. As noted in \cite[(1.13(2))]{SFB1}, the iterated Frobenius homomorphism $F^{r-1} :
\GG_{a(r)} \lra \GG_{a(1)}$ induces a map
\[ (F^{r-1})^\bullet : \HH^\bullet(\GG_{a(1)},k) \lra \HH^\bullet(\GG_{a(r)},k) \ \ ; \ \  x_1 \mapsto x_r.\]
Since $F^{r-1}(\alpha(u)) = a_{r-1}u_0$, the map $F^{r-1}\circ \alpha$ is an isomorphism of $k$-algebras. Consequently,
$\alpha^\bullet \circ (F^{r-1})^\bullet$ is bijective, and
\[ \alpha^\bullet(x_r) = \alpha^\bullet((F^{r-1})^\bullet(x_1)) \ne 0.\]
Since $\ker \alpha^\bullet \in {\rm Proj}(\cV_{\GG_{a(r)}}(N))$, it follows that the radical ideal $I_N \subseteq \HH^\bullet(\GG_{a(r)},k)$ defining $\cV_{\GG_{a(r)}}(N)$ is contained
in $\ker \alpha^\bullet$. By the above, the image of $x_r$ in the coordinate ring $k[\cV_{\GG_{a(r)}}(N)]$ is not zero, and Hilbert's Nullstellensatz provides a maximal ideal $\fM \unlhd
\HH^\bullet(\GG_{a(r)},k)$ such that $\fM \supseteq I_N$ and $x_r \not \in \fM$. Consequently,
\[\fM \in \cV_{\GG_{a(r)}}(N) \setminus (Z(x_r)\cap \cV_{\GG_{a(r)}}(N)),\]
as desired.  \end{proof}

\bigskip
\noindent
Let $\cG$ be an algebraic $k$-group. In \cite[\S1]{SFB1} the authors introduce the affine algebraic scheme $V_r(\cG)$ of infinitesimal one-parameter subgroups. By definition,
\[ V_r(\cG) = \mathcal{HOM}(\GG_{a(r)},\cG)\]
is the homomorphism scheme, cf.\ \cite[p.18]{Wa}. Owing to \cite[(1.14)]{SFB1}, there exists a homomorphism
\[ \Psi^r_\cG : \HH^\bullet(\cG,k) \lra k[V_r(\cG)]\]
of commutative $k$-algebras which multiplies degrees by $\frac{p^r}{2}$. Moreover, the map $\Psi^r_\cG$ is natural in $\cG$.

Let $s\le r$. We conclude this section with a basic observation concerning the map
\[ \Psi^r_{\GG_{a(s)}} : \HH^\bullet(\GG_{a(s)},k) \lra k[V_r(\GG_{a(s)})].\]
In view of \cite[(1.10)]{SFB1} (and its proof), the coordinate ring
\[k[V_r(\GG_{a(s)})] \cong k[T_{r-s},\ldots,T_{r-1}]\]
is reduced with $\ZZ$-grading given by $\deg(T_i) = p^i$ (see also \cite[(1.12)]{SFB1}).

\bigskip

\begin{Lemma} \label{OPG3} We have $\Psi^r_{\GG_{a(s)}}(x_i) = T_{r-i}^{p^i}$ for $1 \le i \le s$. \end{Lemma}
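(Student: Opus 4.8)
The plan is to reduce to the case $s=r$ and then to compute $\Psi^r_{\GG_{a(r)}}$ explicitly on the polynomial generators $x_1,\dots,x_r$. First I would invoke naturality of $\Psi^{\bullet}$ together with the compatibility of the standard embedding $\GG_{a(s)}\hookrightarrow\GG_{a(r)}$ and the Frobenius homomorphisms recorded earlier in this section. Concretely, the iterated Frobenius $F^{r-1}\colon\GG_{a(r)}\lra\GG_{a(1)}$ induces on cohomology the map $x_1\mapsto x_r$ (as used in the proof of Lemma~\ref{OPG2}), and on one-parameter subgroup schemes it induces a map $V_r(\GG_{a(1)})\lra V_r(\GG_{a(r)})$, or rather a map on coordinate rings. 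More generally, the Frobenius $F^{i}\colon\GG_{a(r)}\lra\GG_{a(r-i)}$ sends $x_1$ (the canonical degree-$2$ generator for $\GG_{a(r-i)}$) to $x_{i+1}$, so all the $x_i$ for $\GG_{a(r)}$ are pulled back from the single generator $x_1$ of $\HH^\bullet(\GG_{a(1)},k)$. Thus it suffices to know the single value $\Psi^r_{\GG_{a(1)}}(x_1)$ and then chase the naturality square for $F^{r-i}\colon\GG_{a(r)}\to\GG_{a(i)}$, recalling also from \cite[(1.10)]{SFB1} that $k[V_r(\GG_{a(s)})]\cong k[T_{r-s},\dots,T_{r-1}]$ sits inside $k[V_r(\GG_{a(r)})]\cong k[T_0,\dots,T_{r-1}]$ via the embedding induced by $\GG_{a(s)}\hookrightarrow\GG_{a(r)}$.

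Next I would pin down $\Psi^r_{\GG_{a(r)}}(x_1)$ from the definitions in \cite[\S1]{SFB1}. Since $\Psi^r$ multiplies degrees by $p^r/2$, the element $\Psi^r_{\GG_{a(r)}}(x_1)$ must be homogeneous of degree $p^r$ in $k[T_0,\dots,T_{r-1}]$ with $\deg(T_j)=p^j$. I expect, by unwinding the construction of $\Psi^r$ via the universal one-parameter subgroup and the explicit description of $\HH^\bullet(\GG_{a(r)},k)$, that $\Psi^r_{\GG_{a(r)}}(x_1)=T_{r-1}^{p}$ (the unique — up to scalar, which one normalizes to $1$ — monomial of degree $p^r$ that is ``supported at the top level''), and more generally $\Psi^r_{\GG_{a(r)}}(x_i)=T_{r-i}^{p^i}$; this is exactly the content of \cite[(1.12)]{SFB1}, which I would cite directly rather than re-derive. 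The torus-grading data of Lemma~\ref{OPG1} gives an independent consistency check: $x_i$ lives in weight $-p^i\alpha$, $T_{r-i}$ should carry a compatible weight, and $T_{r-i}^{p^i}$ then matches.

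Finally I would assemble the pieces: for $1\le i\le s$, naturality of $\Psi^\bullet$ applied to the composite $\GG_{a(r)}\xrightarrow{F^{r-s}}\GG_{a(s)}$ — no, more precisely applied to the embedding $\iota\colon\GG_{a(s)}\hookrightarrow\GG_{a(r)}$ — gives a commuting square relating $\Psi^r_{\GG_{a(r)}}$, $\Psi^r_{\GG_{a(s)}}$, the restriction $\iota^\bullet\colon\HH^\bullet(\GG_{a(r)},k)\to\HH^\bullet(\GG_{a(s)},k)$, and the induced ring map $k[V_r(\GG_{a(r)})]\to k[V_r(\GG_{a(s)})]$. Since $\iota^\bullet$ sends the generator $x_i$ of $\HH^\bullet(\GG_{a(r)},k)$ to the generator $x_i$ of $\HH^\bullet(\GG_{a(s)},k)$ for $1\le i\le s$ (one reads this off the K\"unneth description at the start of the section), and the map on coordinate rings sends $T_{r-i}\mapsto T_{r-i}$ for $r-s\le r-i\le r-1$, i.e.\ for $1\le i\le s$, chasing $x_i$ around the square yields $\Psi^r_{\GG_{a(s)}}(x_i)=T_{r-i}^{p^i}$. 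The main obstacle is the normalization issue in the second paragraph: getting the exponent and the index exactly right for $\Psi^r_{\GG_{a(r)}}(x_i)$ requires either a careful reading of the construction of $\Psi^r$ in \cite[(1.12),(1.14)]{SFB1} or a self-contained computation in the cohomology of $\GG_{a(r)}$; everything else is formal naturality. If the cited results already give the value over $\GG_{a(r)}$, the proof is essentially a one-line naturality argument.
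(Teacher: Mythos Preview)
Your approach is sound but considerably more elaborate than what the paper does. The paper's proof is a single sentence: ``This is a direct consequence of the proof of \cite[(6.5)]{SFB2}.'' In other words, the explicit formula $\Psi^r_{\GG_{a(s)}}(x_i)=T_{r-i}^{p^i}$ already falls out of the computation carried out there, for arbitrary $s\le r$, so no reduction step is needed.

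Your naturality argument (reduce to $s=r$ via the embedding $\GG_{a(s)}\hookrightarrow\GG_{a(r)}$, then cite the computation for $\GG_{a(r)}$) is perfectly valid and is in fact the same naturality square the paper exploits later in Proposition~\ref{PH1}; it just is not required here. One small caution: your pointer to \cite[(1.12)]{SFB1} for the actual value of $\Psi^r_{\GG_{a(r)}}(x_i)$ is likely misplaced --- that result records the grading $\deg(T_i)=p^i$, not the evaluation of $\Psi^r$ on the cohomology generators. The computation you want lives in \cite[\S6]{SFB2}, specifically in the proof of (6.5), which is what the paper cites. So your ``main obstacle'' (getting the exponent and index right) is resolved by reading a different part of SFB than the one you indicated; once you point to the right place, your final paragraph collapses to the paper's one-liner.
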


\begin{proof} This is a direct consequence of the proof of \cite[(6.5)]{SFB2}. \end{proof}

\bigskip

\section{Projective Height and Periodicity}\label{S:PH}
Let $k$ be an algebraically closed field of characteristic $\Char(k)=p\ge 3$. Throughout this section, we let $\cG$ be an infinitesimal $k$-group of height $\height(\cG) = r$. To each
non-projective $\cG$-module $M \in \modd \cG$ we associate its projective height $\ph(M)$. This numerical invariant, which will later be seen to be constant on the components of the stable
Auslander-Reiten quiver of $\cG$, provides information on the period of periodic modules.

\bigskip

\begin{Definition} A subgroup $\cU \subseteq \cG$ is called {\it elementary abelian} if there exists $s \in \NN$ such that $\cU \cong \GG_{a(s)}$. We let $\fE(\cG)$ be the set of
elementary abelian subgroups of $\cG$. \end{Definition}

\bigskip

\begin{Definition} Let $M$ be a $\cG$-module, $\cH \subseteq \cG$ be a closed subgroup. Then
\[ \ph_\cH(M) := \left\{ \begin{array}{cl} \min \{ 1 \le t \le r \ ; \ M|_{\cH_t} \ \text{is not projective}\}  & \text{if} \ M|_\cH \ \text{is not projective,}\\ 0 & \text{otherwise}
\end{array} \right.\]
is called the {\it projective height of $M$ relative to $\cH$}. \end{Definition}

\bigskip
\noindent
Let $M$ be a non-projective $\cG$-module. According to \cite[(7.6)]{SFB2}, there exists an elementary abelian subgroup $\cU \in \fE(\cG)$ such that $M|_\cU$ is not projective. This
motivates the following definition:

\bigskip

\begin{Definition}  Let $M$ be a non-projective $\cG$-module. Then
\[ \ph(M) := \max_{\cU \in \fE(\cG)} \ph_\cU(M)\]
is referred to as the {\it projective height} of $M$. \end{Definition}

\bigskip
\noindent
Let $M$ be a $\cG$-module of projective height $\ph(M)=t>0$. Then there exists a subgroup $\cU \subseteq \cG$ with $\cU \cong \GG_{a(t-1)}$ and $M|_\cU$ being projective.
Consequently, $M$ is a free module of the $p^{t-1}$-dimensional algebra $k\cU$, so that $p^{t-1}\!\mid\!\dim_kM$.

\bigskip

\begin{Example} Let $G$ be a reductive group. We consider the Steinberg module $\St_d$, which view as a $G_r$-module. For dimension reasons, $\St_d|_{G_s}$ is not projective for $d<s\le r$, while \cite[(II.11.8)]{Ja3} shows that $\St_d|_{G_d}$ is projective.  Let $d<r$. In view of \cite[(7.6)]{SFB2} we thus have $\ph(\St_d) = d\!+\!1$. \end{Example}

\bigskip
\noindent
Given a commutative $k$-algebra $A$, we denote by $A_{\rm red}$ the associated reduced algebra. A homomorphism $f : A \lra B$ of commutative $k$-algebras induces a homomorphism
$f_{\rm red} : A_{\rm red} \lra B_{\rm red}$ of reduced $k$-algebras. The commutative algebras relevant for our purposes are the even cohomology rings $\HH^\bullet(\cG,k)$ and
$\HH^\bullet(\cU,k)$, where $\cU \cong \GG_{a(s)}$ is an elementary abelian subgroup of $\cG$. We let
\[ {\rm res} : \HH^\bullet(\cG,k) \lra \HH^\bullet(\cU,k)\]
be the canonical restriction map, and recall that the canonical inclusion $k[x_1,\ldots,x_s] \lra \HH^\bullet(\cU,k)$ induces an isomorphism $k[x_1,\ldots,x_s] \cong \HH^\bullet(\cU,k)
_{\rm red}$, see \cite{CPSK}.

Let $\cH \subseteq \cG$ be a closed subgroup. By virtue of \cite[(5.4.1)]{SFB2}, the canonical restriction map ${\rm res} : \HH^\bullet(\cG,k) \lra \HH^\bullet(\cH,k)$ induces a morphism ${\rm res}^\ast : \cV_{\cH}(k) \lra \cV_\cG(k)$ which maps $\cV_\cH(k)$ homeomorphically onto its image. Bearing this in mind, we shall often identify $\cV_\cH(k)$ topologically with a closed subvariety of $\cV_\cG(k)$.

\bigskip

\begin{Proposition} \label{PH1} Let $M$ be a $\cG$-module, $\cU \cong \GG_{a(s)}$ be an elementary abelian subgroup of $\cG$.

{\rm (1)} \ If $M|_\cU$ is not projective, then there exists $\zeta \in \HH^{2p^{r-\ph_\cU(M)}}(\cG,k)_{\rm red}$ such that
\[ Z(\zeta) \cap \cV_\cG(M) \subsetneq \cV_\cG(M).\]

{\rm (2)} \ If $\cV_{\cU}(k) \subseteq \cV_\cG(M)$, then there exists $\zeta \in \HH^{2p^{r-s}}(\cG,k)_{\rm red}$ such that
\[ Z(\zeta) \cap \cV_\cG(M) \subsetneq \cV_\cG(M).\] \end{Proposition}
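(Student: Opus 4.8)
The plan is to establish (1) first; part (2) will then follow by a simpler variant of the same argument. For (1), I would set $t:=\ph_\cU(M)$. Since $M|_\cU=M|_{\cU_s}$ is not projective we have $1\le t\le s\le r$, and because $\cU\cong\GG_{a(s)}$ the $t$-th Frobenius kernel $\cU_t$ is isomorphic to $\GG_{a(t)}$; I identify the two. Put $N:=M|_{\cU_t}$. By minimality of $t$ the module $N$ is not projective, whereas $N|_{\cU_{t-1}}=M|_{\cU_{t-1}}$ is; that is, $\cV_{\cU_t}(N)\ne\{0\}=\cV_{\cU_{t-1}}(N)$. Applying Lemma \ref{OPG2} to the $\GG_{a(t)}\cong\cU_t$-module $N$ yields $Z(x_t)\cap\cV_{\cU_t}(N)\subsetneq\cV_{\cU_t}(N)$. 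Since $Z(x_t^{p^{r-t}})=Z(x_t)$, the homogeneous class $x_t^{p^{r-t}}\in\HH^{2p^{r-t}}(\cU_t,k)_{\rm red}$ restricts to a function on $\cV_{\cU_t}(N)$ which does not vanish identically.

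The heart of the matter is to lift this class to $\cG$: I want a homogeneous $\zeta\in\HH^{2p^{r-t}}(\cG,k)_{\rm red}$ whose image under the canonical restriction map ${\rm res}\colon\HH^\bullet(\cG,k)_{\rm red}\lra\HH^\bullet(\cU_t,k)_{\rm red}$ is $x_t^{p^{r-t}}$. To produce $\zeta$ I would establish, or invoke from \cite{SFB1,SFB2}, that restriction to the elementary abelian subgroup $\cU_t$ is surjective on reduced cohomology rings (a degree-preserving surjection of graded rings is surjective in each cohomological degree). This can be approached through the map $\Psi^r$ of Section \ref{S:VG}, its naturality along $\cU_t\hookrightarrow\cG$, the closed immersion $V_r(\cU_t)\hookrightarrow V_r(\cG)$, and Lemma \ref{OPG3} (which identifies $\Psi^r_{\cU_t}(x_t^{p^{r-t}})$ with $T_{r-t}^{p^r}$), the Friedlander--Suslin generation theorem \cite{FS} being used to control the cohomological degree. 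Obtaining $\zeta$ in degree exactly $2p^{r-t}$ --- rather than merely in some larger degree $2p^{r-t+N}$ --- is the step I expect to demand the most care; the remainder of the argument is formal.

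Granting such a $\zeta$, the conclusion is immediate. By functoriality of cohomological support varieties one has ${\rm res}^\ast(\cV_{\cU_t}(N))\subseteq\cV_\cG(M)$, where ${\rm res}^\ast\colon\cV_{\cU_t}(k)\lra\cV_\cG(k)$ is the morphism induced by ${\rm res}$. If $Z(\zeta)\cap\cV_\cG(M)=\cV_\cG(M)$, then $\zeta$ lies in the radical vanishing ideal of $\cV_\cG(M)$ and hence vanishes on ${\rm res}^\ast(\cV_{\cU_t}(N))$; pulling back along ${\rm res}^\ast$, this forces ${\rm res}(\zeta)=x_t^{p^{r-t}}$ to vanish on all of $\cV_{\cU_t}(N)$, contradicting Lemma \ref{OPG2}. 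Therefore $Z(\zeta)\cap\cV_\cG(M)\subsetneq\cV_\cG(M)$, which proves (1).

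For (2), the hypothesis $\cV_\cU(k)\subseteq\cV_\cG(M)$ guarantees that $M|_\cU$ is not projective but gives no control over $\ph_\cU(M)$, so I would argue directly. The reduced cohomology $\HH^{2p^{r-s}}(\cU,k)_{\rm red}\cong k[x_1,\dots,x_s]_{2p^{r-s}}$ is nonzero, as it contains $x_1^{p^{r-s}}$; lift a nonzero homogeneous element to $\zeta\in\HH^{2p^{r-s}}(\cG,k)_{\rm red}$ exactly as in (1). Were $Z(\zeta)\cap\cV_\cG(M)=\cV_\cG(M)$, then $\zeta$ would vanish on $\cV_\cG(M)\supseteq{\rm res}^\ast(\cV_\cU(k))$, whence ${\rm res}(\zeta)$ vanishes on all of $\cV_\cU(k)$, i.e.\ ${\rm res}(\zeta)=0$ in $k[x_1,\dots,x_s]$ --- contrary to the choice of $\zeta$. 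Thus $Z(\zeta)\cap\cV_\cG(M)\subsetneq\cV_\cG(M)$, which would complete (2).
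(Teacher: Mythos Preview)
Your plan is correct and follows essentially the same route as the paper. The paper makes the lifting step you flagged as delicate completely explicit: using the naturality square for $\Psi^r$ and the surjection $k[V_r(\cG)]\twoheadrightarrow k[V_r(\cU_t)]$, one lifts $T_{r-t}$ to some $v_{r-t}\in k[V_r(\cG)]_{p^{r-t}}$, and then \cite[(5.2)]{SFB2} guarantees both that $v_{r-t}^{p^r}\in\im\Psi^r_\cG$ (so a preimage $w\in\HH^{2p^{r-t}}(\cG,k)$ exists) and that $\Psi^r_{\cU_t}$ is injective on the reduced ring (so $\mathrm{res}_{\rm red}(\bar{w})=\bar{x}_t^{p^{r-t}}$); this is exactly the mechanism you sketched, and no stronger statement about surjectivity of restriction on reduced cohomology is needed or claimed. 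For (2) the paper takes $t=s$ in the same construction, so the element actually lifted is $x_s^{p^{r-s}}$ rather than $x_1^{p^{r-s}}$; your contradiction argument via $\cV_\cU(k)\subseteq\cV_\cG(M)$ then goes through verbatim.
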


\begin{proof} Let $1 \le t \le s$. Owing to \cite[(1.14)]{SFB1} we have a commutative diagram
\[ \begin{CD} \HH^\bullet(\cG,k) @> \Psi^r_{\cG} >> k[V_r(\cG)]\\
@V{\rm res} VV @V\pi VV\\
 \HH^\bullet(\cU_t,k) @>\Psi^r_{\cU_t}>> k[V_r(\cU_t)], \end{CD} \]
of homomorphisms of graded, commutative $k$-algebras, where the horizontal arrows multiply degrees by $\frac{p^r}{2}$. Thanks to \cite[(1.5)]{SFB1}, the map $\pi$ is surjective, and
\cite[(1.12)]{SFB1} shows that it respects degrees.

Since $\cU_t \cong \GG_{a(t)}$ we may consider the element $T_{r-t} \in k[V_r(\cU_t)]_{p^{r-t}}$. As $\pi$ is surjective, we can find $v_{r-t} \in k[V_r(\cG)]_{p^{r-t}}$ with
$\pi(v_{r-t}) = T_{r-t}$. According to \cite[(5.2)]{SFB2}, we have $v_{r-t}^{p^r} \in {\rm im}\, \Psi^r_{\cG}$, so that there exists $w \in  \HH^{2p^{r-t}}(\cG,k)$ with
$\Psi^r_{\cG}(w) = v_{r-t}^{p^r}$. In light of (\ref{OPG3}), we thus obtain
\[ \Psi^r_{\cU_t}({\rm res}(w)) = \pi(\Psi^r_{\cG}(w)) = \pi(v_{r-t}^{p^r}) = T_{r-t}^{p^r} = \Psi^r_{\cU_t}(x_t^{p^{r-t}}).\]
Thanks to \cite[(5.2)]{SFB2}, we conclude that the residue class $\zeta_t := \bar{w} \in \HH^{2p^{r-t}}(\cG,k)_{\rm red}$ satisfies
\[ (\ast) \ \ \ \ \ \ \ \ \ {\rm res}_{\rm red}(\zeta_t) = \bar{x}_t^{p^{r-t}}.\]
If we identify $\cV_{\cU_t}(k)$ with its image under the morphism ${\rm res}^\ast :  \cV_{\cU_t}(k) \lra \cV_\cG(k)$, whose comorphism is the restriction map ${\rm res}_{\rm red} :
\HH^\bullet(\cG,k)_{\rm red} \lra \HH^\bullet(\cU_t,k)_{\rm red}$, then ($\ast$) implies
\[ Z(\zeta_t) \cap \cV_{\cU_t}(k) = Z(x_t).\]
Let $t := \ph_\cU(M)$. In view of \cite[(7.1)]{SFB2}, the assumption $Z(\zeta_t)\cap \cV_\cG(M) = \cV_\cG(M)$ yields
\[ \cV_{\cU_t}(M) = \cV_\cG(M)\cap \cV_{\cU_t}(k) = Z(\zeta_t)\cap \cV_{\cU_t}(k) \cap \cV_{\cU_t}(M) = Z(x_t) \cap \cV_{\cU_t}(M),\]
which, by choice of $t$, contradicts (\ref{OPG2}). This concludes the proof of (1).

For the proof of (2), we set $t:= s$, so that $\cU_t = \cU \cong \GG_{a(s)}$.  Consider the homomorphism
\[ \Phi_M : \HH^\bullet(\cG,k) \lra \Ext^\ast_{\cG}(M,M)\ \ ; \ \ [f] \mapsto [f\!\otimes\! \id_{M}].\]
We let $A := \HH^\bullet(\cG,k)/\sqrt{\ker \Phi_M}$ be the coordinate ring of the support variety $\cV_{\cG}(M)$ and denote by
\[ \iota^\ast : \HH^\bullet(\cG,k)_{\rm red} \lra A\]
the canonical projection map. According to our convention, the inclusion $\cV_\cU(k) \subseteq \cV_\cG(M)$ means that the image of the morphism ${\rm res}^\ast : \cV_{\cU}(k) \lra
\cV_\cG(k)$ is contained in $\cV_{\cG}(M)$. Thus, the map ${\rm res}^\ast$ factors through the inclusion $\iota : \cV_{\cG}(M) \hookrightarrow \cV_{\cG}(k)$, so that there exists a
homomorphism $\gamma^\ast : A \lra \HH^\bullet(\cU,k)_{\rm red}$ with
\[ {\rm res}_{\rm red} = \gamma^\ast \circ \iota^\ast.\]
By ($\ast$), we can find $\zeta := \zeta_s \in \HH^{2p^{r-s}}(\cG,k)_{\rm red}$ such that ${\rm res}_{\rm red}(\zeta) \ne 0$. Consequently, $\iota^\ast(\zeta) \ne 0$, and Hilbert's
Nullstellensatz provides a maximal ideal $\fM \supseteq \sqrt{\ker\Phi_M}$ which does not contain $\zeta$. This implies
\[ Z(\zeta)\cap \cV_{\cG}(M) \subsetneq \cV_{\cG}(M),\]
as desired. \end{proof}

\bigskip
\noindent
We record a consequence concerning modules of complexity $1$, which generalizes \cite[(2.5)]{Fa1}. The proof employs a method of Carlson (cf.\ \cite{Ca3}), which is based on the following
construction: By general theory, a cohomology class $\zeta \in \HH^{2n}(\cG,k)\setminus \{0\}$ corresponds to an element $\hat{\zeta} \in \Hom_\cG(\Omega_\cG^{2n}(k),k)\setminus
\{0\}$. We let
\[ L_\zeta := \ker \hat{\zeta}\]
be the {\it Carlson module} of $\zeta$.

\bigskip

\begin{Corollary}\label{PH2} Let $M$ be an indecomposable $\cG$-module of complexity $\cx_\cG(M) = 1$. Then we have
\[ \Omega_\cG^{2p^{r-\ph(M)}}(M) \cong M.\] \end{Corollary}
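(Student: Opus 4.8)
The plan is to combine the Carlson–Lseta construction with Proposition~\ref{PH1} to bring the dimension of the support variety down whenever it is still positive, and then read off the Heller period. First I would invoke \cite[(7.6)]{SFB2} to find an elementary abelian subgroup $\cU \cong \GG_{a(s)}$ realizing the projective height, so that $\ph_\cU(M) = \ph(M) =: t$; since $\cx_\cG(M)=1$ the variety $\cV_\cG(M)$ is a line, hence irreducible of dimension $1$. By Proposition~\ref{PH1}(1) there is a homogeneous class $\zeta \in \HH^{2p^{r-t}}(\cG,k)_{\rm red}$ with $Z(\zeta)\cap\cV_\cG(M) \subsetneq \cV_\cG(M)$; since the latter is one-dimensional and irreducible, the proper closed subset $Z(\zeta)\cap\cV_\cG(M)$ must be $\{0\}$, i.e.\ $\zeta$ is a parameter for $\cV_\cG(M)$.

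Next I would lift $\zeta$ to a genuine class in $\HH^{2p^{r-t}}(\cG,k)$ (it suffices to replace $\zeta$ by a power if necessary, or to choose a preimage under the projection onto the reduced ring; since the reduction map is a bijection in high enough degree this costs nothing for the variety-theoretic conclusion) and form the Carlson module $L_\zeta = \ker\hat\zeta$, where $\hat\zeta \in \Hom_\cG(\Omega_\cG^{2p^{r-t}}(k),k)$. The standard properties of Carlson modules give $\cV_\cG(L_\zeta) = Z(\zeta)$ and, by the tensor-product formula for support varieties, $\cV_\cG(M\otimes_k L_\zeta) = \cV_\cG(M)\cap Z(\zeta) = \{0\}$. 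Hence $M\otimes_k L_\zeta$ is projective. Tensoring the defining short exact sequence $0 \to L_\zeta \to \Omega_\cG^{2p^{r-t}}(k) \to k \to 0$ with $M$ and using that $M\otimes_k L_\zeta$ is projective shows that in the stable category $M \cong \underline{\Omega}_\cG^{2p^{r-t}}(k)\otimes_k M \cong \Omega_\cG^{2p^{r-t}}(M)$, the last isomorphism because $\Omega_\cG(k)\otimes_k M \cong \Omega_\cG(M)$ up to projective summands. Since $M$ is indecomposable and non-projective, a stable isomorphism between indecomposables is an honest isomorphism, giving $\Omega_\cG^{2p^{r-t}}(M)\cong M$.

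The main obstacle I anticipate is \emph{bookkeeping about the reduced cohomology ring}: Proposition~\ref{PH1} produces $\zeta$ only in $\HH^\bullet(\cG,k)_{\rm red}$, whereas the Carlson construction $L_\zeta$ wants an actual cohomology class, and one must make sure the degree $2p^{r-t}$ is preserved (or controlled up to a $p$-power, which only rescales the Heller shift by a factor of $p^j$ that one then has to rule out). One should check that replacing $\zeta$ by $\zeta^{p^j}$ if needed still yields the \emph{minimal} period $2p^{r-t}$ and not merely a multiple of it; here the lower bound must come from the fact that $M|_{\cU_{t-1}}$ \emph{is} projective while $M|_{\cU_t}$ is not, together with the Friedlander–Suslin generation result forcing the period to be a $p$-power times $2$. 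A clean way around the lifting issue is to note that the relevant statement $Z(\zeta)\cap\cV_\cG(M)=\{0\}$ only depends on the radical, so one may take any homogeneous lift of some $p$-power of $\zeta$ and then argue that the resulting Heller periodicity, being a divisor of $2p^{r-1}$ (by Corollary~\ref{UB2}-type reasoning) and a multiple of the period predicted by $\cU_{t-1}$ versus $\cU_t$, must equal $2p^{r-t}$ on the nose. The remaining steps — the tensor identity for support varieties, projectivity of $M\otimes_k L_\zeta$, and passing from a stable isomorphism to an actual one for indecomposables — are routine.
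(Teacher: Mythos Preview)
Your proposal is correct and follows the same Carlson-module argument as the paper. The obstacles you flag are not genuine: any homogeneous preimage of $\zeta$ in $\HH^{2p^{r-t}}(\cG,k)$ has the same zero locus as its image in the reduced ring (nilpotents lie in every maximal ideal), so no $p$-power adjustment is needed and the degree is preserved exactly---the paper simply picks such a lift without comment. Your second worry, about establishing $2p^{r-t}$ as the \emph{minimal} period, is misplaced: the Corollary only asserts $\Omega_\cG^{2p^{r-t}}(M)\cong M$, not minimality (indeed the paper's own Examples after the Corollary show that the true period can be strictly smaller).
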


\begin{proof} Let $t := \ph(M) = \ph_\cU(M)$ for some $\cU \in \fE(\cG)$. Owing to (\ref{PH1}(1)), we can find an element $\zeta \in \HH^{2p^{r-t}}(\cG,k)\setminus\{0\}$ such that
\[  Z(\zeta) \cap \cV_\cG(M) \subsetneq \cV_\cG(M).\]
Since $M$ is indecomposable, the variety $\cV_\cG(M)$ is a line (see \cite[(7.7)]{SFB2}). Thanks to \cite[p.755]{SFB2} we have $\cV_\cG(L_\zeta) = Z(\zeta)$, so that an application of
\cite[(7.2)]{SFB2} gives
\[ \{0\} = Z(\zeta) \cap \cV_\cG(M) = \cV_\cG(L_\zeta\!\otimes_k\!M).\]
Consequently, the module $L_\zeta \!\otimes_k\! M$ is projective and the exact sequence
\[ (0) \lra L_\zeta\!\otimes_k \!M \lra \Omega_\cG^{2p^{r-t}}(M) \oplus (\text{proj.}) \lra M \lra (0),\]
obtained by tensoring the sequence defined by $\hat{\zeta}$ with $M$, splits. By comparing projective-free summands of $\Omega_\cG^{2p^{r-t}}(M) \oplus (\text{proj.}) \cong
(L_\zeta\!\otimes_k\! M) \oplus M$, we arrive at $\Omega_\cG^{2p^{r-t}}(M) \cong M$. \end{proof}

\bigskip
\noindent
A $\cG$-module $M$ is called {\it periodic}, provided there exists $n \in \NN$ such that $\Omega_\cG^n(M) \cong M$. In that case,
\[ {\rm per}(M):= \min \{n \in \NN \ ; \ \Omega^{n}_\cG(M) \cong M\}\]
is called the {\it period} of $M$. By Corollary \ref{PH2}, the number ${\rm per}(M)$ divides $2p^{r-\ph(M)}$. The following examples show that the latter number may only provide a
rough estimate for ${\rm per}(M)$.

\bigskip

\begin{Examples} (1) Consider the infinitesimal group $\cG = \SL(2)_1T_r$, where $r \ge 2$. Then $\height(\cG) = r$ while any unipotent subgroup $\cU$ of $\cG$ has height $\le 1$. Consequently, every non-projective $\cG$-module has projective height $1$, so that (\ref{PH2}) yields $2p^{r-1}$ as an estimate for the period ${\rm per}(M)$ of a periodic module $M$.
According to \cite[(5.6)]{FV2} and \cite[(4.5)]{Fa5} ``most" periodic modules (namely those, whose rank varieties are not $T$-stable) have period $2$, while those with a $T$-stable
rank variety satisfy ${\rm per}(M) = 2p^{r-1}$.

(2) Let $\cU$ be a unipotent infinitesimal group of complexity $\cx_\cU(k)=1$. Owing to the main theorem of \cite{FRV} such groups can have arbitrarily large height. The corresponding
algebras $k\cU$ are truncated polynomial rings $k[X]/(X^{p^n})$, so that every indecomposable $\cU$-module $M$ is periodic with ${\rm per}(M) = 2$. If $\cU$ has height $r \ge 2$,
then only for those non-projective indecomposable modules $M$ with $\dim_kM = \ell \dim_k k\cU_{r-1}$ does Corollary \ref{PH2} provide the correct formula for ${\rm per}(M)$.
\end{Examples}

\bigskip

\begin{Remark} We shall see in Section \ref{S:AR} below that $2p^{r-\ph(M)}$ coincides with the period for graded modules of Frobenius kernels of reductive algebraic algebraic groups (see
Theorem \ref{MC3}). \end{Remark}

\bigskip
\noindent
Let $M$ be a $\cG$-module. By general theory, a homomorphism $\varphi : \GG_{a(r)} \lra \cG$ corresponds to a homomorphism $\varphi : k\GG_{a(r)} \lra k\cG$. If $M$ is a
$\cG$-module, then we denote by $M|_{k[u_{r-1}]}$ the pull-back of $M$ along the map $\varphi|_{k[u_{r-1}]}$. Following Suslin-Friedlander-Bendel \cite[\S6]{SFB2}, we let
\[ V_r(\cG)_M := \{ \varphi \in \Hom(\GG_{a(r)},\cG) \ ; \ M|_{k[u_{r-1}]} \ \text{is not projective}\}\]
be the {\it rank variety} of $M$.

\bigskip

\begin{Theorem} \label{PH3} Let $M$ be an indecomposable $\cG$-module such that $\cx_{\cG}(M) = 1$. Then
\[ \cU_M := \bigcup_{\varphi \in V_r(\cG)_M} \im \varphi \]
is an elementary abelian subgroup of $\cG$ such that $\ph(M) = \height(\cU_M) = \ph_{\cU_M}(M)$. \end{Theorem}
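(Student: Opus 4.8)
The plan is to show that $\cU_M$ is a closed subgroup by exhibiting it as the image of a single one-parameter subgroup, and then to identify its height with $\ph(M)$ by comparing the restriction of $M$ to the Frobenius kernels of $\cU_M$ against the characterization of $\ph_\cU(M)$ via (\ref{OPG2}). Since $\cx_\cG(M)=1$, the variety $\cV_\cG(M)$ is a line in the sense of \cite[(7.7)]{SFB2}, and by \cite[(7.5)]{SFB2} (the SFB identification of the cohomological support variety with the rank variety) $V_r(\cG)_M$ is the corresponding line in $V_r(\cG)$: concretely, there is a one-parameter subgroup $\varphi_0 : \GG_{a(r)} \lra \cG$ such that every $\varphi \in V_r(\cG)_M$ factors as $\varphi_0 \circ \rho$ for a scalar endomorphism $\rho$ of $\GG_{a(r)}$ (multiplication by $c \in k$ on the underlying scheme, or more precisely the one-parameter subgroup $c \cdot \varphi_0$ in the additive structure of $V_r(\cG)$). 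Consequently $\bigcup_{\varphi \in V_r(\cG)_M} \im \varphi = \im \varphi_0$, and by \cite[(1.2)]{SFB1} the image of a one-parameter subgroup is an infinitesimal unipotent group scheme; writing $h := \height(\cU_M) = \height(\im \varphi_0)$, the structure theory shows $\cU_M \cong \GG_{a(h)}$, so $\cU_M \in \fE(\cG)$. (One must check that $\varphi_0$ factors through $\GG_{a(h)} \hookrightarrow \GG_{a(r)}$; this is exactly the statement that $h$ is the least integer with $\varphi_0$ vanishing on the kernel of $F^h$, which is how $h$ is computed.)

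Next I would compute $\ph_{\cU_M}(M)$. For $1 \le t \le h$ the Frobenius kernel $(\cU_M)_t \cong \GG_{a(t)}$, and $M|_{(\cU_M)_t}$ is projective exactly when the rank variety $V_t((\cU_M)_t)_{M}$ is trivial, i.e.\ when $M|_{k[u_{t-1}]}$ is projective for the relevant one-parameter subgroups of $\GG_{a(t)}$. Pulling back along the standard embedding $\GG_{a(t)} \hookrightarrow \GG_{a(r)}$ and the Frobenius relations $u_i \mapsto u_i$ recorded in Section \ref{S:VG}, a one-parameter subgroup $\psi : \GG_{a(t)} \lra \cU_M$ with $M|_{k[u_{t-1}]}$ non-projective composes with $\cU_M \hookrightarrow \cG$ to give an element of $V_r(\cG)_M$ only after the appropriate Frobenius twist; the point is that $M|_{(\cU_M)_t}$ fails to be projective precisely when $\cV_{(\cU_M)_t}(M) \ne \{0\}$ while $\cV_{(\cU_M)_{t-1}}(M) = \{0\}$ can still occur. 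By the very definition of $h = \height(\cU_M)$, the one-parameter subgroup $\varphi_0$ does not factor through $\GG_{a(h-1)}$, so in the Frobenius-twist picture $\cV_{(\cU_M)_h}(M)\neq\{0\}$; meanwhile for $t < h$ one has, via Lemma \ref{OPG3} and the computation $\Psi^r_{\GG_{a(s)}}(x_i)=T_{r-i}^{p^i}$, that $x_h$ restricts nontrivially to $\cV_{(\cU_M)_h}(k)$ but the lower $x_t$ vanish on $\cV_{(\cU_M)_{h}}(M)$ — which by Lemma \ref{OPG2} forces $M|_{(\cU_M)_{h-1}}$ to be projective. Hence $\ph_{\cU_M}(M) = h = \height(\cU_M)$.

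Finally I would show $\ph(M) = \ph_{\cU_M}(M)$, i.e.\ that $\cU_M$ realizes the maximum in the definition of $\ph(M)$. Given any $\cU' \in \fE(\cG)$ with $M|_{\cU'}$ non-projective, the rank variety $V_{\height(\cU')}(\cU')_M$ is nonzero, so $\cU'$ supports a one-parameter subgroup lying (after the standard embedding $\GG_{a(\height(\cU'))}\hookrightarrow\GG_{a(r)}$ and Frobenius twisting) in $V_r(\cG)_M$, hence in $\im \varphi_0 = \cU_M$. Thus $\cU' \cap \cU_M$ already detects the non-projectivity, and since $\cU_M\cong\GG_{a(h)}$ with $M|_{(\cU_M)_{h-1}}$ projective, the smallest $t$ with $M|_{(\cU')_t}$ non-projective cannot be smaller than $h$; that gives $\ph_{\cU'}(M) \le h = \ph_{\cU_M}(M)$, and taking the maximum yields $\ph(M) = \height(\cU_M) = \ph_{\cU_M}(M)$.

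\textbf{Main obstacle.} The delicate point is the bookkeeping between the \emph{standard} embeddings $\GG_{a(t)}\hookrightarrow\GG_{a(r)}$ (comorphism the truncation $\pi$) and the \emph{Frobenius} maps $F : \GG_{a(r)}\lra\GG_{a(r-1)}$ ($u_i\mapsto u_{i-1}$), since the rank variety $V_r(\cG)_M$ is defined using $u_{r-1}$ while projectivity over $(\cU_M)_t$ is detected by $u_{t-1}$; getting the index shift $r - \ph(M)$ correct — and verifying that "the height of $\cU_M$" in the one-parameter-subgroup sense coincides with the combinatorial height appearing in $\ph_{\cU_M}(M)$ — is where Lemmas \ref{OPG2} and \ref{OPG3} must be deployed with care, and is the step I expect to require the most attention.
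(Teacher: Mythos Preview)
Your overall strategy is right and matches the paper's: use $\cx_\cG(M)=1$ together with Carlson connectedness to see that $V_r(\cG)_M$ is a single $k^\times$-orbit $\{\alpha\boldsymbol{.}\varphi_0 : \alpha\in k^\times\}\cup\{0\}$, so $\cU_M=\im\varphi_0\cong \GG_{a(r)}/\ker\varphi_0$ is elementary abelian of some height $h$.

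Where your argument diverges from the paper---and where it breaks down---is that you never isolate the paper's key containment observation: \emph{for any elementary abelian $\cU$ with $M|_\cU$ not projective, one has $\cU_M\subseteq\cU$}. The paper proves this in one line: $V_r(\cU)_{M|_\cU}$ is a one-dimensional closed subset of the one-dimensional irreducible variety $V_r(\cG)_M$, hence equals it; in particular $\varphi_0$ factors through $\cU$, so $\cU_M=\im\varphi_0\subseteq\cU$. This single statement immediately gives $\ph_{\cU_M}(M)=h$ (apply it to the Frobenius kernels $(\cU_M)_t$ for $t<h$: non-projectivity would force $\cU_M\subseteq(\cU_M)_t$, impossible), without any appeal to Lemmas \ref{OPG2} or \ref{OPG3}. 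Your paragraph invoking those lemmas is unnecessary and, as you yourself flag in the ``Main obstacle'', the index bookkeeping there is not actually carried out.

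The more serious gap is in your last paragraph. You argue that if $M|_{\cU'}$ is not projective then ``the smallest $t$ with $M|_{(\cU')_t}$ non-projective cannot be smaller than $h$'', and then conclude $\ph_{\cU'}(M)\le h$. But the first clause says $\ph_{\cU'}(M)\ge h$, not $\le h$; the inequality you wrote is the wrong one, and it is precisely the upper bound $\ph_{\cU'}(M)\le h$ that is needed to conclude $\ph(M)=\max_{\cU'}\ph_{\cU'}(M)=h$. You have not supplied an argument for that direction. The paper's argument is: set $\ell:=\ph_{\cU'}(M)$; by the containment observation applied to $(\cU')_\ell$ one has $\cU_M\subseteq(\cU')_\ell$, and if $\ell>h$ then $\height(\cU_M)=h$ forces $\cU_M=(\cU_M)_h\subseteq((\cU')_\ell)_h\subseteq(\cU')_{\ell-1}$. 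But $M|_{(\cU')_{\ell-1}}$ is projective by definition of $\ell$, and projectivity passes to the Hopf subalgebra $k\cU_M\subseteq k(\cU')_{\ell-1}$, contradicting $M|_{\cU_M}$ non-projective. Hence $\ell=h$, and this equality (not merely an inequality) holds for every such $\cU'$. Your ``$\cU'\cap\cU_M$ detects non-projectivity'' is too weak a formulation to run this argument; you need the full containment $\cU_M\subseteq\cU'$ and the downward step through $(\cU')_{\ell-1}$.
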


\begin{proof} Since $\cx_{\cG}(M) = 1$, an application of \cite[(6.8)]{SFB2} yields
\[\dim V_r(\cG)_M = \dim \cV_{\cG}(M) =1.\]
According to \cite[(6.1)]{SFB2}, the canonical action of $k^\times$ on $\GG_{a(r)}$ endows the variety $V_r(\cG)_M$ with the structure of a conical variety:
\[ (\alpha\dact\varphi)(x) := \varphi(\alpha.x) \ \ \ \ \ \ \forall \ \alpha \in k^\times, \, x \in k\GG_{a(r)}.\]
In view of $\alpha.u_{r-1} = \alpha^{p^{r-1}}u_{r-1}$, the group $k^\times$ acts simply on $V_r(\cG)_M \setminus \{0\}$. Since $M$ is indecomposable, Carlson's Theorem (see
\cite[(7.7)]{SFB2}) ensures that the variety $(V_r(\cG)_M\setminus\{0\})/k^\times$ is connected. Hence there exists $\varphi \in V_r(\cG)_M$ such  that
\[ V_r(\cG)_M = \{\alpha\dact \varphi \ ; \ \alpha \in k^\times\}\cup \{0\}.\]
As a result, $\cU_M = \im \varphi$ is a subgroup of $\cG$. Being isomorphic to the factor group $\GG_{a(r)}/\ker \varphi$, the group $\cU_M$ is elementary abelian.

Let $s := \ph_{\cU_M}(M)$. We propose to show that $s = \ph(M) = \height(\cU_M)$. Let $\cU$ be an elementary abelian subgroup of $\cG$ such that $M|_\cU$ is not projective. Owing
to \cite[(6.6)]{SFB2},  $V_r(\cU)_{M|_\cU} $ is a one-dimensional subvariety of the one-dimensional irreducible variety $V_r(\cG)_M$, whence $V_r(\cU)_{M|_\cU} = V_r(\cG)_M$. It
readily follows that $\cU_M \subseteq \cU$.

By applying this observation to the Frobenius kernels of $\cU_M$, we conclude that $s = \height(\cU_M)$. Thus, if $\cU$ is elementary abelian and $r' \in \NN$ is a natural number such that
the restriction $M|_{\cU_{r'}}$ of $M$ to the elementary abelian subgroup $\cU_{r'}$ is not projective, then $\cU_M \subseteq \cU_{r'}$, whence $r' \ge s$. Consequently, $\ell :=
\ph_\cU(M) \ge s$ and $\cU_M \subseteq \cU_\ell$. If $\ell>s$, then $\height(\cU_M)=s$ implies that $\cU_M \subseteq \cU_{\ell-1}$, while $M|_{\cU_{\ell-1}}$ is projective. This,
however, contradicts $M|_{\cU_M}$ being non-projective, so that $\ell=s$. As a result, we have
\[\height(\cU_M)= s = \ph(M),\]
as desired. \end{proof}

\bigskip
\noindent
We now specialize to the case, where $\cG=G_r$ is a Frobenius kernel of a smooth group scheme $G$. Then $G$ acts on $G_r$ via the adjoint representation, and we obtain an action of $G$
on $V_r(G)$. If $M$ is a $G_r$-module and $g \in G$, then $M^{(g)}$ denotes the $G_r$-module with underlying $k$-space $M$ and action
\[ x\dact m := \Ad(g)^{-1}(x)m \ \ \ \ \forall \ x \in G_r, m \in M.\]
One readily verrifies that
\[ V_r(G)_{M^{(g)}} = g\dact V_r(G)_M \ \ \ \ \forall \ g \in G.\]
We refer to a $G_r$-module as {\it $G$-stable} if $M^{(g)} \cong M$ for every $g\in G$. Clearly, the restriction $M|_{G_r}$ of a $G$-module $M$ is a $G$-stable $G_r$-module. Moreover,
if $G$ is connected, then every simple $G_r$-module is $G$-stable.

\bigskip

\begin{Corollary} \label{PH3.5} Let $G$ be a smooth group scheme, $M$ be an indecomposable, $G$-stable $G_r$-module of complexity $\cx_{G_r}(M)=1$. Then $\cU_M \unlhd G_r$ is a
normal subgroup of $G_r$. \end{Corollary}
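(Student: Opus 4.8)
The plan is to exploit the explicit construction of $\cU_M$ furnished by Theorem \ref{PH3}, namely $\cU_M = \bigcup_{\varphi \in V_r(G)_M} \im \varphi = \im \varphi_0$ for a single $\varphi_0 \in V_r(G)_M$, together with the $G$-equivariance of the rank variety. First I would recall from the discussion preceding the statement that for every $g \in G$ one has $V_r(G)_{M^{(g)}} = g\dact V_r(G)_M$, where $G$ acts on $V_r(G) = \mathcal{HOM}(\GG_{a(r)},G)$ through the adjoint action on $G_r$. Since $M$ is $G$-stable, $M^{(g)} \cong M$ for all $g \in G$, and hence $V_r(G)_M = V_r(G)_{M^{(g)}} = g\dact V_r(G)_M$ as subsets of $V_r(G)$. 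In other words, the rank variety $V_r(G)_M$ is stable under the $G$-action.

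Next I would translate this stability into a statement about the subgroup $\cU_M$. By construction $\cU_M = \bigcup_{\varphi \in V_r(G)_M}\im\varphi$, so for $g \in G$ we compute
\[
\Ad(g)(\cU_M) = \bigcup_{\varphi \in V_r(G)_M} \Ad(g)(\im \varphi) = \bigcup_{\varphi \in V_r(G)_M} \im(\Ad(g)\circ\varphi) = \bigcup_{\psi \in g\dact V_r(G)_M}\im\psi = \bigcup_{\psi \in V_r(G)_M}\im\psi = \cU_M,
\]
using the $G$-stability of $V_r(G)_M$ from the previous step together with the fact that the $G$-action on $V_r(G)$ is precisely $g\dact\varphi = \Ad(g)\circ\varphi$ (post-composition with the adjoint action on $G_r$). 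Since conjugation by $G$ on the Frobenius kernel $G_r$ is implemented by the adjoint action, this says exactly that $\cU_M$ is stable under conjugation by all of $G$; in particular it is stable under conjugation by $G_r \subseteq G$, i.e.\ $\cU_M \unlhd G_r$. (One should phrase the invariance scheme-theoretically — testing on $R$-points for every commutative $k$-algebra $R$ — but the computation is formally identical, since $M^{(g)} \cong M$ for $g \in G(R)$ by $G$-stability and the displayed identities hold over $R$.)

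The main obstacle, and the only point requiring genuine care, is the compatibility between the three things being identified: the adjoint action of $G$ on the scheme $V_r(G)$, the effect of that action on rank varieties (the identity $V_r(G)_{M^{(g)}} = g\dact V_r(G)_M$, which is asserted in the text just before the Corollary), and the interpretation of conjugation on $G_r$ via $\Ad$. Once these are lined up, the argument is a one-line orbit computation. I would also note at the end that, since $\cU_M$ is a closed subgroup scheme of $G_r$ normalized by $G_r$, no further argument is needed to promote "stable under conjugation" to "normal subgroup"; for infinitesimal group schemes normality is exactly conjugation-invariance in the functorial sense, which is what the above delivers.
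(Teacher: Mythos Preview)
Your proposal is correct and follows exactly the approach of the paper: the paper's proof is the single sentence ``This follows directly from Theorem \ref{PH3} and the definition of the $G$-action on $V_r(G)_M$,'' and you have simply unpacked this by using the identity $V_r(G)_{M^{(g)}} = g\dact V_r(G)_M$ together with $G$-stability of $M$ to see that $V_r(G)_M$ is $G$-invariant, and then observing that $\cU_M = \bigcup_{\varphi \in V_r(G)_M}\im\varphi$ is therefore $\Ad(G)$-stable.
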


\begin{proof} This follows directly from Theorem \ref{PH3} and the definition of the $G$-action on $V_r(G)_M$. \end{proof}

\bigskip
\noindent
We fix a maximal torus $T\subseteq G$ and denote by $X(T)$ its character group. Since $T$ acts diagonally on the Lie algebra $\fg := \Lie(G)$, there exists a finite subset $\Psi \subseteq
X(T)\setminus\{0\}$ such that
\[ \fg = \fg_0 \oplus \bigoplus_{\alpha \in \Psi} \fg_\alpha,\]
with $\fg_\alpha := \{x \in \fg \ ; \ t.x = \alpha(t)x \ \ \ \ \forall \ t \in T\}$ for $\alpha \in \Psi\cup\{0\}$. The elements of $\Psi$ are the {\it roots} of $G$.

Suppose that $\cU \subseteq G_r$ is a $T$-invariant elementary abelian subgroup. The maximal torus $T$ acts canonically on $\HH^\bullet(G_r,k)_{\rm red}$ and
$\HH^\bullet(\cU,k)_{\rm red}$. In light of (\ref{OPG1}), the weights of this action on the latter space are of the form $-n\alpha$ for $n \ge 0$. We thus have the following refinement of
Proposition \ref{PH1}:

\bigskip

\begin{Proposition} \label{PH4} Let $G$ be a smooth group scheme, $\cU \subseteq G_r$ be a $T$-invariant elementary abelian subgroup, $M$ be a $G_r$-module. Then there exists $\alpha
\in \Psi \cup \{0\}$ with the following properties:

{\rm (1)} \ If $M|_\cU$ is not projective, then there exists $\zeta \in (\HH^{2p^{r-\ph_\cU(M)}}(G_r,k)_{\rm red})_{-p^r\alpha}$ such that
\[ Z(\zeta) \cap \cV_{G_r}(M) \subsetneq \cV_{G_r}(M).\]

{\rm (2)} \ If $\cV_\cU(k) \subseteq \cV_{G_r}(M)$, then there exists $\zeta \in (\HH^{2p^{r-\height(\cU)}}(G_r,k)_{\rm red})_{-p^r\alpha}$ such that
\[ Z(\zeta) \cap \cV_{G_r}(M) \subsetneq \cV_{G_r}(M).\]\end{Proposition}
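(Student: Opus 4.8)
The plan is to mimic the proof of Proposition \ref{PH1}, but to keep track of the $T$-weights throughout the argument. The key new ingredient is that all the relevant maps are $T$-equivariant and that, by Lemma \ref{OPG1}, the reduced cohomology ring $\HH^\bullet(\cU,k)_{\mathrm{red}} \cong k[x_1,\dots,x_s]$ carries a $T$-grading in which $x_i$ has weight $-p^i\alpha$, where $\alpha$ is the character through which $T$ acts on $\cU \cong \GG_{a(s)}$ (here $\alpha \in \Psi$ if the action is non-trivial, and $\alpha = 0$ otherwise). So first I would fix this $\alpha$ once and for all. Since $T$ normalizes each Frobenius kernel $\cU_t$ and the restriction map $\mathrm{res} : \HH^\bullet(G_r,k) \lra \HH^\bullet(\cU_t,k)$ is $T$-equivariant, the image under $\mathrm{res}_{\mathrm{red}}$ of a $T$-homogeneous class of weight $w$ is again $T$-homogeneous of weight $w$ (or zero). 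The element $x_t^{p^{r-t}} \in \HH^\bullet(\cU_t,k)_{\mathrm{red}}$ has weight $-p^{r-t}\cdot p^t\alpha = -p^r\alpha$.

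The main point is therefore to re-examine the construction of the class $\zeta_t \in \HH^{2p^{r-t}}(G_r,k)_{\mathrm{red}}$ in the proof of Proposition \ref{PH1} and to arrange that it be chosen $T$-homogeneous of weight $-p^r\alpha$. The class $\zeta_t = \bar w$ was obtained from $w \in \HH^{2p^{r-t}}(G_r,k)$ with $\Psi^r_{G_r}(w) = v_{r-t}^{p^r}$, where $v_{r-t} \in k[V_r(G_r)]_{p^{r-t}}$ is a $\pi$-preimage of $T_{r-t} \in k[V_r(\cU_t)]$. By \cite[(1.14)]{SFB1} the map $\Psi^r_{G_r}$ is $G$-equivariant (hence $T$-equivariant), and $T$ acts on $V_r(\cU_t)$ through $\alpha$ in such a way that $T_{r-t}$ has weight $-p^{r-t}\alpha$; the $T$-equivariant surjectivity of $\pi$ then lets one pick $v_{r-t}$ to be $T$-homogeneous of weight $-p^{r-t}\alpha$, so that $v_{r-t}^{p^r}$ has weight $-p^r\alpha$. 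Since $\Psi^r_{G_r}$ is $T$-equivariant and $T$ is linearly reductive, the preimage $\HH^{2p^{r-t}}(G_r,k)$ decomposes as a $T$-module and we may choose $w$ to be $T$-homogeneous of weight $-p^r\alpha$; passing to the reduced ring, $\zeta_t = \bar w \in (\HH^{2p^{r-t}}(G_r,k)_{\mathrm{red}})_{-p^r\alpha}$. The relation $(\ast)$, namely $\mathrm{res}_{\mathrm{red}}(\zeta_t) = \bar x_t^{p^{r-t}}$, and everything downstream of it then goes through verbatim.

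With $\zeta_t$ now $T$-homogeneous of the asserted weight, parts (1) and (2) follow exactly as in Proposition \ref{PH1}: for (1) take $t = \ph_\cU(M)$ and invoke the chain of equalities involving $\cV_{\cU_t}(M)$ together with Lemma \ref{OPG2} to reach a contradiction if $Z(\zeta_t) \cap \cV_{G_r}(M) = \cV_{G_r}(M)$; for (2) take $t = \height(\cU) = s$ (so $\cU_t = \cU$) and use the factorization $\mathrm{res}_{\mathrm{red}} = \gamma^\ast \circ \iota^\ast$ together with $\mathrm{res}_{\mathrm{red}}(\zeta_s) \neq 0$ and Hilbert's Nullstellensatz to produce a maximal ideal of the coordinate ring of $\cV_{G_r}(M)$ avoiding $\zeta_s$. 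The only care needed is to confirm that the weight bookkeeping is consistent — in particular that the $T$-action on $V_r(\cU_t)$ really does make $T_{r-t}$ have weight $-p^{r-t}\alpha$, which is forced by the $T$-equivariance of $\Psi^r_{\cU_t}$ together with Lemma \ref{OPG3} ($\Psi^r_{\cU_t}(x_t) = T_{r-t}^{p^t}$, and $x_t$ has weight $-p^t\alpha$, giving $T_{r-t}^{p^t}$ weight $-p^t\alpha$, hence $T_{r-t}$ weight $-p^{r-t}\alpha$ after comparing via the degree-multiplication by $p^r/2$ — one checks the exponents match since $\Psi^r$ multiplies cohomological degree $2$ by $p^r/2$, i.e. sends a degree-$2$ class to a degree-$p^r$ polynomial).

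The step I expect to be the main obstacle is the careful verification that all the maps in the commutative square from the proof of Proposition \ref{PH1} are $T$-equivariant and that the $T$-action on $V_r(\cU_t)$ and on $k[V_r(\cU_t)]$ is the one dictated by the character $\alpha$ — i.e. reconciling the various grading conventions (cohomological degree, the $p^r/2$ degree-multiplication of $\Psi^r$, and the $T$-weight) so that ``weight $-p^r\alpha$'' is consistently tracked from $\HH^\bullet(G_r,k)$ through $k[V_r(G_r)]$ and back down to $\HH^\bullet(\cU,k)_{\mathrm{red}}$. Once the equivariance of $\Psi^r_{G_r}$ (from \cite[(1.14)]{SFB1}) and of $\pi$ are in hand, the use of linear reductivity of $T$ to split off the correct weight space is routine.
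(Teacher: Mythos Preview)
Your overall strategy is sound and would work, but it is considerably more elaborate than what the paper actually does, and your weight bookkeeping contains errors. The paper does \emph{not} re-run the construction of Proposition~\ref{PH1} equivariantly. Instead it simply takes the element $\zeta'_t \in \HH^{2p^{r-t}}(G_r,k)_{\rm red}$ already produced there (with no control on its $T$-weight) and observes that, since $\mathrm{res}_{\rm red}$ is $T$-equivariant and the target $\bar{x}_t^{\,p^{r-t}}$ lies in the single weight space $(\HH^\bullet(\cU_t,k)_{\rm red})_{-p^r\alpha}$ by Lemma~\ref{OPG1}, one may \emph{replace} $\zeta'_t$ by its homogeneous component $\zeta_t$ of weight $-p^r\alpha$ without changing the restriction. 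Then ($\ast$) still holds for $\zeta_t$, and the rest of the proof of Proposition~\ref{PH1} applies verbatim. This sidesteps entirely the need to verify $T$-equivariance of $\Psi^r_{G_r}$, of $\pi$, or to compute the $T$-weight of $T_{r-t}$.

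Regarding your computations: you conflate the polynomial (conical) grading on $k[V_r(\cU_t)]$ with the $T$-weight grading. The map $\Psi^r$ multiplies \emph{cohomological degree} by $p^r/2$, but it is $T$-equivariant and hence \emph{preserves} $T$-weights. From $\Psi^r_{\cU_t}(x_t)=T_{r-t}^{p^t}$ and $x_t$ having $T$-weight $-p^t\alpha$ you correctly get that $T_{r-t}^{p^t}$ has weight $-p^t\alpha$, but then $T_{r-t}$ has weight $-\alpha$, not $-p^{r-t}\alpha$; and with your stated weight $-p^{r-t}\alpha$ for $v_{r-t}$, the element $v_{r-t}^{p^r}$ would have weight $-p^{2r-t}\alpha$, not $-p^r\alpha$. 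With the correct weight $-\alpha$ for $T_{r-t}$ (hence for $v_{r-t}$), one does get $v_{r-t}^{p^r}$ of weight $-p^r\alpha$ and your argument goes through --- but the paper's projection trick makes all of this unnecessary.
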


\begin{proof} Since $\cU \cong \GG_{a(s)}$ is $T$-invariant, the diagonalizable group $T$ acts on $\cU$ via a character $\alpha \in X(T)$:
\[ t\dact u = \alpha(t)u \ \ \ \ \forall \ u \in \cU.\]
Hence $T$ also acts on $\Lie(\cU) \subseteq \fg$ via $\alpha$, so that $\alpha \in \Psi\cup \{0\}$.

We return to the proof of Proposition \ref{PH1}. For $t \le r$ we found elements $\zeta'_t \in \HH^{2p^{r-t}}(G_r,k)_{\rm red}$ satisfying
\[ {\rm res}_{\rm red}(\zeta'_t) = \bar{x}_t^{p^{r-t}}.\]
Owing to (\ref{OPG1}), the element $\bar{x}_t^{p^{r-t}}$ belongs to $(\HH^{2p^{r-t}}(\cU,k)_{\rm red})_{-p^r\alpha}$. Since the map ${\rm res}_{\rm red}$ is $T$-equivarant, we may replace $\zeta'_t$ by its homogeneous component $\zeta_t$ of degree $-p^r\alpha$, so that
\[ {\rm res}_{\rm red}(\zeta_t) = \bar{x}_t^{p^{r-t}} \ \ \text{for some} \ \ \zeta_t \in (\HH^{2p^{r-t}}(G_r,k)_{\rm red})_{-p^r\alpha}.\]
We may now adopt the arguments of the proof of (\ref{PH1}) verbatim to obtain our result. \end{proof}

\bigskip
\noindent
Now assume $G$ to be reductive, with Borel subgroup $B=UT$ and sets $\Psi^+$ and $\Sigma$ of positive roots and simple roots, respectively. As usual, $\rho$ denotes the half-sum of the positive roots. Given $d \ge 0$, we recall that
\[ \St_d := L((p^d\!-\!1)\rho)\]
denotes the $d$-th {\it Steinberg module}. By definition, $\St_d$ is a simple $G$-module with $\St_0 \cong k$, the trivial $G$-module. In view of \cite[(3.18(4))]{Ja3} and
\cite[(II.10.2)]{Ja3}
\[ \St_d \cong L_d((p^d\!-\!1)\rho) \cong Z_d((p^d\!-\!1)\rho)\]
is a simple, projective $G_d$-module.

Let $M$ be a $G_r$-module. If $\alpha \in \Psi$ is a root, we define
\[ \widehat{\ph}_\alpha(M) := \left\{ \begin{array}{cl} \ph_{(U_\alpha)_r}(M) & \text{if} \ M|_{(U_\alpha)_r} \ \text{is not projective,}\\ \infty & \text{otherwise,}\end{array} \right.\]
where $U_\alpha \subseteq G$ is the root subgroup of $\alpha$. Moreover, for a subset $\Phi \subseteq \Psi$, we put
\[ \widehat{\ph}_\Phi(M) := \min_{\alpha \in \Phi} \widehat{\ph}_\alpha(M).\]
Let $Y(T)$ be the set of co-characters of $T$ and denote by $\langle\, , \, \rangle : X(T)\times Y(T) \lra \ZZ$ the canonical pairing. For $s \in \NN_0$, we put
\[ \Psi^s_\lambda := \{\alpha\in \Psi \ ; \  \langle\lambda\!+\!\rho, \alpha^\vee \rangle \in p^s \ZZ \}.\]
Here $\alpha^\vee \in Y(T)$ denotes the root dual to $\alpha$. Owing to \cite[(2.7)]{Ja1}, the set $\Psi^s_\lambda$ is a subsystem of $\Psi$ whenever the prime number $p$ is good for
$G$.

Given $\lambda \in X(T)$ with $\langle \lambda+\rho,\alpha^\vee\rangle \ne 0$ for some $\alpha \in \Psi$, we define the \emph{depth} of $\lambda$ via
\[ \dep(\lambda) := \min\{s \in \NN_0 \ ; \  \Psi_\lambda^s \ne \Psi\},\]
and put $\dep(\lambda) = \infty$ otherwise, see \cite{FR}. The following result links the depth of a weight to the projective height $\widehat{\ph}_\Sigma(Z_r(\lambda))$ of the baby
Verma module
\[ Z_r(\lambda) := kG_r\!\otimes_{kB_r}\!k_\lambda.\]
Suppose that $G$ is a smooth group scheme that is defined over the Galois field $\FF_p$. Let $M$ be a $G$-module. Given $d \in \NN$, we let $M^{[d]}$ be the $G$-module with
underlying $k$-space $M$ and action defined via pull-back along the iterated Frobenius endomorphism $F^d : G \lra G$.

\bigskip

\begin{Proposition} \label{PH5} Suppose that $G$ is semi-simple, simply connected, defined over $\FF_p$ and that $p$ is good for $G$. Let $\lambda \in X(T)$ be a weight such that $\dep(\lambda) \le r$. Then the following statements hold:

{\rm (1)} \ We have $\cV_{(U_\alpha)_r}(\St_{\dep(\lambda)-1}) \subseteq \cV_{G_r}(Z_r(\lambda))$ for every $\alpha \in \Sigma\setminus \Psi^{\dep(\lambda)}_\lambda$.

{\rm (2)} \ We have $\dep(\lambda) = \widehat{\ph}_\Sigma(Z_r(\lambda))$. \end{Proposition}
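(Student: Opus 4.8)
The plan is to prove (1) and (2) together, with (1) providing the key geometric input for the inequality $\dep(\lambda) \ge \widehat{\ph}_\Sigma(Z_r(\lambda))$ in (2).

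\emph{Step 1: Reduce the computation of $\cV_{G_r}(Z_r(\lambda))$ to a statement about root subgroups.} The support variety of a baby Verma module is governed by the root combinatorics encoded in the sets $\Psi^s_\lambda$. By work relating $\cV_{G_r}(Z_r(\lambda))$ to the nilpotent cone and the "linkage" structure (this should be available via \cite{Ja1,FR} and the identification $Z_r(\lambda) \cong \St_d \otimes (\text{something})$ when $\lambda$ is in a suitable position), one expects that $\alpha \in \Sigma$ contributes a line $\cV_{(U_\alpha)_r}(k)$ to $\cV_{G_r}(Z_r(\lambda))$ precisely when $\alpha \notin \Psi^{\dep(\lambda)}_\lambda$. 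More precisely, for such $\alpha$ the restriction $Z_r(\lambda)|_{(U_\alpha)_r}$ should, after identifying $(U_\alpha)_r \cong \GG_{a(r)}$, behave like the restriction of a Steinberg twist: the key point is that $Z_r(\lambda)$ restricted to the rank-$1$ parabolic direction associated to $\alpha$ decomposes in a way controlled by $\langle \lambda+\rho,\alpha^\vee\rangle \pmod{p^{\dep(\lambda)}}$.

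\emph{Step 2: Prove (1) by comparing with the Steinberg module.} Fix $\alpha \in \Sigma \setminus \Psi^{\dep(\lambda)}_\lambda$, so $\langle\lambda+\rho,\alpha^\vee\rangle \notin p^{\dep(\lambda)}\ZZ$; write $d := \dep(\lambda)$. Using the rank-$1$ subgroup generated by $U_\alpha$ and $U_{-\alpha}$ (a copy of $\SL(2)$ or $\mathrm{PGL}(2)$), restrict $Z_r(\lambda)$ there and analyze it as an $\SL(2)_r$-module; the Example in Section \ref{S:PH} computed $\ph(\St_d) = d+1$ and \cite[(II.11.8)]{Ja3}, \cite[(7.6)]{SFB2} identify when Steinberg modules are projective on Frobenius kernels. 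The assertion $\cV_{(U_\alpha)_r}(\St_{d-1}) \subseteq \cV_{G_r}(Z_r(\lambda))$ then amounts to showing that $Z_r(\lambda)|_{(U_\alpha)_r}$ is non-projective, equivalently that the relevant one-parameter subgroup $\varphi: \GG_{a(r)} \to (U_\alpha)_r$ lies in the rank variety $V_r(G_r)_{Z_r(\lambda)}$. This I would extract from the SFB rank-variety computation \cite[\S6]{SFB2} applied to the explicit form of $Z_r(\lambda)$ over the root subgroup, using that $\St_{d-1}|_{(U_\alpha)_r}$ is non-projective (as $d-1 < r$ forces a non-projective restriction by the cited Example) while $\St_{d-1}$ sits inside the relevant composition structure. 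Part (1) then follows from \cite[(6.6)]{SFB2} (compatibility of rank varieties under restriction to subgroups) together with the observation that $V_r((U_\alpha)_r)_{\St_{d-1}} = V_r((U_\alpha)_r)_{Z_r(\lambda)|_{(U_\alpha)_r}}$ as one-dimensional varieties.

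\emph{Step 3: Deduce (2).} For the inequality $\widehat{\ph}_\Sigma(Z_r(\lambda)) \le \dep(\lambda)$: by definition $\widehat{\ph}_\alpha(M) = \ph_{(U_\alpha)_r}(M)$ when $M|_{(U_\alpha)_r}$ is non-projective. Pick $\alpha \in \Sigma \setminus \Psi^d_\lambda$ as in Step 2; by (1) and the computation $\ph(\St_{d-1}) = d$ (from the Example, since $d-1 < r$), the restriction $Z_r(\lambda)|_{((U_\alpha)_r)_d}$ should contain $\St_{d-1}|_{((U_\alpha)_d)}$-type behaviour making it non-projective at level $d$ but projective at level $d-1$; hence $\widehat{\ph}_\alpha(Z_r(\lambda)) \le d$, giving $\widehat{\ph}_\Sigma(Z_r(\lambda)) \le \dep(\lambda)$. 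For the reverse inequality $\widehat{\ph}_\Sigma(Z_r(\lambda)) \ge \dep(\lambda)$: if $\alpha \in \Sigma \cap \Psi^s_\lambda$ for all $s < \dep(\lambda)$, i.e. $\langle\lambda+\rho,\alpha^\vee\rangle \in p^s\ZZ$ for those $s$, then $\langle\lambda+\rho,\alpha^\vee\rangle \in p^{\dep(\lambda)-1}\ZZ$, which by the rank-$1$ analysis forces $Z_r(\lambda)|_{((U_\alpha)_r)_{\dep(\lambda)-1}}$ to be projective, so $\widehat{\ph}_\alpha(Z_r(\lambda)) \ge \dep(\lambda)$; taking the minimum over $\alpha \in \Sigma$ and invoking \cite[(2.7)]{Ja1} (that $\Psi^s_\lambda$ is a subsystem, so $\Sigma \not\subseteq \Psi^s_\lambda$ exactly when some simple root escapes) yields the claim.

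\emph{Main obstacle.} The crux is Step 2: pinning down the precise structure of $Z_r(\lambda)$ restricted to a root subgroup $(U_\alpha)_r$ and relating it to $\St_{d-1}$. This requires knowing that the $\SL(2)$-theory (baby Verma modules for $\SL(2)_r$, their periodicity and rank varieties, controlled by the residue of the highest weight modulo powers of $p$) transfers correctly to the rank-$1$ situation inside $G$, and that the depth $\dep(\lambda)$ is exactly the quantity detecting the $p$-adic valuation of $\langle\lambda+\rho,\alpha^\vee\rangle$ for the "worst" simple root $\alpha$. I expect this to lean on \cite{FR} and the $\SL(2)_r$-computations, and on \cite[(2.7)]{Ja1} to ensure the subsystem structure makes $\widehat{\ph}_\Sigma$ well-behaved (so the minimum over $\Sigma$ equals the minimum over all of $\Psi$, which is what makes $\dep$ the right invariant).
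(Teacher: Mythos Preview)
Your outline has the right instincts --- reduce to rank-one analysis, compare with Steinberg modules, use the depth to control $p$-adic valuations of $\langle\lambda+\rho,\alpha^\vee\rangle$ --- but there is a genuine gap in Step~2, and it propagates into Step~3.

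The assertion that the inclusion $\cV_{(U_\alpha)_r}(\St_{d-1}) \subseteq \cV_{G_r}(Z_r(\lambda))$ ``amounts to showing that $Z_r(\lambda)|_{(U_\alpha)_r}$ is non-projective'' is too weak: non-projectivity only tells you that $\cV_{(U_\alpha)_r}(Z_r(\lambda)) \ne \{0\}$, not that it contains the specific subvariety $\cV_{(U_\alpha)_r}(\St_{d-1})$, which in general has dimension $r-d+1$ rather than $1$. Your claim that the two rank varieties coincide ``as one-dimensional varieties'' is unjustified and, for generic depth, dimensionally wrong. What the paper actually does is use the tensor decomposition $Z_r(\lambda) \cong Z_{r-d}(\mu)^{[d]}\otimes_k \St_d$ from \cite[(6.2)]{FR} (here $d = \dep(\lambda)-1$, and $\mu$ has depth $1$), together with the fact \cite[(6.2(b))]{FR} that the Frobenius map $F^d : G_r \to G_{r-d}$ induces an isomorphism $\cV_{G_r}(\St_d) \stackrel{\sim}{\to} \cV_{G_{r-d}}(k)$ sending $\cV_{G_r}(Z_r(\lambda))$ onto $\cV_{G_{r-d}}(Z_{r-d}(\mu))$. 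The depth-$1$ input $\cV_{(U_\alpha)_{r-d}}(k) \subseteq \cV_{G_{r-d}}(Z_{r-d}(\mu))$ is then pulled from \cite[(5.2)]{FR}, and a commutative-diagram chase with $F^d$ and the restriction maps gives (1). You gesture at the tensor decomposition in Step~1 but never deploy it; without that Frobenius reduction, there is no mechanism in your argument to upgrade ``non-projective on $(U_\alpha)_r$'' to the full variety inclusion.

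For (2), the paper separates $\alpha \in \Sigma\setminus\Psi^{\dep(\lambda)}_\lambda$ from $\alpha \in \Sigma\cap\Psi^{\dep(\lambda)}_\lambda$. For the former, part (1) plus a Levi-subgroup argument via \cite[(4.2.1)]{NPV} and the $\SL(2)$ computation \cite[(7.9)]{SFB2} shows $\cV_{(U_\alpha)_{d+1}}(Z_r(\lambda)) \ne \{0\}$, while \cite[(5.6)]{FR} gives projectivity on $G_d$, pinning $\widehat{\ph}_\alpha(Z_r(\lambda)) = d+1$. For the latter, projectivity of the Levi Verma module $Z_r^L(\lambda)|_{L_{d+1}}$ (again \cite[(5.6)]{FR}) plus \cite[(4.2.1)]{NPV} forces $\widehat{\ph}_\alpha(Z_r(\lambda)) \ge d+2$. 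Your Step~3 blurs these two cases together and the projectivity direction (``projective at level $d-1$'') is asserted rather than proved; the precise input you need is \cite[(5.6)]{FR}, not a general ``rank-$1$ analysis''.
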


\begin{proof} Let $\dep(\lambda) = d+1$ with $d\ge 0$. Owing to \cite[(6.2)]{FR}, there exists a weight $\mu$ of depth $1$ and an isomorphism
\[ Z_r(\lambda) \cong Z_{r-d}(\mu)^{[d]}\!\otimes_k\!\St_d\]
of $G_r$-modules. Moreover, we have $\lambda = p^d \mu +(p^d-1)\rho$. According to \cite[(5.2)]{FR}, the inclusion
\[ \cV_{(U_\alpha)_{r-d}}(k) \subseteq \cV_{G_{r-d}}(Z_{r-d}(\mu))\]
holds for every $\alpha \in \Sigma\setminus \Psi^1_\mu$.

Consider the iterate $F^d : G_r \lra G_{r-d}$ of the Frobenius endomorphism. There results a commutative diagram
\[ \begin{CD} \cV_{(U_\alpha)_{d+1}}(\St_d) @> F^d >> \cV_{(U_\alpha)_1}(k)\\
@V{\rm res}^\ast VV @V{\rm res}^\ast VV\\
 \cV_{(U_\alpha)_r}(\St_d) @> F^d >> \cV_{(U_\alpha)_{r-d}}(k)\\
  @V{\rm res}^\ast VV @V{\rm res}^\ast VV\\
\cV_{G_r}(\St_d) @> F^d >> \cV_{G_{r-d}}(k),\\
\end{CD} \]
where the vertical arrows are the canonical inclusions induced by the restriction maps. Thanks to \cite[(6.2(b))]{FR}, the lower horizontal map is an isomorphism sending $\cV_{G_r}(Z_r(\lambda))$ onto $\cV_{G_{r-d}}(Z_{r-d}(\mu))$.

(1) Let $\alpha \in \Sigma\setminus\Psi_\lambda^{d+1}$. Then we have
\[ p^d\langle\mu+\rho,\alpha^\vee \rangle = \langle \lambda +\rho, \alpha^\vee \rangle \not \in p^{d+1}\ZZ,\]
so that $\langle \mu+\rho,\alpha^\vee\rangle \not \in p\ZZ$, whence $\alpha \in \Sigma\setminus \Psi^1_\mu$. In view of the identification discussed at the beginning of this section,
the inclusion $\cV_{(U_\alpha)_{r-d}}(k) \subseteq \cV_{G_{r-d}}(Z_{r-d}(\mu))$ in conjunction with the above diagram now implies $\cV_{(U_\alpha)_r}(\St_d) \subseteq \cV_{G_r}(Z_r(\lambda))$.

(2) Given $\alpha \in \Sigma \setminus \Psi^{d+1}_\lambda$, part (1) yields $\cV_{(U_\alpha)_r}(\St_d) \subseteq \cV_{G_r}(Z_r(\lambda))$. The diagram above
then implies
\begin{eqnarray*}
\cV_{(U_\alpha)_{d+1}}(\St_d) & = & \cV_{(U_\alpha)_r}(\St_d) \cap \cV_{(U_\alpha)_{d+1}}(k) \ \subseteq \cV_{G_r}(Z_r(\lambda))\cap \cV_{(U_\alpha)_{d+1}}(k)\\
& \subseteq & \cV_{G_r}(Z_r(\lambda))\cap \cV_{G_{d+1}}(k) = \cV_{G_{d+1}}(Z_r(\lambda)).
\end{eqnarray*}
Let $L$ be the Levi subgroup of $G$ associated to the simple root $\alpha$ and let $\St_d^L$ be the $d$-th Steinberg module of $L$. Thanks to \cite[(4.2.1)]{NPV}, there is an inclusion $\cV_{L_{d+1}}(\St^L_d) \subseteq \cV_{G_{d+1}}(\St_d)$. The rank varieties of the former module were essentially computed in \cite[(7.9)]{SFB2} (The quoted result deals with Frobenius kernels of $\SL(2)$). This result implies in particular that $\cV_{(U_\alpha)_{d+1}}(\St_d^L) \ne \{0\}$, so that $\cV_{(U_\alpha)_{d+1}}(\St_d) \ne \{0\}$. Consequently,
\[ \{0\} \ne \cV_{(U_\alpha)_{d+1}}(\St_d) \cap \cV_{(U_\alpha)_{d+1}}(k) \subseteq \cV_{G_{d+1}}(Z_r(\lambda))\cap \cV_{(U_\alpha)_{d+1}}(k) = \cV_{(U_\alpha)_{d+1}}(Z_r(\lambda)).\]
As a result, the module $Z_r(\lambda)|_{(U_\alpha)_{d+1}}$ is not projective, so that  $\widehat{\ph}_\alpha(Z_r(\lambda)) \le d+1$.

Since $\Psi^d_\lambda = \Psi$, \cite[(5.6)]{FR} shows that $Z_r(\lambda)|_{G_d}$ is projective, so that module $Z_r(\lambda)|_{(U_\alpha)_d}$ is projective for every $\alpha \in
\Sigma$. It follows that $\widehat{\ph}_\alpha(Z_r(\lambda)) = d+1$ for all $\alpha \in \Sigma\setminus \Psi^{d+1}$. Since $p$ is good for $G$, \cite[(2.7)]{Ja1} ensures that the
latter set is not empty.

For $\alpha \in \Sigma\cap\Psi^{d+1}_\lambda$ we consider the Levi subgroup $L \subseteq G$ defined by $\alpha$, as well as the corresponding baby Verma module $Z_r^L(\lambda)$
of $L_r$. According to \cite[(5.6)]{FR}, the module $Z_r^L(\lambda)|_{L_{d+1}}$ is projective, and \cite[(4.2.1)]{NPV} now implies the projectivity of
$Z_r(\lambda)|_{(U_\alpha)_{d+1}}$. We conclude that $\widehat{\ph}_\alpha(Z_r(\lambda)) \ge d+2$. Consequently,
\[ \widehat{\ph}_\Sigma(Z_r(\lambda)) = \min_{\alpha \in \Sigma} \widehat{\ph}_\alpha(Z_r(\lambda)) = d+1 = \dep(\lambda),\]
as desired. \end{proof}

\bigskip

\section{Euclidean Components of finite group Schemes}\label{S:EC}
This section is concerned with the Auslander-Reiten theory of a finite group scheme $\cG$. Given a self-injective algebra $\Lambda$, we denote by $\Gamma_s(\Lambda)$ the {\it stable
Auslander--Reiten quiver} of $\Lambda$. By definition, the directed graph $\Gamma_s(\Lambda)$ has as vertices the isomorphism classes of the non-projective indecomposable
$\Lambda$-modules and its arrows are defined via the so-called {\it irreducible morphisms}. We refer the interested reader to \cite[Chap.\ VII]{ARS} for further details. The AR-quiver is fitted
with an automorphism $\tau_\Lambda$, the so-called {\it Auslander--Reiten translation}. Since $\Lambda$ is self-injective, $\tau_\Lambda$ coincides with the composite
$\Omega^2_\Lambda \circ \nu_\Lambda$ of the square of the Heller translate $\Omega_\Lambda$ and the Nakayama functor $\nu_\Lambda$, cf.\ \cite[(IV.3.7)]{ARS}.

The connected components of $\Gamma_s(\Lambda)$ are connected stable translation quivers. By work of Riedtmann \cite[Struktursatz]{Ri}, the structure of such a quiver $\Theta$ is
determined by a directed tree $T_\Theta$, and an {\it admissible group} $\Pi \subseteq {\rm Aut}_k(\ZZ[T_\Theta])$, giving rise to an isomorphism
\[ \Theta \cong \ZZ[T_\Theta]/\Pi\]
of stable translation quivers. The underlying undirected tree $\bar{T}_\Theta$, the so-called {\it tree class} of $\Theta$ is uniquely determined by $\Theta$. We refer the reader to
\cite[(4.15.6)]{Be1} for further details. For group algebras of finite groups, the possible tree classes and admissible groups were first determined by Webb \cite{We}.

Following Ringel \cite{Ri1}, an indecomposable $\Lambda$-module $M$ is called \emph{quasi-simple}, provided it lies at the end of a component of tree class $A_\infty$ of the stable
AR-quiver $\Gamma_s(\Lambda)$.

Throughout this section, $\cG$ is assumed to be a finite algebraic group, defined over an algebraically closed field $k$ of characteristic $p>0$. By general theory (see \cite[(1.5)]{FMS}), the
algebra $k\cG$ affords a Nakayama automorphism $\nu = \nu_\cG$ of finite order $\ell$. For each $n \in \NN_0$, the automorphism $\nu^n$ induces, via pull-back, an auto-equivalence of
$\modd \cG$ and hence an automorphism on the stable AR-quiver $\Gamma_s(\cG)$ of $k\cG$. In view of \cite[Chap.X]{ARS}, the Heller operator $\Omega_\cG$ also gives rise to an
automorphism of $\Gamma_s(\cG)$. Given an AR-component $\Theta \subseteq \Gamma_s(\cG)$, we denote by $\Theta^{(n)}$ the image of $\Theta$ under $\nu^n$ and put
$\Upsilon_\Theta := \bigcup_{n=0}^{\ell-1} (\Theta \cup \Omega_\cG(\Theta))^{(n)}$.

We say that a component $\Theta \subseteq \Gamma_s(\cG)$ has {\it Euclidean tree class} if the graph $\bar{T}_\Theta$ is one of the Euclidean diagrams $\tilde{A}_{12},\, \tilde{D}_n
\, (n\ge 4)$ or $\tilde{E}_n \, (6\le n\le 8)$.

\bigskip

\begin{Proposition} \label{EC1} Suppose that $\Theta \subseteq \Gamma_s(\cG)$ is a component of Euclidean tree class. Let $M$ be a $\cG$-module such that

{\rm (a)} \ $M$ possesses a filtration $(M_i)_{0\leq i \leq r}$ such that each filtration factor is indecomposable with $M_i / M_{i-1} \not \in \Upsilon_\Theta$ for $1 \leq i \leq r$, and

{\rm (b)} \ $M$ possesses a filtration $(M'_i)_{0\leq j \leq s}$ such that $M'_j/ M'_{j-1} \in \Upsilon_\Theta$ for $1 \leq j \leq s$.

\noindent
Then $\cx_\cG(M) \le 1$. \end{Proposition}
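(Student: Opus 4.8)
The plan is to bound $\dim_k\Ext^n_\cG(M,M)$ independently of $n$. By the Friedlander--Suslin theorem $\HH^\bullet(\cG,k)$ is finitely generated and $\Ext^\ast_\cG(M,M)$ is a noetherian module over it, so $\cx_\cG(M)=\gr(\Ext^\ast_\cG(M,M))$ and such a bound gives $\cx_\cG(M)\le 1$. Note that hypothesis (b) alone yields only $\cx_\cG(M)\le\cx_\cG(\Theta)$, and this number is $\ge 2$ because a component of $\Gamma_s(\cG)$ of Euclidean tree class contains no periodic module; hypothesis (a) is therefore essential. The first steps are bookkeeping. The set $\Upsilon_\Theta$ is a finite union of components of $\Gamma_s(\cG)$, namely the $(\Theta\cup\Omega_\cG(\Theta))^{(n)}$, $0\le n<\ell$, each again of Euclidean tree class (since $\Omega_\cG$ and the Nakayama twists are self-equivalences of $\underline{\modd}\,\cG$), and $\Upsilon_\Theta$ is stable under $\Omega_\cG$: indeed $\Omega_\cG(\Omega_\cG(\Theta))=\tau_\cG\nu_\cG^{-1}(\Theta)=\Theta^{(-1)}\subseteq\Upsilon_\Theta$, using $\tau_\cG=\Omega^2_\cG\nu_\cG$, that components are $\tau_\cG$-stable, and that $\nu_\cG$ has finite order. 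Writing $\Ext^n_\cG(M,M)\cong\underline{\Hom}_\cG(\Omega^n_\cG M,M)$, filtering $\Omega^n_\cG M$ via (b) — its factors $\Omega^n_\cG(M'_j/M'_{j-1})$ then lie in $\Upsilon_\Theta$ — and the target $M$ via (a), and feeding both into the long exact $\underline{\Hom}$-sequences, one obtains
\[
\dim_k\Ext^n_\cG(M,M)\ \le\ \sum_{i=1}^{r}\sum_{j=1}^{s}\dim_k\underline{\Hom}_\cG\bigl(\Omega^n_\cG(M'_j/M'_{j-1}),\,M_i/M_{i-1}\bigr).
\]
Thus it suffices to prove that, for a fixed indecomposable $A\notin\Upsilon_\Theta$, the numbers $\dim_k\underline{\Hom}_\cG(B,A)$ remain bounded as $B$ runs through $\Upsilon_\Theta$.

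By the bookkeeping above this reduces to a statement about a single component $\Theta'\subseteq\Upsilon_\Theta$ of Euclidean tree class, say $\Theta'\cong\ZZ[T]/\Pi$ with $T\in\{\tilde{A}_{12},\tilde{D}_n,\tilde{E}_n\}$; since $T$ is a finite graph, $\Theta'$ has only finitely many $\tau_\cG$-orbits. Because $A$ is indecomposable and lies outside $\Theta'$, we have $A\not\cong B$ for every $B\in\Theta'$, so every morphism $B\to A$ — and, comparing with projective covers, every stable morphism — factors through the minimal right almost split morphism terminating at $B$; running this through the mesh relations of $\Theta'$ shows that $d_A\colon B\mapsto\dim_k\underline{\Hom}_\cG(B,A)$ is a non-negative subadditive function on the translation quiver $\Theta'$. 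The crucial point is that such a function is bounded: pairing $d_A$ against the positive additive function of the Euclidean diagram $T$ forces $d_A$ to be additive, and a non-negative additive function on a Euclidean component must be $\tau_\cG$-periodic (it cannot grow linearly along $\tau_\cG$-orbits without becoming negative), hence bounded. This yields a constant $C_A$ with $\dim_k\underline{\Hom}_\cG(B,A)\le C_A$ for all $B\in\Theta'$; summing over the finitely many components of $\Upsilon_\Theta$ and the finitely many factors of (a) gives $\dim_k\Ext^n_\cG(M,M)\le r\,s\cdot\max_i C_{M_i/M_{i-1}}$ for every $n$, whence $\cx_\cG(M)=\gr(\Ext^\ast_\cG(M,M))\le 1$.

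I expect the main obstacle to be the boundedness of $d_A$. Subadditivity alone is insufficient — non-negative unbounded subadditive functions occur on $\ZZ[\tilde{T}]$-shaped translation quivers in other settings — so one genuinely needs both that $T$ is a finite Euclidean diagram and that $d_A$ arises from a module through the almost split sequences of $\Theta'$. Making this precise (in particular checking that no almost split sequence in $\Theta'$ has a projective middle summand, so that $d_A$ is honestly subadditive, and carrying out the passage from additivity to $\tau_\cG$-periodicity) is where the real work lies and where the hypothesis ``Euclidean tree class'' — rather than merely ``infinite tree class'' — is used; the remaining ingredients, the reduction to $\Ext$-groups and the filtration estimates, are routine once this boundedness is available.
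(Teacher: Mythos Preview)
Your approach is essentially the paper's: bound $\dim_k\Ext^n_\cG(M,M)$ uniformly in $n$ via a (sub)additive function on the Euclidean components and invoke the Happel--Preiser--Ringel/Webb boundedness; the paper merely runs the argument dually, fixing $Z_i=M_i/M_{i-1}$ from (a) and using $X\mapsto\dim_k\Ext^1_\cG(Z_i,X)$ on $\Upsilon_\Theta$ (citing \cite[(I.8.8)]{Er} for additivity and \cite[(2.4)]{We} for the bound) rather than your $B\mapsto\dim_k\underline{\Hom}_\cG(B,A)$. One slip worth correcting: a morphism $B\to A$ with $A\not\cong B$ factors through the \emph{left} almost split map $B\to E$ (the source map of the Auslander--Reiten sequence $0\to B\to E\to\tau^{-1}_\cG B\to 0$), not through the right almost split map terminating at $B$; with that fix your subadditivity argument goes through cleanly for $\Hom_\cG(-,A)$, which already dominates $\underline{\Hom}_\cG(-,A)$.
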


\begin{proof} Let $Z_i := M_i / M_{i-1}$ for $1 \leq i \leq r$. According to \cite[(I.8.8)]{Er}, the map
\[ d_i : \Upsilon_\Theta \lra \NN_0 \ \ ; \ \ X  \mapsto \dim_k\Ext^1_\cG(Z_i,X)\]
is additive on $\Theta^{(j)}$ and $\Omega_\cG(\Theta)^{(j)}$ for $0\leq j \leq \ell-1$. Hence \cite[(2.4)]{We} provides a natural number $b_i$ such that
\[ \dim_k \Ext^n_\cG(Z_i,X) = \dim_k \Ext^1_\cG(Z_i,\Omega_\cG^{1-n}(X)) \leq b_i\]
for every $n \geq 1$ and every $X \in \Upsilon_\Theta$. From the exactness of the sequence
\[ \Ext^n_\cG(Z_i,X) \lra \Ext^n_\cG(M_i,X) \lra \Ext^n_\cG(M_{i-1},X)\]
we obtain
\[ \dim_k\Ext^n_\cG(M,X) \leq \sum_{i=1}^rb_i =: b \ \ \ \ \ \ \forall \ X \in \Upsilon_\Theta, \ n \geq 1.\]
In view of (b), the same reasoning now implies
\[ \dim_k\Ext^n_\cG(M,M) \leq sb\ \ \ \ \ \ \forall \ n \geq 1,\]
so that general theory (cf.\ \cite[(5.3.5)]{Be2}) yields $\cx_\cG(M) \le 1$. \end{proof}

\bigskip

\begin{Corollary} \label{EC2} Suppose that $\Theta \subseteq \Gamma_s(\cG)$ is a component of Euclidean tree class. Then every $M \in \Theta$ possesses a composition factor $S$ such
that $S \in \Upsilon_\Theta$. \end{Corollary}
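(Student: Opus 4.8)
The strategy is to derive Corollary~\ref{EC2} from Proposition~\ref{EC1} by contradiction. So suppose $M \in \Theta$ has no composition factor lying in $\Upsilon_\Theta$. I then want to exhibit $M$ as a module satisfying both hypotheses (a) and (b) of Proposition~\ref{EC1}, so as to conclude $\cx_\cG(M) \le 1$, and then derive a contradiction from the fact that $M$ lies in a component of Euclidean (hence infinite, non-$A_\infty$-type) tree class.

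First I would verify hypothesis (a): a composition series of $M$ is a filtration whose factors are simple modules; by our assumption none of these lies in $\Upsilon_\Theta$, and each simple module is certainly indecomposable, so (a) holds with $M_i$ the terms of a composition series. The more delicate point is hypothesis (b), which asks for a filtration of $M$ \emph{all of whose factors do lie in $\Upsilon_\Theta$}. The natural candidate is the trivial filtration $0 \subseteq M$ with the single factor $M$ itself — and indeed $M \in \Theta \subseteq \Upsilon_\Theta$ by definition of $\Upsilon_\Theta$. So (b) holds with $s = 1$ and $M'_1/M'_0 = M$. Proposition~\ref{EC1} then yields $\cx_\cG(M) \le 1$.

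It remains to contradict $\cx_\cG(M) \le 1$. A module of complexity $\le 1$ is either projective or periodic (cf.\ \cite[(5.10.4)]{Be2}); since $M$ is a vertex of the stable AR-quiver it is non-projective, hence periodic. But a periodic module lies in a component of the stable AR-quiver whose tree class is $A_\infty$ with at most... more precisely, by the theory of AR-components (Happel--Preiser--Ringel, cf.\ \cite[(4.16.2)]{Be1} or \cite{We}), a periodic module lies in a tube, i.e.\ a component of the form $\ZZ[A_\infty]/\langle \tau^n\rangle$, which has tree class $A_\infty$ — this is incompatible with $\Theta$ having Euclidean tree class. This contradiction completes the proof.

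I expect the main obstacle to be a subtlety I have glossed over: whether the crude bound coming from $\cx_\cG(M) \le 1$ is genuinely enough, or whether one needs the finer statement that a complexity-$1$ module in a component of \emph{infinite} tree class forces that tree class to be $A_\infty$. One must be careful that "Euclidean" tree classes $\tilde A_{12}, \tilde D_n, \tilde E_n$ are indeed not $A_\infty$ (they are infinite but have a branch point or a cycle), so the contradiction is clean; the hypotheses of Proposition~\ref{EC1} were precisely engineered so that the trivial filtration furnishes (b), and the only real content is checking that the module of a periodic (equivalently, complexity-$\le 1$ non-projective) module cannot sit in a Euclidean component — a statement that should follow from the classification of stable AR-components of self-injective algebras via \cite{Ri} together with the Happel--Preiser--Ringel theorem as recorded in \cite[Chap.~4]{Be1}.
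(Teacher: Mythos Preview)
Your proposal is correct and follows essentially the same route as the paper: assume no composition factor of $M$ lies in $\Upsilon_\Theta$, use the composition series for (a) and the trivial filtration $0\subseteq M$ for (b), apply Proposition~\ref{EC1} to get $\cx_\cG(M)\le 1$, and then contradict the Euclidean tree class via Happel--Preiser--Ringel. The only point the paper makes more explicit is the passage from $\Omega_\cG$-periodicity to $\tau_\cG$-periodicity: since $\tau_\cG=\Omega_\cG^2\circ\nu_\cG$ and $\nu_\cG$ has finite order (cf.\ \cite[(1.5)]{FMS}), $\Omega$-periodic implies $\tau$-periodic, which is what \cite{HPR} actually requires; you gloss over this step, but it is routine.
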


\begin{proof} If the composition factors of $M$ do not belong to $\Upsilon_\Theta$, then $M$ satisfies (a) and (b) of Proposition \ref{EC1}. Hence $M$ has complexity $1$, and is therefore periodic, \cite[(5.10.4)]{Be2}. As $\nu$ has finite order, there thus exists $n \ge 1$ with $\tau^n_\cG(M) \cong M$. In view of \cite[Theorem]{HPR}, the tree class
$\bar{T}_\Theta$ is either a finite Dynkin diagram or $A_\infty$, contradicting our hypothesis on $\Theta$. \end{proof}

\bigskip

\section{Representations of $\SL(2)_r$}\label{S:Rep}
Throughout, we consider the infinitesimal group scheme $\SL(2)_r$, defined over the algebraically closed field $k$ of characteristic $p \ge 3$. Recall that the algebra $\Dist(\SL(2)_r) =
k\SL(2)_r$ is symmetric (cf.\ \cite[(2.1)]{FR}), so that $\Upsilon_\Theta = \Theta \cup \Omega_{\SL(2)_r}(\Theta)$ for every  component $\Theta \subseteq \Gamma_s(\SL(2)_r)$.

The blocks of $\Dist(\SL(2)_r)$ are well understood (cf.\ \cite{Pf}). In the following, we identify the character group $X(T)$ of the standard maximal torus $T$ of diagonal matrices of
$\SL(2)$ with $\ZZ$, by letting $n \in \ZZ$ correspond to $n\rho \in X(T)$. Accordingly, the modules $L_r(0),\ldots,L_r(p^r\!-\!1)$ constitute a full set of representatives for the
isomorphism classes of the simple $\SL(2)_r$-modules (cf.\ \cite[(II.3.15)]{Ja3}). A block of $\Dist(\SL(2)_r)$ is given by a subset of $X_r(T) := \{0,\ldots,p^r\!-\!1\}$; the resulting
partition was determined by Pfautsch, see \cite[\S4.2]{Pf}: We present elements of $X_r(T)$ by expanding them $p$-adically: $\lambda = \sum_{i=0}^{r-1} \lambda_ip^i$. For $0\le i \le
\frac{p-3}{2}$ and $0\le s \le r\!-\!1$ we put
\[ \cB^{(r)}_{i,s} := \{ \sum_{i=0}^{r-1} \lambda_ip^i \ ;  \ \lambda_0=\lambda_1=\cdots =\lambda_{s-1} = p\!-\!1 \ , \ \lambda_s \in \{i,p\!-\!2\!-\!i\}\} \ \ \text{as well as} \ \
\cB_r^{(r)} = \{p^r\!-\!1\},\]
so that the corresponding block consists of those modules, whose composition factors are of the form $L_r(\lambda)$ with $\lambda \in \cB^{(r)}_{i,s}$. For future reference we record the
following result (cf.\ \cite[Satz~5]{Pf}):

\bigskip

\begin{Lemma} \label{Rep1} The functor $M \mapsto \St_s\!\otimes_k\!M^{[s]}$ induces a Morita equivalence $\cB_{i,0}^{(r-s)}\sim_M \cB_{i,s}^{(r)}$, sending $L_{r-s}(n)$ onto
$L_r(np^s\!+\!(p^s\!-\!1))$. \end{Lemma}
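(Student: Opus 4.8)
The plan is to verify that the functor $F : M \mapsto \St_s \otimes_k M^{[s]}$ carries $kG_{r-s}$-modules in the block $\cB_{i,0}^{(r-s)}$ to $kG_r$-modules in the block $\cB_{i,s}^{(r)}$, and that it is an equivalence of the corresponding module categories. Here $G = \SL(2)$, so $\St_s = L_s((p^s-1)\rho)$ is a simple projective $G_s$-module. First I would establish the effect on simple modules. By Steinberg's tensor product theorem (in the form \cite[(II.3.17)]{Ja3}), for $0 \le n \le p^{r-s}-1$ one has $L_r(np^s + (p^s-1)) \cong L_s(p^s-1) \otimes_k L_{r-s}(n)^{[s]} = \St_s \otimes_k L_{r-s}(n)^{[s]}$, since the $p$-adic digits of $np^s + (p^s - 1)$ are $p-1$ in positions $0,\dots,s-1$ and the digits of $n$ in positions $s,\dots,r-1$. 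This shows $F(L_{r-s}(n)) \cong L_r(np^s + (p^s - 1))$. Comparing with the definition of the blocks, $\lambda \in \cB_{i,0}^{(r-s)}$ means $\lambda_0 \in \{i, p-2-i\}$ (with no constraint forced on higher digits beyond block membership, encoded recursively), and $np^s + (p^s-1) \in \cB_{i,s}^{(r)}$ exactly says the first $s$ digits are $p-1$ and digit $s$ of $n\cdot p^s+(p^s-1)$, i.e.\ digit $0$ of $n$, lies in $\{i, p-2-i\}$; so $F$ sends simples of $\cB_{i,0}^{(r-s)}$ bijectively onto simples of $\cB_{i,s}^{(r)}$.

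Next I would argue $F$ is exact and fully faithful on the relevant subcategories. Exactness is clear: $M^{[s]}$ is exact in $M$ (Frobenius twist is just a relabelling of the action on the same vector space), and $\St_s \otimes_k -$ is exact over $k$. For full faithfulness, the key point is the projectivity of $\St_s$ as a $G_s$-module together with the fact that the Frobenius-twisted modules $M^{[s]}$ carry the trivial $G_s$-action. Concretely, for $G_r$-modules of the form $\St_s \otimes_k M^{[s]}$ and $\St_s \otimes_k N^{[s]}$, I would compute
\[
\Hom_{G_r}(\St_s \otimes_k M^{[s]}, \St_s \otimes_k N^{[s]}) \cong \Hom_{G_r}(M^{[s]}, \St_s^\ast \otimes_k \St_s \otimes_k N^{[s]}),
\]
and then use the Lyndon--Hochschild--Serre-type analysis for the normal subgroup $G_s \unlhd G_r$ with quotient $G_{r}/G_s \cong G_{r-s}^{(s)}$ (cf.\ \cite[(I.6), (I.9)]{Ja3}): since $\St_s$ is projective and simple as a $G_s$-module, $\St_s^\ast \otimes_k \St_s$ has trivial $G_s$-socle and head each equal to $k$, and taking $G_s$-fixed points of $\St_s^\ast \otimes_k \St_s \otimes_k N^{[s]}$ returns $N^{[s]}$ (the $G_{r-s}$-module structure being the correct twist). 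This yields a natural isomorphism $\Hom_{G_r}(F(M), F(N)) \cong \Hom_{G_{r-s}}(M, N)$. The same computation with $\Ext$ in place of $\Hom$, using that $\St_s$ is $G_s$-projective so that the relevant LHS spectral sequence collapses, shows $F$ induces isomorphisms on all $\Ext$-groups; hence $F$ is a fully faithful exact functor landing in $\cB_{i,s}^{(r)}$, and since it hits all the simples of the target block it is dense up to the block decomposition. That makes $F$ an equivalence $\cB_{i,0}^{(r-s)} \xrightarrow{\sim} \cB_{i,s}^{(r)}$, and an exact equivalence between module categories of finite-dimensional algebras is a Morita equivalence.

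The main obstacle I anticipate is the bookkeeping around the normal subgroup $G_s \unlhd G_r$ and the identification of the quotient with a Frobenius twist of $G_{r-s}$: one must be careful that $(\St_s \otimes_k M^{[s]})^{G_s} \cong M^{[s]}$ as $G_r/G_s$-modules, i.e.\ that the $G_{r-s}$-action recovered on fixed points is exactly the untwisted $M$ after undoing the $[s]$-twist, and similarly for the $\Ext$-computation that the collapse of the spectral sequence is genuine (which is where $G_s$-projectivity of $\St_s$ enters decisively). An alternative, and probably cleaner, route is simply to cite \cite[Satz~5]{Pf} as the paper already flags — Pfautsch established precisely this Morita equivalence — in which case the proof reduces to recording the stated correspondence $L_{r-s}(n) \mapsto L_r(np^s + (p^s-1))$ via Steinberg's tensor product theorem and matching it against the block combinatorics recalled just above. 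I would present the short argument via Steinberg's theorem for the simples together with the citation to \cite{Pf} for the equivalence itself, since that is the level of detail consistent with the phrase ``For future reference we record the following result'' in the surrounding text.

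\begin{proof} This is \cite[Satz~5]{Pf}; we indicate the correspondence on simple modules. Writing $G := \SL(2)$, the module $\St_s = L_s((p^s-1)\rho)$ is a simple projective $G_s$-module. Given $0 \le n \le p^{r-s}-1$, the digit expansion of $n p^s + (p^s - 1)$ has its first $s$ digits equal to $p-1$ and its remaining digits equal to those of $n$; hence Steinberg's tensor product theorem \cite[(II.3.17)]{Ja3} yields
\[
L_r(n p^s + (p^s - 1)) \cong \St_s \otimes_k L_{r-s}(n)^{[s]}.
\]
Comparing with the definition of $\cB_{i,s}^{(r)}$ and $\cB_{i,0}^{(r-s)}$, one sees that $n \in \cB_{i,0}^{(r-s)}$ if and only if $n p^s + (p^s-1) \in \cB_{i,s}^{(r)}$, so the functor $M \mapsto \St_s \otimes_k M^{[s]}$ sends the simple modules of $\cB_{i,0}^{(r-s)}$ bijectively onto those of $\cB_{i,s}^{(r)}$. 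Since this functor is exact and, by the projectivity of $\St_s$ as a $G_s$-module together with the analysis of the normal subgroup $G_s \unlhd G_r$ with quotient $G_{r-s}^{(s)}$, induces isomorphisms on all $\Ext$-groups, it is a Morita equivalence $\cB_{i,0}^{(r-s)} \sim_M \cB_{i,s}^{(r)}$. \end{proof}
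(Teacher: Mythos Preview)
Your proposal is correct and follows essentially the same approach as the paper. The paper's proof simply cites \cite[(II.10.5)]{Ja3} (which packages the equivalence $M \mapsto \St_s \otimes_k M^{[s]}$ that you spell out via $G_s$-projectivity of $\St_s$ and the LHS-type analysis) together with Steinberg's tensor product theorem \cite[(II.3.17)]{Ja3} and \cite[(II.3.15)]{Ja3} for the correspondence on simples; your argument unpacks the same ingredients in slightly more detail.
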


\begin{proof} This follows directly from \cite[(II.10.5)]{Ja3} in conjunction with Steinberg's tensor product theorem \cite[(II.3.17)]{Ja3} and \cite[(II.3.15)]{Ja3}. \end{proof}

\bigskip
\noindent
Thus, for the purposes of Auslander-Reiten theory, it suffices to consider the blocks $\cB_{i,0}^{(r)}$ for $r \ge 1$.

\bigskip

\begin{Lemma} \label{Rep2} The simple $\cB_{i,0}^{(r)}$-modules of complexity $2$ are given by the highest weights $p^r\!-\!p\!+\!i$ and $p^r\!-\!2\!-\!i$. \end{Lemma}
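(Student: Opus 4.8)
The plan is to reduce the computation to the blocks $\cB_{i,0}^{(r)}$ (already the standing case) and to identify the complexity of a simple module $L_r(\lambda)$ with the dimension of its support variety $\cV_{\SL(2)_r}(L_r(\lambda))$, using the SFB theory recalled in Section~0. Since $\cx_{\SL(2)_r}(L_r(\lambda)) = \dim \cV_{\SL(2)_r}(L_r(\lambda))$ and $\dim \SL(2)_r = 3$ forces $\cx_{\SL(2)_r}(L_r(\lambda)) \le 3$ with the Steinberg module $L_r(p^r-1)$ being projective (complexity~$0$), every non-projective simple module in $\cB_{i,0}^{(r)}$ has complexity $1$, $2$, or $3$. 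So the task is to single out exactly those $\lambda \in \cB_{i,0}^{(r)}$, i.e.\ those $\lambda$ whose $p$-adic digits satisfy $\lambda_0 \in \{i, p-2-i\}$, for which the support variety is two-dimensional. The two claimed weights are $p^r - p + i$ (digits $(i, p-1, \dots, p-1)$) and $p^r - 2 - i$ (digits $(p-2-i, p-1,\dots,p-1)$): in both, only the bottom digit is in $\{i,p-2-i\}$ and all higher digits equal $p-1$.

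First I would invoke Steinberg's tensor product theorem to write $L_r(\lambda) \cong L_1(\lambda_0) \otimes_k L_1(\lambda_1)^{[1]} \otimes_k \cdots \otimes_k L_1(\lambda_{r-1})^{[r-1]}$ and then use the SFB rank-variety description of tensor products together with the Frobenius-twist behavior of $V_r(\SL(2))$ (the iterated Frobenius identifications from \cite[(1.13)]{SFB1}, as used in Lemma~\ref{OPG2}). The key input is the known rank variety of a simple $\SL(2)_1$-module $L_1(j)$: for $1 \le j \le p-1$ the module $L_1(j)$ is either projective ($j=p-1$, Steinberg) or has one-dimensional rank variety, and crucially the non-zero line depends on whether $j$ lies in a "restricted" range — this is exactly the computation underlying \cite[(7.9)]{SFB2}. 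Tracking how these lines in the factors $V_r(\SL(2)) \cong \mathbb A^r$ assemble (the support variety of a tensor product being governed by the union/intersection behavior of rank varieties via \cite[(6.x)]{SFB2}) will show that $\cV_{\SL(2)_r}(L_r(\lambda))$ is the union of coordinate-type lines indexed by those digits $\lambda_i$ that are "critical," and that this union is a line exactly when precisely one digit is critical while the rest are $p-1$ — which, within $\cB_{i,0}^{(r)}$ (forcing $\lambda_0 \in \{i,p-2-i\}$, hence $\lambda_0 \ne p-1$), means all of $\lambda_1,\dots,\lambda_{r-1}$ equal $p-1$, i.e.\ $\lambda = p^r-p+i$ or $\lambda = p^r-2-i$.

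The main obstacle I expect is the precise bookkeeping of rank varieties under tensor products of Frobenius twists: one needs that for a non-restricted twisted factor $L_1(\lambda_j)^{[j]}$ the contribution to $V_r(\SL(2))_{L_r(\lambda)}$ is a genuine line in the $j$-th "Frobenius layer," and that these lines do not collapse or merge, so that the dimension of the total rank variety equals the number of non-Steinberg digits. This requires care because the rank variety of a tensor product is $\bigcap$ of the individual rank varieties in general but here the twists make the factors "independent," and one must also rule out degenerate coincidences among the lines. I would handle this by an explicit parametrization of $V_r(\SL(2))$ following \cite[\S1, \S6]{SFB2} and a direct check, or alternatively by induction on $r$ using the Morita equivalence of Lemma~\ref{Rep1} combined with the already-understood $r=1$ case $\cB_{i,0}^{(1)} = \{i, p-2-i, p-1\}$ (where $L_1(i)$ and $L_1(p-2-i)$ both have complexity $1$, and one instead checks the complexity-$2$ simples appear only upon passing to higher $r$, contributed by the nontrivial digit in layer $0$ together with a single nontrivial higher digit). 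Given that Lemma~\ref{Rep2} is stated for $\cB_{i,0}^{(r)}$ after the reduction via Lemma~\ref{Rep1}, the cleanest route is probably the inductive one, peeling off the top Frobenius twist and invoking the established rank-variety calculus.
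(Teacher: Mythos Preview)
Your overall strategy --- compute the SFB rank variety of $L_r(\lambda)$ and read off the complexity as its dimension --- is exactly the paper's, but your execution rests on incorrect claims. The variety $V_r(\SL(2))$ is not $\mathbb{A}^r$; it is the variety of $r$-tuples of pairwise commuting elements of $\cN(\fsl(2))$, which has dimension $r+1$ (any two nonzero commuting nilpotents in $\fsl(2)$ are proportional). More seriously, for $0 \le j \le p-2$ the $\SL(2)_1$-module $L_1(j)$ has \emph{two}-dimensional rank variety, not a line: being the restriction of an $\SL(2)$-module, its rank variety is an $\SL(2)$-stable nonzero cone in $\cN(\fsl(2))$, hence all of $\cN(\fsl(2))$. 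Thus your assertion that $L_1(i)$ and $L_1(p-2-i)$ have complexity $1$ is false --- they have complexity $2$, which is precisely what the Lemma says for $r=1$.

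The paper bypasses all tensor-product bookkeeping by citing directly the formula \cite[(7.8)]{SFB2}:
\[ V_r(\SL(2))_{L_r(\lambda)} \cong \{(x_0,\ldots,x_{r-1}) \in \cN(\fsl(2))^r : [x_i,x_j]=0 \ \forall\, i,j,\ \ x_{r-i-1}=0 \text{ whenever } \lambda_i = p-1\}. \]
If exactly $k$ of the digits $\lambda_i$ differ from $p-1$, this is the commuting variety of $k$-tuples in $\cN(\fsl(2))$, of dimension $k+1$; hence $\cx_{\SL(2)_r}(L_r(\lambda)) = 2$ iff $k=1$. Since membership in $\cB_{i,0}^{(r)}$ forces $\lambda_0 \in \{i,p-2-i\} \ne p-1$, the condition becomes $\lambda_1=\cdots=\lambda_{r-1}=p-1$, giving the two stated weights. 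Your heuristic ``dimension equals the number of non-Steinberg digits'' is off by one, and the support variety is not a union of coordinate lines but a commuting-variety slice; your argument reaches the right conclusion only because these two errors happen to cancel.
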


\begin{proof} Let $\cN(\fsl(2))$ be the nullcone of the Lie algebra $\fsl(2)$, that is,
\[ \cN(\fsl(2)) := \{ x \in \fsl(2) \ ; \ x^{[p]} = 0\}.\]
Suppose that $L_r(\lambda)$ is a simple $\cB_{i,0}^{(r)}$-module. Thanks to \cite[(7.8)]{SFB2}, we have
\[ V_r(\SL(2))_{L_r(\lambda)} \cong \{ (x_0,\ldots, x_{r-1}) \in \cN(\fsl(2))^r \ ; \ [x_i,x_j] = 0 \ \ \forall \ i,j \ , \ x_{r-i-1} = 0 \ \text{for} \ \lambda_i =p\!-\!1\}.\]
Direct computation (see \cite[p.112f]{Fa3}) shows that $\dim V_r(\SL(2))_{L_r(\lambda)} = 2$ if and only if there exists exactly one coefficient $\lambda_i \ne p\!-\!1$. As $L_r(\lambda)$
belongs to $\cB_{i,0}^{(r)}$, we conclude that $\cx_{\SL(2)_r}(L_r(\lambda)) = 2$ if and only if $\lambda_0 \in \{i,p\!-\!2\!-\!i\}$ is the only coefficient $\ne p\!-\!1$, as desired.
\end{proof}

\bigskip
\noindent
We record some structural features of the corresponding principal indecomposable $\SL(2)_r$-modules. Given $\lambda \in \{0,\ldots,p^r\!-\!1\}$, we let $P_r(\lambda)$ be the projective
cover of $L_r(\lambda)$. If $\lambda \ne p^r\!-\!1$, then $P_r(\lambda)$ is not simple, and we consider the heart
\[ \Ht_r(\lambda) = \Rad(P_r(\lambda))/\Soc(P_r(\lambda))\]
of $P_r(\lambda)$. By work of Jeyakumar \cite{Je}, each principal indecomposable $\SL(2)_1$-module $P_1(\lambda)$ has the structure of an $\SL(2)$-module, which extends the given $\SL(2)_1$-structure. Since $p\ge 3$, this structure is unique, cf.\ \cite[(II.11.11)]{Ja3}.

\bigskip

\begin{Lemma} \label{Rep3} Let $\lambda = \lambda_0 + \sum_{i=1}^{r-1}(p\!-\!1)p^i$, where $0 \le \lambda_0 \le p\!-\!2$. Then the following statements hold:

{\rm (1)} \ If $r \ge 2$, then $\Ht_r(\lambda)$ is indecomposable, with $\Soc(\Ht_r(\lambda))$ being simple.

{\rm (2)} \ The composition factors of $\Ht_r(\lambda)$ are of the form $L_r(\mu)$, with
\[\mu \in \{p\!-\!2\!-\!\lambda_0+(p\!-\!2)p^{\ell\!-\!1}+\sum_{i=\ell}^{r-1}(p\!-\!1)p^i \ ; \ 1< \ell \le r\} \cup \{p\!-\!2\!-\!\lambda_0\}.\] \end{Lemma}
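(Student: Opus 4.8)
The structure is a weight $\lambda = \lambda_0 + \sum_{i\ge 1}(p-1)p^i$ with $0\le\lambda_0\le p-2$, i.e. $\lambda = p^r - 1 - (p-1-\lambda_0)$, so $L_r(\lambda)$ sits in a block $\cB^{(r)}_{i,0}$ of the type controlled by Lemma~\ref{Rep1} (with $s=0$), and in fact $\lambda$ has exactly the ``all but the bottom digit equal to $p-1$'' shape. The natural device is the Steinberg decomposition $P_r(\lambda) \cong P_1(\lambda_0)\otimes_k \St_{r-1}^{\,[1]}$ coming from \cite[(II.11.11)]{Ja3} (here $\St_{r-1}$ is the $(r-1)$st Steinberg module, which is projective and simple over $G_{r-1}$, so the tensor product is $G_r$-projective and has the right top $L_1(\lambda_0)\otimes L_{r-1}((p^{r-1}-1)\rho)^{[1]} \cong L_r(\lambda)$ by Steinberg's tensor product theorem). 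First I would record this isomorphism, together with the $\SL(2)$-module structure on $P_1(\lambda_0)$ furnished by Jeyakumar's theorem and its uniqueness for $p\ge 3$.

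\textbf{Proof of (2).} Granting the decomposition, the composition factors of $\Ht_r(\lambda)$ are obtained by deleting $\Soc$ and $\Top$ from $P_1(\lambda_0)\otimes \St_{r-1}^{[1]}$. The Loewy/composition structure of $P_1(\lambda_0)$ for $0\le\lambda_0\le p-2$ is classical (\cite[(II.3.~)]{Ja3} / Jeyakumar): its composition factors are $L_1(\lambda_0)$ (top and socle) together with, reading up the radical series, the factors $L_1(p-2-\lambda_0)$, then a ``twisted'' layer $L_1(\lambda_0)\otimes L_1(1)^{[1]}$-type contributions, and so on — concretely the Weyl/dual-Weyl filtration of $P_1(\lambda_0)$ has factors $L_1(\lambda_0)$ and $L_1(p-2-\lambda_0)\otimes L_1(j)^{[1]}$ for a short range of $j$. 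Tensoring with $\St_{r-1}^{[1]} = L_{r-1}((p^{r-1}-1)\rho)^{[1]}$ and applying Steinberg's tensor product theorem turns each such twisted factor $L_1(\nu_0)\otimes L_{r-1}(\nu')^{[1]}$ into $L_r(\nu_0 + p\nu')$; unwinding the $p$-adic digits gives precisely the weights listed, namely $\mu = p-2-\lambda_0$ (from the $j=0$ contribution, untwisted part tensored with Steinberg), and $\mu = p-2-\lambda_0 + (p-2)p^{\ell-1} + \sum_{i=\ell}^{r-1}(p-1)p^i$ for $1<\ell\le r$, the index $\ell$ tracking which Frobenius twist the ``defect'' digit $p-2$ lands in. This is essentially a bookkeeping computation in $p$-adic expansions once the factors of $P_1(\lambda_0)$ are written down; I would present it as such rather than redo Jeyakumar's analysis.

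\textbf{Proof of (1).} For $r\ge 2$, indecomposability of $\Ht_r(\lambda)$ and simplicity of its socle: since $P_r(\lambda)$ is the projective cover of $L_r(\lambda)$ and $k\SL(2)_r$ is symmetric (\cite[(2.1)]{FR}), $\Soc(P_r(\lambda)) \cong L_r(\lambda)$ is simple and $\Rad(P_r(\lambda))/\Soc(P_r(\lambda))$ loses that unique socle layer. I would argue $\Soc(\Ht_r(\lambda))$ is simple by showing $\dim_k \Ext^1_{\SL(2)_r}(L_r(\lambda), -)$ restricted to the relevant factors is $1$, equivalently that the second socle layer of $P_r(\lambda)$ is simple; this again descends to a statement about $P_1(\lambda_0)$, whose second socle layer is the simple $L_1(p-2-\lambda_0)$ (for $\lambda_0 < p-1$), tensored with $\St_{r-1}^{[1]}$. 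Indecomposability then follows: if $\Ht_r(\lambda)$ split, $P_r(\lambda)$ would have a proper nonzero projective direct summand between its radical and socle, impossible for an indecomposable projective; more cleanly, a module with simple socle is indecomposable, so once simplicity of $\Soc(\Ht_r(\lambda))$ is established, (1) is immediate.

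\textbf{Main obstacle.} The one genuinely delicate point is pinning down the \emph{second socle (equivalently second radical) layer} of $P_r(\lambda)$ precisely enough to see it is simple — i.e. controlling $\Ext^1$ between $L_r(\lambda)$ and the simples appearing in $\Ht_r(\lambda)$. For $r=1$ this is Jeyakumar's explicit description of $P_1(\lambda_0)$; the work is to propagate it through the tensor-by-Steinberg functor and the $p$-adic relabelling without collapsing distinct factors, and to make sure the case $\lambda_0 = p-2$ (where $p-2-\lambda_0 = 0$) does not produce a degenerate socle. Everything else — the $P_r(\lambda)\cong P_1(\lambda_0)\otimes\St_{r-1}^{[1]}$ splitting, symmetry of $k\SL(2)_r$, Steinberg's tensor product theorem — is quoted from \cite{Ja3} and the references already in hand.
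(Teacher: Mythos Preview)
Your argument for part (1) has a genuine gap. You claim that the second socle layer of $P_1(\lambda_0)$, viewed as an $\SL(2)_1$-module, is the \emph{simple} module $L_1(p\!-\!2\!-\!\lambda_0)$, and that tensoring with $\St_{r-1}^{[1]}$ then yields a simple second socle layer for $P_r(\lambda)$. But over $\SL(2)_1$ the heart $\Ht_1(\lambda_0)$ is $L_1(p\!-\!2\!-\!\lambda_0)^{\oplus 2}$, not simple; equivalently, the second socle layer of $P_1(\lambda_0)$ consists of \emph{two} copies of $L_1(p\!-\!2\!-\!\lambda_0)$. This is precisely why the hypothesis $r\ge 2$ is required in the statement --- your reduction to $r=1$ lands on a false assertion. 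Moreover, even if one uses the $\SL(2)$-module structure on $P_1(\lambda_0)$ (where the middle layer \emph{is} the single simple $L(2p\!-\!2\!-\!\lambda_0)$), the step ``tensoring with $\St_{r-1}^{[1]}$ preserves the socle series'' is not automatic: tensoring with a simple module need not commute with taking socles.

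The paper's approach fills exactly this gap. It starts from the same decomposition $P_r(\lambda)\cong P_1(\lambda_0)\otimes_k L_1(p\!-\!1)^{[1]}\otimes_k\cdots\otimes_k L_1(p\!-\!1)^{[r-1]}$ and identifies $\Ht_r(\lambda)\cong L(2p\!-\!2\!-\!\lambda_0)|_{\SL(2)_r}\otimes_k L_1(p\!-\!1)^{[1]}\otimes_k\cdots\otimes_k L_1(p\!-\!1)^{[r-1]}$ using the uniserial $\SL(2)$-structure of $P_1(\lambda_0)$. The key additional step (which you are missing) is Steinberg's theorem plus the identity $L_1(1)\otimes_k L_1(p\!-\!1)\cong P_1(p\!-\!2)$ from \cite[(3.1)]{AJL}: this absorbs the Frobenius-twisted $L_1(1)^{[1]}$ coming from $L(2p\!-\!2\!-\!\lambda_0)$ into the next tensor factor, exhibiting $\Ht_r(\lambda)$ as a submodule of the principal indecomposable $P_1(p\!-\!2\!-\!\lambda_0)\otimes_k P_1(p\!-\!2)^{[1]}\otimes_k L_1(p\!-\!1)^{[2]}\otimes_k\cdots$. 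A submodule of an indecomposable projective over a self-injective algebra has simple socle, and indecomposability follows at once. Your sketch for part (2) is in the right spirit but vague; the paper simply invokes \cite[(3.4)]{AJL} together with Steinberg's tensor product theorem for the list of composition factors.
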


\begin{proof} (1) Thanks to \cite[(1.1)]{HJ}, the principal indecomposable $\SL(2)_r$-module with highest weight $\lambda$ has the form
\[ P_r(\lambda) = P_1(\lambda_0)\!\otimes_k\! L_1(p\!-\!1)^{[1]}\!\otimes_k\cdots\otimes_k\!L_1(p\!-\!1)^{[r-1]}.\]
As shown in \cite[Thm.3]{Hu2}, the $\SL(2)$-module $P_1(\lambda_0)$ has composition factors (from top to bottom) $L(\lambda_0),L(2p\!-\!2\!-\! \lambda_0),L(\lambda_0)$, where
$L(\gamma)$ denotes the simple $\SL(2)$-module with highest weight $\gamma \in \NN_0$. From the isomorphism $L(\lambda_0)|_{\SL(2)_1} \cong L_1(\lambda_0)$ (cf.\
 \cite[(II.3.15)]{Ja3}), we obtain
\[ \Ht_r(\lambda) \cong L(2p\!-\!2\!-\!\lambda_0)|_{\SL(2)_r}\!\otimes_k\! L_1(p\!-\!1)^{[1]}\!\otimes_k\cdots\otimes_k\!L_1(p\!-\!1)^{[r-1]}.\]
Steinberg's tensor product theorem in conjunction with \cite[(II.3.15)]{Ja3}, \cite[(3.1)]{AJL} and $r\ge 2$ now yields
\begin{eqnarray*}
\Ht_r(\lambda) & \cong &  L_1(p\!-\!2\!-\!\lambda_0)\!\otimes_k\!L_1(1)^{[1]}\!\otimes_k\!L_1(p\!-\!1)^{[1]}\!\otimes_k\cdots\otimes_k\!L_1(p\!-\!1)^{[r-1]} \\
& \cong & L_1(p\!-\!2\!-\!\lambda_0)\!\otimes_k\!P_1(p\!-\!2)^{[1]}\!\otimes_k\!L_1(p\!-\!1)^{[2]}\!\otimes_k \cdots\otimes_k\!L_1(p\!-\!1)^{[r-1]}.
\end{eqnarray*}
Since the latter module is contained in the principal indecomposable $\SL(2)_r$-module
\[ P_1(p\!-\!2\!-\!\lambda_0)\!\otimes_k\!P_1(p\!-\!2)^{[1]}\!\otimes_k\!L_1(p\!-\!1)^{[2]}\!\otimes_k \cdots\otimes_k\!L_1(p\!-\!1)^{[r-1]}\]
(see \cite[(1.1)]{HJ}), we conclude that $\Ht_r(\lambda)$ is indecomposable with simple socle.

(2) This is a direct consequence of \cite[(3.4)]{AJL}, \cite[(II.3.15)]{Ja3} and Steinberg's tensor product theorem. \end{proof}

\bigskip
\noindent
We turn to the Auslander-Reiten theory of $\SL(2)_r$ and begin by determining the stable AR-components of Euclidean tree class. We recall that the standard almost split sequence
\[ (0) \lra \Rad(P_r(\lambda)) \lra \Ht_r(\lambda)\oplus P_r(\lambda) \lra P_r(\lambda)/\Soc(P_r(\lambda)) \lra (0)\]
is the only almost split sequence involving the principal indecomposable module $P_r(\lambda)$ (cf.\ \cite[(V.5.5)]{ARS}).

\bigskip

\begin{Proposition} \label{Rep4} Let $\Theta \subseteq \Gamma_s(\SL(2)_r)$ be a component of Euclidean tree class. Then $\Theta \cong \ZZ[\tilde{A}_{12}]$. \end{Proposition}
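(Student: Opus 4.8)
The plan is to show that a component $\Theta$ of Euclidean tree class must have tree class $\tilde A_{12}$ by systematically ruling out all other Euclidean diagrams. The engine for this is Corollary \ref{EC2}: every module in $\Theta$ has a composition factor lying in $\Upsilon_\Theta = \Theta \cup \Omega_{\SL(2)_r}(\Theta)$. Combined with the observation that $k\SL(2)_r$ is symmetric (so $\tau_{\SL(2)_r} = \Omega^2_{\SL(2)_r}$), this will force strong numerical constraints on the number of vertices of $\Theta$, matched against the number of simple modules in a block.

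First I would recall that a component $\Theta$ lies in a single block $\cB$ of $\Dist(\SL(2)_r)$, and by Lemma \ref{Rep1} it suffices to analyze the blocks $\cB^{(r)}_{i,0}$ for $r \ge 1$ (plus the simple projective block $\cB^{(r)}_r$, which has no stable part). Each such block has exactly two simple modules $L_r(\lambda)$, namely those with $\lambda_0 \in \{i, p-2-i\}$ and all higher $p$-adic digits equal to $p-1$; and by Lemma \ref{Rep2} these are precisely the two simples of complexity $2$ in the block (so in particular a Euclidean component, which must consist of modules that are neither periodic nor of complexity $\ge 3$ for the tree class to be infinite of the right shape, has its constraints pinned down). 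Now Corollary \ref{EC2} says every $M \in \Theta$ has a composition factor in $\Upsilon_\Theta$; since the block has only two simple modules, and since simple modules of complexity $2$ cannot lie in a Euclidean component (a Euclidean component has complexity $1$ modules throughout — indeed by Webb's theorem the tree class of a component containing a module of complexity $\ge 3$ is $A_\infty$, and a complexity $2$ module would force, via the relevant Ext-growth estimates, the tree class to be $A_\infty$, $A^\infty_\infty$ or $D_\infty$ rather than Euclidean), the composition factors in $\Upsilon_\Theta$ must be among the \emph{non-simple} situation — forcing $\Theta$ itself to contain one or both simple modules, or forcing $\Omega_{\SL(2)_r}(\Theta)$ to, which since $\Omega$ preserves Euclidean tree class again lands a simple module in a Euclidean component. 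I would then invoke Corollary \ref{UB3}(1): if $M = L_r(\lambda)$ is simple and $\Omega^{2np^{r-1}}_{\SL(2)_r}(M)$ has simple top, then it is $\Omega$-periodic; but a module on a Euclidean component is never $\Omega$-periodic (periodicity forces $\tau$-periodicity, hence by \cite[Theorem]{HPR} a finite Dynkin or $A_\infty$ tree class), so we need the top of a high Heller shift of the simple module to be non-simple — and from the explicit structure of the principal indecomposables in Lemma \ref{Rep3} one reads off that $\Omega_{\SL(2)_r}(L_r(\lambda)) = \Rad P_r(\lambda)$ has simple top exactly when $\Ht_r(\lambda)$ has simple socle, which Lemma \ref{Rep3}(1) guarantees for $r \ge 2$. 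Tracking the Loewy structure through the almost split sequence $(0) \to \Rad P_r(\lambda) \to \Ht_r(\lambda)\oplus P_r(\lambda) \to P_r(\lambda)/\Soc \to (0)$ shows that the simple modules in these blocks sit at the mouth of their components in a configuration whose only Euclidean possibility is $\tilde A_{12}$ — the tube-like shape $\ZZ[\tilde A_{12}]$ arising precisely because each block $\cB^{(r)}_{i,0}$ has two simples exchanged by the natural symmetry $\lambda_0 \leftrightarrow p-2-\lambda_0$ visible in Lemma \ref{Rep3}(2).

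The heart of the argument, and the step I expect to be the main obstacle, is the \textbf{combinatorial bookkeeping that pins the tree class to $\tilde A_{12}$ rather than $\tilde D_n$ or $\tilde E_n$.} The Euclidean diagrams all share the feature that $\ZZ[\bar T_\Theta]/\Pi$ is a finite or ``tube-like'' quiver only for a restricted admissible group $\Pi$, and the key numerical invariant is the number $12$ appearing in $\tilde A_{12}$, which should emerge from the Cartan-matrix / dimension data of the blocks $\cB^{(r)}_{i,0}$ — concretely, from the fact that the relevant subadditive (indeed additive) function on $\Upsilon_\Theta$ coming from $\dim_k \Ext^1_{\SL(2)_r}(Z_i,-)$ in the proof of Proposition \ref{EC1} has its growth controlled, via \cite[(2.4)]{We}, in a way that only $\tilde A_{12}$ accommodates. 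I would pin this down by: (i) using Lemma \ref{Rep3}(2) to compute explicitly the composition series of $\Ht_r(\lambda)$ and hence the shape of the AR-sequences near the mouth; (ii) observing that the two simple modules of the block, together with their Heller translates, generate a periodic-looking pattern of $\tau$-orbits; and (iii) counting. The main risk is that the counting is delicate for small $r$ (the case $r=1$, where $\Ht_r(\lambda) = L(2p-2-\lambda_0)|_{\SL(2)_1}$ is simple, must be handled separately, and may in fact produce no Euclidean component at all), so I would dispatch $r=1$ by direct inspection of the $\SL(2)_1$ AR-quiver (known from Fischer/Premet-type results) and then run the general argument for $r \ge 2$ using the inductive structure provided by Lemma \ref{Rep1}.

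Finally I would assemble these pieces: a Euclidean component $\Theta$ lies in some $\cB^{(r)}_{i,0}$ (up to the Morita equivalence of Lemma \ref{Rep1}); it must contain a simple module by Corollary \ref{EC2} and the complexity-$2$ exclusion; the simple module has non-simple-topped Heller shifts by Lemma \ref{Rep3}, so is not $\Omega$-periodic, consistent with Euclidean type; and the explicit AR-structure near this simple module, governed by the almost split sequence through $P_r(\lambda)$ and the composition data of Lemma \ref{Rep3}(2), forces the tree class to be $\tilde A_{12}$ with the appropriate admissible group, i.e.\ $\Theta \cong \ZZ[\tilde A_{12}]$. I would close by noting consistency with the hypothesis $p \ge 3$ used throughout (needed for the uniqueness of the $\SL(2)$-structure on $P_1(\lambda)$ and for Lemma \ref{Rep2}).
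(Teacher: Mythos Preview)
Your proposal contains a fundamental error about the relationship between complexity and tree class. You claim that ``a Euclidean component has complexity $1$ modules throughout'' and that ``a complexity $2$ module would force \ldots\ the tree class to be $A_\infty$, $A^\infty_\infty$ or $D_\infty$ rather than Euclidean.'' This is backwards. Modules in a Euclidean component have complexity exactly $2$: complexity $1$ means periodicity, which by \cite[Theorem]{HPR} forces finite Dynkin or $A_\infty$ tree class, while Webb's growth argument \cite[(2.4)]{We} shows that Euclidean components consist of modules of complexity $2$ (and, as you note, complexity $\ge 3$ gives $A_\infty$). Because of this inversion, your subsequent reasoning --- that simple modules of complexity $2$ cannot lie in $\Upsilon_\Theta$, and that one must therefore do combinatorial bookkeeping on the shape of $\Theta$ --- goes in the wrong direction.

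The paper's argument is in fact much shorter and runs as follows. After reducing to $\cB^{(r)}_{i,0}$ via Lemma~\ref{Rep1}, one uses \cite[(2.4)]{We} to see that $\Theta$ is attached to a projective, so (after applying $\Omega^{-1}$) $\Theta$ contains a simple module $L_r(\lambda)$; the same reference gives $\cx_{\SL(2)_r}(M)=2$ for every $M\in\Upsilon_\Theta$, so Lemma~\ref{Rep2} forces $\lambda=\lambda_0+(p{-}1)p+\cdots+(p{-}1)p^{r-1}$. Now assume $r\ge 2$. By Lemma~\ref{Rep3}(1), $\Ht_r(\lambda)$ is indecomposable, hence lies in $\Omega_{\SL(2)_r}(\Theta)\subseteq\Upsilon_\Theta$. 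But Lemma~\ref{Rep3}(2) together with Lemma~\ref{Rep2} shows that \emph{every} composition factor of $\Ht_r(\lambda)$ has complexity $\ne 2$, hence none of them belongs to $\Upsilon_\Theta$. This contradicts Corollary~\ref{EC2}. Therefore $r=1$, and the classical path-algebra description of $\cB^{(1)}_{i,0}$ (Morita equivalent to the trivial extension of the Kronecker algebra) gives $\Theta\cong\ZZ[\tilde A_{12}]$ directly. No case analysis on $\tilde D_n$ or $\tilde E_n$, and no use of Corollary~\ref{UB3}, is required.
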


\begin{proof} In view of Lemma \ref{Rep1}, we may assume that $\Theta \subseteq \Gamma_s(\cB_{i,0}^{(r)})$. According to \cite[(2.4)]{We}, the component $\Theta$ is attached to a
principal indecomposable module, so by passing to the isomorphic component $\Omega_{\SL(2)_r}^{-1}(\Theta)$ we may assume without loss of generality that $\Theta$ contains a simple
module $L_r(\lambda)$.

Another application of \cite[(2.4)]{We} implies $\cx_{\SL(2)_r}(M) = 2$ for every $M \in \Upsilon_\Theta$, so that Lemma \ref{Rep2} yields
\[ \lambda = \lambda_0 + (p\!-\!1)p + \cdots +(p\!-\!1)p^{r-1} \ \ ; \ \ \lambda_0 \in \{i,p\!-\!2\!-\!i\}.\]
Suppose that $r \ge 2$. By virtue of Lemma \ref{Rep3}(1), the module $\Ht_r(\lambda)$ is indecomposable and $\Ht_r(\lambda) \in \Omega_{\SL(2)_r}(\Theta) \subseteq
\Upsilon_\Theta$.

Owing to Lemma \ref{Rep3}(2) and Lemma \ref{Rep2} the composition factors of $\Ht_r(\lambda)$ have complexity $\ne 2$. Accordingly, they do not belong to $\Upsilon_\Theta$, and
Corollary \ref{EC2} yields a contradiction. Consequently, $r=1$, and the assertion follows from the well-known path algebra presentation of $\cB_{i,0}^{(1)}$ (see \cite{Dr,Fi,Ru}), which
establishes a Morita equivalence between $\cB_{i,0}^{(1)}$ and the trivial extension of the Kronecker algebra $k[\bullet \rightrightarrows \bullet]$. \end{proof}

\bigskip

\begin{Remark} The proof of the foregoing result also shows that $\Theta$ belongs to a block of $k\SL(2)_r$ of tame representation type (see also Theorem \ref{BF3} below). \end{Remark}

\bigskip
\noindent
Recall that a non-projective indecomposable $\SL(2)_r$-module $M$ is referred to as quasi-simple, provided

(a) \ the module $M$ belongs to a component of tree class $A_\infty$, and

(b) \ $M$ has exactly one predecessor in $\Gamma_s(\SL(2)_r)$.

\bigskip

\begin{Proposition} \label{Rep5} Let $S$ be a simple $\SL(2)_r$-module of complexity $\cx_{\SL(2)_r}(S)=2$, $\Theta \subseteq \Gamma_s(\SL(2)_r)$ be the component containing $S$.
Then $\Theta \cong \ZZ[A_\infty], \ \ZZ[\tilde{A}_{12}]$. If $\Theta \cong \ZZ[A_\infty]$, then $S$ is quasi-simple. \end{Proposition}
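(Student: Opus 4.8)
The statement to prove is Proposition~\ref{Rep5}: for a simple $\SL(2)_r$-module $S$ of complexity $2$, the component $\Theta$ containing $S$ is either $\ZZ[A_\infty]$ or $\ZZ[\tilde A_{12}]$, and in the first case $S$ is quasi-simple. My plan is to combine the Euclidean classification already obtained (Proposition~\ref{Rep4}) with general structure theory of AR-components of self-injective algebras, using the block reduction of Lemma~\ref{Rep1} to assume $\Theta \subseteq \Gamma_s(\cB_{i,0}^{(r)})$.

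\textbf{Step 1: restrict the possible tree classes.} Since $k\SL(2)_r$ is symmetric and $S$ has complexity $2$, the component $\Theta$ is not periodic, so by the Happel--Preiser--Ringel theorem (\cite{HPR}, as invoked in the proof of Corollary~\ref{EC2}) the tree class $\bar T_\Theta$ is either $A_\infty$, $A_\infty^\infty$, $D_\infty$, or one of the Euclidean diagrams; infinite Dynkin diagrams with a nontrivial automorphism group acting as the admissible group are also excluded when the relevant structure results apply. The key input here is that $\Theta$ contains a simple module, hence (by the standard almost split sequence recalled before Proposition~\ref{Rep4}, via Webb's argument \cite[(2.4)]{We}) $\Theta$ contains a module attached to a principal indecomposable module, which forces additivity of the length function and bounds the number of arrows at each vertex. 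I would run the Webb-style argument to show that a vertex of $\Theta$ has at most two predecessors, ruling out $D_\infty$ and $\tilde D_n$, $\tilde E_n$; the case analysis then leaves only $A_\infty$, $A_\infty^\infty$, and $\tilde A_{12}$.

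\textbf{Step 2: eliminate $A_\infty^\infty$.} A component of type $\ZZ[A_\infty^\infty]$ has no module with exactly one predecessor and no projective can be ``attached'' to it in the sense of Webb; but by \cite[(2.4)]{We} the component of a simple module for $k\SL(2)_r$ must be attached to a principal indecomposable module, i.e.\ there is an almost split sequence with a projective middle term adjacent to $\Theta$ (up to applying $\Omega$). This is incompatible with $\ZZ[A_\infty^\infty]$, which has no boundary. Hence $\Theta \cong \ZZ[A_\infty]$ or $\ZZ[\tilde A_{12}]$, where the Euclidean possibility is exactly the one already classified in Proposition~\ref{Rep4}.

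\textbf{Step 3: quasi-simplicity in the $A_\infty$ case.} If $\Theta \cong \ZZ[A_\infty]$, then it has a boundary of modules with exactly one predecessor. I must show $S$ itself sits on this boundary. Since $S$ is simple, the only almost split sequence involving $P_r(\lambda)$ is the standard one recalled above, so the module adjacent to the projective—equivalently, after a shift by $\Omega_{\SL(2)_r}$, the simple module $S$ together with $\Omega_{\SL(2)_r}(S)$—has at most one non-projective predecessor in $\Gamma_s(\SL(2)_r)$. One checks via the heart $\Ht_r(\lambda)$ and Lemma~\ref{Rep3} that this predecessor count is exactly one, placing $S$ at the end of the component; then $S$ is quasi-simple by definition. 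The main obstacle I anticipate is Step~1: carefully extracting from Webb's additivity machinery the bound of at most two predecessors at a vertex of $\Theta$ while keeping track of the $\nu$-twists hidden in $\Upsilon_\Theta$ (here harmless since $k\SL(2)_r$ is symmetric, so $\nu$ is trivial on components and $\Upsilon_\Theta = \Theta \cup \Omega_{\SL(2)_r}(\Theta)$), and making sure the exclusion of $D_\infty$ is airtight rather than merely plausible.
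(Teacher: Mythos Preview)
Your overall architecture is right, and Step~3 correctly identifies Lemma~\ref{Rep3} (indecomposability of the heart $\Ht_r(\lambda)$) as the key structural input. But Steps~1 and~2 as written do not work, and the paper's proof shows why the order of operations matters.

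\textbf{Step 2 is the clearest gap.} Being ``attached to a projective'' does \emph{not} exclude $\ZZ[A_\infty^\infty]$. A projective $P_r(\lambda)$ is attached to $\Theta$ via the standard almost split sequence, and the number of non-projective predecessors of $P_r(\lambda)/\Soc(P_r(\lambda))$ in $\Gamma_s(\SL(2)_r)$ equals the number of indecomposable summands of $\Ht_r(\lambda)$; nothing prevents this number from being $2$. What actually excludes $A_\infty^\infty$ is Lemma~\ref{Rep3}(1): for $r\ge 2$ the heart is indecomposable, so $\Omega^{-1}_{\SL(2)_r}(S)$ has exactly one predecessor, hence so does $S$. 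In $\ZZ[A_\infty^\infty]$ every vertex has two predecessors, contradiction. (For $r=1$ one is in the Euclidean case $\ZZ[\tilde A_{12}]$ by the Kronecker description.) Thus the indecomposability of the heart must be invoked \emph{before} Step~2, not after.

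\textbf{Step 1 does not eliminate $D_\infty$.} A generic ``Webb-style bound of at most two predecessors'' cannot do the job: the tree $D_\infty$ has two leaf vertices with exactly one neighbour, so the fact that $S$ has exactly one predecessor is perfectly compatible with $\bar T_\Theta = D_\infty$ --- it would simply place $S$ at a leaf. The paper therefore argues further: if $\bar T_\Theta = D_\infty$, then $\Ht_r(\lambda)$ sits at the branch point of $\Omega^{-1}_{\SL(2)_r}(\Theta)$ and must have three successors. Analysing the resulting almost split sequence terminating in some $Y$ with middle term $\Ht_r(\lambda)\oplus Q$ (for $Q$ projective or zero), one uses that $\Soc(\Ht_r(\lambda))$ is simple (Lemma~\ref{Rep3}(1)) together with \cite[(V.3.2)]{ARS} to force $Y$ simple of complexity $2$, and then Lemma~\ref{Rep2} and Lemma~\ref{Rep3}(2) show no such $Y$ can be a quotient of $\Ht_r(\lambda)$. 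This case analysis is the substantive content; your proposal flags it as a worry but offers no mechanism to carry it out.

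In short: move Lemma~\ref{Rep3} to the front. Its part~(1) gives exactly one predecessor for $S$ (killing $A_\infty^\infty$ and giving quasi-simplicity in the $A_\infty$ case), and its parts~(1) and~(2) together with Lemma~\ref{Rep2} are what drive the $D_\infty$ elimination. The Euclidean diagrams other than $\tilde A_{12}$ are handled wholesale by Proposition~\ref{Rep4}, not by a predecessor count.
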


\begin{proof} Since Morita equivalence preserves the complexity of a module, Lemma \ref{Rep1} allows us to assume that $\Theta \subseteq \Gamma_s(\cB_{i,0}^{(r)})$. As
$\cx_{\SL(2)_r}(S) = 2$, the component $\Theta$ is not finite and its tree class is not a finite Dynkin diagram (cf.\ \cite[(2.1)]{Fa3}). If the tree class $\bar{T}_\Theta$ is Euclidean, then
Proposition \ref{Rep4} implies $\Theta \cong \ZZ[\tilde{A}_{12}]$. Alternatively, $\bar{T}_\Theta \cong A_\infty, A_\infty^\infty$ or $D_\infty$ (see \cite[(1.3)]{Fa3}).

In view of Lemma \ref{Rep2}, we have $S \cong L_r(\lambda)$, where
\[ \lambda = \lambda_0 + (p\!-\!1)p + \cdots +(p\!-\!1)p^{r-1} \ \ ; \ \ \lambda_0 \in \{i,p\!-\!2\!-\!i\}.\]
If $r=1$, then $\Theta \cong \ZZ[\tilde{A}_{12}]$, a contradiction. Alternatively, Lemma \ref{Rep3} shows that $\Ht_r(\lambda)$ is indecomposable with simple socle. From the standard
almost split sequence involving $P_r(\lambda)$ we conclude that $P_r(\lambda)/\Soc(P_r(\lambda)) \cong \Omega^{-1}_{\SL(2)_r}(L_r(\lambda))$ has exactly one predecessor in
$\Gamma_s(\SL(2)_r)$. Since $\Omega_{\SL(2)_r}$ defines an automorphism of $\Gamma_s(\SL(2)_r)$, the simple module $S=L_r(\lambda)$ enjoys the same property. Thus,
$\bar{T}_\Theta \not \cong A_\infty^\infty$, and if $\bar{T}_\Theta \cong A_\infty$, then $S$ is quasi-simple.

It thus remains to rule out the case, where $\bar{T}_\Theta \cong D_\infty$. Assuming this isomorphism to hold, we infer from the above that $\Ht_r(\lambda)$ has $3$ successors in $\Psi
:= \Omega^{-1}_{\SL(2)_r}(\Theta)$. Accordingly, there exists an almost split sequence
\[ (0) \lra X \lra \Ht_r(\lambda) \oplus Q \lra Y \lra (0),\]
with $Y\not \cong P_r(\lambda)/L_r(\lambda)$, and with $Q \not \cong P_r(\lambda)$ being indecomposable projective, or zero. If $Q \ne (0)$, then $Q = P_r(\lambda')$ belongs to the
block of $L_r(\lambda)$ and the standard sequence yields $\Ht_r(\lambda) \cong \Ht_r(\lambda')$. Since $\bar{T}_\Theta \cong D_\infty$, \cite[(2.2)]{Fa3} implies
$\cx_{\SL(2)_r}(L_r(\lambda')) =2$, and Lemma \ref{Rep2} yields
\[ \lambda' = p-2-\lambda_0 + (p\!-\!1)p + \cdots +(p\!-\!1)p^{r-1}.\]
This, however, contradicts (2) of Lemma \ref{Rep3}.

We thus have an almost split sequence
\[ (0) \lra X \lra \Ht_r(\lambda) \lra Y \lra (0).\]
Since $\Soc(\Ht_r(\lambda))$ is simple, we may apply \cite[(V.3.2)]{ARS} to conclude that $Y$ is simple. As $\cx_{\SL(2)_r}(Y) = 2$ (cf.\ \cite[(2.2)]{Fa3}), Lemma \ref{Rep2} implies
$Y \in \{L_r(\lambda),L_r(\lambda')\}$, so that Lemma \ref{Rep3}(2) again leads to a contradiction.

Consequently, $\bar{T}_\Theta = A_\infty$, which, in view of $\cx_{\SL(2)_r}(L_r(\lambda)) = 2$, implies $\Theta \cong \ZZ[A_\infty]$ (see \cite[(2.1)]{Fa3}). \end{proof}

\bigskip

\section{Blocks and AR-Components}\label{S:BAR}
In this section we illustrate our results by considering blocks and Auslander-Reiten components of infinitesimal group schemes.

\subsection{Invariants of AR-Components}
We begin by showing that the notion of projective height gives rise to invariants of stable Auslander-Reiten components. Let $\fE \subseteq \fE(\cG)$ be a collection of elementary abelian
subgroups of $\cG$, and define
\[ \ph_\fE(M) := \max_{\cU \in \fE}\ph_\cU(M).\]

\bigskip

\begin{Prop} \label{NI1} Let $\cG$ be an infinitesimal group, $\Theta \subseteq \Gamma_s(\cG)$ be a connected component of its stable Auslander-Reiten quiver. Given $\fE
\subseteq \fE(\cG)$, we have
\[ \ph_\fE(M) = \ph_\fE(N)\]
for all $M,N \in \Theta$. \end{Prop}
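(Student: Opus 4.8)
The plan is to reduce the statement to the well-known fact that support-variety-type data are constant along almost split sequences, using the description of $\ph_\cU$ in terms of support varieties established in Section \ref{S:PH}. First I would recall the two elementary observations that make the invariance bookkeeping manageable. On one hand, by the definition of $\ph_\cU$ and by \cite[(7.1)]{SFB2} (as used in the proof of (\ref{PH1})), one has for each $\cU \cong \GG_{a(s)}$ in $\fE$ and each $1 \le t \le s$ the equivalence
\[
M|_{\cU_t} \ \text{is not projective} \iff \cV_{\cU_t}(M) \ne \{0\} \iff \cV_{\cU_t}(k) \subseteq \cV_\cG(M),
\]
the last step because $\cV_{\cU_t}(k)$ is a line, so $\cV_{\cU_t}(M) = \cV_\cG(M) \cap \cV_{\cU_t}(k)$ is either $\{0\}$ or all of $\cV_{\cU_t}(k)$. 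Hence $\ph_\cU(M)$, and therefore $\ph_\fE(M) = \max_{\cU \in \fE} \ph_\cU(M)$, is completely determined by which of the lines $\cV_{\cU_t}(k)$ (for $\cU \in \fE$, $1 \le t \le \height(\cU)$) are contained in the support variety $\cV_\cG(M)$. So it suffices to show that $\cV_\cG(M) = \cV_\cG(N)$ whenever $M$ and $N$ lie in the same component $\Theta$.

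Second I would invoke the standard fact that cohomological support varieties are constant on connected components of the stable Auslander--Reiten quiver. The mechanism is the one used repeatedly in the Auslander--Reiten theory of group schemes: if $(0) \to \tau_\cG(Z) \to E \to Z \to (0)$ is an almost split sequence, then $\cV_\cG(Z) = \cV_\cG(E) = \cV_\cG(\tau_\cG(Z))$, because $\cV_\cG$ is additive on direct sums and the support variety of the middle term of \emph{any} short exact sequence is contained in the union of the support varieties of the outer terms, while on the other hand $\tau_\cG = \Omega_\cG^2 \circ \nu_\cG$ with $\Omega_\cG$ and the Nakayama functor $\nu_\cG$ both preserving support varieties (the latter because $\nu_\cG$ is induced by an algebra automorphism and $\cV_\cG$ only depends on the module structure up to such twists, or more simply because $\nu_\cG$ has finite order so $\cV_\cG(\nu_\cG(M)) = \cV_\cG(M)$ follows from $\cx$-considerations). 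Here I would cite \cite{Be2} (e.g.\ the support-variety chapter) or \cite{Er} for the precise statement. Since $\Theta$ is connected and any two vertices are joined by a chain of arrows each of which sits in an almost split sequence, walking along such a chain gives $\cV_\cG(M) = \cV_\cG(N)$ for all $M, N \in \Theta$.

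Combining the two steps: for $M, N \in \Theta$ we have $\cV_\cG(M) = \cV_\cG(N)$; feeding this into the first paragraph, for every $\cU \in \fE$ and every $t$ the line $\cV_{\cU_t}(k)$ lies in $\cV_\cG(M)$ iff it lies in $\cV_\cG(N)$, hence $\ph_\cU(M) = \ph_\cU(N)$, hence $\ph_\fE(M) = \ph_\fE(N)$, as desired. The only point requiring genuine care — and what I expect to be the main obstacle — is justifying that the Nakayama twist does not disturb support varieties, i.e.\ that $\tau_\cG$ and not merely $\Omega_\cG^2$ preserves $\cV_\cG$; for a finite group scheme $k\cG$ is in general only weakly symmetric, not symmetric, so one genuinely needs $\nu_\cG$-invariance of support varieties, which follows because $\nu_\cG$ has finite order and $\cV_\cG(\nu_\cG(M))$, $\cV_\cG(M)$ have the same dimension and, more precisely, $\nu_\cG^\ell = \id$ forces $\cV_\cG(M) = \cV_\cG(\nu_\cG M) = \cdots$ to be a single orbit of size dividing $\ell$ under an action that must fix it since $\Phi_{\nu_\cG M}$ and $\Phi_M$ have the same kernel up to the automorphism of $\HH^\bullet(\cG,k)$ induced by $\nu_\cG$, which acts trivially on the reduced ring by \cite{SFB2}. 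Everything else is routine.
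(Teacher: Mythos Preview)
Your reduction strategy works, but there is a genuine slip and a difference in approach worth noting.

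\textbf{The slip.} The claim that $\cV_{\cU_t}(k)$ is a line is false for $t>1$: since $\cU_t \cong \GG_{a(t)}$ has $\cx_{\cU_t}(k)=t$, the variety $\cV_{\cU_t}(k)$ is $t$-dimensional. Consequently ``$\cV_{\cU_t}(M)$ is either $\{0\}$ or all of $\cV_{\cU_t}(k)$'' is wrong. Fortunately you do not need this dichotomy: from $\cV_{\cU_t}(M) = \cV_\cG(M)\cap \cV_{\cU_t}(k)$ it already follows that the condition ``$M|_{\cU_t}$ is not projective'' is determined by $\cV_\cG(M)$ alone, which is all your argument actually uses. With that correction your first paragraph goes through.

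\textbf{Comparison with the paper.} The paper does not pass through global support varieties at all. It works directly with the restrictions $M|_{\cU_s}$ and exploits one clean structural fact: the Nakayama automorphism $\id_{k\cG}\ast\zeta_\ell$ restricts to the identity on $k\cU_s$ for every unipotent $\cU_s$, because the modular function $\zeta_\ell$ restricts to the counit there. Hence $\nu_\cG(M)|_{\cU_s}=M|_{\cU_s}$, so $\tau_\cG(M)|_{\cU_s}\cong \Omega^2_{\cU_s}(M|_{\cU_s})\oplus(\text{proj.})$, and projectivity upon restriction is $\tau_\cG$-invariant. Then, for $s<\ph_\cU(M)$, the almost split sequence terminating in $M$ splits after restriction to $\cU_s$, forcing every middle term to be projective over $\cU_s$; this gives $\ph_\cU(E_i)\ge \ph_\cU(M)$ for predecessors, and symmetry via the arrow $\tau_\cG(M)\to E_i$ gives equality. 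This is entirely elementary and avoids any discussion of how $\nu_\cG$ interacts with $\HH^\bullet(\cG,k)$.

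\textbf{Your Nakayama step.} This is exactly where your sketch is weakest. The assertion that the automorphism of $\HH^\bullet(\cG,k)$ induced by $\nu_\cG$ acts trivially on the reduced ring is not something you will find in \cite{SFB2}, and the finite-order argument you outline does not by itself pin down the support variety. The cleanest way to repair this is precisely the paper's observation: since $\zeta_\ell|_{k\cU}=\varepsilon$ for unipotent $\cU$, one gets $\nu_\cG(M)|_\cU = M|_\cU$, hence $V_r(\cG)_{\nu_\cG(M)}=V_r(\cG)_M$ and therefore $\cV_\cG(\nu_\cG(M))=\cV_\cG(M)$. Once you insert that, your route via ``support varieties are constant on $\Theta$'' (which the paper in fact cites elsewhere as \cite[(1.1)]{Fa3}) is perfectly valid, just less self-contained than the direct argument.
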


\begin{proof} Let $\cU \in \fE$ be an elementary abelian subgroup and consider $M \in \Theta$. We write $t := \ph_\cU(M)$. According to \cite[(1.5)]{FMS}, the Nakayama functor
$\nu_\cG$ is induced by the automorphism $\id_\cG\ast \zeta_\ell$, which is the convolution of $\id_\cG$ with the left modular function $\zeta_\ell : k\cG \lra k$. As $\cU_s$ is unipotent,
the restriction $\zeta_\ell|_{k\cU_s}$ of $\zeta_\ell$ coincides with with the co-unit, so that $(\id_\cG\ast \zeta_\ell)|_{k\cU_s} = \id_{k\cU_s}$. It follows that $\nu_\cG(M)|_{\cU_s} =
M|_{\cU_s}$ for all $s$.

Since $\tau_\cG = \Omega^2_\cG \circ \nu_\cG$, we therefore obtain
\[ \tau_\cG(M)|_{\cU_s} \cong \Omega^2_{U_s}(M|_{\cU_s})\oplus ({\rm proj.}) \ \ \ \ \forall \ s \in \NN.\]
Thus, $\tau_\cG(M)|_{U_s}$ is projective if and only if $M|_{\cU_s}$ is projective. This implies $t = \ph_\cU(\tau_\cG(M))$.

Now let
\[ \fE_M : \ \ \ \ \ \ \ \ \ \ (0) \lra \tau_\cG(M) \lra \bigoplus_{i=1}^n E_i \lra M \lra (0)\]
be the almost split sequence terminating in $M$. If $s \le t-1$, then $M|_{\cU_s}$ is projective and $\fE_M|_{\cU_s}$ splits, so that each $E_i|_{\cU_s}$ is a direct summand of the
projective module $M|_{\cU_s}\oplus \tau_\cG(M)|_{\cU_s}$. Hence $E_i|_{\cU_s}$ is projective and $\ph_\cU(E_i) \ge t$ for $1\le i \le n$.

Let $M,N \in \Theta$. By the above, we have $\ph_\cU(N) \ge \ph_\cU(M)$, whenever $N$ is a predecessor of $M$. In that case, there also exists an arrow $\tau_\cG(M) \rightarrow N$, so
that $\ph_\cU(M) = \ph_\cU(\tau_\cG(M)) \ge \ph_\cU(N)$, whence $\ph_\cU(M) = \ph_\cU(N)$.

Since $\Theta$ is connected, it follows that $\ph_\cU(M) = \ph_\cU(N)$ for all $M,N \in \Theta$. This readily implies our assertion. \end{proof}

\bigskip

\begin{Cor} \label{NI2} Suppose that $\Theta \subseteq \Gamma_s(\cG)$ is a component containing a module of complexity $1$. Then there exists an elementary abelian subgroup
$\cU_\Theta \subseteq \cG$ such that $\ph(M) = \height(\cU_\Theta)$ for every $M \in \Theta$. \end{Cor}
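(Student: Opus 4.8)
The plan is to combine the additivity statement from Proposition \ref{NI1} with the structure theory for modules of complexity $1$ developed in Theorem \ref{PH3}. First I would observe that by hypothesis $\Theta$ contains some module $M_0$ with $\cx_\cG(M_0)=1$. Since complexity is constant along a connected component of the stable Auslander--Reiten quiver (this follows from the fact that $\cx_\cG$ is determined by the rate of growth of $\dim_k\Ext^n_\cG(M,-)$ restricted to $\Theta$, and irreducible morphisms propagate this bound — the same Webb-type argument used in Corollary \ref{EC2} and Proposition \ref{Rep4}), every $M \in \Theta$ has $\cx_\cG(M)=1$. Hence Theorem \ref{PH3} applies to every $M \in \Theta$: each $M$ has an associated elementary abelian subgroup $\cU_M \subseteq \cG$ with $\ph(M) = \height(\cU_M) = \ph_{\cU_M}(M)$.

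The key point is then to show that $\cU_M$ does not depend on $M \in \Theta$. I would take $\fE := \fE(\cG)$ (the full set of elementary abelian subgroups) in Proposition \ref{NI1}, which yields $\ph(M) = \ph_{\fE(\cG)}(M) = \ph_{\fE(\cG)}(N) = \ph(N)$ for all $M,N \in \Theta$; call this common value $t$. So the projective height is already constant on $\Theta$. To upgrade this to constancy of the subgroup itself, fix $M, N \in \Theta$. By Theorem \ref{PH3}, $\height(\cU_M) = \ph(M) = t = \ph(N) = \height(\cU_N)$. It remains to see $\cU_M = \cU_N$. For this I would exploit the characterization $\cU_M = \bigcup_{\varphi \in V_r(\cG)_M}\im\varphi$ together with the fact that the rank variety $V_r(\cG)_{\bullet}$ is also constant on $\Theta$: indeed, $V_r(\cG)_M$ is homeomorphic to $\Proj$ of the cohomological support variety $\cV_\cG(M)$ (via the $p$-point machinery, or directly via \cite[(6.8)]{SFB2}), and $\cV_\cG(M)$ is constant on AR-components — this is the analogue for support varieties of the complexity-constancy argument, and is standard (it also follows because $\tau_\cG = \Omega_\cG^2\circ\nu_\cG$ and both $\Omega_\cG$ and $\nu_\cG$ preserve support varieties, while the middle term of an almost split sequence has support variety contained in that of its ends). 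Since $\cU_M$ is entirely determined by $V_r(\cG)_M$, constancy of the latter gives $\cU_M = \cU_N =: \cU_\Theta$, and then $\ph(M) = \height(\cU_\Theta)$ for all $M \in \Theta$, as desired.

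The main obstacle I anticipate is justifying cleanly that the rank variety $V_r(\cG)_M$ (equivalently the cohomological support variety) is constant along $\Theta$. The complexity-constancy argument of Proposition \ref{NI1}'s flavour only controls dimensions, not the variety on the nose; one needs the sharper statement that the actual closed subvariety of $V_r(\cG)$ is an AR-invariant. A safe route that avoids this altogether: in the case $\cx_\cG(M)=1$, $\cV_\cG(M)$ is a line (by \cite[(7.7)]{SFB2}, as used in Corollary \ref{PH2}), and Corollary \ref{PH2} gives $\Omega_\cG^{2p^{r-t}}(M) \cong M$ for every $M$ in $\Theta$; combined with $V_r(\cG)_{\Omega_\cG(M)} = V_r(\cG)_M$ and $V_r(\cG)_{\nu_\cG(M)} = V_r(\cG)_M$ (the latter because $\nu_\cG$ acts trivially on restrictions to unipotent subgroups, as shown in the proof of Proposition \ref{NI1}), one sees $V_r(\cG)_{\tau_\cG(M)} = V_r(\cG)_M$; then the almost split sequence terminating in $M$ forces $V_r(\cG)_{E_i} \subseteq V_r(\cG)_M \cup V_r(\cG)_{\tau_\cG M} = V_r(\cG)_M$, and since each $E_i$ also has complexity $1$, hence one-dimensional rank variety, equality holds whenever $E_i \in \Theta$. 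Connectedness of $\Theta$ finishes it. I would present the argument in this second form, as it keeps everything inside the toolkit already assembled in Sections \ref{S:PH} and \ref{S:BAR}.
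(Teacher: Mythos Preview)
Your proposal is correct and follows the same route as the paper: establish that $V_r(\cG)_M$ is constant on $\Theta$ (the paper simply cites \cite[(1.1)]{Fa3} for this, whereas your ``safe route'' reconstructs the argument via the short-exact-sequence behaviour of support varieties together with $\Omega_\cG$- and $\nu_\cG$-invariance), so that $\cU_M$ as defined in Theorem \ref{PH3} is independent of $M$, and then conclude $\ph(M) = \height(\cU_\Theta)$ directly from Theorem \ref{PH3}. Your preliminary appeal to Proposition \ref{NI1} is unnecessary --- the paper does not use it here --- and your separate argument for complexity constancy is likewise redundant, since the containment $V_r(\cG)_{E_i} \subseteq V_r(\cG)_M$ already forces $\cx_\cG(E_i) \le 1$ for every non-projective middle term of the almost split sequence.
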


\begin{proof} Let $M,N \in \Theta$. As observed in \cite[(1.1)]{Fa3}, we have
\[ V_r(\cG)_M = V_r(\cG)_N,\]
so that the subgroups $\cU_M$ and $\cU_N$, defined in (\ref{PH3}), coincide. Setting $\cU_\Theta := \cU_M$ for some $M \in \Theta$, we obtain the desired result directly from
Theorem \ref{PH3}. \end{proof}

\subsection{Blocks of finite representation type}
Throughout, we consider an infinitesimal group $\cG$ of height $r$ and denote by $\nu_\cG : \modd  \cG \lra \modd \cG$ the Nakayama functor of $\modd \cG$. By general theory
\cite[(1.5)]{FMS}, $\nu_\cG$ is an auto-equivalence of order a power of $p$.

\bigskip

\begin{Cor} Let $\cG$ be an infinitesimal group, $\cB \subseteq k\cG$ be a block of finite representation type.

{\rm (1)} \  If $\cU �\in \fE(\cG)$ is an elementary abelian subgroup, then we have $\ph_\cU(M) = \ph_\cU(N)$ for any two non-projective indecomposable $\cB$-modules $M$ and $N$.

{\rm (2)} \ There exists an elementary abelian subgroup $\cU_\cB \subseteq \cG$ such that $\ph(M) = \height(\cU_\cB)$ for every non-projective indecomposable $\cB$-module $M$.
\end{Cor}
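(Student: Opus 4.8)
The plan is to reduce the statement about blocks of finite representation type to the component-level statement already established in Proposition~\ref{NI1} and Corollary~\ref{NI2}. The key observation is that a block $\cB$ of finite representation type consists of finitely many indecomposable modules, and its stable Auslander--Reiten quiver $\Gamma_s(\cB)$ is therefore a finite stable translation quiver; by general Auslander--Reiten theory (e.g.\ \cite[(IV.5.4)]{ARS}) such a quiver is connected, so that $\Gamma_s(\cB)$ is a single component $\Theta \subseteq \Gamma_s(\cG)$. Moreover, $\nu_\cG$ permutes the isoclasses of indecomposable $\cB$-modules, and since $\Gamma_s(\cB)$ is connected we have $\Upsilon_\Theta = \Gamma_s(\cB)$ as a set of vertices.

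For part~(1), I would invoke Proposition~\ref{NI1} directly: applying it to the singleton $\fE = \{\cU\}$ yields $\ph_\cU(M) = \ph_\cU(N)$ for all $M,N$ in the connected component $\Theta = \Gamma_s(\cB)$, which is exactly the set of non-projective indecomposable $\cB$-modules. This step is essentially immediate once the connectedness of $\Gamma_s(\cB)$ is recorded.

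For part~(2), the natural route is to show that every block of finite representation type contains a module of complexity $1$, so that Corollary~\ref{NI2} applies and produces the desired elementary abelian subgroup $\cU_\cB := \cU_\Theta$ with $\ph(M) = \height(\cU_\cB)$ for all non-projective indecomposable $\cB$-modules $M$. Since $k\cG$ is self-injective and $\cB$ is a block of finite representation type that is not semisimple, every indecomposable non-projective $\cB$-module is $\Omega_\cG$-periodic (a finite stable AR-quiver forces periodicity of all its vertices, cf.\ \cite[(5.10.4)]{Be2} together with the fact that bounded-length orbits of $\Omega_\cG$ occur); hence each such module has complexity~$1$. Feeding any such module into Corollary~\ref{NI2} finishes the argument, and part~(1) can alternatively be read off from part~(2).

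The main obstacle I anticipate is the bookkeeping at the interface between ``blocks'' and ``components'': one must be careful that a block of finite representation type really does give a single connected stable AR-component (this fails in general for infinite blocks, where a block may meet several components), and that the invariants $\ph_\cU$ and $\ph$ are genuinely constant on this component rather than merely on $\Upsilon_\Theta$. Once it is granted that $\Gamma_s(\cB)$ is connected and that all its modules have complexity~$1$, both assertions are formal consequences of Proposition~\ref{NI1} and Corollary~\ref{NI2}; the only genuine input beyond those results is the standard fact that a finite-representation-type self-injective block has all non-projective indecomposables periodic of complexity~$1$.
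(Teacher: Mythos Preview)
Your approach is correct and matches the paper's: invoke Auslander's Theorem (the paper cites \cite[(VI.1.4)]{ARS} rather than (IV.5.4)) to see that $\Gamma_s(\cB)$ is a single connected component of $\Gamma_s(\cG)$, then apply Proposition~\ref{NI1} and Corollary~\ref{NI2}. One minor correction: your aside that part~(1) ``can alternatively be read off from part~(2)'' is not right, since part~(2) only pins down the maximum $\ph(M)=\max_{\cU}\ph_\cU(M)$ and says nothing about an individual $\ph_\cU(M)$; but your primary argument for~(1) via Proposition~\ref{NI1} is exactly what the paper does.
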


\begin{proof} Auslander's Theorem \cite[(VI.1.4)]{ARS} implies that $\Gamma_s(\cB)$ is a connected component of $\Gamma_s(\cG)$. Hence the assertions follow directly from Proposition
\ref{NI1} and Corollary \ref{NI2}. \end{proof}

\bigskip
\noindent
The group $\cU_\cB$ may be thought of as a ``defect group'' of the block $\cB$.  Recall that a finite-dimensional  $k$-algebra $\Lambda$ is called a {\it Nakayama algebra} if all of its
indecomposable projective left modules and indecomposable injective left modules are uniserial (i.e., they only possess one composition series).

The following result provides a first link between properties of $\cU_\cB$ and $\cB$:

\bigskip

\begin{Prop} \label{BFR1} Let $S$ be a simple $\cG$-module such that $\cx_\cG(S) = 1$ and $\ph(S) =r$. Then the block $\cB \subseteq k\cG$ containing $S$ is a Nakayama algebra with simple modules $\{\nu_\cG^i(S) \ ; \ i \in \NN_0\}$. \end{Prop}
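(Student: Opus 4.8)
The plan is to combine the periodicity statement from Corollary \ref{PH2} with the structural rigidity forced by the hypothesis $\ph(S)=r$, and then to read off the Nakayama property from the shape of the stable Auslander--Reiten component containing $S$. First I would invoke Corollary \ref{PH2}: since $S$ is indecomposable of complexity $1$ and $\ph(S)=r$, we have $\Omega_\cG^{2p^{r-r}}(S)=\Omega_\cG^2(S)\cong S$, so $S$ is periodic with period dividing $2$. Thus the stable AR-component $\Theta$ of $S$ consists entirely of periodic modules of complexity $1$; by \cite[(5.10.4)]{Be2} and the tree-class analysis (the same input used in Corollary \ref{EC2}, i.e.\ \cite[Theorem]{HPR}), the component $\Theta$ is a tube $\ZZ[A_\infty]/\langle \tau^n\rangle$ — and since $\Omega^2_\cG(S)\cong S$ with $\nu_\cG$ of $p$-power order, one checks $\tau_\cG$ has small period on $\Theta$, forcing $\Theta$ to be a homogeneous tube of rank $1$, with $S$ quasi-simple sitting at its mouth.

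Next I would pin down the socle/radical structure. Because $\Omega^2_\cG(S)\cong S$, the minimal projective resolution of $S$ is periodic of period $2$: there is an exact sequence $0\to S\to P(S)\to \Omega^{-1}_\cG(S)\to 0$ and $0\to \Omega^{-1}_\cG(S)\to P(\Omega^{-1}_\cG(S))\to S\to 0$, so $\Soc(P(S))\cong S$ and $\Top(P(S))\cong S$, i.e.\ $P(S)$ has the same simple at top and bottom. Now apply Corollary \ref{UB3}(1): for every simple $\cG$-module $T$ in the block $\cB$, if $\Top(\Omega_\cG^{2p^{r-1}}(T))$ were simple we would get $\Omega_\cG^{2p^{r-1}}(T)\cong T$; I would instead run the AR-theoretic argument showing that every simple in $\cB$ lies on $\Theta$ (using Corollary \ref{NI1}/Corollary \ref{NI2} and Auslander's theorem \cite[(VI.1.4)]{ARS}, so $\Gamma_s(\cB)$ is the single component $\Theta$), hence every simple in $\cB$ is periodic of period dividing $2$ and quasi-simple. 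The key point is that in a homogeneous tube, the quasi-simple modules are exactly the simple $\cB$-modules, and the heart of each indecomposable projective $P(T)=\Omega^{-1}$ of a quasi-simple is again quasi-simple (indecomposable with simple socle and simple top). That $\Rad P(T)/\Soc P(T)$ is simple — equivalently that the Loewy length of $P(T)$ is $3$ and each Loewy layer is simple — is precisely the uniseriality needed.

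Finally I would identify the simple modules of $\cB$ as the $\nu_\cG$-orbit of $S$. Since $\Theta$ is the whole of $\Gamma_s(\cB)$ and $\tau_\cG=\Omega^2_\cG\circ\nu_\cG$ acts on the mouth of the tube, and $\Omega^2_\cG$ fixes each quasi-simple (period $2$!), the translation $\tau_\cG$ restricted to the mouth is just $\nu_\cG$; as $\nu_\cG$ has $p$-power order, its orbit on the simples exhausts the mouth, giving $\{\nu_\cG^i(S)\ ;\ i\in\NN_0\}$ as the full set of simple $\cB$-modules. Combined with the Loewy structure above, every indecomposable projective and injective $\cB$-module is uniserial, so $\cB$ is a Nakayama algebra. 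The main obstacle I anticipate is the bookkeeping in the middle step — rigorously ruling out that $\Theta$ is a tube of rank $>1$ or a component of type $\ZZ[A_\infty]/\langle\tau^n\rangle$ with $n>1$ large, and checking that the heart of each projective is genuinely uniserial rather than merely having simple socle; for this I would lean on the standard almost split sequence through $P(T)$ as recorded just before Proposition \ref{Rep4}, together with the fact that a quasi-simple module in a homogeneous tube has a unique predecessor, forcing the middle term $\Rad P(T)/\Soc P(T)$ to be indecomposable with simple socle, and then dualizing via $\nu_\cG$ to get simple top as well.
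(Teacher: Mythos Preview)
Your first step matches the paper exactly: Corollary~\ref{PH2} with $\ph(S)=r$ gives $\Omega_\cG^{2}(S)\cong S$. The paper then simply invokes the arguments of \cite[(3.2)]{Fa1}; your task is to reconstruct them, and here the proposal has a genuine gap.

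The central problem is circularity. You appeal to Auslander's theorem \cite[(VI.1.4)]{ARS} to conclude that $\Gamma_s(\cB)$ is the single component $\Theta$, but that theorem requires $\cB$ to be representation-finite, which is a \emph{consequence} of the Nakayama property you are trying to establish. Without it you have no mechanism forcing an arbitrary simple $T$ of $\cB$ into $\Theta$, and the subsequent claims (``quasi-simples are exactly the simple $\cB$-modules'') are unsupported. Several subsidiary points are also off: the tube need not have rank~$1$, since $\tau_\cG(S)\cong\nu_\cG(S)$ and the rank is the order of $\nu_\cG$ on the orbit of $S$; the sequence ``$0\to S\to P(S)\to\Omega^{-1}_\cG(S)\to 0$'' confuses the projective cover $P(S)$ with the injective hull $I(S)$, so $\Soc(P(S))\cong S$ does not follow; and uniseriality is not the same as Loewy length~$3$.

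The argument of \cite[(3.2)]{Fa1} bypasses AR-components entirely and works directly with the Ext-quiver. From the sequence $0\to S\to P(\Omega_\cG(S))\to\Omega_\cG(S)\to 0$ and self-injectivity of $k\cG$ one checks that $P(\Omega_\cG(S))$ is indecomposable (a projective summand mapping injectively to $\Omega_\cG(S)$ would split off), so $S':=\Top(\Omega_\cG(S))$ is simple and $\Soc(P(S'))=S$ identifies $S'$ as a $\nu_\cG$-translate of $S$. Since $\nu_\cG$ commutes with $\Omega_\cG$, every $T$ in the $\nu_\cG$-orbit $\cS$ of $S$ satisfies $\Omega_\cG^2(T)\cong T$; hence $\Ext^1_\cG(T,-)\cong\Hom_\cG(\Top(\Omega_\cG(T)),-)$ is supported on a single simple in $\cS$, and dually for $\Ext^1_\cG(-,T)$ via $\Soc(\Omega_\cG(T))=\Soc(P(T))\in\cS$. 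Thus $\cS$ is closed under Ext-quiver adjacency and therefore exhausts the simples of the block $\cB$. The same computation, iterated, shows each layer $\Rad^iP(T)/\Rad^{i+1}P(T)$ is simple, so every $P(T)$ is uniserial and $\cB$ is Nakayama.
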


\begin{proof} Since $S$ has complexity $1$ with $\ph(S) = r$, Corollary \ref{PH2} implies $\Omega_\cG^2(S) \cong S$. The arguments employed in the proof of \cite[(3.2)]{Fa1}
now yield the assertion. \end{proof}

\bigskip
\begin{Example} Consider the subgroup $\cG := \SL(2)_1\GG_{a(2)} \subseteq \SL(2)$, given by
\[ \cG(R) = \{ \left( \begin{array}{cc} a & b \\ c & d \end{array} \right) \in \SL(2)(R) \ ; \ a^p=1=d^p \ , c^p = 0 \ , \ b^{p^2} = 0\}\]
for every commutative $k$-algebra $R$. The first Steinberg module $\St_1$ is a simple $\cG$-module, whose restriction to $\cG_1 = \SL(2)_1$ is simple and projective, whence $\ph(\St_1) = 2$. Since $\cG/\cG_1 \cong \GG_{a(1)}$, \cite[(I.6.6)]{Ja3} implies
\[ \Ext^n_\cG(\St_1,\St_1) \cong \HH^n(\GG_{a(1)},\Hom_{\cG_1}(\St_1,\St_1)) \cong  \HH^n(\GG_{a(1)},k),\]
so that $\cx_\cG(\St_1) = 1$, cf.\ \cite[\S2]{Ca1}. Thus, the block $\cB \subseteq k\cG$ containing $\St_1$ is a Nakayama algebra, with $\St_1$ being the only simple $\cB$-module. \end{Example}

\subsection{Frobenius kernels of reductive groups}
In this section we consider a smooth reductive group $G$ of characteristic $p\ge 3$. Our objective is to apply the foregoing results in order to correct the proof of \cite[(4.1)]{Fa3}. Given $r
\ge 1$, we are interested in the algebra $\Dist(G_r) = kG_r$ of distributions of $G_r$.

The following Lemma, which follows directly from the arguments of \cite[(7.2)]{Fa2}, reduces a number of issues to the special case $G = \SL(2)$:

\bigskip

\begin{Lem} \label{BF1} Let $\cB \subseteq \Dist(G_r)$ be a block. If $\cB$ has a simple module of complexity $2$, then $\cB$ is Morita equvalent to a block of $\Dist(\SL(2)_r)$. \hfill
$\square$ \end{Lem}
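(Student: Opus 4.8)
The plan is to re-run the argument of \cite[(7.2)]{Fa2}: one shows that a block of $\Dist(G_r)$ carrying a simple module of complexity $2$ is ``localized'' at a standard Levi subgroup $L\subseteq G$ of semisimple rank $1$, and then one identifies the relevant block of $\Dist(L_r)$ with a block of $\Dist(\SL(2)_r)$. The essential geometric input is the Nakano--Parshall--Vella description \cite{NPV} of the support variety $\cV_{G_r}(L_r(\lambda))$ of a simple $G_r$-module: up to $G$-saturation it is a variety of commuting $r$-tuples of $p$-nilpotent elements of $\fg$ whose individual entries are constrained to root-space configurations dictated by the $p$-adic digits of the highest weight $\lambda$.

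It is convenient to normalize first. Writing $\lambda=\sum_{i=0}^{r-1}p^i\lambda^{(i)}$ with restricted digits $\lambda^{(i)}$, whenever $\lambda^{(0)}=(p\!-\!1)\rho$ the module $L_1(\lambda^{(0)})\cong\St_1$ is $G_1$-projective, and Steinberg's tensor product theorem together with \cite[(II.10.5)]{Ja3} shows --- exactly as in Lemma \ref{Rep1} --- that $M\mapsto\St_1\otimes_k M^{[1]}$ induces a complexity-preserving Morita equivalence between the block of $\Dist(G_{r-1})$ attached to $\sum_{i\ge 1}p^{i-1}\lambda^{(i)}$ and $\cB$. Iterating, I may assume $\lambda^{(0)}\ne(p\!-\!1)\rho$, so in particular $L_r(\lambda)\not\cong\St_r$ and the hypothesis $\cx_{G_r}(L_r(\lambda))=2$ remains in force.

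The next step is the geometric heart of the matter, and the one I expect to be the main obstacle. Since $\dim\cV_{G_r}(L_r(\lambda))=\cx_{G_r}(L_r(\lambda))=2$, the Nakano--Parshall--Vella computation should force the conical $2$-dimensional variety $\cV_{G_r}(L_r(\lambda))$ to be $G$-conjugate to one supported, layer by layer, inside the Lie algebra of a single standard Levi subgroup $L=L_\alpha$ of semisimple rank $1$; equivalently, $\cV_{G_r}(L_r(\lambda))$ is $G$-conjugate to ${\rm res}^\ast(\cV_{L_r}(S'))$ for a simple $L_r$-module $S'$ of complexity $2$ lying in the block of $\Dist(L_r)$ cut out by $\lambda$. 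Checking that $2$-dimensionality of the support genuinely confines the ``defect'' to a rank-one subsystem of $\Psi$ --- ruling out, for instance, a support spread diagonally across two Frobenius layers along incompatible roots --- is the real content of the argument, and is precisely what the analysis behind \cite[(7.2)]{Fa2} supplies.

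It then remains to descend to $\SL(2)$. Once the block is localized at $L$, the induction--restriction adjunction between $\Dist(L_r)$ and $\Dist(G_r)$ (cf.\ \cite{Ja3}), together with the fact that the linkage within $\cB$ is already detected inside $L_r$, yields a Morita equivalence between $\cB$ and the corresponding block $\cB_L$ of $\Dist(L_r)$; this is the conclusion of \cite[(7.2)]{Fa2}. Finally $[L,L]$ is isomorphic to $\SL(2)$ or $\mathrm{PGL}(2)$, and since $p\ge 3$ the isogeny $\SL(2)\to\mathrm{PGL}(2)$ has \'etale kernel, hence restricts to an isomorphism $\SL(2)_r\cong\mathrm{PGL}(2)_r$ on Frobenius kernels; writing $L_r=Z_r\cdot[L,L]_r$ with $Z$ the connected centre of $L$, the diagonalizable factor $Z_r$ contributes only a semisimple tensor factor, so $\cB_L$ --- and therefore $\cB$ --- is Morita equivalent to a block of $\Dist([L,L]_r)\cong\Dist(\SL(2)_r)$.
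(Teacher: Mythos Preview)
Your proposal and the paper's proof agree at the top level: both simply invoke the arguments of \cite[(7.2)]{Fa2}. The paper says nothing more than that, so in this sense your approach is the same.

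That said, your sketch of what \cite[(7.2)]{Fa2} actually does is not quite right, and the discrepancy matters if you intend to reconstruct the argument rather than just cite it. You describe the mechanism as localization at a standard Levi subgroup $L$ of semisimple rank~$1$, followed by a Morita equivalence coming from induction--restriction between $\Dist(L_r)$ and $\Dist(G_r)$. But parabolic induction does not produce Morita equivalences between blocks. The actual route---visible in the paper's own proof of Theorem~\ref{WT3}(1), which unpacks \cite[(7.1)]{Fa2}---is via an \emph{almost direct product} decomposition $G=HK$ with $\Lie(H)=\fsl(2)$, so that $kG_r\cong kH_r\otimes_k kK_r$ (after handling the central intersection $H\cap K$). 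The hypothesis $\cx_{G_r}(S)=2$ forces $S|_{K_r}$ to be projective, whence $S$ is an outer tensor product $S_1\otimes_k S_2$ with $S_2$ simple projective over $K_r$; tensoring with $S_2$ then gives the Morita equivalence between the block of $S_1$ in $\Dist(H_r)$ and $\cB$. Your appeal to \cite{NPV} is also anachronistic (it postdates \cite{Fa2}) and carries good-prime hypotheses not assumed in this section; the original argument uses the $G$-stability of $\cV_{G_r}(S)$ for simple $S$ and the structure theory of $2$-dimensional $G$-stable conical subvarieties of the nullcone directly.
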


\bigskip
\noindent
The proof of the following result corrects the false reference to $\SL(2)$-theory on page 113 of \cite{Fa3}. Theorem 4.1 of \cite{Fa3} is correct as stated.

\bigskip

\begin{Thm} \label{BF2} A non-projective simple $G_r$-module $L_r(\lambda)$ of complexity $2$ belongs to a component $\Theta \cong \ZZ[A_\infty],\, \ZZ[\tilde{A}_{12}]$.\end{Thm}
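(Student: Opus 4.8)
The plan is to reduce immediately to the case $G = \SL(2)$ via Lemma \ref{BF1} and then invoke the structural results of Section \ref{S:Rep}. Since $L_r(\lambda)$ is a non-projective simple $G_r$-module of complexity $2$, the block $\cB \subseteq \Dist(G_r)$ containing it has a simple module of complexity $2$, so Lemma \ref{BF1} furnishes a block $\cB' \subseteq \Dist(\SL(2)_r)$ and a Morita equivalence $\cB \sim_M \cB'$. Under this equivalence $L_r(\lambda)$ corresponds to a simple $\cB'$-module $S$, and since Morita equivalences preserve complexity, $\cx_{\SL(2)_r}(S) = 2$.

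The next step is to observe that a Morita equivalence induces an isomorphism of stable module categories, hence of stable Auslander--Reiten quivers: the component $\Theta$ of $L_r(\lambda)$ in $\Gamma_s(G_r)$ is carried onto the component $\Theta'$ of $S$ in $\Gamma_s(\SL(2)_r)$, and tree class is preserved. So it suffices to determine $\Theta'$. But this is exactly the content of Proposition \ref{Rep5}: a simple $\SL(2)_r$-module $S$ of complexity $2$ lies in a component isomorphic to $\ZZ[A_\infty]$ or $\ZZ[\tilde{A}_{12}]$. Transporting back along the Morita equivalence gives $\Theta \cong \ZZ[A_\infty]$ or $\ZZ[\tilde{A}_{12}]$, as claimed.

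I do not expect any serious obstacle here; the theorem is essentially a bookkeeping reduction. The one point requiring a little care is the transport of Auslander--Reiten data along the Morita equivalence of Lemma \ref{BF1} --- one must note that a Morita equivalence is an exact equivalence preserving projectives, hence commutes with $\Omega$, sends almost split sequences to almost split sequences, and therefore induces an isomorphism of stable AR-quivers respecting tree class. (This is standard, e.g.\ via \cite[Chap.\ X]{ARS}.) The other mild subtlety is that Lemma \ref{BF1} only asserts a block-level Morita equivalence under the hypothesis that $\cB$ has \emph{some} simple module of complexity $2$; since our $L_r(\lambda)$ itself has complexity $2$, this hypothesis is automatically met, so the reduction applies without further argument. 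Everything else is a direct appeal to Proposition \ref{Rep5}.
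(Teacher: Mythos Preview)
Your proposal is correct and follows essentially the same approach as the paper's own proof, which simply states that the result follows from a consecutive application of Lemma \ref{BF1} and Proposition \ref{Rep5}. Your added remarks on why Morita equivalences transport AR-components and preserve complexity just make explicit the standard facts the paper leaves implicit.
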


\begin{proof} This follows from a consecutive application of Lemma \ref{BF1} and Proposition \ref{Rep5}. \end{proof}

\bigskip
\noindent
A finite-dimensional $k$-algebra $\Lambda$ is called {\it tame} if it is not of finite representation type and if for every $d>0$ the $d$-dimensional indecomposable $\Lambda$-modules can by parametrized by a one-dimensional variety. The reader is referred to \cite[(I.4)]{Er} for the precise definition.

The structure of the representation-finite and tame blocks of the Frobenius kernels of smooth groups is well understood, see \cite[Theorem]{Fa4}. The following result shows that, for smooth
reductive groups, such blocks may be characterized via the complexities of their simple modules.

\bigskip

\begin{Thm} \label{BF3} Suppose that $G$ is reductive, and let $\cB \subseteq \Dist(G_r)$ be a block. Then the following statements are equivalent:

{\rm (1)} \ $\cB$ is tame.

{\rm (2)} \ Every simple $\cB$-module has complexity $2$.

{\rm (3)} \ $\Gamma_s(\cB)$ possesses a component isomorphic to $\ZZ[\tilde{A}_{12}]$.\end{Thm}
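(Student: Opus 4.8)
The plan is to prove the cycle of implications $(1)\Rightarrow(2)\Rightarrow(3)\Rightarrow(1)$, drawing on the already-developed $\SL(2)_r$-theory via the reduction Lemma \ref{BF1}. For $(1)\Rightarrow(2)$, I would argue by contraposition: if $\cB$ has a simple module $L_r(\lambda)$ whose complexity is not $2$, then since the complexity of a simple module is at least $1$ on any non-projective block, either $\cx_{G_r}(L_r(\lambda))=1$ or $\cx_{G_r}(L_r(\lambda))\ge 3$. In the first case, the component $\Theta$ of $L_r(\lambda)$ contains a module of complexity $1$, hence $\Theta$ is a tube or of type $\ZZ[A_\infty]$ (standard theory of components of periodic modules / quasi-simple modules), and one invokes the classification of representation-finite and tame blocks of Frobenius kernels from \cite[Theorem]{Fa4} to rule out tameness — more directly, a tame block cannot have a module of complexity $1$ unless it is already of finite type (contradiction), so $\cB$ is not tame. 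In the case $\cx_{G_r}(L_r(\lambda))\ge 3$ the support variety $\cV_{G_r}(\cB)$ has dimension $\ge 3$, and one cites the fact (again via \cite{Fa4} or via the complexity-growth dichotomy) that tameness forces $\dim\cV_{G_r}(\cB)\le 2$; so again $\cB$ is not tame. This shows $(1)\Rightarrow(2)$.

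For $(2)\Rightarrow(3)$: if every simple $\cB$-module has complexity $2$, pick one such simple module $L_r(\lambda)$ and let $\Theta$ be its component in $\Gamma_s(\cB)$. By Theorem \ref{BF2}, $\Theta\cong\ZZ[A_\infty]$ or $\Theta\cong\ZZ[\tilde A_{12}]$; I must exclude the possibility that \emph{every} component through a simple module is of type $\ZZ[A_\infty]$. Here is where Lemma \ref{BF1} enters: since $\cB$ has a simple module of complexity $2$, it is Morita equivalent to a block of $\Dist(\SL(2)_r)$, and by Lemma \ref{Rep1} we may further assume it is (Morita equivalent to) a block $\cB^{(r)}_{i,0}$. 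For $r=1$ the block $\cB^{(1)}_{i,0}$ is Morita equivalent to the trivial extension of the Kronecker algebra, which is tame and has $\Gamma_s$-components of type $\ZZ[\tilde A_{12}]$, giving (3) at once. For $r\ge2$ one uses the structure of the heart $\Ht_r(\lambda)$ (Lemma \ref{Rep3}) together with the standard almost split sequence attached to $P_r(\lambda)$ exactly as in the proof of Proposition \ref{Rep4}/\ref{Rep5}: the module $\Ht_r(\lambda)$ has composition factors of complexity $\ne 2$, so Corollary \ref{EC2} forces the Euclidean case — that is, the component of $L_r(\lambda)$ (or of $\Omega^{-1}$ of a projective) must be $\ZZ[\tilde A_{12}]$, since if all such components were $\ZZ[A_\infty]$ one contradicts the heart computation. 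This yields a $\ZZ[\tilde A_{12}]$-component in $\Gamma_s(\cB)$.

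For $(3)\Rightarrow(1)$: a block possessing a stable AR-component isomorphic to $\ZZ[\tilde A_{12}]$ cannot be of finite representation type (an algebra of finite type has only finite Dynkin stable components by \cite[Theorem]{HPR}, as used in Corollary \ref{EC2}), and it cannot be wild either — by the Erdmann/Crawley-Boevey theory, a wild self-injective algebra has all stable AR-components of type $\ZZ[A_\infty]$, so the presence of a Euclidean component forces tameness. Combined with the classification of tame blocks of Frobenius kernels in \cite[Theorem]{Fa4}, this gives $(1)$. The main obstacle I anticipate is the bookkeeping in $(2)\Rightarrow(3)$: one has to make sure that passing through the Morita equivalences of Lemma \ref{BF1} and Lemma \ref{Rep1} genuinely lands in a block $\cB^{(r)}_{i,0}$ to which the heart computation of Lemma \ref{Rep3} applies, and then that the component-exclusion argument via Corollary \ref{EC2} is invoked for the correct vertex of $\Gamma_s(\cB)$ (a simple module, after applying a power of $\Omega$ if necessary). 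The other implications are comparatively soft, resting on general structural dichotomies (finite/tame/wild, support-variety dimension, tree classes of components) together with the already-proved Theorem \ref{BF2}.
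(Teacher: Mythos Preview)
Your proposal has two genuine gaps. In $(1)\Rightarrow(2)$, the claim that ``a tame block cannot have a module of complexity $1$ unless it is already of finite type'' is simply false: the trivial extension of the Kronecker algebra is tame and its homogeneous tubes consist entirely of periodic (complexity-$1$) modules. What must be excluded is a \emph{simple} module of complexity $1$, and this is where reductivity enters nontrivially. The paper argues as follows: a simple $G_r$-module $S$ is $G$-stable (for connected $G$), so by Corollary~\ref{PH3.5} the subgroup $\cU_S$ is normal in $G_r$; passing to Lie algebras produces a non-zero unipotent $p$-ideal of $\fg$, which \cite[(11.8)]{Hu1} forbids for reductive $G$. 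Your vague appeal to the classification in \cite{Fa4} might be salvageable, but not via the mechanism you describe.

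In $(2)\Rightarrow(3)$, your handling of the case $r'\ge 2$ is backwards. Corollary~\ref{EC2} gives a \emph{necessary} condition for a component to be Euclidean (every vertex has a composition factor in $\Upsilon_\Theta$), and the heart computation of Lemma~\ref{Rep3} shows this condition \emph{fails} when $r'\ge 2$ --- that is exactly how Proposition~\ref{Rep4} rules Euclidean components \emph{out}, not how one produces them. The paper's route is much simpler: after reducing via Lemmas~\ref{BF1} and~\ref{Rep1} to a block $\cB^{(r')}_{i,0}$ of $\SL(2)_{r'}$ all of whose simples have complexity $2$, Lemma~\ref{Rep2} forces $r'=1$ immediately, since for $r'\ge 2$ only two of the $2p^{r'-1}$ simple modules of $\cB^{(r')}_{i,0}$ have complexity $2$. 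Once $r'=1$, the Kronecker description yields the $\ZZ[\tilde{A}_{12}]$-component. For $(3)\Rightarrow(1)$ your route also differs from the paper's (which uses that $\cB$ is symmetric and then Erdmann's results \cite[(IV.3.8.3),(IV.3.9)]{Er} to get that $\cB/\Soc(\cB)$ is special biserial); your assertion that a wild self-injective algebra has \emph{all} stable components of type $\ZZ[A_\infty]$ is not Crawley--Boevey's theorem and would need a precise reference before it can carry the argument.
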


\begin{proof} (1) $\Rightarrow$ (2) Passing to the connected component of $G$ if necessary, we may assume that $G$ is connected. Let $S$ be a simple $\cB$-module. Since $\cB$ is tame,
\cite[(3.2)]{Fa6} implies $\cx_{G_r}(S) \le 2$. Since $\cB$ is not representation finite, the simple $\cB$-module $S$ is not projective, so that $\cx_{G_r}(S) \ge 1$. Suppose that
$\cx_{G_r}(S) = 1$. Since $S$ is $G$-stable, Corollary \ref{PH3.5} shows that $\cU_\cB \unlhd \cG_r$ is a unipotent, normal subgroup of $G_r$. Passage to the first Frobenius kernels yields
the existence of a non-zero unipotent $p$-ideal $\fu \unlhd \fg$. Since $G$ is reductive, \cite[(11.8)]{Hu1} rules out the existence of such ideals, a contradiction.

(2) $\Rightarrow$ (3) By Lemma \ref{BF1}, the block $\cB$ is Morita equivalent to a block $\cB' \subseteq \Dist(\SL(2)_r)$, all whose simple modules have complexity $2$. A consecutive
application of Lemma \ref{Rep2} and Lemma \ref{Rep3} implies $r=1$. Consequently, $\cB'$ is tame and so is $\cB$.

(3) $\Rightarrow$ (1) Suppose that $\Gamma_s(\cB)$ possesses a component of type $\ZZ[\tilde{A}_{12}]$. By \cite[(2.1)]{FR}, the block $\cB$ is symmetric, and we may invoke \cite[(IV.3.8.3),(IV.3.9)]{Er} to see that the algebra $\cB/\Soc(\cB)$ is special biserial. Thus, $\cB/\Soc(\cB)$ is tame or representation-finite (cf.\ \cite[(II.3.1)]{Er}) and, having the
same non-projective indecomposables, $\cB$ enjoys the same property. Since $\cB$ possesses modules of complexity $2$, it is not of finite representation type, cf.\ \cite{He}. \end{proof}

\bigskip

\begin{Remark} Let $G$ be a smooth algebraic group scheme. According to \cite[(4.6)]{Fa4}, the presence of a tame block $\cB \subseteq \Dist(G_r)$ implies that $G$ is reductive.
\end{Remark}

\bigskip

\section{The Nakayama Functor of $\modd \gr\Lambda$}\label{S:NF}
In preparation for our analysis of $G_rT$-modules we study in this section the category of graded modules of an associative algebra. Let $k$ be a field,
\[ \Lambda  = \bigoplus_{i \in \ZZ^n} \Lambda_i\]
be a finite-dimensional, $\ZZ^n$-graded $k$-algebra. We denote by $\modd \gr \Lambda$ the category of finite-dimensional $\ZZ^n$-graded $\Lambda$-modules and degree zero homomorphisms. Given $i \in \ZZ^n$, the $i$-th shift functor $[i] : \modd \gr\Lambda \lra \modd \gr\Lambda$ sends $M$ onto $M[i]$, where
\[ M[i]_j := M_{j-i} \ \ \ \ \ \forall \ j \in \ZZ\]
and leaves the morphisms unchanged. The $\Lambda^{\rm op}$-module $\Hom_\Lambda(M,\Lambda)^\ast$ has a natural $\ZZ^n$-grading. There results a functor
\[ \cN : \modd \gr \Lambda \lra \modd \gr\Lambda \ \ ; \ \ M \mapsto \Hom_\Lambda(M,\Lambda)^\ast,\]
the {\it Nakayama functor} of $\modd \gr\Lambda$. The purpose of this section is to determine this functor for certain Hopf algebras $\Lambda$. We begin with a few general observations.

\subsection{Graded Frobenius Algebras} Suppose that $\Lambda$ is a Frobenius algebra with Frobenius homomorphism $\pi : \Lambda \lra k$ and associated non-degenerate bilinear form
\[ (\, ,\, )_\pi : \Lambda\times \Lambda \lra k \ \ ; \ \ (x,y) \mapsto \pi(xy).\]
The uniquely determined automorphism $\mu : \Lambda \lra \Lambda$ given by
\[ (y,x)_\pi = (\mu(x),y)_\pi\]
is called the {\it Nakayama automorphism} of the form $(\, , \,)_\pi$. For an arbitrary automorphism $\alpha \in {\rm Aut}_k(\Lambda)$ and a $\Lambda$-module $M$, we denote by
$M^{(\alpha)}$ the $\Lambda$-module with underlying $k$-space $M$ and action
\[ a\dact m := \alpha^{-1}(a)m \ \ \ \ \ \ \forall \ a \in \Lambda,\, m \in M.\]
Given $M \in \modd \gr \Lambda$, we define the {\it support of} $M$ via
\[ \supp(M) := \{ i \in \ZZ^n \ ; \ M_i \ne (0)\}.\]
Then we have $\supp(M[d]) = d+\supp(M)$, as well as $\supp(M^\ast) = -\supp(M)$. If $\alpha \in {\rm Aut}_k(\Lambda)$ is an automorphism of degree $0$ and $M \in \modd \gr\Lambda$ is graded, then $M^{(\alpha)}$ is graded via
\[ M^{(\alpha)}_i := M_i \ \ \ \ \ \ \forall \ i \in \ZZ^n.\]
In particular, $\supp(M) = \supp(M^{(\alpha)})$. Recall that the {\it enveloping algebra} $\Lambda^e := \Lambda\!\otimes_k\!\Lambda^{\rm op}$ inherits a $\ZZ^n$-grading from
$\Lambda$ via
\[ \Lambda^e_i := \bigoplus_{\ell+j = i} \Lambda_\ell\!\otimes_k\!\Lambda_j \ \ \ \ \forall \ i \in \ZZ^n.\]
The algebra $\Lambda$ obtains the structure of a graded $\Lambda^e$-module by means of
\[ (a\otimes b)\dact c := acb \ \ \ \ \forall \ a,b,c \in \Lambda.\]
Our first result extends the classical formula for the Nakayama functor of Frobenius algebras to the graded case.

\bigskip

\begin{Lem} \label{NF1} Suppose that $\Lambda$ is a Frobenius algebra with Frobenius homomorphism $\pi : \Lambda \lra k$ of degree $d_\Lambda$. Then the following statements
hold:

{\rm (1)} \ The Nakayama automorphism $\mu : \Lambda \lra \Lambda$ of the form $(\, ,\,)_\pi$ has degree $0$.

{\rm (2)} \ There are natural isomorphisms
\[ \cN(M) \cong M^{(\mu^{-1})}[d_\Lambda]\]
for all $M \in \modd \gr \Lambda$.

{\rm (3)} \ Every homogeneous Frobenius homomorphism of $\Lambda$ has degree $d_\Lambda$.\end{Lem}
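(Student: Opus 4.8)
The plan is to establish the three assertions in the order (1), (2), (3): part (2) uses part (1), and part (3) follows by applying parts (1) and (2) to an arbitrary homogeneous Frobenius homomorphism. For part (1), fix $i\in\ZZ^n$ and a homogeneous $x\in\Lambda_i$, and write $\mu(x)=\sum_m\mu(x)_m$ with $\mu(x)_m\in\Lambda_m$. Since $\pi$ is homogeneous (say supported in degree $c$) and $(\,,\,)_\pi$ is non-degenerate, for every nonzero homogeneous $z\in\Lambda_j$ there is a homogeneous $y\in\Lambda_{c-j}$ with $\pi(zy)\neq0$. If some $\mu(x)_m$ with $m\neq i$ were nonzero, then choosing such a $y$ for $z=\mu(x)_m$ would give $\pi(\mu(x)y)=\pi(\mu(x)_m y)\neq0$ (the remaining components of $\mu(x)$ contribute $0$ for degree reasons), while $yx\in\Lambda_{(c-m)+i}$ and $(c-m)+i\neq c$, so $\pi(yx)=0$. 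This contradicts the defining identity $\pi(yx)=\pi(\mu(x)y)$. Hence $\mu(\Lambda_i)\subseteq\Lambda_i$ for all $i$, i.e.\ $\deg\mu=0$.

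For part (2), first note that $\cN$ restricts to an endofunctor of $\modd\gr\Lambda$. I would then use the standard natural isomorphism $\cN(M)=\Hom_\Lambda(M,\Lambda)^\ast\cong\Lambda^\ast\otimes_\Lambda M$, valid for any finite-dimensional algebra and compatible with the gradings, where $\Lambda^\ast=\Hom_k(\Lambda,k)$ carries its canonical graded $(\Lambda,\Lambda)$-bimodule structure; it then suffices to identify this bimodule. The map $\psi\colon\Lambda\to\Lambda^\ast$, $\psi(a)(x):=\pi(ax)$, is bijective (injective by non-degeneracy, hence bijective since $\dim_k\Lambda<\infty$); it is right-$\Lambda$-linear for the usual actions, and the defining relation $\pi(yx)=\pi(\mu(x)y)$ gives $b\cdot\psi(a)=\psi(\mu(b)a)$, so $\psi$ is left-linear as a map $\Lambda^{(\mu^{-1})}\to\Lambda^\ast$. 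By part (1) and the homogeneity of $\pi$, the map $\psi$ shifts degrees uniformly, so it is a degree-zero isomorphism $\Lambda^{(\mu^{-1})}[d_\Lambda]\to\Lambda^\ast$ of graded bimodules (the shift by $d_\Lambda$ records the degree of $\pi$; the displayed formula fixes the sign convention). Substituting this into $\cN(M)\cong\Lambda^\ast\otimes_\Lambda M$ and using the evident natural isomorphism $\big(\Lambda^{(\mu^{-1})}[d_\Lambda]\big)\otimes_\Lambda M\cong M^{(\mu^{-1})}[d_\Lambda]$ (the left twist and the shift pass through $\otimes_\Lambda M$) yields $\cN(M)\cong M^{(\mu^{-1})}[d_\Lambda]$, naturally in $M$.

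For part (3), let $\pi'$ be any homogeneous Frobenius homomorphism, of degree $d'$ say, with associated Nakayama automorphism $\mu'$. Parts (1) and (2), applied with $\pi'$ in place of $\pi$, give $\deg\mu'=0$ and natural isomorphisms $\cN(M)\cong M^{(\mu'^{-1})}[d']$. Comparing with part (2) for $\pi$ yields $M^{(\mu^{-1})}[d_\Lambda]\cong M^{(\mu'^{-1})}[d']$ for every $M\in\modd\gr\Lambda$; taking $M=\Lambda$ and passing to supports (a degree-zero twist leaves $\supp$ unchanged) gives $\supp(\Lambda)+d_\Lambda=\supp(\Lambda)+d'$ in $\ZZ^n$. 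A finite nonempty subset of $\ZZ^n$ cannot be invariant under a nonzero translation, so $d'=d_\Lambda$.

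The only delicate point is the bookkeeping in part (2): keeping the left/right module structures, the direction of the $\mu$-twist, and the sign of the shift $[d_\Lambda]$ mutually consistent. Everything else is either a routine degree count or a graded refinement of standard facts about Frobenius algebras and the Nakayama functor.
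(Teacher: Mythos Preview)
Your proof is correct and follows essentially the same approach as the paper's: for (1) you use the degree constraint imposed by the homogeneity of $\pi$ together with non-degeneracy, for (2) you identify the graded bimodule $\Lambda^\ast$ with $\Lambda^{(\mu^{-1})}[d_\Lambda]$ via $a\mapsto \pi(a\,\cdot\,)$ and then use $\cN(M)\cong \Lambda^\ast\otimes_\Lambda M$, and for (3) you compare the two descriptions of $\cN$ on $M=\Lambda$ and invoke finiteness of $\supp(\Lambda)$. The paper does exactly this, merely packaging the bimodule twist in (2) via the enveloping-algebra automorphism $\gamma=\mu\otimes\id_\Lambda$.
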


\begin{proof} (1) Since $\pi$ has degree $d_\Lambda$, we have $\pi(\Lambda_i) = (0)$ for $i\ne -d_\Lambda$, whence
\[ (\Lambda_i,\Lambda_j)_\pi = (0) \ \ \ \ \text{for} \ i+j \ne -d_\Lambda.\]
Let $a \in \Lambda_i$ and write $\mu(a)= \sum_{j \in \ZZ^n} x_j$. Assuming $j \ne i$, we consider $b \in \Lambda_\ell$. Then we have $(x_j,b)_\pi = 0$ for $j+\ell \ne -d_\Lambda$. If
$j+\ell = -d_\Lambda$, we obtain, observing $i+\ell \ne -d_\Lambda$,
\[ (x_j,b)_\pi = (\mu(a),b)_\pi = (b,a)_\pi  = 0.\]
As a result, $x_j = 0$ whenever $j\ne i$, so that $\mu(a) = x_i \in \Lambda_i$.

(2) Let $\gamma := \mu\otimes \id_\Lambda$ be the induced automorphism of the enveloping algebra $\Lambda^e$. Since $\pi$ has degree $d_\Lambda$, the map
\[ \Psi : \Lambda^{(\gamma^{-1})} \lra \Lambda^\ast \ \ ; \ \ \Psi(x)(y) := (x,y)_\pi\]
is an isomorphism of $\Lambda^e$-modules of degree $d_\Lambda$. There results an isomorphism
\[ \Lambda^{(\gamma^{-1})}[d_\Lambda] \cong \Lambda^\ast\]
of graded $\Lambda^e$-modules. The adjoint isomorphism theorem yields natural isomorphisms
\[ \cN(M) \cong \Lambda^\ast\!\otimes_\Lambda\!M \cong \Lambda^{(\gamma^{-1})}[d_\Lambda]\!\otimes_\Lambda\!M \cong M^{(\mu^{-1})}[d_\Lambda],\]
for every $M \in \modd \gr \Lambda$, cf.\ \cite[(III.2.9)]{ASS}.

(3) If $\Lambda$ has a Frobenius homomorphism of degree $d'$, then (2) provides an isomorphism $\Lambda^{(\delta^{-1})}[d']$ $ \cong \Lambda^\ast$, where $\delta = \nu\otimes
\id_\Lambda$ for some Nakayama automorphism $\nu$ of $\Lambda$ of degree $0$. We therefore obtain
\[ \supp(\Lambda)+d_\Lambda = \supp(\Lambda^{(\gamma^{-1})})+d_\Lambda = \supp(\Lambda^{(\gamma^{-1})}[d_\Lambda]) = -\supp(\Lambda) = \supp(\Lambda)+d',\]
so that $x \mapsto x+(d_\Lambda-d')$ leaves $\supp(\Lambda)$ invariant. Since $\supp(\Lambda)$ is finite, it follows that $d'=d_\Lambda$. \end{proof}

\bigskip

\begin{Lem} \label{NF3} Let $\Lambda = \bigoplus_{i \in \ZZ^n}\Lambda_i$ be a graded $k$-algebra, $\cB \subseteq \Lambda$ be a block of $\Lambda$. Then the following statements
hold:

{\rm (1)} \ Any central idempotent of $\Lambda$ is homogeneous of degree $0$.

{\rm (2)} \ The block $\cB \subseteq \Lambda$ is a homogeneous subspace.

{\rm (3)} \ Suppose that $\Lambda$ is a Frobenius algebra with homogeneous Frobenius homomorphism of degree $d_\Lambda$. Then $\cB$ is a Frobenius algebra, and every homogeneous
Frobenius homomorphism of $\cB$ has degree $d_\Lambda$. \end{Lem}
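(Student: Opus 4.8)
The plan is to prove the three statements in sequence, using the fact that the grading group $\ZZ^n$ is torsion-free and that $\supp(\Lambda)$ is finite.

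First, for (1), let $e \in \Lambda$ be a central idempotent and write its homogeneous decomposition $e = \sum_{i} e_i$ with $e_i \in \Lambda_i$. Comparing the degree-$0$ homogeneous components of $e = e^2$ together with centrality, I would argue that $e_0$ is itself a central idempotent and that $e - e_0$ is a nilpotent central element (its homogeneous parts live in positive/negative "degrees", and since $\supp(\Lambda)$ is finite one gets nilpotency after passing to a suitable total order on $\ZZ^n$; a quick way is to observe that any homogeneous central element of nonzero degree must be nilpotent because its powers would have arbitrarily large support in a fixed direction). Since $e - e_0$ is nilpotent and idempotents of a commutative ring modulo nilpotents lift uniquely, we get $e = e_0$, i.e.\ $e$ is homogeneous of degree $0$.

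For (2): a block $\cB \subseteq \Lambda$ is by definition $\Lambda e$ for a primitive central idempotent $e$. By part (1), $e$ is homogeneous of degree $0$, so $\cB = \Lambda e = \bigoplus_i \Lambda_i e$ is a direct sum of homogeneous pieces $\cB_i := \Lambda_i e$, hence a homogeneous (graded) subspace — in fact a graded subalgebra with identity element $e$.

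For (3): suppose $\pi : \Lambda \lra k$ is a homogeneous Frobenius homomorphism of degree $d_\Lambda$, with Nakayama automorphism $\mu$ of degree $0$ (Lemma \ref{NF1}(1)). Let $\cB = \Lambda e$ be a block. I would define $\pi_\cB := \pi|_\cB$ and check it is a Frobenius homomorphism for $\cB$: since $e$ is central and homogeneous of degree $0$, the restriction of the form $(\,,\,)_\pi$ to $\cB \times \cB$ is non-degenerate (the orthogonal complement of $\cB$ with respect to $(\,,\,)_\pi$ is $\Lambda(1-e)$, because $(\cB, \Lambda(1-e))_\pi = \pi(\Lambda e (1-e)) = 0$ and a dimension count using non-degeneracy on $\Lambda$ finishes it); thus $\cB$ is a Frobenius algebra. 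The form $(\,,\,)_{\pi_\cB}$ still has degree $d_\Lambda$ because $\pi_\cB$ does, and its Nakayama automorphism is $\mu|_\cB$ (this restriction makes sense since $\mu$ fixes $e$, which follows from $e$ being central: $(\mu(e), y)_\pi = (y, e)_\pi = (ye, e)_\pi = \cdots$, or more directly uniqueness of the Nakayama automorphism forces $\mu$ to preserve each block). Now apply Lemma \ref{NF1}(3) to the Frobenius algebra $\cB$: every homogeneous Frobenius homomorphism of $\cB$ has the same degree, namely $\deg(\pi_\cB) = d_\Lambda$.

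The main obstacle is part (1): making rigorous the claim that a homogeneous central element of nonzero degree is nilpotent, and therefore that the "non-degree-zero part" of a central idempotent vanishes. The clean way is to choose a group homomorphism $\ZZ^n \lra \ZZ$ that is injective on the finite set $\supp(\Lambda) - \supp(\Lambda)$ (possible since this is a finite subset of a torsion-free group), reducing to the case $n = 1$ where a homogeneous element of positive degree raised to a high power lands outside $\supp(\Lambda)$; the negative-degree case is symmetric, and a general nonzero-degree homogeneous element has no reason to be central unless it is nilpotent by this argument applied to the subalgebra it generates. Everything else is a routine transfer of the classical Frobenius/block correspondence to the graded setting, using only that the relevant idempotent is homogeneous of degree $0$.
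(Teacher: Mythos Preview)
Your argument for part~(1) has a genuine gap. Comparing degree-$0$ components of $e=e^2$ does \emph{not} show that $e_0$ is idempotent: the degree-$0$ component of $e^2$ is $\sum_{i}e_ie_{-i}=e_0^2+\sum_{i\neq 0}e_ie_{-i}$, so what you obtain is
\[
e_0^2-e_0 \;=\; -\sum_{i\neq 0}e_ie_{-i},
\]
and there is no reason for the right-hand side to vanish a priori. Consequently the appeal to ``unique lifting of idempotents'' breaks down, since that statement requires \emph{two idempotents} differing by a nilpotent; here you only have one idempotent $e$ and an element $e_0$ that is merely idempotent modulo the nilradical. Your identification of ``the main obstacle'' (nilpotency of homogeneous elements of nonzero degree) is the easy part; the real obstacle is this cross-term.

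The strategy is salvageable: each $e_ie_{-i}$ is a product of commuting nilpotent central elements, hence nilpotent, so $e_0^2-e_0$ lies in the nilradical of $Z(\Lambda)$. Now lift $e_0$ to a genuine idempotent $e_0'$; the standard construction gives $e_0'\in k[e_0]\subseteq\Lambda_0$. Since $e$ and $e_0'$ are idempotents with $e-e_0'$ nilpotent, uniqueness forces $e=e_0'\in\Lambda_0$. This repaired argument is correct but different from the paper's: the paper observes that a $\ZZ^n$-grading is the weight decomposition for an action of a torus $T\cong\GG_m^n$ by algebra automorphisms; $T$ permutes the finite set of primitive central idempotents, and connectedness of $T$ forces the permutation to be trivial, whence each primitive central idempotent lies in $\Lambda_0$. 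The torus argument is shorter and avoids the idempotent-lifting subtlety altogether.

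Your treatment of (2) and (3) is essentially the same as the paper's. For (3) the paper checks non-degeneracy of $\pi|_\cB$ via the left-ideal criterion (a left ideal $I\subseteq\cB$ with $I\subseteq\ker\pi|_\cB$ is a left ideal of $\Lambda$ in $\ker\pi$, hence zero), which is marginally quicker than your orthogonal-complement computation, and then invokes Lemma~\ref{NF1}(3) exactly as you do; your remarks about restricting the Nakayama automorphism are correct but not needed for the stated conclusion.
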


\begin{proof} (1) General theory provides a torus $T$, which acts on $\Lambda$ via automorphisms such that the given grading coincides with the weight space decomposition $\Lambda =
\bigoplus_{\lambda \in X(T)} \Lambda_\lambda$ of $\Lambda$ relative to $T$. Let $\cI\subseteq \Lambda$ be the set of central primitive idempotents of $\Lambda$. Then $\cI$ is a finite,
$T$-invariant set, so that the connected algebraic group $T$ acts trivially on $\cI$. Hence $\cI \subseteq \Lambda_0$, and our assertion follows from the fact that every central idempotent is a
sum of elements of $\cI$.

(2) There exists a central primitive idempotent $e$ such that $\cB = \Lambda e.$ Thanks to (1), we have $e \in \Lambda_0$, whence
\[ \cB = \bigoplus_{i \in \ZZ^n}\Lambda_ie = \bigoplus_{i \in \ZZ^n}(\cB\cap \Lambda_i),\]
as desired.

(3) Let $\pi : \Lambda \lra k$ be a Frobenius homomorphism of degree $d_\Lambda$. If $I \subseteq \cB$ is a left  ideal of $\cB$ which is contained in $\ker \pi|_{\cB}$, then $I$ is also a
left ideal of $\Lambda$, so that $I = (0)$. In view of (2), the block $\cB$ is a homogeneous subspace. We conclude that $\pi|_\cB$ is a homogeneous Frobenius homomorphism of $\cB$ of
degree $d_\Lambda$. Our assertion now follows from Lemma \ref{NF1}. \end{proof}

\bigskip
\noindent
A $k$-algebra $\Lambda$ is called {\it connected} if its $\Ext$-quiver is connected. This is equivalent to $\Lambda$ having exactly one block. The following result in conjunction with the
observations above shows when graded Frobenius algebras afford homogeneous Frobenius homomorphisms.

\bigskip

\begin{Thm} \label{NF2} Let $\Lambda = \bigoplus_{i \in \ZZ^n}\Lambda_i$ be a connected graded Frobenius algebra. Then there exists a homogeneous Frobenius homomorphism $\pi :
\Lambda \lra k$. \end{Thm}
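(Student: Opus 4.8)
First I would fix a Frobenius homomorphism $\pi_0:\Lambda\lra k$ (one exists, as $\Lambda$ is a Frobenius algebra) and write $\pi_0=\sum_e(\pi_0)_e$ for its decomposition into homogeneous components, where $(\pi_0)_e$ is concentrated on $\Lambda_{-e}$. The aim is to show that \emph{some} homogeneous component $(\pi_0)_e$ is again a Frobenius homomorphism; it is then the required homogeneous Frobenius homomorphism. To this end recall the criterion: a linear form $\pi$ on $\Lambda$ is a Frobenius homomorphism if and only if $\ker\pi$ contains no nonzero left ideal, and this holds if and only if $\pi$ is nonzero on every simple submodule of $\Soc({}_\Lambda\Lambda)$ --- indeed, every nonzero left ideal contains a simple submodule, and such a submodule lies in the left socle, while a simple left ideal contained in $\ker\pi$ would violate non-degeneracy. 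Note that $\Soc({}_\Lambda\Lambda)$ is a graded submodule.

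Since the conclusion of the theorem is invariant under graded Morita equivalence (the Nakayama automorphism, and the existence of a homogeneous Frobenius form, transfer along a graded progenerator), I would now assume $\Lambda$ basic, with Ext-quiver $Q$. As $\ZZ^n$ is torsion-free, a homogeneous invertible element of the finite-dimensional algebra $\Lambda$ must have degree $0$ --- otherwise its powers would be infinitely many linearly independent homogeneous elements of distinct degrees --- exactly as in the proof of Lemma \ref{NF1}(1); consequently the Wedderburn components of $\Lambda/J$ lie in degree $0$, every graded simple $\Lambda$-module is concentrated in a single degree, and one may choose a complete set $\{e_v\}_{v\in Q_0}$ of orthogonal primitive idempotents homogeneous of degree $0$. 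As $\Lambda$ is self-injective, $\Soc({}_\Lambda\Lambda)=\bigoplus_{v\in Q_0}\Soc(\Lambda e_v)$, each $\Soc(\Lambda e_v)$ is a simple module concentrated in a single degree $\sigma_v$, and these summands are pairwise non-isomorphic (indexed by $Q_0$ via the Nakayama permutation), hence are precisely the simple submodules of $\Soc({}_\Lambda\Lambda)$. By the criterion above, a homogeneous form concentrated in degree $-e$ is a Frobenius homomorphism precisely when $\sigma_v=e$ for all $v$ and it is nonzero on each $\Soc(\Lambda e_v)$; and the component $(\pi_0)_{-e}$ does satisfy the last condition whenever $e$ is the common value, since $\Soc(\Lambda e_v)$ sits in degree $\sigma_v=e$, where $(\pi_0)_{-e}$ agrees with $\pi_0$. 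Thus everything reduces to showing that all $\sigma_v$ coincide, i.e.\ that $\Soc({}_\Lambda\Lambda)$ is concentrated in a single degree.

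It is at this final point that connectedness of $\Lambda$ must be used, and it is where I expect the real work to lie. The plan is to transport the degree of the socle along the arrows of $Q$: given an arrow $\alpha:u\to v$, the homogeneous element $\alpha\in\Lambda e_u$ generates a nonzero submodule $\Lambda\alpha\subseteq\Lambda e_u$, which, by self-injectivity, contains the essential simple socle $\Soc(\Lambda e_u)$; factoring the socle generator of $\Lambda e_u$ through $\alpha$, iterating the same argument with the resulting factor (which again generates a socle-containing submodule of some $\Lambda e_w$), and using that $J$ annihilates $\Soc({}_\Lambda\Lambda)$ so that socle elements admit no further prolongation, one obtains a relation between $\sigma_u$ and $\sigma_v$; the Frobenius self-duality between tops and socles of the indecomposable projectives --- conveniently packaged in the twisted Hermitian symmetry $C_{vw}=X^{\sigma_w}\,\overline{C_{\nu(w),v}}$ of the graded Cartan matrix $C_{vw}=\sum_i\dim_k(e_v\Lambda e_w)_i\,X^i$, where the bar inverts $X^i\mapsto X^{-i}$ --- then forces $\sigma_u=\sigma_v$. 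Since $Q$ is connected, all the $\sigma_v$ agree, and by the previous paragraph $(\pi_0)_{-e}$ (for the common value $e$) is the desired homogeneous Frobenius homomorphism. The delicate step, and the main obstacle, is to organise this transport of degrees cleanly --- in effect, to verify that the degree-defect function on $Q$ is locally constant along arrows --- and to make the Cartan-symmetry computation precise; the reductions in the first two paragraphs are routine.
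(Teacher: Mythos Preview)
Your reductions in the first two paragraphs are sound, and you correctly identify that the whole question comes down to showing that the socle degrees $\sigma_v$ are independent of $v$. However, the third paragraph does not actually prove this. The ``transport along arrows'' argument is left as a sketch: factoring a socle generator through an arrow $\alpha:u\to v$ only tells you that $\Soc(\Lambda e_u)$ lies in $\Lambda\alpha$, which relates $\sigma_u$ to a degree inside $\Lambda e_v$, not to $\sigma_v$ itself. Your Cartan identity $C_{vw}=X^{\sigma_w}\,\overline{C_{\nu(w),v}}$ is indeed valid (it follows from the uniqueness of graded injective hulls, applied to $D(e_{\nu(w)}\Lambda)\cong\Lambda e_w[-\sigma_w]$), and iterating it gives $C_{vw}=X^{\sigma_w-\sigma_v}C_{\nu(v),\nu(w)}$. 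When $\nu=\id$ (the symmetric case) this forces $\sigma_v=\sigma_w$ whenever $C_{vw}\ne 0$, and connectedness finishes the proof. But for a nontrivial Nakayama permutation the identity only yields constraints on sums of $\sigma$ over $\nu$-orbits, and you have not explained how to extract $\sigma_u=\sigma_v$ from this together with the arrow argument. This is the missing idea, and you acknowledge as much.

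The paper bypasses all of this by working one categorical level up: rather than left $\Lambda$-modules it considers $\Lambda$ and $\Lambda^\ast$ as $\Lambda^e$-modules (i.e.\ bimodules). Here connectedness is used exactly once, and decisively: it says that $\Lambda$---hence also $\Lambda^\ast$---is \emph{indecomposable} as a bimodule. An ungraded Frobenius form $\pi$ gives an ungraded $\Lambda^e$-isomorphism $\Psi:\Lambda^{(\gamma^{-1})}\to\Lambda^\ast$ with $\gamma=\mu\otimes\id_\Lambda$, and then Gordon--Green's result \cite[(4.1)]{GG1} (any two gradings on an indecomposable module differ by a shift) upgrades this to a graded isomorphism $\Phi:\Lambda^{(\gamma^{-1})}[d]\to\Lambda^\ast$; evaluating at $1$ produces the desired homogeneous Frobenius form $\rho=\Phi(1)$. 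So the paper replaces your combinatorial transport along the Ext-quiver by a single application of an off-the-shelf lemma about graded indecomposables, which is exactly what makes the argument short. If you want to salvage your approach, the cleanest route is probably to imitate this: observe that $\Lambda^\ast$ is an invertible graded bimodule over the connected algebra $\Lambda$ and argue that such a bimodule must be a twist-and-shift of $\Lambda$---but that is essentially the paper's proof again.
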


\begin{proof} (1) Let $\Lambda^e = \Lambda\!\otimes_k\!\Lambda^{\rm op}$ be the enveloping algebra of $\Lambda$. As $\Lambda$ is connected, the canonical $\Lambda^e$-module
$\Lambda$ is indecomposable. Let $\pi : \Lambda \lra k$ be a Frobenius homomorphism. As argued above, there exists a degree $0$ automorphism $\gamma \in {\rm Aut}_k(\Lambda^e)$
such that the map
\[ \Psi : \Lambda^{(\gamma^{-1})} \lra \Lambda^\ast \ \ ; \ \ \Psi(x)(y) := (x,y)_\pi\]
is an isomorphism of $\Lambda^e$-modules. As all modules involved are indecomposable, \cite[(4.1)]{GG1} ensures the existence of an element $d \in \ZZ^n$ and an isomorphism
\[ \Phi : \Lambda^{(\gamma^{-1})}[d] \lra \Lambda^\ast\]
of graded $\Lambda^e$-modules. Consequently, the map
\[ \rho : \Lambda \lra k \ \ ; \ \ \rho(x) := \Phi(1)(x)\]
is a Frobenius homomorphism of degree $d$. \end{proof}

\bigskip

\begin{Remark} Let $\Lambda$ and $\Gamma$ be two connected $\ZZ^n$-graded Frobenius algebras with homogeneous Frobenius homomorphisms of degrees $d_\Lambda$ and
$d_\Gamma$, respectively. If $d_\Lambda \ne d_\Gamma$, then Lemmas \ref{NF1} and \ref{NF3} imply that $\Lambda \oplus \Gamma$ is a graded Frobenius algebra, which does not
afford a homogeneous Frobenius homomorphism. We shall see in the next section that such phenomena do not arise within the context of graded Hopf algebras. \end{Remark}

\bigskip
\noindent
Suppose that $\Lambda$ is a Frobenius algebra. Then $\modd \gr \Lambda$ is a Frobenius category, and \cite[(3.5)]{GG2} ensures that $\modd \gr \Lambda$ has almost split sequences.
In view of \cite[\S1]{GG2}, the Auslander-Reiten translation $\tau_{\gr \Lambda}$ is given by
\[ \tau_{\gr\Lambda} = \cN \circ \Omega^2_{\gr \Lambda},\]
where $\Omega_{\gr\Lambda}$ denotes the Heller operator of the Frobenius category $\modd \gr \Lambda$. We denote by $\Gamma_s(\gr\Lambda)$ the stable Auslander-Reiten quiver of
$\modd \gr \Lambda$.

\bigskip

\begin{Cor} \label{NF4} Let $\Theta \subseteq \Gamma_s(\gr \Lambda)$ be a component. Then there exists an automorphism $\mu$ of $\Lambda$ of degree $0$ and
$d_\Theta \in \ZZ$ such that
\[ \tau_{\gr\Lambda}(M) = \Omega^2_{\gr\Lambda}(M^{(\mu)})[d_\Theta]\]
for every $M \in \Theta$. \end{Cor}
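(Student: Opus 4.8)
The plan is to combine Theorem \ref{NF2}, which guarantees that the connected Frobenius algebra $\Lambda$ carries a homogeneous Frobenius homomorphism of some degree $d_\Lambda$, with the graded Nakayama formula of Lemma \ref{NF1}. First I would invoke Theorem \ref{NF2} to fix a homogeneous Frobenius homomorphism $\pi : \Lambda \lra k$ of degree $d_\Lambda$; by Lemma \ref{NF1}(1) its Nakayama automorphism $\mu$ has degree $0$, and by Lemma \ref{NF1}(2) there is a natural isomorphism $\cN(N) \cong N^{(\mu^{-1})}[d_\Lambda]$ for every $N \in \modd \gr \Lambda$. Setting $d_\Theta := d_\Lambda$ (so in fact $d_\Theta$ is independent of $\Theta$) and applying this with $N = \Omega^2_{\gr\Lambda}(M)$ gives, via $\tau_{\gr\Lambda} = \cN \circ \Omega^2_{\gr\Lambda}$,
\[ \tau_{\gr\Lambda}(M) \cong \cN(\Omega^2_{\gr\Lambda}(M)) \cong \Omega^2_{\gr\Lambda}(M)^{(\mu^{-1})}[d_\Lambda]. \]

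The remaining point is to move the twist $(\mu^{-1})$ and the shift $[d_\Lambda]$ through $\Omega^2_{\gr\Lambda}$, i.e.\ to identify $\Omega^2_{\gr\Lambda}(M)^{(\mu^{-1})}[d_\Lambda]$ with $\Omega^2_{\gr\Lambda}(M^{(\mu^{-1})}[d_\Lambda])$, so that the statement follows after renaming $\mu^{-1}$ as the asserted degree-$0$ automorphism (the statement only asks for \emph{some} such automorphism). This is where I would be a little careful: both the shift functor $[i]$ and the twist functor $(-)^{(\alpha)}$ by a degree-$0$ automorphism $\alpha$ are exact self-equivalences of $\modd \gr \Lambda$, and the shift functor commutes with the forgetful functor to ungraded modules while sending graded projectives to graded projectives; the twist functor likewise preserves projectivity since $P^{(\alpha)}$ is again projective (it is the pullback of a projective along an algebra automorphism). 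An exact self-equivalence of a Frobenius category that preserves the projective-injective objects commutes with the Heller operator up to natural isomorphism, hence with $\Omega^2_{\gr\Lambda}$. Applying this to $[d_\Lambda]$ and to $(-)^{(\mu^{-1})}$ yields
\[ \Omega^2_{\gr\Lambda}(M)^{(\mu^{-1})}[d_\Lambda] \cong \Omega^2_{\gr\Lambda}\bigl(M^{(\mu^{-1})}[d_\Lambda]\bigr), \]
and combining the two displays gives the claim with $d_\Theta = d_\Lambda$ and the automorphism $\mu^{-1}$ (which has degree $0$).

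The one subtlety worth spelling out, and the main obstacle, is the claim that shift and twist commute with $\Omega_{\gr\Lambda}$: one must check that these functors carry a graded projective cover of $M$ to a graded projective cover of the image of $M$, which in turn reduces to the statements that $P[i]$ and $P^{(\alpha)}$ are graded projective whenever $P$ is, and that both functors are exact and preserve the radical (so that projective \emph{covers}, not merely projective presentations, are preserved). All of this is elementary but should be stated, since it is exactly the point at which the grading and the Nakayama twist interact. Once that is in place, the proof is just the two chains of isomorphisms above; no further computation is needed, and the $D_\infty$-type components require no separate treatment since the argument is uniform over all components $\Theta$.
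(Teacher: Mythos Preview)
Your argument has a genuine gap: you invoke Theorem~\ref{NF2} for ``the connected Frobenius algebra $\Lambda$'', but the corollary does not assume $\Lambda$ to be connected. Theorem~\ref{NF2} only applies to connected graded Frobenius algebras, and as the Remark immediately following it shows, a non-connected graded Frobenius algebra need not admit any homogeneous Frobenius homomorphism at all. So your very first step --- fixing a homogeneous $\pi$ of degree $d_\Lambda$ on all of $\Lambda$ --- is unjustified. This is also why your parenthetical claim that ``$d_\Theta$ is independent of $\Theta$'' is false in general: two components lying in different blocks may well require different shifts.

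The paper's proof repairs exactly this point. One first takes the block decomposition $\Lambda = \cB_1 \oplus \cdots \oplus \cB_s$, observes via Lemma~\ref{NF3} that each $\cB_i$ is itself a graded Frobenius algebra, and notes that any indecomposable $M \in \Theta$ lies in a single block $\cB_\ell$, whence the entire component $\Theta$ lies in $\Gamma_s(\gr \cB_\ell)$. Since $\cB_\ell$ is connected, Theorem~\ref{NF2} now legitimately provides a homogeneous Frobenius homomorphism on $\cB_\ell$, and Lemma~\ref{NF1} gives the Nakayama formula there; the resulting degree $d_\Theta$ depends on the block and hence on $\Theta$. Once you insert this reduction to a block, the remainder of your argument --- applying Lemma~\ref{NF1}(2), commuting the degree-$0$ twist through $\Omega^2_{\gr\Lambda}$ via the observation that exact self-equivalences preserving projectives commute with the Heller operator, and renaming $\mu^{-1}$ as $\mu$ --- is correct and in fact spells out details the paper leaves implicit.
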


\begin{proof} Let
\[ \Lambda = \cB_1\oplus \cB_2 \oplus \cdots \oplus \cB_s\]
be the block decomposition of $\Lambda$. In view of Lemma \ref{NF3}, this is a decomposition of graded Frobenius algebras which gives rise to a decomposition
\[ \modd \gr\Lambda = \bigoplus_{i=1}^s\modd \gr\cB_i\]
of the graded module category.

Suppose that $M \in \Theta$. Since $M$ is indecomposable, there exists a block $\cB_\ell \subseteq \Lambda$ such that $M \in \modd \gr \cB_\ell$. This readily implies that $\Theta
\subseteq \modd \gr\cB_\ell$, whence $\Theta \subseteq \Gamma_s(\gr\cB_\ell)$. Our assertion now follows from Theorem \ref{NF2}.\end{proof}

\bigskip

\subsection{Graded Hopf Algebras}
Suppose that $H$ is a finite-dimensional Hopf algebra. We say that $H$ is {\it graded} if $H = \bigoplus_{i\in \ZZ^n}H_i$ is a graded $k$-algebra such that the comultiplication $\Delta : H
\lra H\!\otimes_k\!H$ is homogeneous of degree $0$. In that case, the counit $\varepsilon : H \lra k$ and the antipode $\eta : H \lra H$ are also maps of degree $0$. The subspace
\[ \int^\ell_H := \{ x \in H \ ; \ hx = \varepsilon(h)x \ \ \forall \ h \in H\}\]
is called the space of {\it left integrals} of $H$. This space is known to be one-dimensional, cf.\ \cite{Sw}.

\bigskip

\begin{Lem} \label{HA1} Let $H$ be a graded Hopf algebra. Then the following statements hold:

{\rm (1)} \ There exists $i \in \ZZ^n$ such that $\int_H^\ell \subseteq H_i$.

{\rm (2)} \ $H$ is a Frobenius algebra with a homogeneous Frobenius homomorphism $\pi : H \lra k$. \end{Lem}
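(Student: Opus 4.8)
The plan is to prove (1) by a direct argument showing that every homogeneous component of a left integral is itself a left integral, so that one-dimensionality of $\int_H^\ell$ forces the integral to be homogeneous; and then to deduce (2) by applying (1) to the dual Hopf algebra $H^\ast$ together with the Larson--Sweedler theorem.

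For (1), I would use only that a graded Hopf algebra has a homogeneous counit of degree $0$, so $\varepsilon(H_j)=(0)$ for every $j\ne 0$. Let $x\in\int_H^\ell$ and write $x=\sum_i x_i$ with $x_i\in H_i$. For a homogeneous element $h\in H_j$ I compare the homogeneous components of the equality $hx=\varepsilon(h)x$: the term $hx_i$ lies in degree $i+j$, while $\varepsilon(h)x_i$ lies in degree $i$. When $j=0$ this gives $hx_i=\varepsilon(h)x_i$ in each degree $i$; when $j\ne 0$ we have $\varepsilon(h)=0$, and since the $hx_i$ sit in pairwise distinct degrees, again $hx_i=0=\varepsilon(h)x_i$. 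As $H$ is spanned by homogeneous elements, $hx_i=\varepsilon(h)x_i$ holds for all $h\in H$, i.e.\ $x_i\in\int_H^\ell$ for every $i$. Because $\int_H^\ell$ is one-dimensional (cf.\ \cite{Sw}), any two nonzero $x_i$ would span the same line; being homogeneous of distinct degrees this is impossible, so at most one $x_i$ is nonzero. Hence $\int_H^\ell\subseteq H_i$ for a single $i$. The identical computation, with $hx$ replaced by $xh$, shows that the space of right integrals is homogeneous as well.

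For (2), the finite-dimensional dual $H^\ast$ is naturally a graded $k$-algebra, a linear form being homogeneous exactly when it is supported on a single $H_i$; this is a grading because the multiplication of $H^\ast$ is the transpose of the degree-$0$ map $\Delta$. Its comultiplication is the transpose of the multiplication of $H$, which satisfies $H_iH_j\subseteq H_{i+j}$, so it too is homogeneous of degree $0$; hence $H^\ast$ is again a graded Hopf algebra in the sense of the definition above. Applying part (1) to $H^\ast$ furnishes a nonzero homogeneous integral $\pi$ of $H^\ast$. By the Larson--Sweedler theorem (see \cite{Sw}), $H$ is a Frobenius algebra, and such an integral of $H^\ast$, regarded as a linear form $\pi : H\lra k$, is a Frobenius homomorphism, i.e.\ the bilinear form $(x,y)\mapsto\pi(xy)$ on $H$ is nondegenerate. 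Since $\pi$ is homogeneous, (2) follows.

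The degree bookkeeping for $H^\ast$ and the verification that its structure maps are homogeneous are routine. The one point that requires care is the compatibility of conventions: the Larson--Sweedler construction produces the nondegenerate form from a left (resp.\ right) integral of $H^\ast$ via a specific action, and one must check that the resulting form is literally $(x,y)\mapsto\pi(xy)$ — or observe that its transpose $(x,y)\mapsto\pi(yx)$ is equally nondegenerate and, by the last remark in the proof of (1), equally homogeneous — so that Lemma~\ref{NF1} and the applications in the following section apply verbatim.
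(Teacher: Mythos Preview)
Your proof is correct and follows essentially the same approach as the paper: for (1) you show each homogeneous component of a left integral is again a left integral and invoke one-dimensionality, and for (2) you apply (1) to the graded dual Hopf algebra $H^\ast$ and cite Larson--Sweedler to identify a nonzero homogeneous integral of $H^\ast$ with a Frobenius homomorphism of $H$. The only difference is that you spell out in more detail why $H^\ast$ is again a graded Hopf algebra and flag the convention check for Larson--Sweedler, points the paper leaves implicit.
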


\begin{proof} (1) Let $x = \sum_{j \in \ZZ^n}x_j$ be a non-zero left integral of $H$. Given $h \in H_d$, we have  $\varepsilon(h)x = hx = \sum_{j \in\ZZ^n} hx_j$, so that
\[ hx_j = \varepsilon(h)x_{j+d} \ \ \ \ \forall \ j \in \ZZ^n.\]
Since $\deg \varepsilon = 0$, we obtain $hx_j = 0$ for $d \ne 0$, and $hx_j = \varepsilon(h)x_j$ for $d=0$. Consequently, $x_j \in \int^\ell_H$ for every $j \in \ZZ^n$. Since $\dim_k
\int^\ell_H = 1$, it follows that $\int^\ell_H \subseteq H_i$ for some $i \in \ZZ^n$.

(2) Note that the dual Hopf algebra $H^\ast$ is graded. Let $\pi \in H^\ast$ be a non-zero left integral. A result due to Larson and Sweedler \cite{LS} ensures that $H$ is a Frobenius algebra
with Frobenius homomorphism $\pi : H \lra k$. In view of (1), the linear map $\pi$ is homogeneous. \end{proof}

\bigskip

\begin{Example} Suppose that $\Char(k)=p>0$, and let $(\fg,[p])$ be a finite-dimensional restricted Lie algebra with restricted enveloping algebra $U_0(\fg)$. Assume that $\fg =
\bigoplus_{i\in \ZZ^n}\fg_i$ is restricted graded, that is, $\fg_i^{[p]} \subseteq \fg_{ip}$, so that $U_0(\fg)$ is also $\ZZ^n$-graded. Given a homogeneous basis $\{e_1, \ldots, e_m\}$
of $\fg$, we write $e^a := e_1^{a_1}\cdots e_m^{a_m}$ for every $a \in \NN^m_0$ and put $\tau:=(p\!-\!1,\ldots,p\!-\!1)$ as well as $a\le \tau :\Leftrightarrow a_i \le p\!-\!1$ for
$1\le i\le m$. Then the set $\{e^a \ ; \ 0 \le a \le \tau\}$ is a homogeneous basis of $U_0(\fg)$, and
\[ \pi : U_0(\fg) \lra k \ \ ; \ \ \sum_{0\le a\le \tau}\alpha_ae^a \mapsto \alpha_\tau\]
is a homogeneous Frobenius homomorphism of degree $d_\fg:= -(p\!-\!1)\sum_{i\in \ZZ^n}(\dim_k\fg_i)i$.  Moreover, by \cite[(I.9.7)]{Ja3}, the unique automorphism $\mu :
U_0(\fg)\lra U_0(\fg)$, given by $\mu(x) = x +\tr(\ad x)1$ for all $x \in \fg$, is the Nakayama automorphism corresponding to $\pi$. By Lemma \ref{NF1}, we have
\[ \cN(M) \cong M^{(\mu^{-1})}[d_\fg]\]
for every $M \in \modd \gr U_0(\fg)$. Let $P(S)$ be the projective cover of the graded simple module $S$. General theory implies that
\[ \Soc(P(S)) \cong \cN^{-1}(S) \cong S^{(\mu)}[-d_\fg],\]
which retrieves \cite[(1.9)]{Ja2}. \end{Example}

\bigskip
\noindent
We are going to apply the foregoing result in the context of Frobenius kernels. Suppose that $\Char(k) =p>0$, and let $G$ be a reduced algebraic $k$-group scheme with maximal torus $T$.
We denote the adjoint representation of $G$ on $\Lie(G)$ by $\Ad$. For $r>0$, the algebra $kG_r$ obtains a grading via the adjoint action of $T$ on $kG_r$:
\[ kG_r = \bigoplus_{\lambda \in X(T)} (kG_r)_\lambda.\]
We shall identify characters of $G$, $T$ and $G_r$ with the corresponding elements of the coordinate rings or the duals of the algebras of measures. Let $\lambda_G \in X(G)$ be the character
given by $\lambda_G(g) = \det(\Ad(g))$ for all $g \in G$. If $\fg = \bigoplus_{\alpha \in \Psi\cup \{0\}} \fg_\alpha$ is the root space decomposition of $\fg := \Lie(G)$ relative to $T$,
then
\[ \det(\Ad(t)) = \prod_{\alpha \in \Psi} \alpha(t)^{\dim_k\fg_\alpha}\]
for every $t \in T$, so that
\[ \lambda_G|_T = \sum_{\alpha \in \Psi}(\dim_k\fg_\alpha)\alpha.\]
Given $r>0$, we denote by $\modd G_rT$ the category of finite-dimensional modules of the group scheme $G_rT \subseteq G$. In view of \cite[(2.1)]{Fa5}, which also hols for $r>1$, this
category is a direct sum of blocks of the category $\modd (G_r\!\rtimes\!T)$ of finite-dimensional $(G_r\!\rtimes\!T)$-modules. The latter category is just the category of $X(T)$-graded
$kG_r$-modules. It now follows from work by Gordon-Green \cite[(3.5)]{GG2}, \cite{Gr} that the Frobenius category $\modd G_rT$ affords almost split sequences. By the same token, the
Auslander-Reiten translation $\tau_{G_rT}$ of $\modd G_rT$ is given by
\[ \tau_{G_rT} = \cN\circ \Omega^2_{G_rT}.\]
The following result provides a formula for the Nakayama functor of $\modd G_rT$.

\bigskip

\begin{Prop} \label{HA2} Let $G$ be a reduced algebraic $k$-group scheme with maximal torus $T\subseteq G$.

{\rm (1)} \ The $X(T)$-graded algebra $kG_r$ possesses a Frobenius homomorphism $\pi : kG_r \lra k$ of degree $-(p^r\!-\!1)\lambda_G|_T$.

{\rm (2)} \ We have $ \cN(M) \cong ( M\!\otimes_k\!k_{\lambda_G|_{G_rT}})[-p^r\lambda_G|_T]$ for every $M \in \modd G_rT$. \end{Prop}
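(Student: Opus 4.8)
The plan is to combine the two general results of this section. By Lemma~\ref{HA1}(2), the $X(T)$-graded Hopf algebra $kG_r=\Dist(G_r)$ admits a homogeneous Frobenius homomorphism $\pi\colon kG_r\to k$, so that Lemma~\ref{NF1} applies and gives natural isomorphisms $\cN(M)\cong M^{(\mu^{-1})}[d]$ for all $M$, where $d:=\deg\pi$ and $\mu$ is the associated Nakayama automorphism, which has degree $0$. Since, by Lemma~\ref{NF1}(3), every homogeneous Frobenius homomorphism of $kG_r$ has the same degree, statement~(1) amounts to showing $d=-(p^r\!-\!1)\lambda_G|_T$, after which (2) follows by identifying the twist $M^{(\mu^{-1})}$.

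For the degree, I would use the hyperalgebra structure of $\Dist(G_r)$ (cf.\ \cite[I.7, I.9]{Ja3}): fixing a basis $X_1,\dots,X_m$ of $\fg$ consisting of $T$-weight vectors, with $X_j$ of weight $\gamma_j$, one obtains a homogeneous basis of $kG_r$ given by the divided-power monomials $X^{(a)}=\prod_j X_j^{(a_j)}$ with $0\le a_j\le p^r\!-\!1$, where $X^{(a)}$ has $T$-weight $\sum_j a_j\gamma_j$. The unique maximal such monomial, $a=(p^r\!-\!1,\dots,p^r\!-\!1)$, has weight $(p^r\!-\!1)\sum_j\gamma_j=(p^r\!-\!1)\sum_{\alpha\in\Psi}(\dim_k\fg_\alpha)\alpha=(p^r\!-\!1)\lambda_G|_T$, the zero weight spaces of $\fg$ contributing nothing. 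As in the restricted case \cite[(I.9.7)]{Ja3} (the $r=1$ instance of which is the Example above), the functional extracting the coefficient of this top monomial is a Frobenius homomorphism; being homogeneous and vanishing on every weight space other than the one of weight $(p^r\!-\!1)\lambda_G|_T$, it has degree $-(p^r\!-\!1)\lambda_G|_T$. (Equivalently, one may invoke Lemma~\ref{HA1}(1): the line $\int^\ell_{kG_r}$ is concentrated in a single $T$-weight, which the explicit form of the integral identifies as $(p^r\!-\!1)\lambda_G|_T$, and $\pi$ is non-zero on it.)

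For part~(2) it remains to identify $\mu$. As $\pi$ is a non-zero integral of the dual Hopf algebra $(kG_r)^\ast$, the Nakayama automorphism attached to $(\,,\,)_\pi$ is the canonical one of the Hopf algebra $kG_r$, namely — by \cite[(1.5)]{FMS}, as recalled in Section~\ref{S:EC} — the winding automorphism $\mu=\id_{G_r}\ast\zeta_\ell$ convolving $\id_{G_r}$ with the left modular function $\zeta_\ell\colon kG_r\to k$. The character $\zeta_\ell$ of $G_r$ is precisely $\lambda_G|_{G_r}$: for $r=1$ this is the identity $\mu(x)=x+\tr(\ad x)1=x+(d\lambda_G)(x)1$ of \cite[(I.9.7)]{Ja3}, and the general case is the same computation for $\Dist(G_r)$. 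A short manipulation with the comultiplication then shows that for every $kG_r$-module $M$,
\[ h\cdot(m\otimes 1)=\textstyle\sum (h_{(1)}m)\otimes\bigl(\zeta_\ell(h_{(2)})\cdot 1\bigr)=\bigl((\id_{G_r}\ast\zeta_\ell)(h)\bigr)m\otimes 1=\mu(h)m\otimes 1\qquad(h\in kG_r,\ m\in M), \]
so that $M^{(\mu^{-1})}\cong M\otimes_k k_{\zeta_\ell}=M\otimes_k k_{\lambda_G|_{G_r}}$. Since the $G_rT$-module $k_{\lambda_G|_{G_rT}}$ is concentrated in degree $\lambda_G|_T$, this becomes an isomorphism $M\otimes_k k_{\lambda_G|_{G_rT}}\cong M^{(\mu^{-1})}[\lambda_G|_T]$ of graded modules; feeding it into $\cN(M)\cong M^{(\mu^{-1})}[d]$ and adding shifts yields
\[ \cN(M)\cong\bigl(M\otimes_k k_{\lambda_G|_{G_rT}}\bigr)[d-\lambda_G|_T]=\bigl(M\otimes_k k_{\lambda_G|_{G_rT}}\bigr)[-p^r\lambda_G|_T], \]
since $d-\lambda_G|_T=-(p^r\!-\!1)\lambda_G|_T-\lambda_G|_T$.

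The comultiplication computation and the additivity of grading shifts are routine, and the structural ingredients (Lemmas~\ref{HA1} and~\ref{NF1}, together with the Nakayama automorphism of \cite{FMS}) are already in place. The one point demanding care is fixing the two sign conventions consistently: that the weight in question is $+(p^r\!-\!1)\lambda_G|_T$ rather than its negative, and that the left modular function of $\Dist(G_r)$ is $\lambda_G|_{G_r}$ rather than $\lambda_G^{-1}|_{G_r}$. Both are pinned down by the explicit descriptions of the integral and of the Nakayama automorphism of $\Dist(G_r)$ in \cite[I.7--I.9]{Ja3}, of which the $r=1$ Example preceding this proof is the template.
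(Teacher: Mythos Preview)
Your proof is correct and follows essentially the same route as the paper's: both parts rest on Lemma~\ref{NF1} together with the identification of the Nakayama automorphism as the winding automorphism $\id\ast\zeta_\ell$ from \cite[(1.5)]{FMS} and the fact $\zeta_\ell=\lambda_G|_{G_r}$ from \cite[(I.9.7)]{Ja3}. The only cosmetic difference is that the paper obtains the degree in~(1) by directly invoking the $G$-action on the integral $\pi\in k[G_r]$ described in the proof of \cite[(I.9.7)]{Ja3}, whereas you reach the same weight $(p^r\!-\!1)\lambda_G|_T$ via the top divided-power monomial; these are the same object viewed from the two dual sides.
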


\begin{proof} (1) Let $\pi \in k[G_r] = kG_r^\ast$ be a non-zero (left) integral of the commutative Hopf algebra $k[G_r]$. By the proof of \cite[(I.9.7)]{Ja3}, the group $G$ acts on the
subspace $k\pi$ via the character $\lambda_{G_r} : G \lra k \ ; \ g \mapsto \det(\Ad(g))^{-(p^r-1)}$. Consequently, $\pi$ is homogeneous of degree $\lambda_{G_r}|_T  =
-(p^r\!-\!1)\lambda_G|_T\in X(T)$.

(2) Let $\zeta_\ell$ be the left modular function of $kG_r$. By definition, $\zeta_\ell : kG_r \lra k$ is given by $xh = \zeta_\ell(h)x$ for every $h \in kG_r$ and $x \in \int^\ell_{kG_r}$.
Owing to \cite[(1.5)]{FMS}, the automorphism $\id_{kG_r}\ast\zeta_\ell$ is the Nakayama automorphism associated to $(\,,\,)_\pi$. It now follows from \cite[(I.9.7)]{Ja3} that
\[ \zeta_\ell(g) = \lambda_G(g)\]
for all $g \in G_r$. Let $M$ be a graded $kG_r$-module. In view of Lemma \ref{NF1}, the Nakayama functor $\cN$ of $\modd \gr kG_r$ satisfies
\[ \cN(M) \cong  M^{(\id_{kG_r}\ast\zeta_\ell^{-1})}[-(p^r\!-\!1)\lambda_G|_T].\]
For $\lambda \in X(T)$, we denote by $k_\lambda$ the one-dimensional $(G_r\!\rtimes\!T)$-module on which $T$ and $G_r$ act via $\lambda$ and $1$, respectively. Then we have
\[ M[\lambda] \cong M\!\otimes_k\!k_\lambda \ \ \ \ \ \ \forall \ \lambda \in X(T), \, M \in \modd G_r\!\rtimes\!T.\]
For a character $\gamma \in X(G)$, we define characters $\hat{\gamma}, \tilde{\gamma} \in X(G_r\!\rtimes\!T)$ via
\[ \hat{\gamma}(g,t) = \gamma(g)\gamma(t) \ \ \text{and} \ \ \tilde{\gamma}(g,t) = \gamma(g),\]
respectively. Given $M \in \modd G_r\!\rtimes\!T$, we now obtain
\begin{eqnarray*}
 M^{(\id_{kG_r}\ast\zeta_\ell^{-1})}[-(p^r\!-\!1)\lambda_G|_T] & \cong & (M\!\otimes_k\!k_{\tilde{\lambda}_G})\!\otimes_k\!k_{-(p^r-1)\lambda_G|_T}
 \cong (M\!\otimes_k\!k_{\hat{\lambda}_G})\!\otimes_k\!k_{-p^r\lambda_G|_T} \\
 & \cong & (M\!\otimes_k\!k_{\hat{\lambda}_G})[-p^r\lambda_G|_T].
 \end{eqnarray*}
 Let $\omega : G_r\!\rtimes\!T \lra G_rT$ be the canonical quotient map. Since $\hat{\lambda}_G(t^{-1},t) = 1$ for every $t \in T_r$, we have
\[ (\lambda_G|_{G_rT})\circ \omega = \hat{\lambda}_G.\]
As $\modd G_rT$ is a sum of blocks of $\modd G_r\!\rtimes\!T$, it follows that
\[ \cN(M) \cong ( M\!\otimes_k\!k_{\lambda_G|_{G_rT}})[-p^r\lambda_G|_T]\]
for every $M \in \modd G_rT$. \end{proof}

\bigskip
\noindent
For future reference, we record the following result:

\bigskip

\begin{Cor} \label{HA3} Suppose that $G$ is a reduced group scheme with maximal torus $T \subseteq G$ and Lie algebra $\fg = \bigoplus_{\alpha \in \Psi\cup\{0\}}\fg_\alpha$.

{\rm (1)} \ If $\dim_k\fg_\alpha = \dim_k\fg_{-\alpha}$ for every $\alpha \in \Psi$, then $\tau_{G_rT} = \Omega^2_{G_rT}$.

{\rm (2)} \ If $G$ is reductive, then $\tau_{G_rT} = \Omega^2_{G_rT}$.\end{Cor}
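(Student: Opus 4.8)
The plan is to deduce both parts from the Nakayama functor formula in Proposition \ref{HA2}, which gives $\cN(M) \cong (M\otimes_k k_{\lambda_G|_{G_rT}})[-p^r\lambda_G|_T]$, together with the fact that $\tau_{G_rT} = \cN\circ\Omega^2_{G_rT}$. Since in the graded $(G_r\rtimes T)$-setting a shift by $\lambda\in X(T)$ is nothing but tensoring with the one-dimensional module $k_\lambda$, and since the same holds after passing to the quotient $G_rT$, the composite effect of $\cN$ on $M$ is to tensor by $k_{\lambda_G|_{G_rT}}$ and then shift by $-p^r\lambda_G|_T$; I would observe that these two operations combine into tensoring by a single one-dimensional $G_rT$-module whose $T$-weight is $\lambda_G|_T - p^r\lambda_G|_T = -(p^r-1)\lambda_G|_T$ and whose underlying $G_rT$-character is $\lambda_G|_{G_rT}[-p^r\lambda_G|_T]$. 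The key point is therefore that $\cN = \id$ on $\modd G_rT$ precisely when this one-dimensional module is trivial, equivalently when $\lambda_G|_T = 0$ in $X(T)$, since the shift by $-p^r\lambda_G|_T$ and the tensor factor $k_{\lambda_G|_{G_rT}}$ must simultaneously vanish — and $\lambda_G|_T=0$ forces $\lambda_G|_{G_rT}=0$ as well.

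So the real content of (1) is the vanishing $\lambda_G|_T = 0$ under the hypothesis $\dim_k\fg_\alpha = \dim_k\fg_{-\alpha}$ for all $\alpha\in\Psi$. First I would recall from the discussion preceding Proposition \ref{HA2} that $\lambda_G|_T = \sum_{\alpha\in\Psi}(\dim_k\fg_\alpha)\alpha$. Grouping the roots into pairs $\{\alpha,-\alpha\}$ (using that $\Psi = -\Psi$, which follows from $T$-invariance of $\fg$ and the reality of the set of weights, or simply because a root and its negative both appear once $\dim\fg_\alpha=\dim\fg_{-\alpha}>0$), each pair contributes $(\dim_k\fg_\alpha)\alpha + (\dim_k\fg_{-\alpha})(-\alpha) = 0$ by the symmetry assumption. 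Hence $\lambda_G|_T = 0$, so by the paragraph above $\cN$ is the identity functor on $\modd G_rT$ and $\tau_{G_rT} = \Omega^2_{G_rT}$. A small technical point I would address is that a priori $\cN$ being the identity on objects should be upgraded to a natural isomorphism of functors, but Proposition \ref{HA2} already delivers the natural isomorphism $\cN(M)\cong(M\otimes_k k_{\lambda_G|_{G_rT}})[-p^r\lambda_G|_T]$, and specializing the one-dimensional module to the trivial one gives $\cN\cong\id$ naturally.

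For part (2), I would simply note that a reductive group is in particular one whose root system satisfies $\Psi = -\Psi$ with $\dim_k\fg_\alpha = \dim_k\fg_{-\alpha}$ for every root $\alpha$; indeed for a reductive group all root spaces are one-dimensional, so the hypothesis of (1) is automatic. Thus (2) is an immediate consequence of (1). I do not anticipate a genuine obstacle here; the only thing to be careful about is making sure the identification "shift by $\lambda\in X(T)$ equals tensoring with $k_\lambda$" is invoked in the right category ($\modd G_r\rtimes T$) and then transported to $\modd G_rT$ via the fact, used already in the proof of Proposition \ref{HA2}, that $\modd G_rT$ is a direct sum of blocks of $\modd G_r\rtimes T$ — so all the functors in question restrict compatibly.
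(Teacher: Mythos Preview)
Your approach is the same as the paper's: show $\lambda_G|_T = \sum_{\alpha\in\Psi}(\dim_k\fg_\alpha)\,\alpha = 0$ from the symmetry hypothesis, conclude that the Nakayama functor $\cN$ is the identity via Proposition~\ref{HA2}, and then use $\tau_{G_rT}=\cN\circ\Omega^2_{G_rT}$. The deduction of (2) from (1) is likewise identical.

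There is one step you pass over too quickly. You assert that ``$\lambda_G|_T=0$ forces $\lambda_G|_{G_rT}=0$ as well'', but this is precisely where the paper does real work. Vanishing of $\lambda_G|_T$ handles the shift $[-p^r\lambda_G|_T]$, yet the tensor factor $k_{\lambda_G|_{G_rT}}$ still carries the $G_r$-action via $\lambda_G|_{G_r}$, and nothing you have said shows that this is trivial. The paper closes this gap by proving $\lambda_G\equiv 1$ on all of $G$: from $T\subseteq\ker\lambda_G$ one gets $Z_G(T)\subseteq\ker\lambda_G$ (the Cartan subgroup is connected nilpotent, and its unipotent part admits no nontrivial character into $\GG_m$); since the Cartan subgroups are conjugate and their union is dense in $G$, it follows that $\lambda_G$ is globally trivial, hence trivial on $G_rT$. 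An equivalent route would be to invoke the standard fact that the restriction $X(G)\to X(T)$ is injective for a smooth connected group, but either way you should make the argument explicit rather than treat the implication as automatic.
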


\begin{proof} (1) By assumption, we have $\lambda_G|_T \equiv 1$, so that $T \subseteq \ker \lambda_G$. Let $Z_G(T)$ be the Cartan subgroup of $G$ defined by $T$. According to
\cite[(7.2.10)]{Sp}, the group $Z_G(T)$ is connected and nilpotent, and \cite[(6.8)]{Sp} implies that $Z_G(T) \subseteq \ker \lambda_G$. As all Cartan subgroups of $G$ are conjugate,
their union $U$ is also contained in $\ker \lambda_G$. By virtue of \cite[(7.3.3)]{Sp}, the set $U$ lies dense in $G$, so that $\lambda_G \equiv 1$. Our assertion now follows from
Proposition \ref{HA2}.

(2) This is a direct consequence of (1) and \cite[(II.1)]{Ja3}. \end{proof}

\bigskip

\section{Complexity and the Auslander-Reiten Quiver $\Gamma_s(G_rT)$}\label{S:AR}
As before, we fix a smooth group scheme $G$ as well as a maximal torus $T \subseteq G$. The set of roots of $G$ relative to $T$ will be denoted $\Psi$. Recall that the category $\modd
G_rT$ of finite-dimensional $G_rT$-modules is a Frobenius category, whose Heller operator is denoted $\Omega_{G_rT}$. By work of Gordon and Green \cite{GG1}, the forgetful functor $\fF
: \modd G_rT \lra \modd G_r$ preserves and reflects projectives and indecomposables, respectively. Moreover, we have $\Omega^n_{G_r}\circ \fF = \fF \circ \Omega^n_{G_rT}$ for all $n
\in \ZZ$, and the fiber $\fF^{-1}(\fF(M))$, defined by an indecomposable $G_rT$-module $M$, consists of the shifts $\{M\!\otimes_k\!k_{p^r\lambda}\ ; \ \lambda \in X(T)\}$, with
different shifts giving non-isomorphic modules (see \cite{Fa5,FR} for more details). Barring possible ambiguities, we will often suppress the functor $\fF$.

\subsection{Modules of complexity $\bf{1}$} Modules of complexity $1$ play an important r\^ole in the determination of the Auslander-Reiten quiver of a finite group scheme. Our study of
the stable AR-quiver of $\modd G_rT$ also necessitates some knowledge concerning such modules. Since $\modd G_rT$ has projective covers, we have the concept of a minimal projective
resolution, so that we can speak of the complexity $\cx_{G_rT}(M)$ of a $G_rT$-module $M$. Note that $\cx_{G_r}(\fF(M)) = \cx_{G_rT}(M)$ for every $M \in \modd G_rT$.

Recall that the conjugation action of $T$ on $G_r$ induces an operation of $T$ on the variety $V_r(G)$. Standard arguments then show that the rank varieties of $G_rT$-modules are
$T$-invariant subvarieties of $V_r(G)$.

\bigskip

\begin{Thm} \label{MC1} Let $M$ be an indecomposable $G_rT$-module such that $\cx_{G_rT}(M) = 1$. Then there exists a unique $\alpha \in \Psi\cup\{0\}$ such that
\[ \Omega^{2p^{r-\ph(M)}}_{G_rT}(M) \cong M\!\otimes_k\! k_{p^r\alpha}.\] \end{Thm}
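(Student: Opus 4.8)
The plan is to combine the rank-variety analysis of Theorem \ref{PH3}, the $T$-equivariant refinement of the Carlson-module construction supplied by Proposition \ref{PH4}, and the Nakayama-functor formula of Proposition \ref{HA2}. First I would invoke Theorem \ref{PH3}: since $M$ is indecomposable of complexity $1$, the subgroup $\cU_M = \bigcup_{\varphi \in V_r(G)_M}\im\varphi$ is an elementary abelian subgroup of $G_r$ with $\ph(M) = \height(\cU_M) = \ph_{\cU_M}(M) =: t$. Moreover, the rank variety $V_r(G)_M$ is $T$-invariant, being the rank variety of a $G_rT$-module; since it is a single $k^\times$-orbit (closure) equal to $\im\varphi$, the subgroup $\cU_M$ is $T$-stable. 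Hence $T$ acts on $\cU_M \cong \GG_{a(t)}$ through a character, which must be a root or zero; call it $\alpha \in \Psi\cup\{0\}$. This is the candidate $\alpha$ in the statement, and its uniqueness will follow at the end from the fact that distinct shifts $M\otimes_k k_{p^r\beta}$ are pairwise non-isomorphic as $G_rT$-modules (cf. the discussion of the fibers of $\fF$ at the start of Section \ref{S:AR}).

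Next I would run the Carlson-module argument of Corollary \ref{PH2}, but carried out $T$-equivariantly. Applying Proposition \ref{PH4}(1) to $\cU := \cU_M$ (which is $T$-invariant and satisfies $M|_{\cU}$ non-projective with $\ph_{\cU}(M) = t$), I obtain a homogeneous class $\zeta \in (\HH^{2p^{r-t}}(G_r,k)_{\mathrm{red}})_{-p^r\alpha}\setminus\{0\}$ with $Z(\zeta)\cap\cV_{G_r}(M)\subsetneq\cV_{G_r}(M)$. Since $M$ is indecomposable of complexity $1$, the variety $\cV_{G_r}(M)$ is a line, so the proper closed subset $Z(\zeta)\cap\cV_{G_r}(M)$ is $\{0\}$, and consequently the Carlson module $L_\zeta$ satisfies $\cV_{G_r}(L_\zeta\otimes_k M) = Z(\zeta)\cap\cV_{G_r}(M) = \{0\}$, i.e.\ $L_\zeta\otimes_k M$ is projective. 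The subtlety is that the defining sequence of $\hat\zeta$ must be taken inside the graded category: because $\zeta$ is homogeneous of degree $-p^r\alpha$, the map $\hat\zeta \in \underline{\Hom}_{G_r}(\Omega^{2p^{r-t}}_{G_r}(k),k)$ lifts to a degree-zero morphism $\Omega^{2p^{r-t}}_{G_rT}(k_{p^r\alpha})\to k$ in $\modd G_rT$ (here one uses that on the reduced cohomology ring the $T$-grading is the $p^r$-fold of the $V_r$-grading, so a class of $\HH$-degree $2p^{r-t}$ and $T$-weight $-p^r\alpha$ corresponds to a Heller shift twisted by $k_{p^r\alpha}$). Tensoring that short exact sequence with $M$ and using projectivity of $L_\zeta\otimes_k M$, the sequence
\[ (0)\lra L_\zeta\otimes_k M \lra \Omega^{2p^{r-t}}_{G_rT}(M)\otimes_k k_{p^r\alpha}\oplus(\mathrm{proj.})\lra M\lra (0)\]
splits in $\modd G_rT$, and comparing projective-free summands gives $\Omega^{2p^{r-t}}_{G_rT}(M)\otimes_k k_{p^r\alpha}\cong M$, equivalently $\Omega^{2p^{r-\ph(M)}}_{G_rT}(M)\cong M\otimes_k k_{-p^r\alpha}$. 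Replacing $\alpha$ by $-\alpha$ (still a root or zero) yields the asserted isomorphism.

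The main obstacle I anticipate is the bookkeeping of $T$-weights in the Carlson construction: one must check carefully that the class $\zeta$ produced by Proposition \ref{PH4} genuinely lifts to a morphism in the \emph{graded} stable category with the correct shift, rather than merely in $\modd G_r$. This requires knowing that $\underline{\Hom}_{G_rT}(\Omega^{2n}_{G_rT}(k_\mu),k)$ realises exactly the $T$-weight-$\mu$ part of $\HH^{2n}(G_r,k)$ — a statement that follows from the general theory of graded self-injective algebras (every element of $\HH^{2n}(G_r,k)_{-\mu}$ arises from a degree-zero map $\Omega^{2n}_{G_rT}(k)\to k_{-\mu}$, equivalently $\Omega^{2n}_{G_rT}(k_\mu)\to k$) together with the compatibility $\Omega^n_{G_r}\circ\fF = \fF\circ\Omega^n_{G_rT}$. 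Once this identification is in place, everything else is a routine transcription of the proofs of Theorem \ref{PH3} and Corollary \ref{PH2} into the graded setting, and uniqueness of $\alpha$ is immediate from the non-isomorphism of distinct $p^r X(T)$-shifts of $M$.
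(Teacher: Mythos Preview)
Your proposal is correct and follows essentially the same route as the paper: invoke Theorem \ref{PH3} to obtain the $T$-invariant subgroup $\cU_M$, apply Proposition \ref{PH4}(1) to produce a $T$-homogeneous class $\zeta$ of the right cohomological degree and weight, lift $\hat\zeta$ to a morphism in $\modd G_rT$, and run the Carlson-module splitting argument of Corollary \ref{PH2}; uniqueness comes from the injectivity of the shift action on isoclasses. The only discrepancy is a sign in your weight bookkeeping: a class $\zeta$ of $T$-weight $-p^r\alpha$ lifts to a $G_rT$-map $\Omega^{2p^{r-t}}_{G_rT}(k)\otimes_k k_{-p^r\alpha}\to k$ (as the paper writes, citing \cite[(I.6.9(5))]{Ja3}), not $\Omega^{2p^{r-t}}_{G_rT}(k_{p^r\alpha})\to k$; with the correct sign the conclusion $\Omega^{2p^{r-\ph(M)}}_{G_rT}(M)\cong M\otimes_k k_{p^r\alpha}$ falls out directly, and your final ``replace $\alpha$ by $-\alpha$'' step becomes unnecessary.
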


\begin{proof} In view of  \cite[(3.2)]{GG1}, the module $M|_{G_r}$ is indecomposable, and we have $\cx_{G_r}(M) = \cx_{G_rT}(M) = 1$. Since $M$ is a $G_rT$-module, the variety
$V_r(G)_M \subseteq V_r(G)$ is $T$-invariant. As a result, the subgroup $\cU_M \subseteq G_r$, provided by Theorem \ref{PH3}, is also $T$-invariant. Thanks to Theorem \ref{PH3}, we
have $s := \ph(M) = \ph_{\cU_M}(M)$.

According to Proposition \ref{PH4}, there exist $\alpha \in \Psi \cup \{0\}$ and $\zeta \in (\HH^{2p^{r-s}}(G_r,k)_{\rm red})_{-p^r\alpha}$ such that $Z(\zeta) \cap \cV_{G_r}(M)
\subsetneq \cV_{G_r}(M)$. Since the module $M|_{G_r}$ is indecomposable, the one-dimensional variety $\cV_{G_r}(M)$ is a line (cf.\ \cite[(7.7)]{SFB2}). Consequently, $Z(\zeta) \cap
\cV_{G_r}(M) = \{0\}$.

In analogy with \S\ref{S:PH}, the cohomology class $\zeta$ can be interpreted as an element of the weight space $\Hom_{G_r}(\Omega^{2p^{r-s}}_{G_rT}(k),k)_{-p^r\alpha}$, or
equivalently, as a non-zero $G_rT$-linear map $\hat{\zeta} : \Omega^{2p^{r-s}}_{G_rT}(k)\!\otimes_k\! k_{-p^r\alpha} \lra k$, see \cite[(I.6.9(5))]{Ja3}. There results an exact sequence
\[ (0) \lra \widehat{L}_\zeta \lra \Omega^{2p^{r-s}}_{G_rT}(k)\!\otimes_k\!k_{-p^r\alpha} \stackrel{\hat{\zeta}}{\lra} k \lra (0)\]
of $G_rT$-modules. Since $\fF(\widehat{L}_\zeta) = L_\zeta$, we have $\cV_{G_r}(\widehat{L}_\zeta\!\otimes_k\!M) = Z(\zeta)\cap \cV_{G_r}(M) = \{0\}$, so that the $G_rT$-module
$\widehat{L}_\zeta\!\otimes_k\!M$ is projective. The arguments of (\ref{PH2}) now yield
\[ \Omega^{2p^{r-s}}_{G_rT}(M)\!\otimes_k\! k_{-p^r\alpha} \cong M,\]
as desired.

If we also have an isomorphism
\[ \Omega^{2p^{r-\ph(M)}}_{G_rT}(M) \cong M\!\otimes_k\! k_{p^r\beta}\]
for some $\beta \in \Psi\cup \{0\}$,  then the shifts $M[p^r\alpha]$ and $M[p^r\beta]$ of the indecomposable $X(T)$-graded module $M$ coincide, and \cite[(4.1)]{GG1} in conjunction
with $X(T)$ being torsion-free yields $\alpha = \beta$. \end{proof}

\bigskip

\begin{Remark} For $r=1$ our result specializes to \cite[(2.4(2))]{Fa5}. \end{Remark}

\bigskip
\noindent
Suppose that $G$ is reductive. For every root $\alpha \in \Psi$, there exists a subgroup $U_\alpha \subseteq G$ on which $T$ acts via $\alpha$. The group $U_\alpha$ is isomorphic to
the additive group $\GG_{a}$ and it is customarily referred to as the {\it root subgroup} of $\alpha$.

An indecomposable $G_rT$-module $M$ is called {\it periodic}, provided there is an isomorphism $\Omega^m_{G_rT}(M)$ $\cong M$ for some $m \in \NN$.

\bigskip

\begin{Cor} \label{MC2} Suppose that $G$ is reductive. Let $M$ be an indecomposable $G_rT$-module. Then the following statements hold:

{\rm (1)} \ If $\cx_{G_rT}(M)=1$, then there exists a unique root $\alpha \in \Psi$ such that $\Omega^{2p^{r-\ph(M)}}_{G_rT}(M) \cong M\!\otimes_k\! k_{p^r\alpha}$.

{\rm (2)} \ The module $M$ is not periodic. \end{Cor}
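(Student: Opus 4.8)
The plan is to derive both statements from Theorem \ref{MC1}; the only genuinely new input is that reductivity of $G$ rules out the degenerate value $\alpha=0$ in that theorem, and once this is in place part (2) follows by a short iteration argument using the torsion-freeness of $X(T)$.

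For (1) I would start from Theorem \ref{MC1}, which already gives a \emph{unique} $\alpha\in\Psi\cup\{0\}$ with $\Omega^{2p^{r-\ph(M)}}_{G_rT}(M)\cong M\otimes_k k_{p^r\alpha}$, so that only $\alpha\neq 0$ remains to be shown. Here I would recall, from the proofs of Theorem \ref{MC1} and Proposition \ref{PH4}, that $\alpha$ is precisely the character through which $T$ acts by conjugation on the $T$-invariant elementary abelian subgroup $\cU_M\subseteq G_r$ produced by Theorem \ref{PH3}, and that $\cU_M\cong\GG_{a(\ph(M))}$ with $\ph(M)\ge 1$, since $\cx_{G_rT}(M)=1$ forces $M$ to be non-projective. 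If $\alpha=0$, then $T$ acts trivially on the line $\Lie(\cU_M)\subseteq\fg$, whence $\Lie(\cU_M)\subseteq\fg_0$. As $G$ is reductive, $\fg_0=\Lie(T)$; but $\Lie(\cU_M)\cong\Lie(\GG_{a(\ph(M))})$ is a nonzero subspace on which the $p$-operation vanishes identically, whereas $x^{[p]}\neq 0$ for every nonzero $x\in\Lie(T)$ because $\Lie(T)$ is spanned by toral elements. This contradiction forces $\alpha\in\Psi$, and uniqueness is inherited from Theorem \ref{MC1}.

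For (2) I would argue by contradiction: suppose $\Omega^m_{G_rT}(M)\cong M$ for some $m\ge 1$. Then $M$ is non-projective (else $\Omega^m_{G_rT}(M)=0$) and its syzygies take only finitely many isomorphism types, so $\cx_{G_rT}(M)=1$ and part (1) applies, yielding a root $\alpha\in\Psi$ with $\Omega^{2p^{r-\ph(M)}}_{G_rT}(M)\cong M\otimes_k k_{p^r\alpha}$. Iterating this isomorphism $m$ times, and using that $-\otimes_k k_\lambda$ commutes with $\Omega_{G_rT}$, I would obtain $\Omega^{2mp^{r-\ph(M)}}_{G_rT}(M)\cong M\otimes_k k_{p^r(m\alpha)}$; on the other hand $m$ divides the exponent $2mp^{r-\ph(M)}$, so $\Omega^{2mp^{r-\ph(M)}}_{G_rT}(M)\cong M$. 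Hence $M\cong M\otimes_k k_{p^r(m\alpha)}$. Since $X(T)$ is torsion-free and $\alpha\neq 0$, the character $p^r(m\alpha)$ is a nonzero element of $p^rX(T)$, so $M$ and the shift $M\otimes_k k_{p^r(m\alpha)}$ are non-isomorphic by \cite[(4.1)]{GG1} (equivalently, by the description of the fibres of $\fF$ recalled at the beginning of this section). This contradiction shows $M$ is not periodic.

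The single step I expect to require care is the exclusion of $\alpha=0$ in (1): it reduces to the structural fact that, for reductive $G$, the zero weight space $\fg_0$ equals the toral algebra $\Lie(T)$ and hence contains no nonzero $p$-nilpotent element, so that no nontrivial $T$-fixed unipotent subgroup of $G_r$ can arise as $\cU_M$. Everything else is bookkeeping with the identifications already set up in Sections \ref{S:PH}, \ref{S:NF} and \ref{S:AR}.
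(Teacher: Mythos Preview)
Your proof is correct, and part (2) is essentially identical to the paper's argument. The difference lies in how you exclude $\alpha=0$ in part (1). The paper invokes the main theorem of Cline--Parshall--Scott \cite{CPS}: since $G$ is reductive and $M$ is not projective, there is a root $\alpha\in\Psi$ with $M|_{(U_\alpha)_r}$ not projective; by Theorem~\ref{PH3} this forces $\cU_M\subseteq (U_\alpha)_r$, whence $\cU_M=(U_\alpha)_s$ for $s=\ph(M)$, so the character through which $T$ acts on $\cU_M$ is the root $\alpha$ itself. Your route is more self-contained: you rule out $\alpha=0$ directly by observing that it would place the nonzero $p$-nilpotent line $\Lie(\cU_M)$ inside $\fg_0=\Lie(T)$, contradicting the fact that a toral restricted Lie algebra contains no nonzero $p$-nilpotent elements. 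The paper's approach has the advantage of identifying $\cU_M$ concretely as a Frobenius kernel of a root subgroup (information used again later, e.g.\ in Corollary~\ref{MC5} and Corollary~\ref{VM5}), whereas your argument avoids the external reference to \cite{CPS} at the cost of not pinning down $\cU_M$.
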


\begin{proof} (1) Since $M$ is not projective, the Main Theorem of \cite{CPS} provides a root $\alpha \in \Psi$ such that $M|_{(U_\alpha)_r}$ is not projective. Hence we have $\cU_M
= (U_\alpha)_s$ in the proof of Theorem \ref{MC1}.

(2) Assume to the contrary that there exists a natural number $m>0$ such that
\[ \Omega^m_{G_rT}(M) \cong M.\]
Then we have $\cx_{G_rT}(M) = 1$ and part (1) provides a root $\alpha \in \Psi$ and a non-negative integer $\ell \ge 0$ such that
\[ \Omega^{2p^\ell}_{G_rT}(M) \cong M\!\otimes_k\!k_{p^r\alpha}.\]
This implies
\[ M \cong \Omega^{2mp^\ell}_{G_rT}(M) \cong M\!\otimes_k\!k_{mp^r\alpha},\]
which, by our above remarks, is only possible for $m=0$. \end{proof}

\bigskip
\noindent
As noted in Section \ref{S:PH}, the number $2p^{r-\ph(M)}$ is in general only an upper bound for the period of a periodic module. In the classical context of reductive groups, however,  the
periods of restrictions of $G_rT$-modules of complexity $1$ are completely determined by their projective heights.

Let $G$ be reductive. Given a root $\alpha \in \Psi$, we denote by $\alpha^\vee$ the corresponding {\it coroot}, see \cite[(II.1.3)]{Ja3}. As usual, $\rho$ denotes the half-sum of the
positive roots.

\bigskip

\begin{Thm} \label{MC3} Suppose that $G$ is reductive. Let $M \in \modd G_rT$ be indecomposable of complexity $\cx_{G_rT}(M) = 1$. Then $M|_{G_r}$ is periodic with period
${\rm per}(M) = 2p^{r-\ph(M)}$. \end{Thm}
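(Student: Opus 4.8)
The plan is to combine Theorem \ref{MC1} with the explicit formula for the Nakayama functor from Proposition \ref{HA2}, using the fact that $G$ reductive forces $\tau_{G_rT} = \Omega^2_{G_rT}$ (Corollary \ref{HA3}). Write $s := \ph(M)$. By Corollary \ref{MC2}(1) there is a unique root $\alpha \in \Psi$ with $\Omega^{2p^{r-s}}_{G_rT}(M) \cong M\otimes_k k_{p^r\alpha}$, and by Corollary \ref{MC2}(2) the module $M$ is not $\Omega_{G_rT}$-periodic. Now apply the forgetful functor $\fF : \modd G_rT \to \modd G_r$: since $\fF \circ \Omega_{G_rT} = \Omega_{G_r}\circ \fF$ and $\fF(M\otimes_k k_{p^r\alpha}) = \fF(M)$ (the twist by $k_{p^r\alpha}$ lies in the kernel of $\fF$, as $p^r\alpha \in p^rX(T)$ acts trivially on $G_r$), we obtain $\Omega^{2p^{r-s}}_{G_r}(M|_{G_r}) \cong M|_{G_r}$. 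Hence $M|_{G_r}$ is periodic and ${\rm per}(M|_{G_r})$ divides $2p^{r-s}$.

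The remaining task is to show the period is exactly $2p^{r-s}$, i.e. no proper divisor works. Suppose $\Omega^m_{G_r}(M|_{G_r}) \cong M|_{G_r}$ for some $m \mid 2p^{r-s}$. Lifting back to $\modd G_rT$: since $\fF$ reflects isomorphisms up to twist, $\Omega^m_{G_rT}(M)$ lies in the fiber $\fF^{-1}(\fF(M)) = \{M\otimes_k k_{p^r\lambda} \mid \lambda \in X(T)\}$, so $\Omega^m_{G_rT}(M) \cong M\otimes_k k_{p^r\mu}$ for some $\mu \in X(T)$. Now I would argue with the minimality of the exponent $2p^{r-s}$ in Theorem \ref{MC1}: iterating, one compares $\Omega^{2p^{r-s}}_{G_rT}(M)$ computed two ways and uses uniqueness of $\alpha$ together with the torsion-freeness of $X(T)$ (via \cite[(4.1)]{GG1}) to pin down that the minimal $m$ realizing a twist must be a multiple of $2p^{r-s}$ — equivalently, that the $\Omega_{G_rT}$-orbit of $M$ modulo twists has length exactly $2p^{r-s}$. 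Concretely: if $m = 2p^{\ell}$ is the minimal exponent with $\Omega^m_{G_rT}(M) \cong M\otimes_k k_{p^r\mu}$ for some $\mu$, then Theorem \ref{MC1} gives $\ell \ge r-s$ is forced by $\ph$-considerations applied to the intermediate Heller translates (all of which have the same projective height, by Proposition \ref{NI1}), while $\ell \le r - s$ from the displayed isomorphism; and the parity $2 \mid m$ is forced since an odd power of $\Omega_{G_r}$ cannot fix an indecomposable of even-dimensional support variety — more cleanly, because $\Omega^{2p^{r-s}-1}_{G_rT}(M)$ is not in the fiber of $M$ (otherwise $\Omega_{G_rT}(M)$ itself would be a twist of $M$, contradicting non-periodicity of $M$ via Corollary \ref{MC2}(2) applied after dividing out).

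The main obstacle I expect is precisely this last minimality argument: translating "period of the restriction" back into a statement about the $X(T)$-graded lift without circularity. The subtlety is that $\Omega^m_{G_r}(M|_{G_r}) \cong M|_{G_r}$ only tells us $\Omega^m_{G_rT}(M)$ is *some* twist of $M$, not $M$ itself, and one must rule out that a small $m$ (a proper divisor of $2p^{r-s}$) achieves a nontrivial twist. I would handle this by noting that the set of $m \in \ZZ$ with $\Omega^m_{G_rT}(M)$ in the fiber $\fF^{-1}(\fF(M))$ is a subgroup of $\ZZ$, hence generated by a single positive integer $d$ which is exactly ${\rm per}(M|_{G_r})$; Theorem \ref{MC1} shows $2p^{r-s}$ lies in this subgroup, so $d \mid 2p^{r-s}$, and one must show $d = 2p^{r-s}$. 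Writing $\Omega^d_{G_rT}(M) \cong M\otimes_k k_{p^r\gamma}$, iterating $p^{r-s}/ \gcd$-many times and invoking uniqueness of the root $\alpha$ from Theorem \ref{MC1} together with $X(T)$ torsion-free should force $d = 2p^{r-s}$; the evenness is the delicate point and I would secure it by observing that $\Omega_{G_rT}$ acting on the (finite, by non-periodicity up to twist) orbit cannot have a fixed point other than after an even number of steps, since $\cV_{G_r}(M)$ is a line and the relevant Carlson-module argument in Theorem \ref{MC1} intrinsically produces an even exponent $2p^{r-s}$.
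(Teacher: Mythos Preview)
Your setup is correct and matches the paper's: from Corollary \ref{MC2}(1) you get $\Omega^{2p^{r-s}}_{G_rT}(M) \cong M\otimes_k k_{p^r\alpha}$ with $\alpha \in \Psi$, restriction gives $\Omega^{2p^{r-s}}_{G_r}(M|_{G_r}) \cong M|_{G_r}$, and letting $d := {\rm per}(M|_{G_r})$ you lift to $\Omega^d_{G_rT}(M) \cong M\otimes_k k_{p^r\gamma}$ for some $\gamma \in X(T)$ via \cite[(4.1)]{GG1}. Iterating $\ell := 2p^{r-s}/d$ times and comparing with the first isomorphism, torsion-freeness of $X(T)$ yields the relation $\alpha = \ell\gamma$. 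Up to here you are fine.

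The gap is in the step from $\alpha = \ell\gamma$ to $\ell = 1$. Neither the uniqueness of $\alpha$ in Theorem \ref{MC1} nor the torsion-freeness of $X(T)$ forces this: for example in type $A_1$ with fundamental weight $\varpi$ and $\alpha = 2\varpi$, the choice $\gamma = \varpi$, $\ell = 2$ is perfectly consistent with both. Your appeal to ``$\ph$-considerations applied to intermediate Heller translates'' cannot help either, since by Proposition \ref{NI1} the projective height is constant along the entire $\Omega_{G_rT}$-orbit and therefore carries no information distinguishing $d$ from $2p^{r-s}$. The separate parity discussion is also a red herring: the paper does not treat evenness as a distinct issue.

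What the paper actually does at this point is a root-system argument. Using the Weyl group one may assume $\alpha$ is simple; if the root system is not a union of copies of $A_1$ there is an adjacent simple root $\beta$ with $\langle \alpha,\beta^\vee\rangle = -1$, whence $-1 = \ell\langle\gamma,\beta^\vee\rangle$ forces $\ell = 1$. In the pure $A_1$ case one only gets $\ell \in \{1,2\}$ from $\langle\alpha,\alpha^\vee\rangle = 2$, and the possibility $\ell = 2$ (i.e.\ $\gamma = \frac{1}{2}\alpha \notin \ZZ\Psi$) is eliminated by the linkage principle: $M$ and $\Omega^{p^{r-s}}_{G_rT}(M) \cong M\otimes_k k_{p^r\gamma}$ lie in the same block, so $p^r\gamma \in \ZZ\Psi$, contradicting $p \ge 3$. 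This root-theoretic input is the missing idea in your proposal.
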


\begin{proof} Let $s$ be the period of the indecomposable module $M|_{G_r}$. By Theorem \ref{MC1}, we have
\[ \Omega_{G_r}^{2p^{r-\ph(M)}}(M) \cong M,\]
so that there exists $\ell �\in \NN$ with $s\ell = 2p^{r-\ph(M)}$. On the other hand, $M$ and $\Omega^s_{G_rT}(M)$ are indecomposable $G_rT$-modules such that
\[ \fF(\Omega^s_{G_rT}(M)) \cong \Omega^s_{G_r}(\fF(M)) \cong \fF(M),\]
and \cite[(4.1)]{GG1} provides $\lambda \in X(T)$ such that
\[ \Omega^s_{G_rT}(M) \cong M\!\otimes_k\!k_\lambda.\]
Since $M$ and $M\!\otimes\!k_\lambda$ both belong to $\modd G_rT$, there exists $\gamma \in X(T)$ such that $\lambda = p^r\gamma$. Applying Theorem \ref{MC1} again, we obtain
\[ M\!\otimes_k\!k_{p^r\alpha} \cong \Omega^{2p^{r-\ph(M)}}_{G_rT}(M) \cong M\!\otimes_k\!k_{\ell p^r\gamma},\]
so that \cite[(4.1)]{GG1} implies $\alpha = \ell \gamma$. Let $W$ be the Weyl group of the root system $\Psi$. General theory (cf.\ \cite[(1.5)]{Hu3}) provides an element $w \in W$ such
that $\alpha_i := w(\alpha)$ is simple. If $\Psi$ is not a union of copies of $A_1$, then there exists a simple root $\alpha_j$ such that $\alpha_i(\alpha_j^\vee) = -1$. Thus, $-1 =
w(\gamma)(\alpha_j^\vee)\ell$, proving that $\ell=1$. Consequently, $s= 2p^{r-\ph(M)}$, as desired.

It remains to consider the case, where the connected components of the root system all have type $A_1$. In that case, we have
\[ 2 = \alpha(\alpha^\vee) = \gamma(\alpha^\vee)\ell,\]
so that $\ell \in \{1,2\}$.

If $\ell = 2$, then $\gamma = \frac{1}{2}\alpha \not \in \ZZ\Psi$. Since the $G_rT$-modules $M$ and $N:=\Omega_{G_rT}^{p^{r-\ph(M)}}(M)$ are indecomposable, they belong to the
same block of $\modd G_rT$, see \cite[(7.1)]{Ja3}. Consequently, all their composition factors belong to the same linkage class. Let $W_p$ be the affine Weyl group and denote by
\[ w\dact \lambda := w(\lambda\!+\!\rho)-\rho \ \ \ \ \forall \ w \in W_p,\, \lambda \in X(T)\]
the dot action of $W_p$ on $X(T)$, cf.\ \cite[(II.6.1)]{Ja3}. Suppose that $\widehat{L}_r(\mu)$ is a composition factor of $M$. According to \cite[(II.9.6)]{Ja3}, the module
$\widehat{L}_r(\mu\!+\!p^r\gamma) \cong \widehat{L}_r(\mu)\!\otimes\!k_{p^r\gamma}$ is a composition factor of $N \cong M\!\otimes_k\!k_{p^r\gamma}$. The linkage principle
\cite[(II.9.19)]{Ja3} implies that $\mu+p^r\gamma \in W_p\dact\mu$. Since $w\dact\lambda \equiv \lambda \ \ \modd(\ZZ\Psi)$ for all $w \in W_p$ and $\lambda \in X(T)$, we
conclude that $p^r\gamma = \frac{p^r}{2}\alpha \in \ZZ\Psi$. As $p \ge 3$, we have reached a contradiction.

As a result, $\ell=1$, so that we have $s=2p^{r-\ph(M)}$ in this case as well. \end{proof}

\bigskip
\noindent
We turn to the question, which periods can actually occur. Our approach necessitates the following realizability criterion.

\bigskip

\begin{Prop} \label{MC4} Suppose that $V \subseteq V_r(G)$ is a $T$-invariant conical closed subvariety. Then there exists a $G_rT$-module $M$ such that $V = V_r(G)_M$. \end{Prop}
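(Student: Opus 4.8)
The plan is to realize the given $T$-invariant conical closed subvariety $V\subseteq V_r(G)$ as the rank variety $V_r(G)_M$ of a suitably constructed $G_rT$-module, following the classical Carlson-type construction of modules with prescribed support, but carried out equivariantly so as to stay within $\modd G_rT$. The starting point is that, by \cite[(1.14)]{SFB1} and the discussion preceding Lemma \ref{OPG3}, the homomorphism $\Psi^r_G : \HH^\bullet(G_r,k)\lra k[V_r(G)]$ multiplies degrees by $p^r/2$ and is natural in $G$, so that its image (after taking reduced rings) realizes $k[V_r(G)]_{\rm red}$ as an integral extension of the subring generated by $\Psi^r_G(\HH^\bullet(G_r,k))$. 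Since $V$ is conical and closed, it is defined by a homogeneous radical ideal $I(V)\unlhd k[V_r(G)]$. Because $V$ is $T$-invariant, the $T$-action on $k[V_r(G)]$ (induced by the conjugation action of $T$ on $V_r(G)$) stabilizes $I(V)$, so $I(V)$ is spanned by $T$-weight vectors.

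First I would reduce to cutting out $V$ by finitely many homogeneous, $T$-semiinvariant cohomology classes. Concretely: choose homogeneous generators of $I(V)$ that are $T$-weight vectors; for each such generator $f$, using surjectivity/integrality of $\Psi^r_G$ onto a subring over which $k[V_r(G)]$ is module-finite, together with \cite[(5.2)]{SFB2} (which says a sufficiently high $p^r$-th power of any element of $k[V_r(G)]$ lies in the image of $\Psi^r_G$), produce a class $\zeta_f\in\HH^\bullet(G_r,k)_{\rm red}$ whose image in $k[V_r(G)]_{\rm red}$ is $f^{p^r}$ up to a unit — and, by $T$-equivariance of $\Psi^r_G$, one may take $\zeta_f$ to be a $T$-weight vector of weight $-p^r\cdot(\text{weight of }f)$, exactly as in the proof of Proposition \ref{PH4}. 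Passing to these powers does not change zero loci, so $Z(\zeta_{f_1})\cap\dots\cap Z(\zeta_{f_m})$ has the same support in $\cV_{G_r}(k)=V_r(G)$ (under the homeomorphism of \cite[(6.8)]{SFB2} / \cite[(5.4.1)]{SFB2}) as $V$.

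Next I would build the module. For each $\zeta_f$, being a $T$-weight vector of weight $-p^r\alpha_f$, it corresponds — exactly as in the proof of Theorem \ref{MC1}, via \cite[(I.6.9(5))]{Ja3} — to a $G_rT$-linear map $\hat\zeta_f : \Omega^{2n_f}_{G_rT}(k)\otimes_k k_{-p^r\alpha_f}\lra k$, giving a graded Carlson module $\widehat L_{\zeta_f}:=\ker\hat\zeta_f\in\modd G_rT$ with $\fF(\widehat L_{\zeta_f})=L_{\zeta_f}$ and $\cV_{G_r}(\widehat L_{\zeta_f})=Z(\zeta_f)$. Then set
\[ M := \widehat L_{\zeta_{f_1}}\otimes_k\cdots\otimes_k\widehat L_{\zeta_{f_m}}, \]
a $G_rT$-module. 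By the tensor-product formula for rank/support varieties (\cite[(7.2)]{SFB2}, applied over $G_r$ to $\fF(M)$) we get $\cV_{G_r}(M)=\bigcap_i Z(\zeta_{f_i})$, which under the identification $\cV_{G_r}(k)\cong V_r(G)$ is precisely $V$; translating back through $\dim V_r(G)_N=\dim\cV_{G_r}(N)$ and the compatibility of rank and cohomological support varieties (\cite[(6.8)]{SFB2}), one concludes $V_r(G)_M=V$. Finally, $M$ is automatically a $G_rT$-module by construction, so its rank variety is $T$-invariant, consistent with $V$.

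The main obstacle I expect is the equivariance bookkeeping in the first two steps: one must check that the classes $\zeta_f$ realizing the $p^r$-th powers of a chosen generating set of $I(V)$ can be chosen $T$-homogeneous of the correct weight and that their common zero locus is exactly $V$ (not merely set-theoretically $V$ up to the Frobenius twist on coordinates) — here the content is that raising to the $p^r$ power is a homeomorphism on the relevant varieties, so $Z(f^{p^r})=Z(f)$, combined with the fact that $\Psi^r_G$ multiplies degrees by $p^r/2$, which matches the weight shift $\alpha\mapsto p^r\alpha$ seen throughout Section \ref{S:PH}. A secondary point to be careful about is that $V_r(G)$ may be non-reduced, so all support-variety statements should be read on the underlying reduced schemes, as is done consistently in \cite{SFB2}; once that is granted, the argument above goes through.
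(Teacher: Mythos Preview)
Your proposal is correct and follows the same underlying strategy as the paper: realize $V$ as an intersection of hypersurfaces $Z(\zeta_i)$ for $T$-homogeneous cohomology classes $\zeta_i$, build the corresponding graded Carlson modules $\widehat{L}_{\zeta_i}\in\modd G_rT$, and tensor them together, invoking \cite[(7.2),(7.5)]{SFB2}.

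The paper's execution is, however, considerably more direct than yours, and in a way that eliminates precisely the ``equivariance bookkeeping'' you flag as the main obstacle. Rather than working in $k[V_r(G)]$ and pulling generators back to cohomology via the $p^r$-th power trick of \cite[(5.2)]{SFB2}, the paper first invokes the homeomorphism of \cite[(6.8)]{SFB2} to transfer the problem to cohomological support varieties, so that $V$ is cut out by a homogeneous $T$-invariant ideal $I_V\unlhd \HH^\bullet(G_r,k)$ from the start. The key observation is then that $T_r$ acts trivially on $\HH^\bullet(G_r,k)$ (by \cite[(I.6.7)]{Ja3}), forcing every $T$-weight occurring in cohomology to already lie in $p^rX(T)$. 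Hence one may simply choose homogeneous generators $\zeta_i\in \HH^{2n_i}(G_r,k)_{p^r\lambda_i}$ of $I_V$ directly, with no need to lift from $k[V_r(G)]$ or to adjust weights by taking powers. Your route through $\Psi^r_G$ works, but the paper's shortcut via the triviality of the $T_r$-action makes the $T$-homogeneity of the $\zeta_i$ automatic.
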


\begin{proof} Thanks to \cite[(6.8)]{SFB2}, it suffices to establish the corresponding result for support varieties. Since $T_r$ acts trivially on $\HH^\bullet(G_r,k)$ (cf.\ \cite[(I.6.7)]{Ja3}), the $T$-action on $\HH^\bullet(G_r,k)$ gives rise to the following decomposition
\[ \HH^\bullet(G_r,k) = \bigoplus_{n\ge 0}\HH^{2n}(G_r,k) = \bigoplus_{n\ge 0}\bigoplus_{\lambda \in X(T)} \HH^{2n}(G_r,k)_{p^r\lambda}.\]
If $V\subseteq \cV_{G_r}(k)$ is a conical, $T$-invariant closed subvariety, then there exists a homogeneous $T$-invariant ideal $I_V \unlhd \HH^\bullet(G_r,k)$ such that
\[ V = Z(I_V).\]
We let $\zeta_1, \ldots, \zeta_r$ be homogeneous generators of $I_V$, that is, $\zeta_i \in \HH^{2n_i}(G_r,k)_{p^r\lambda_i}$ for suitable $n_i\ge 0$ and $\lambda_i \in X(T)$. As noted
earlier, each $\zeta_i$ corresponds to a map $\hat{\zeta}_i : \Omega^{2n_i}_{G_rT}(k)\!\otimes_k\!k_{-p^r\lambda_i} \lra k$, whose kernel $\widehat{L}_{\zeta_i} := \ker \hat{\zeta}_i
\in \modd G_rT$ has support
\[ \cV_{G_r}(\widehat{L}_{\zeta_i}) = Z(\zeta_i),\]
see \cite[(7.5)]{SFB2}. The result now follows from the tensor product theorem \cite[(7.2)]{SFB2}. \end{proof}

\bigskip

\begin{Cor} \label{MC5} Let $G$ be reductive. Then the following statements hold:

{\rm (1)} \ Given $s \in \{0,\ldots,r\!-\!1\}$, there exists an indecomposable, periodic $G_r$-module $M$, whose period equals $2p^s$.

{\rm (2)} \ The stable AR-components $\Theta \subseteq \Gamma_s(G_r)$ containing a module of complexity $1$ are precisely of the form $\ZZ[A_\infty]/\langle \tau^{p^s}\rangle$,
where $s \in \{0,\ldots,r\!-\!1\}$.\end{Cor}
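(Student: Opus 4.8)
The plan is to derive both statements from the realizability criterion (Proposition \ref{MC4}), the periodicity formula (Theorem \ref{MC3}), and the general theory of components carrying periodic modules. Since $G_r=(G^\circ)_r$ is infinitesimal and $G^\circ$ is again reductive, I may assume $G$ connected throughout.

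\emph{Part (1).} Fix $s\in\{0,\dots,r-1\}$ and put $j:=r-s\in\{1,\dots,r\}$. Choosing a root $\alpha$ of $G$, I would consider the infinitesimal one-parameter subgroup $\varphi_0\in V_r(G)$ obtained by composing the $s$-fold Frobenius $\GG_{a(r)}\to\GG_{a(j)}$ with an embedding $\GG_{a(j)}\cong (U_\alpha)_j\hookrightarrow G_r$; its image is the height-$j$ elementary abelian subgroup $(U_\alpha)_j$. Because $\Ad(T)$ rescales $U_\alpha$ through $\alpha$, the line $\ell:=(k^\times\dact\varphi_0)\cup\{0\}$ is a $T$-invariant, conical, one-dimensional closed subvariety of $V_r(G)$ (here one checks that $\Ad(t)\circ\varphi_0$ lies in the $k^\times$-orbit of $\varphi_0$, using $(\mathrm{mult}_c)\circ F^s=F^s\circ(\mathrm{mult}_{c^{p^{-s}}})$ on $\GG_{a(r)}$). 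By Proposition \ref{MC4} there is an $N\in\modd G_rT$ with $V_r(G)_N=\ell$. Decomposing $N$ and discarding projective summands, irreducibility of $\ell$ together with closedness of rank varieties forces some indecomposable summand $M\in\modd G_rT$ to satisfy $V_r(G)_M=\ell$; hence $\cx_{G_rT}(M)=\dim\ell=1$ and, by Theorem \ref{PH3}, $\cU_M=\bigcup_{\varphi\in\ell}\im\varphi=\im\varphi_0=(U_\alpha)_j$, so $\ph(M)=\height(\cU_M)=j$. Theorem \ref{MC3} then shows $\fF(M)$ is indecomposable and periodic with period $2p^{\,r-\ph(M)}=2p^s$.

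\emph{Part (2).} Let $\Theta\subseteq\Gamma_s(G_r)$ be a component containing a module $M_0$ with $\cx_{G_r}(M_0)=1$. Since rank varieties, hence complexities, are constant on AR-components (cf.\ \cite[(1.1)]{Fa3}, \cite[(6.8)]{SFB2}), every module of $\Theta$ has complexity $1$ and is therefore periodic (\cite[(5.10.4)]{Be2}); as $kG_r$ is symmetric for reductive $G$ (\cite[(2.1)]{FR}) we have $\tau_{G_r}=\Omega^2_{G_r}$, so these modules are $\tau$-periodic. By \cite[Theorem]{HPR}, $\bar{T}_\Theta$ is a finite Dynkin diagram or $A_\infty$. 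If it were finite Dynkin, $\Theta$ would be finite, so by Auslander's theorem \cite[(VI.1.4)]{ARS} the block $\cB\subseteq kG_r$ with $\Theta\subseteq\Gamma_s(\cB)$ would be representation-finite and non-semisimple, hence would admit a non-projective simple module of complexity $1$ — contradicting the argument in the proof of Theorem \ref{BF3} (via Corollary \ref{PH3.5} and the absence, by \cite[(11.8)]{Hu1}, of nonzero unipotent $p$-ideals in $\fg$). Thus $\bar{T}_\Theta=A_\infty$, and Riedtmann's structure theorem gives $\Theta\cong\ZZ[A_\infty]/\langle\tau^n\rangle$ for some $n\ge1$, a tube in which every vertex has $\tau$-period $n$. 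Applying Corollary \ref{PH2} to the height-$r$ infinitesimal group $G_r$ yields $\Omega^{2p^{\,r-\ph(M_0)}}_{G_r}(M_0)\cong M_0$, so the $\Omega$-period $d$ of $M_0$ divides $2p^{\,r-\ph(M_0)}$; writing $d=2^ap^b$ with $a\in\{0,1\}$ and $b\le r-\ph(M_0)\le r-1$, one gets $n=p^b$, whence $\Theta\cong\ZZ[A_\infty]/\langle\tau^{p^s}\rangle$ with $s=b\in\{0,\dots,r-1\}$. Conversely, for each such $s$ the $G_r$-module $\fF(M)$ from Part (1) has $\Omega$-period $2p^s$, hence $\tau$-period $p^s$, so its (tube) component is exactly $\ZZ[A_\infty]/\langle\tau^{p^s}\rangle$; thus all these components occur.

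\emph{Main obstacle.} The two delicate points are the $T$-equivariance and the non-degeneracy ($V_r(G)_M=\ell$, not a proper subvariety) in Part (1), and excluding finite-Dynkin tree classes in Part (2) — i.e.\ showing that a reductive Frobenius kernel has no non-semisimple representation-finite block, which I handle by reusing the complexity-one simple-module argument of Theorem \ref{BF3}. The translation between $\Omega$-periods and $\tau$-periods is then purely arithmetic.
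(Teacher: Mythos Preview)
Your proof is correct. Part (1) follows the paper's argument essentially verbatim: construct the $T$-invariant line $\ell\subseteq V_r(G)$ through a root subgroup, realize it via Proposition \ref{MC4}, pass to an indecomposable summand, and read off the period from Theorem \ref{MC3}.

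For Part (2) you take a genuinely different, more self-contained route. The paper simply invokes \cite[(2.1),(4.1)]{Fa3} and \cite[(4.4)]{Fa5} as a black box to conclude $\Theta\cong\ZZ[A_\infty]/\langle\tau^{p^s}\rangle$, and then appeals to Part (1) for the converse. You instead reconstruct the argument: $\tau$-periodicity plus \cite{HPR} forces the tree class to be finite Dynkin or $A_\infty$; you then exclude finite Dynkin by arguing (via Auslander's theorem) that a finite component would yield a representation-finite non-semisimple block of $kG_r$, hence a simple $G_r$-module of complexity $1$, which the normal-subgroup argument of Theorem \ref{BF3} (Corollary \ref{PH3.5} together with \cite[(11.8)]{Hu1}) rules out for reductive $G$. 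Finally you extract the tube rank arithmetically from Corollary \ref{PH2}. This buys independence from the external references at the cost of some length; it also makes transparent exactly where reductivity enters (namely in excluding representation-finite blocks), whereas the paper's citations absorb that step. Both approaches are valid, and your handling of the $\Omega$-period versus $\tau$-period bookkeeping ($d\mid 2p^{r-\ph(M_0)}$, $p$ odd, hence $n=p^b$ with $b\le r-1$) is clean.
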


\begin{proof} (1) Let $\alpha \in \Psi$ be a root, $U_\alpha \subseteq G$ be the corresponding root subgroup. We consider the subgroup
\[ \cU := (U_\alpha)_{r-s} \subseteq G_r\]
and note that $\cU \cong \GG_{a(r-s)}$ is a $T$-invariant elementary abelian subgroup of height $\height(\cU) = r\!-\!s$. Let $\varphi : \GG_{a(r)} \lra \cU$ be the map such that $\im
\varphi = \cU$. Since $T$ acts on $\cU$ via the character $\alpha$, the one-dimensional closed subvariety
\[ V := \{ \gamma\dact \varphi \ ; \ \gamma \in k^\times\} \cup \{0\}\]
of $V_r(G)$ is $T$-invariant. Proposition \ref{MC4} now provides $N \in \modd G_rT$ such that $V_r(G)_N = V$. Since $V$ is irreducible, we have $V_r(G)_M =V$ for a suitable indecomposable constituent $M$ of $N$. According to Theorem \ref{MC3}, the module $M|_{G_r}$ is periodic, with period $2p^{r-\height(\cU)}=2p^s$.

(2) Since $kG_r$ is symmetric, we have $\tau_{G_r} \cong \Omega^2_{G_r}$. A consecutive application of \cite[(2.1)]{Fa3}, \cite[(4.1)]{Fa3} and \cite[(4.4)]{Fa5} shows that $\Theta$ is
of the form $\ZZ[A_\infty]/\langle \tau^{p^s}\rangle$. Part (1) implies that for every $s \in\{0,\ldots,r\!-\!1\}$ there exists an infinite tube of rank $p^s$. \end{proof}

\bigskip

\begin{Remark} The example of the groups $\SL(2)_1T_r$ with $r\ge 3$ shows that for infinitesimal groups that are not Frobenius kernels of smooth groups, the ranks of tubes may be more
restricted, see \cite[(5.6)]{FV2}. \end{Remark}

\bigskip

\subsection{Webb's Theorem} Results by Happel-Preiser-Ringel \cite{HPR} show that the presence of so-called {\it subadditive functions} imposes constraints on the structure of the tree class of a connected stable representation quiver. This approach was first effectively employed by Webb \cite{We} in his determination of the tree classes for AR-components of group algebras of
finite groups. We shall establish an analogue for $\modd G_rT$, with a refinement for the case, where $G$ is reductive.

Let $G$ be a smooth algebraic group scheme. In the sequel, we let $\Gamma_s(G_rT)$ be the stable Auslander-Reiten quiver of the Frobenius category $\modd G_rT$. For a component $\Theta
\subseteq \Gamma_s(G_rT)$, we have
\[ V_r(G)_{\fF(M)} = V_r(G)_{\fF(N)} \ \ \ \ \ \text{for all} \ M,N \in \Theta.\]
Accordingly, we can attach a variety $V_r(G)_\Theta$ to the component $\Theta$. By combining this fact with results by Happel-Preiser-Ringel \cite{HPR} one obtains:

\bigskip

\begin{Prop}[cf.\ \cite{Fa5}] \label{WT1} Let $\Theta \subseteq \Gamma_s(G_rT)$ be a component. Then the tree class $\bar{T}_\Theta$ is a simply laced finite or infinite Dynkin diagram,
a simply laced Euclidean diagram, or $\tilde{A}_{12}$. \hfill $\square$ \end{Prop}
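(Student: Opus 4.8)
The plan is to run, for the Frobenius category $\modd G_rT$, the argument of Section~\ref{S:EC} (itself a version of Webb's method \cite{We}), and then to invoke the Happel--Preiser--Ringel classification \cite{HPR}; in this form the result is due to \cite{Fa5}. Recall from \cite{GG1} and Section~\ref{S:NF} that $\modd G_rT$ is a Frobenius category possessing almost split sequences, with $\tau_{G_rT}=\cN\circ\Omega^2_{G_rT}$, and that by Proposition~\ref{HA2} the Nakayama functor $\cN$ is the composite of the one-dimensional twist $M\mapsto M\otimes_k k_{\lambda_G|_{G_rT}}$ with a grading shift; hence $\cN$ and $\Omega_{G_rT}$ are auto-equivalences of $\underline{\modd}\,G_rT$, $\cN$ preserves $k$-dimension, and $\tau_{G_rT}$ is an auto-equivalence. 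Fix a component $\Theta\subseteq\Gamma_s(G_rT)$. As noted before the statement, every object of $\Theta$ has one and the same, non-zero rank variety $V_r(G)_\Theta$, equivalently the same non-zero support variety $\cV_{G_r}(M)$, $M\in\Theta$.

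The main step is to produce a non-zero subadditive function on the translation quiver $\Theta$. As in Proposition~\ref{EC1}, for $W\in\modd G_rT$ the function $X\mapsto\dim_k\underline{\Hom}_{G_rT}(W,X)$ is additive along the meshes of $\Theta$ by \cite[(I.8.8)]{Er}; what is needed is the uniform bound $\dim_k\Ext^n_{G_rT}(W,X)=\dim_k\Ext^1_{G_rT}(W,\Omega^{1-n}_{G_rT}(X))\le b_W$ for all $n\ge1$ and all $X$ in the relevant $\Omega_{G_rT}$- and $\cN$-translates of $\Theta$. This follows from the finite generation of cohomology: either directly, since $\HH^\bullet(G_rT,k)=\HH^\bullet(G_r,k)^T$ is finitely generated by the Friedlander--Suslin Theorem \cite{FS} together with invariant theory, so that $\Ext^\bullet_{G_rT}(W,X)$ is a finitely generated $\HH^\bullet(G_rT,k)$-module; or by descending through the forgetful functor $\fF\colon\modd G_rT\to\modd G_r$, which preserves indecomposables and projectives, reflects projectivity, and commutes with the Heller operators \cite{GG1}. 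Then, as in \cite[(2.4)]{We}, these bounds assemble --- using that, since all objects of $\Theta$ share the non-zero support variety $\cV_{G_r}(M)$, none of them is $\Ext$-orthogonal to $M$ --- into the sought non-zero subadditive function $d$ on $\Theta$.

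An application of \cite{HPR} now shows that the valued tree class $\bar{T}_\Theta$ is a finite Dynkin diagram, an infinite Dynkin diagram, or a Euclidean diagram. To finish I would discard the asymmetrically valued types. Since $k$ is algebraically closed and $\modd G_rT$ is a Hom-finite Krull--Schmidt category, every indecomposable $M\in\modd G_rT$ has local endomorphism ring with residue field $k$; hence every arrow of $\Gamma_s(G_rT)$ carries a symmetric valuation $(a,a)$. Inspecting the Happel--Preiser--Ringel list, the diagrams having only symmetric valuations are exactly the simply-laced finite and infinite Dynkin diagrams, the simply-laced Euclidean diagrams, and $\tilde{A}_{12}$, while $B_n$, $C_n$, $F_4$, $G_2$, $B_\infty$, $C_\infty$ and the non-simply-laced Euclidean diagrams other than $\tilde{A}_{12}$ all possess an asymmetrically valued edge. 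This leaves precisely the list in the statement.

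I expect two points to require the most care, both in the second paragraph. First, transporting the finite-generation statement to $G_rT$ --- either via invariant theory or across $\fF$ --- and checking that Webb's estimates, originally designed for a finite-dimensional group algebra, survive for $kG_rT$; this is legitimate because every computation is confined to a single Auslander--Reiten component, where only finitely generated data occur. Second, unlike the finite group scheme setting of Section~\ref{S:EC}, the Nakayama functor $\cN$ of $\modd G_rT$ need not have finite order (its shift component has infinite order as soon as $\lambda_G|_T\ne0$), so the bookkeeping with $\cN$-translates in the construction of $d$ must be handled directly on $\Theta$; for reductive $G$ this difficulty disappears, since $\cN$ acts trivially on $\underline{\modd}\,G_rT$ by Corollary~\ref{HA3}.
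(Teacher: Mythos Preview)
The paper does not prove this proposition; it cites \cite{Fa5} and closes with $\square$. Your overall plan---construct a nonzero subadditive function, apply \cite{HPR}, then discard the non-simply-laced types using that over an algebraically closed field every arrow carries a symmetric valuation---is the right one, and your final paragraph is correct. The second paragraph, however, has a genuine gap.

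You assert that for $W \in \modd G_rT$ finite generation of cohomology yields a uniform bound $\dim_k \Ext^n_{G_rT}(W,X) \le b_W$ for all $n \ge 1$ and all $X$ in the relevant translates of $\Theta$. This is false for unspecified $W$: finite generation only gives that $n \mapsto \dim_k \Ext^n_{G_r}(W,X)$ grows polynomially of degree $\dim(\cV_{G_r}(W)\cap\cV_{G_r}(X))-1$, which is unbounded once that intersection has dimension $\ge 2$ (e.g.\ for $W=M\in\Theta$ with $\dim V_r(G)_\Theta\ge 2$). You never say which $W$ to take. Your appeal to Proposition~\ref{EC1} also points in the wrong logical direction: there one \emph{assumes} Euclidean tree class, so that \cite[(2.4)]{We} bounds any additive function, and \emph{deduces} $\cx_\cG(M)\le 1$; here the subadditive function must be produced before \cite{HPR} can be invoked.

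The argument of \cite{Fa5} (cf.\ \cite[(3.2)]{Fa5}, quoted verbatim in the proof of Theorem~\ref{WT3} below) chooses $W$ with care: pick $\varphi\in V_r(G)_\Theta\setminus\{0\}$ and set $M_\varphi:=kG_r\otimes_{k[u_{r-1}]}k$. Since $kG_r$ is free over $k[u_{r-1}]$ and $\Omega^2_{k[u_{r-1}]}(k)\cong k$, one has $\Omega^2_{G_r}(M_\varphi)\cong M_\varphi$ in the stable category; in particular $M_\varphi$ has complexity $1$, and the boundedness you need holds for \emph{this} test module. The function $\delta(X):=\dim_k\Ext^1_{G_r}(M_\varphi,\fF(X))$ is subadditive on meshes, nonzero (as $\varphi\in V_r(G)_{\fF(X)}$ forces $\fF(X)|_{k[u_{r-1}]}$ to be non-projective), and $\tau_{G_rT}$-invariant: one checks $\fF\circ\tau_{G_rT}=\tau_{G_r}\circ\fF$ from Proposition~\ref{HA2}, and $\tau_{G_r}(M_\varphi)\cong M_\varphi$ because the Nakayama automorphism of $kG_r$ restricts to the identity on the unipotent subalgebra $k[u_{r-1}]$ (cf.\ the proof of Proposition~\ref{NI1}). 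Working through $\fF$ also dissolves your worry about $\cN$ having infinite order. Your instinct to descend to $\modd G_r$ was right; what was missing was the specific choice of a test module of complexity $1$, without which no uniform bound exists.
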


\bigskip

\begin{Prop} \label{WT2} Suppose that $G$ is reductive, and let $\Theta \subseteq \Gamma_s(G_rT)$ be a component such that $\dim V_r(G)_\Theta \ne 2$. Then $\Theta \cong
\ZZ[A_\infty]$. \end{Prop}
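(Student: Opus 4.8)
The plan is to combine Proposition~\ref{WT1} with a complexity dichotomy on $\Theta$. Since $V_r(G)_\Theta = V_r(G)_{\fF(M)}$ for any $M\in\Theta$, and $\dim V_r(G)_{\fF(M)} = \dim\cV_{G_r}(\fF(M)) = \cx_{G_r}(\fF(M)) = \cx_{G_rT}(M)$ by \cite[(6.8)]{SFB2} and the remarks in \S\ref{S:AR}, the hypothesis $\dim V_r(G)_\Theta\ne 2$ means that every module in $\Theta$ has complexity $\ne 2$. As $\Theta$ consists of non-projective modules, the complexity is $\ge 1$; it cannot be $2$; so I will first treat the two remaining cases $\cx=1$ and $\cx\ge 3$ and show the latter cannot occur for a component subject to the constraint of Proposition~\ref{WT1}, then pin down $\Theta$ in the complexity~$1$ case.

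First I would handle $\cx_{G_rT}(M)=1$ for $M\in\Theta$. By Corollary~\ref{MC2}(2), $M$ is not $\Omega_{G_rT}$-periodic, so $\Theta$ contains no $\tau_{G_rT}$-periodic module (recall $\tau_{G_rT}=\cN\circ\Omega^2_{G_rT}$ and $\cN$ has finite order by Corollary~\ref{NF4} together with \cite[(1.5)]{FMS}, so $\tau$-periodicity would force $\Omega$-periodicity). Invoking \cite[Theorem]{HPR} (as in the proof of Corollary~\ref{EC2}), a component with a subadditive but non-additive function has tree class a finite Dynkin diagram or $A_\infty$; a component with an additive function that is not periodic has tree class $A_\infty$, $A^\infty_\infty$, $D_\infty$, or a Euclidean diagram. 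Since $\cx_{G_rT}(M)=1$, standard growth estimates (the $\Ext$-dimension argument of \cite[(5.3.5)]{Be2}, combined with Webb-style additivity of $X\mapsto\dim_k\Ext^1_{G_rT}(M,X)$ on $\Theta$ and its $\Omega$-translate as in Proposition~\ref{EC1}) force the relevant function to be bounded, hence a Euclidean or infinite-Dynkin tree class is possible only if the function is additive; a finite Dynkin tree class forces $\Theta$ finite and hence periodic, contradiction. This leaves $A_\infty$, $A^\infty_\infty$, $D_\infty$, $\tilde A_{12}$, and the simply laced Euclidean diagrams from Proposition~\ref{WT1}. The non-$A_\infty$ options all carry an additive function that is bounded, which on a Euclidean diagram forces $\tau$-periodicity — contradicting Corollary~\ref{MC2}(2) — while $A^\infty_\infty$ and $D_\infty$ would likewise yield, via the structure of bounded additive functions on these trees, a contradiction with the established behavior of $\Omega_{G_rT}$ on modules of complexity~$1$ recorded in Theorem~\ref{MC1} (the powers $\Omega^{2p^{r-\ph(M)}}_{G_rT}$ act by a character twist, so the $\Omega_{G_rT}$-orbit structure is that of a single $A_\infty$-end, not a $D_\infty$ fork or a two-sided $A^\infty_\infty$). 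Hence $\Theta\cong\ZZ[A_\infty]$.

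Next I would rule out $\cx_{G_rT}(M)\ge 3$ for $M\in\Theta$. Here I expect to argue that such a component cannot have a tree class from the list in Proposition~\ref{WT1}: a finite or infinite Dynkin diagram, a simply laced Euclidean diagram, or $\tilde A_{12}$ all admit a bounded subadditive function, which by \cite[(2.4)]{We} (the same estimate used in Proposition~\ref{EC1}) forces $\cx_{G_rT}(M)\le 1$ for every module with a composition series through $\Upsilon_\Theta$ — in particular, an analogue of Corollary~\ref{EC2} shows that every $M\in\Theta$ then has a composition factor lying in $\Upsilon_\Theta$, and chasing $\Ext$-groups as in Proposition~\ref{EC1} bounds $\dim_k\Ext^n_{G_rT}(M,M)$, whence $\cx_{G_rT}(M)\le 1$ by \cite[(5.3.5)]{Be2}. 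This contradicts $\cx\ge 3$. Therefore no component with $\dim V_r(G)_\Theta\ne 2$ can contain a module of complexity $\ge 3$, and together with the previous paragraph the proof is complete.

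\textbf{The main obstacle} I anticipate is the careful bookkeeping in the complexity~$1$ case to eliminate the Euclidean, $A^\infty_\infty$, and $D_\infty$ tree classes: one must translate "bounded additive (or subadditive) function on $\bar T_\Theta$" into a statement forcing $\tau_{G_rT}$-periodicity or a forbidden fork, and then collide that with Corollary~\ref{MC2}(2) and the very rigid $\Omega_{G_rT}$-orbit description of Theorem~\ref{MC1}. In the finite-group setting this is exactly Webb's and Erdmann's analysis, but here one has to be sure the Frobenius category $\modd G_rT$ behaves well enough — which it does, since it has almost split sequences (Gordon--Green \cite{GG1,GG2}), a Heller operator compatible with the forgetful functor $\fF$, and a finite-order Nakayama functor — so that the Happel--Preiser--Ringel machinery and the additivity of $\dim_k\Ext^1_{G_rT}(M,-)$ on components go through verbatim. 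The remaining steps are routine applications of results already in the paper.
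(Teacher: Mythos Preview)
Your argument has a genuine gap in the $\cx\ge 3$ case. You write that ``a finite or infinite Dynkin diagram, a simply laced Euclidean diagram, or $\tilde A_{12}$ all admit a bounded subadditive function,'' and from this you try to deduce $\cx_{G_rT}(M)\le 1$, thereby \emph{ruling out} $\cx\ge 3$ altogether. Both steps fail. First, the infinite Dynkin diagrams $A_\infty$, $A^\infty_\infty$, $D_\infty$ do \emph{not} admit bounded additive (or bounded non-trivial subadditive) functions; the canonical additive function on $A_\infty$ is $n\mapsto n$. Second, components with $\cx\ge 3$ certainly exist---the task is not to exclude them but to show that their tree class is $A_\infty$. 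Your Proposition~\ref{EC1} style bound on $\dim_k\Ext^n(M,M)$ only applies when the tree class is Euclidean, so it cannot touch the infinite Dynkin cases.

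The $\cx=1$ case is also incomplete. Your elimination of $A^\infty_\infty$ and $D_\infty$ rests on the sentence ``the $\Omega_{G_rT}$-orbit structure is that of a single $A_\infty$-end, not a $D_\infty$ fork or a two-sided $A^\infty_\infty$''---but Theorem~\ref{MC1} only controls the \emph{$\tau$-orbit} of a single vertex (it becomes a shift after $p^{r-\ph(M)}$ steps), which says nothing about the transversal structure of the tree. The paper's route is more direct: by Corollary~\ref{HA3} one has $\tau_{G_rT}=\Omega^2_{G_rT}$ exactly (not merely $\cN$ of finite order), so Corollary~\ref{MC2}(2) immediately gives that $\modd G_rT$ has no $\tau_{G_rT}$-periodic modules. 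One then invokes the Webb-type argument of \cite[(3.2)]{Fa5}: for $\varphi\in V_r(G)_\Theta$ the induced module $M_\varphi=kG_r\otimes_{k[u_{r-1}]}k$ yields, via Frobenius reciprocity, a $\tau$-invariant subadditive function $X\mapsto\dim_k\Ext^1_{G_r}(M_\varphi,\fF(X))$ on $\Theta$ which, precisely when $\dim V_r(G)_\Theta\ne 2$, is \emph{not additive}. Happel--Preiser--Ringel then forces the tree class to be finite Dynkin or $A_\infty$, and the absence of $\tau$-periodic modules leaves only $\ZZ[A_\infty]$. This single construction handles both $\cx=1$ and $\cx\ge 3$ uniformly; your case split obscures the mechanism and leads you to the erroneous attempt to exclude $\cx\ge 3$.
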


\begin{proof} A consecutive application of Corollary \ref{HA3} and Corollary \ref{MC2} shows that $\modd G_rT$ has no $\tau_{G_rT}$-periodic modules. We may thus adopt the arguments
of \cite[(3.2)]{Fa5}. \end{proof}

\bigskip
\noindent
In case the underlying group is reductive, only three types of components can occur:

\bigskip

\begin{Thm} \label{WT3} Let $G$ be reductive of characteristic $\Char(k)=p\ge 3$. Suppose that $\Theta \subseteq \Gamma_s(G_rT)$ is a component.

{\rm (1)} \ If $\Theta$ contains a simple module $S$ of complexity $\cx_{G_rT}(S)=2$, then $\Theta \cong  \ZZ[A_\infty], \ZZ[A_\infty^\infty]$.

{\rm (2)} \ We have $\Theta \cong \ZZ[A_\infty]$, $\ZZ[A_\infty^\infty]$, or $\ZZ[D_\infty]$. \end{Thm}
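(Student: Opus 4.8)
The plan is to run Theorem \ref{WT3} on the candidate list supplied by Proposition \ref{WT1} and to transport the already-known classification over $G_r$ up to $\modd G_rT$ along the forgetful functor $\fF : \modd G_rT \lra \modd G_r$. For part (2): by Proposition \ref{WT1} the tree class $\bar{T}_\Theta$ is a simply laced finite or infinite Dynkin diagram, a simply laced Euclidean diagram, or $\tilde{A}_{12}$. First I would discard the finite Dynkin diagrams: since $G$ is reductive, Corollary \ref{HA3}(2) gives $\tau_{G_rT} = \Omega^2_{G_rT}$, so a component of finite Dynkin tree class would consist of $\tau_{G_rT}$-periodic, hence $\Omega_{G_rT}$-periodic, modules, contradicting Corollary \ref{MC2}(2); this also gives the Happel--Preiser--Ringel input \cite{HPR}. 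Next, if $\dim V_r(G)_\Theta \ne 2$ then Proposition \ref{WT2} already yields $\Theta \cong \ZZ[A_\infty]$ and we are done, so I may assume every $M \in \Theta$ has $\cx_{G_r}(\fF(M)) = \cx_{G_rT}(M) = 2$; it remains to exclude the Euclidean tree classes.

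For that last step the plan is to use that $\fF$ maps $\Theta$ onto a component $\Theta' \subseteq \Gamma_s(G_r)$ and that, by Gordon--Green's covering theory \cite{GG1}, $\fF|_\Theta : \Theta \lra \Theta'$ is a covering of stable translation quivers whose deck group is the subgroup $X(T)_\Theta := \{\lambda \in X(T) \ ; \ \Theta\!\otimes_k\!k_{p^r\lambda} = \Theta\}$ of the torsion-free group $X(T)$. Since $\Theta'$ has complexity $2$, \cite[(4.1)]{Fa3}, whose statement remains valid with the proof repaired above (cf.\ Theorem \ref{BF2}), shows $\Theta' \cong \ZZ[A_\infty],\ \ZZ[A_\infty^\infty],\ \ZZ[D_\infty]$, or $\ZZ[\tilde{A}_{12}]$. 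The first three are simply connected, so there $\fF|_\Theta$ is an isomorphism and $\Theta \cong \Theta'$ has one of the three asserted forms. If $\Theta' \cong \ZZ[\tilde{A}_{12}]$, then Theorem \ref{BF3} forces the ambient block of $\Dist(G_r)$ to be tame, whence $r = 1$ (cf.\ the proof of Theorem \ref{BF3}); now $\Theta$ is a connected covering of $\ZZ[\tilde{A}_{12}]$ whose deck group $X(T)_\Theta$, being torsion-free, is trivial or infinite, so $\Theta$ is either $\ZZ[\tilde{A}_{12}]$ itself or its universal cover $\ZZ[A_\infty^\infty]$; the former is then ruled out by the description of $\Gamma_s(G_1T)$ obtained in \cite{Fa5}. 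This settles (2).

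For part (1) I would specialise the same argument. If $S \in \Theta$ is simple with $\cx_{G_rT}(S) = 2$, then $\fF(S) = L_r(\lambda)$ is a non-projective simple $G_r$-module of complexity $2$, so Theorem \ref{BF2} gives $\Theta' = \fF(\Theta) \cong \ZZ[A_\infty]$ or $\ZZ[\tilde{A}_{12}]$. Running the covering dichotomy of the previous paragraph, one gets $\Theta \cong \ZZ[A_\infty]$ in the first case, and $\Theta \cong \ZZ[A_\infty]$ or $\ZZ[A_\infty^\infty]$ (via $r=1$ and \cite{Fa5}) in the second; in particular $\ZZ[D_\infty]$ cannot occur, since $\ZZ[D_\infty]$ is not a covering of $\ZZ[A_\infty]$ or of $\ZZ[\tilde{A}_{12}]$. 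Hence $\Theta \cong \ZZ[A_\infty]$ or $\ZZ[A_\infty^\infty]$, as claimed.

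The main obstacle will be the step through the forgetful functor: one must be sure that $\fF$ does not merge distinct components (this follows from the description of its fibres recalled at the start of Section \ref{S:AR}), that it preserves complexity, and — the genuinely delicate point — that $\fF|_\Theta$ really is a Galois covering of translation quivers, so that one may read off the admissible covers of $\ZZ[\tilde{A}_{12}]$ and use the torsion-freeness of $X(T)$ to collapse the list to $\ZZ[A_\infty^\infty]$. Everything else is bookkeeping against the candidate list of Proposition \ref{WT1}, together with the already established facts that $\modd G_rT$ has no $\Omega_{G_rT}$-periodic modules and that tame blocks of $\Dist(G_r)$ only occur for $r=1$.
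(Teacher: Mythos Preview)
Your covering-theoretic approach is genuinely different from the paper's. The paper proves (1) by an explicit reduction: it decomposes $G$ as an almost direct product $HK$ with $\Lie(H)=\fsl(2)$, passes to $G/(H\cap K)=H'\times K'$, uses an outer tensor product decomposition of the simple module to obtain a block equivalence $\modd \cB_1T_H \simeq \modd \cB T$, and then invokes Pfautsch's Morita equivalence $\cB^{(r-s)}_{i,0}\sim_M\cB^{(r)}_{i,s}$ (checking $T$-compatibility) to reduce to a component of $\Gamma_s(\SL(2)_{r-s}T)$. The two cases $r-s=1$ and $r-s\ge 2$ are then handled separately: the first by the explicit description in \cite{Fa5}, the second by showing the underlying $\SL(2)_{r-s}$-component is $\ZZ[A_\infty]$ via Lemma~\ref{Rep3} and Proposition~\ref{Rep5}, and then ruling out $A_\infty^\infty$ and $D_\infty$ upstairs by a subadditive-function argument. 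Part~(2) is deduced from~(1) by observing that a Euclidean $\Theta$ has finitely many $\tau$-orbits, hence so does $\fF(\Theta)$, forcing $\fF(\Theta)\cong\ZZ[\tilde{A}_{12}]$; Webb then supplies a simple module in $\Theta$, and~(1) gives the contradiction.

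Your route is more conceptual, and the covering formalism for $\fF|_\Theta$ together with the torsion-freeness of $X(T)$ does collapse most of the case analysis. There is, however, a genuine gap at the step ``whence $r=1$''. What the proof of Theorem~\ref{BF3} actually yields (after Lemma~\ref{BF1} and Lemma~\ref{Rep1}) is that the tame block $\cB$ is Morita equivalent to a block $\cB^{(r)}_{i,s}$ of $\Dist(\SL(2)_r)$ with $r-s=1$, i.e.\ to a block of $\Dist(\SL(2)_1)$; it does \emph{not} force the ambient height $r$ of $G_r$ to equal~$1$. Consequently your appeal to the description of $\Gamma_s(G_1T)$ in \cite{Fa5} does not apply, and you have no argument left to exclude the possibility $\Theta\cong\ZZ[\tilde{A}_{12}]$ sitting over $\Theta'\cong\ZZ[\tilde{A}_{12}]$ with trivial deck group. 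Closing this gap requires precisely the $T$-equivariant Morita reduction that the paper carries out in its proof of~(1): one must check that the block equivalence $\cB\sim_M\cB^{(1)}_{i,0}$ lifts to an equivalence of graded categories $\modd \cB T\simeq\modd \cB^{(1)}_{i,0}T'$ for a suitable torus $T'$, after which \cite{Fa5} does apply. So your shortcut does not, in the end, avoid the substantive work.
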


\begin{proof} (1) In view of  \cite[(1.3)]{GG1}, the module $S|_{G_r}$ is simple and of complexity $\cx_{G_r}(S) = 2$. In this situation \cite[(7.1)]{Fa2} provides a decomposition $G =
HK$ of $G$ into an almost direct product such that

(a) \ $\fg = \Lie(H) \oplus \Lie(K)$ with $\Lie(H) = \fsl(2)$, and

(b) \ $V_r(G)_M = V_r(H)_M$ and $M|_{K_r}$ is projective for every $M \in \Theta$.

\noindent
Since $H$ is an almost simple group of rank $1$, it follows that the central subgroup $H\cap K \subseteq H$ is either trivial or isomorphic to $\mu_{(2)}$ (cf.\ \cite[(8.2.4)]{Sp}). Hence, if
$H\cap K \ne e_k$, then  there exists a character $\lambda \in X(H\cap K) \cong \ZZ/(2)$ such that $H\cap K$ acts on every vertex $M \in \Theta$ via $\lambda$. As $H\cap K$ is contained
in the maximal torus $T$, we can find $\gamma \in X(T)$ with $\gamma|_{H\cap K} = \lambda$. In view of $p$ being odd, we also have $p^r\gamma|_{H\cap K} = \lambda$.
Consequently, $H\cap K$ acts trivially on every vertex of the shifted component $\Theta[-p^r\gamma] \cong \Theta$.

Let $G' := G/(H\cap K)$, and consider its maximal torus $T' := T/(H\cap K)$ (cf.\ \cite[(7.2.7)]{Sp}). According to \cite[(II.9.7)]{Ja3}, there results an exact sequence
\[ e_k \lra H\cap K \lra G_rT \lra G'_rT' \lra e_k,\]
with $\modd G'_rT'$ being a sum of blocks of $\modd G_rT$. By the above observation, a suitable shift of $\Theta$ belongs to $\modd (G'_rT')$. Setting $H' := H/(H\cap K)$ and
$K' := K/(H\cap K)$, we have $G' = H' \times K'$, while (a) and (b) continue to hold for $H'$ and $K'$. As a result, we may assume in addition that

(c) \ $G = H \times K$.

\noindent
By general theory, there exist maximal tori $T_H \subseteq H$ and $T_K \subseteq K$ such that $T = T_H\times T_K$.

Consequently, the isomorphism
\[kG_r \cong kH_r\!\otimes_k\!kK_r\]
induced by (c) is compatible with the $T$-action. Thus, the outer tensor product defines a functor
\[ \modd H_rT_H \times \modd K_rT_K \lra \modd G_rT  \ \ ; \ \ (M,N) \mapsto M\!\otimes_k\!N.\]

Let $\cB \subseteq kG_r$ be the block containing the simple $G_r$-module $S$, so that $\Theta \subseteq \Gamma_s(\cB T)$. Owing to \cite[Section 10.E]{CR}, the $G_r$-module $S$ is an
outer tensor product
\[ S \cong S_1\!\otimes_k\!S_2\]
with a simple projective $K_r$-module $S_2$. In view of \cite[(II.9.6)]{Ja3}, we may assume that $S_1 \in \modd H_rT$ and $S_2\in \modd K_rT$. It now follows from \cite[(4.1)]{GG1}
that
\[S[\gamma] \cong S_1\!\otimes_k\!S_2\]
for a suitable $\gamma \in X(T)$. (Since the right-hand module lies in $\modd G_rT$, we actually have $\gamma \in p^rX(T)$.)

Letting $\cB_1 \subseteq kH_r$ be the block containing $S_1$, we obtain inverse equivalences
\[ \modd \cB_1 \lra \modd \cB \ \ ; \ \ X \mapsto X\!\otimes_k\!S_2 \]
and
\[ \modd \cB \lra \modd \cB_1 \ \ ; \ \ Y \mapsto \Hom_{kK_r}(S_2,Y),\]
so that the first functor induces an equivalence
\[  \modd \cB_1T_H \lra \modd \cB T \ \ ; \ \ X \mapsto X\!\otimes_k\!S_2. \]
Thus, $\Theta$ is isomorphic to a component $\Theta_1 \subseteq \Gamma_s(H_rT_H)$, which, by (b), has a two-dimensional rank variety.

Since $H$ has rank $1$, we have $H \cong \SL(2), \PSL(2)$. Since $\modd \PSL(2)_{r-s}T'$ is a sum of blocks of $\modd \SL(2)_{r-s}T$, it suffices to address the case, where $H = \SL(2)$.
We shall write $T := T_H$. As noted in Section $5$, the block $\cB_1$ is of the form $\cB^{(r)}_{i,s}$ and Lemma \ref{Rep1} provides a Morita equivalence
\[ \modd \cB_{i,0}^{(r-s)} \lra \modd \cB^{(r)}_{i,s} \ \ ; \ \ M \mapsto \St_s\!\otimes_k\!M^{[s]}.\]
Thanks to \cite[(II.10.4)]{Ja3}, this functor and its inverse take $\SL(2)_{r-s}T$-modules to $\SL(2)_rT$-modules, so that $\Theta_1$ is isomorphic to a component $\Theta_2$ of
$\Gamma_s(\SL(2)_{r-s}T)$, whose modules have complexity $2$.

If $r\!-\!s=1$, then standard $\SL(2)_1$-theory (see \cite[\S3, Example]{Fa5}) implies $\Theta_2 \cong \ZZ[A_\infty^\infty]$.   Now assume that $r\!-\!s \ge 2$ and let
$\widehat{L}_{r-s}(\lambda)$ be the simple module belonging to $\Theta_2$ which corresponds to $S$. According to \cite[(3.3)]{FR}, the forgetful functor $\fF : \modd \SL(2)_{r-s}T \lra
\modd \SL(2)_{r-s}$ takes our component $\Theta_2$ to the component $\Psi_2 := \fF(\Theta_2) \subseteq \Gamma_s(\SL(2)_{r-s})$ containing the simple module $L_{r-s}(\lambda)$.
Since $r-s\ge 2$, Lemma \ref{Rep3} shows that $\Ht_{r-s}(\lambda)$ is indecomposable. From the standard almost split sequence
\[ (0) \lra \Rad(P_{r-s}(\lambda)) \lra \Ht_{r-s}(\lambda) \oplus P_{r-s}(\lambda)\lra P_{r-s}(\lambda)/\Soc(P_{r-s}(\lambda)) \lra (0)\]
we see that $ P_{r-s}(\lambda)/\Soc(P_{r-s}(\lambda))$ has exactly one predecessor. Consequently, the module $L_{r-s}(\lambda)$ $ \cong \Omega_{\SL(2)_{r-s}}(
P_{r-s}(\lambda)/\Soc(P_{r-s}(\lambda)))$ enjoys the same property and Proposition \ref{Rep5} guarantees that $\Psi_2 \cong \ZZ[A_\infty]$. It follows that $\Theta_2 \cong
\ZZ[A_\infty], \ZZ[A_\infty^\infty], \ZZ[D_\infty]$. Let $\varphi \in V_{r-s}(G)_{\Theta_2}$ and consider the module $M_\varphi := k\SL(2)_{r-s}\!\otimes_{k[u_{r-1}]}k$. As argued in
\cite[(3.2)]{Fa5},
\[ \delta : \Psi_2 \lra \NN \ \ ; \ \ X \mapsto \dim_k\Ext^1_{\SL(2)_{r-s}}(M_\varphi,X)\]
is a $\tau_{\SL(2)_{r-s}}$-invariant subadditive function such that $\delta \circ \fF$ is a $\tau_{\SL(2)_{r-s}T}$-invariant subadditive function on $\Theta_2$. If $\bar{T}_{\Theta_2} \in \{
A_\infty^\infty, D_\infty\}$, then \cite[(VII.3.4)]{ARS} and \cite[(VII.3.5)]{ARS} show
that $\delta \circ \fF$, and thereby $\delta$, is bounded and additive. Since $\ZZ[A_\infty]$ does not afford such a function, it follows that $\Theta_2 \cong \ZZ[A_\infty]$, as desired.

(2) In view of Proposition \ref{WT2}, we may assume that $\dim V_r(G)_\Theta = 2$. By Proposition \ref{WT1}, it remains to rule out the case, where $\Theta \cong
\ZZ[\tilde{A}_{pq}]$ or where $\bar{T}_\Theta$ is Euclidean. In these cases, the component $\Theta$ has only finitely many $\tau_{G_rT}$-orbits, so that the component $\Psi :=
\fF(\Theta)$ also enjoys this property. It now follows from \cite[(4.1)]{Fa3} that $\Psi \cong \ZZ[\tilde{A}_{12}]$. Thanks to \cite[Thm.A]{We}, we may assume that $\Theta$ contains a
simple module, and (1) shows that the abovementioned cases cannot occur. \end{proof}

\bigskip

\begin{Remark} It is not known whether components of tree class $D_\infty$ actually occur. According to \cite[(4.5)]{FR}, components containing baby Verma modules have tree class
$A_\infty$. \end{Remark}

\bigskip

\subsection{Components containing Verma modules} Throughout, $G$ denotes a smooth reductive group scheme with maximal torus $T \subseteq G$ and root system $\Psi$. By picking
a Borel subgroup $B \subseteq G$ containing $T$ we obtain the sets $\Psi^+$ and $\Sigma$ of positive and simple roots, respectively. Given $\lambda \in X(T)$, we denote by $k_\lambda$
the corresponding one-dimensional $T$-module. Since $B = UT$ is a product of $T$ and the unipotent radical $U \subseteq B$, this module is also a $B$-module. Given $r\in \NN$, the adjoint
representation endows the induced $\Dist(G_r)$-module
\[ Z_r(\lambda) := \Dist(G_r)\!\otimes_{\Dist(B_r)}\!k_\lambda\]
with the structure of a $G_rT$-module. We denote this module by $\widehat{Z}_r(\lambda)$ and refer to $Z_r(\lambda)$ and $\widehat{Z}_r(\lambda)$ as {\it baby Verma modules}
defined by $\lambda$. Given $\lambda \in X(T)$, the modules $Z_r(\lambda)$ and $\widehat{Z}_r(\lambda)$ have simple tops $L_r(\lambda)$ and $\widehat{L}_r(\lambda)$, and all
simple objects of $\modd G_r$ and $\modd G_rT$ arise in this fashion. Moreover, every simple $G_rT$-module $\widehat{L}_r(\lambda)$ has a projective cover $\widehat{P}_r(\lambda)$,
see \cite[\S II.3, \S II.9, \S II.11]{Ja3} for more details. In what follows, $B^-$ denotes the Borel subgroup opposite to $B$.

The main result of this section, Theorem \ref{VM3}, employs support varieties to study the Heller translates of the baby Verma modules $\widehat{Z}_r(\lambda)$. This is motivated by the
Auslander-Reiten theory of the Frobenius categories $\modd G_rT$ and $\modd G_r$, where non-projective Verma modules are quasi-simple (cf.\ \cite[(4.4),(4.5)]{FR}). Our result implies
that the connected components of the stable Auslander-Reiten quiver $\Gamma_s(G_r)$ contain at most one baby Verma module $Z_r(\lambda)$.

We let $\cF(\Delta) \subseteq \modd G_rT$ be the subcategory of {\it $\Delta$-good} modules. By definition, every object $M \in \cF(\Delta)$ possesses a filtration, a so-called {\it $\widehat{Z}_r$-filtration}, whose factors are baby Verma modules $\widehat{Z}_r(\lambda)$. The filtration multiplicities $[M\!:\!\widehat{Z}_r(\lambda)]$ do not depend on the
choice of the filtration, and each projective indecomposable module $\widehat{P}_r(\lambda)$ belongs to $\cF(\Delta)$, with its filtration multiplicities being linked to the Jordan-H\"older multiplicities by BGG reciprocity:
\[ [\widehat{P}_r(\lambda)\!:\!\widehat{Z}_r(\mu)] = [\widehat{Z}_r(\mu)\!:\!\widehat{L}_r(\lambda)],\]
see \cite[\S II.11]{Ja3}. Since $[\widehat{Z}_r(\lambda)\!:\!\widehat{L}_r(\lambda)] = 1$, this readily implies the following:

\bigskip

\begin{Lem} \label{VM1} Let $m>0$. Then $\Omega^m_{G_rT}(\widehat{Z}_r(\lambda)) \in \cF(\Delta)$ with filtration factors $\widehat{Z}_r(\mu)$ for $\mu>\lambda$.
\hfill $\square$ \end{Lem}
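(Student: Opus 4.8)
The plan is to prove the statement by induction on $m$, working throughout in the highest weight structure of $\modd G_rT$ (cf.\ \cite[\S II.11]{Ja3}): $\cF(\Delta)$ is closed under extensions and direct summands, the $\widehat{Z}_r$-filtration multiplicities $[{-}:\widehat{Z}_r(\mu)]$ are additive on short exact sequences within $\cF(\Delta)$ (and over direct sums), every $\widehat{P}_r(\mu)$ lies in $\cF(\Delta)$, and BGG reciprocity $[\widehat{P}_r(\mu):\widehat{Z}_r(\tau)] = [\widehat{Z}_r(\tau):\widehat{L}_r(\mu)]$ holds. First I would record the basic fact: since $\widehat{Z}_r(\mu)$ has simple top $\widehat{L}_r(\mu)$, its projective cover is $\widehat{P}_r(\mu)$, and the kernel of $\widehat{P}_r(\mu)\twoheadrightarrow \widehat{Z}_r(\mu)$, which is $\Omega_{G_rT}(\widehat{Z}_r(\mu))$, again lies in $\cF(\Delta)$ (in a highest weight category the standard quotient $\Delta(\mu)$ occurs as the top of a $\Delta$-filtration of $P(\mu)$). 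Additivity of multiplicities together with BGG reciprocity then gives $[\Omega_{G_rT}(\widehat{Z}_r(\mu)):\widehat{Z}_r(\tau)] = [\widehat{Z}_r(\tau):\widehat{L}_r(\mu)] - \delta_{\tau,\mu}$, which vanishes for $\tau=\mu$ (as $[\widehat{Z}_r(\mu):\widehat{L}_r(\mu)]=1$) and is nonzero only when $\widehat{L}_r(\mu)$ is a composition factor of $\widehat{Z}_r(\tau)$, forcing $\tau>\mu$. In particular, taking $\mu=\lambda$ settles $m=1$.

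Next I would establish the auxiliary claim driving the induction: if $N\in\cF(\Delta)$ has all its $\widehat{Z}_r$-filtration factors of the form $\widehat{Z}_r(\mu)$ with $\mu>\lambda$, then $\Omega_{G_rT}(N)\in\cF(\Delta)$ with all filtration factors $\widehat{Z}_r(\nu)$, $\nu>\lambda$. I would prove this by a second induction, on the $\widehat{Z}_r$-filtration length of $N$, the length-one case being the basic fact above. For the step, choose a sub-object giving a short exact sequence $0\to N'\to N\to\widehat{Z}_r(\mu_k)\to 0$ with $\mu_k>\lambda$ and $N'$ of shorter filtration length, hence with all factors indexed by weights $>\lambda$. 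The horseshoe lemma applied to projective covers produces a projective $Q$ and an exact sequence $0\to\Omega_{G_rT}(N')\to\Omega_{G_rT}(N)\oplus Q\to\Omega_{G_rT}(\widehat{Z}_r(\mu_k))\to 0$. By the induction hypothesis $\Omega_{G_rT}(N')$, and by the basic fact $\Omega_{G_rT}(\widehat{Z}_r(\mu_k))$, both lie in $\cF(\Delta)$ with all filtration factors indexed by weights $>\lambda$; since $\cF(\Delta)$ is extension-closed and the multiplicities add along this sequence, the same holds for $\Omega_{G_rT}(N)\oplus Q$, and therefore, by closure under summands and additivity over direct sums, for $\Omega_{G_rT}(N)$ itself.

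Finally, the lemma follows by induction on $m\ge 1$: the case $m=1$ is the basic fact with $\mu=\lambda$, and the inductive step is an application of the auxiliary claim to $N=\Omega^m_{G_rT}(\widehat{Z}_r(\lambda))$, which by hypothesis lies in $\cF(\Delta)$ with all $\widehat{Z}_r$-filtration factors $\widehat{Z}_r(\mu)$, $\mu>\lambda$. I expect the only real care to be bookkeeping: making sure that $\cF(\Delta)$ is closed under kernels of epimorphisms between its objects (equivalently, that $\widehat{Z}_r(\mu)$ is the top $\Delta$-factor of $\widehat{P}_r(\mu)$), and that the syzygy produced by the horseshoe lemma agrees with the honest Heller translate up to a projective summand — both being standard facts about highest weight categories and about Frobenius categories, respectively.
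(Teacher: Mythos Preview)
Your proof is correct and follows precisely the route the paper has in mind: the paper gives no argument beyond the sentence ``Since $[\widehat{Z}_r(\lambda)\!:\!\widehat{L}_r(\lambda)] = 1$, this readily implies the following'' and marks the lemma with $\square$, so your write-up is simply a careful unpacking of that remark via BGG reciprocity and induction on $m$.
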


\bigskip
\noindent
We require the following subsidiary result concerning the subset $\wt(M) \subseteq X(T)$ of weights of a $G_rT$-module $M$.

\bigskip

\begin{Lem} \label{VM2} Let $M$ be a $G_rT$-module such that $\wt(M) \subseteq \lambda + \ZZ\Psi$ for some $\lambda \in X(T)$. Then $\wt(\Omega_{G_rT}(M)) \subseteq
\lambda + \ZZ\Psi$. \end{Lem}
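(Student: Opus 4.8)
The plan is to use the projective cover of $M$ in $\modd G_rT$ to realize $\Omega_{G_rT}(M)$ as a submodule whose weights are constrained by those of $M$. Recall that $\modd G_rT$ has projective covers, so there is an exact sequence
\[ (0) \lra \Omega_{G_rT}(M) \lra \widehat{P} \lra M \lra (0),\]
where $\widehat{P} \to M$ is a projective cover. Since $\Omega_{G_rT}(M)$ is a submodule of $\widehat{P}$, it suffices to show that $\wt(\widehat{P}) \subseteq \lambda + \ZZ\Psi$.

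\emph{First} I would reduce to the indecomposable projectives occurring in $\widehat{P}$. Writing $\widehat{P} = \bigoplus_i \widehat{P}_r(\mu_i)$, where $\widehat{L}_r(\mu_i)$ runs over the composition factors of $\Top(M)$ (with multiplicity), each such $\mu_i$ is a weight of $M$, hence $\mu_i \in \lambda + \ZZ\Psi$. So the claim reduces to: for every $\mu \in \lambda + \ZZ\Psi$ one has $\wt(\widehat{P}_r(\mu)) \subseteq \lambda + \ZZ\Psi$. \emph{Next}, by BGG reciprocity $\widehat{P}_r(\mu)$ has a $\widehat{Z}_r$-filtration, so it suffices to check $\wt(\widehat{Z}_r(\nu)) \subseteq \nu + \ZZ\Psi$ for any $\nu$. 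This last point is immediate from the construction $\widehat{Z}_r(\nu) = \Dist(G_r)\otimes_{\Dist(B_r)} k_\nu$: the weights of $\Dist(G_r)$ as a $T$-module lie in $\ZZ\Psi$ (the adjoint $T$-action on $\Dist(G_r)$ has weights in $\ZZ\Psi$, since $\fg$ has weights in $\Psi \cup \{0\}$ and $\Dist(G_r)$ is built from $\fg$), so every weight of $\widehat{Z}_r(\nu)$ is of the form $\nu + (\text{element of }\ZZ\Psi)$. Chaining these inclusions, every weight of $\widehat{P}_r(\mu)$ lies in $\mu + \ZZ\Psi = \lambda + \ZZ\Psi$, hence so does every weight of $\widehat{P}$ and of its submodule $\Omega_{G_rT}(M)$.

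\textbf{The main obstacle} is purely bookkeeping: pinning down precisely why the $T$-weights of $\Dist(G_r)$ (equivalently of $kG_r$) lie in the root lattice $\ZZ\Psi$. This follows because $kG_r$ is generated as an algebra by the Frobenius-kernel analogues of root vectors and the distributions along $T$; the former carry $T$-weights $p^i\alpha$ with $\alpha \in \Psi$, and the latter carry weight $0$, and the grading on $kG_r$ used throughout Section~\ref{S:AR} is exactly this adjoint one (with $(kG_r)_{p^r\lambda}$ appearing in the decomposition of $\HH^\bullet(G_r,k)$, cf.\ the proof of Proposition~\ref{MC4}). Alternatively, and more cleanly, one can argue directly on $\widehat{Z}_r(\lambda)$: its weights are $\lambda - \sum_{\alpha \in \Psi^+} c_\alpha \alpha$ with $0 \le c_\alpha \le$ (a bound depending on $p^r$ and $\dim\fg_\alpha$), which lies in $\lambda + \ZZ\Psi$ by inspection. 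I would present the argument via the projective cover as above, since it keeps everything inside $\modd G_rT$ and invokes only facts already recorded (existence of projective covers, BGG reciprocity from the discussion preceding Lemma~\ref{VM1}, and the $\widehat{Z}_r$-filtration of $\widehat{P}_r(\mu)$).
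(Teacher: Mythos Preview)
Your argument is essentially correct, but one step is left implicit. From $\wt(\widehat{Z}_r(\nu)) \subseteq \nu + \ZZ\Psi$ for each filtration factor $\widehat{Z}_r(\nu)$ of $\widehat{P}_r(\mu)$ you only obtain $\wt(\widehat{P}_r(\mu)) \subseteq \bigcup_\nu (\nu + \ZZ\Psi)$; to land in $\mu + \ZZ\Psi$ you still need each such $\nu$ to lie in $\mu + \ZZ\Psi$. This is where BGG reciprocity actually enters (not merely the existence of the filtration, which is a separate fact): $[\widehat{P}_r(\mu)\!:\!\widehat{Z}_r(\nu)] = [\widehat{Z}_r(\nu)\!:\!\widehat{L}_r(\mu)] > 0$ forces $\mu \in \wt(\widehat{Z}_r(\nu)) \subseteq \nu + \ZZ\Psi$, hence $\nu \in \mu + \ZZ\Psi$. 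With that patch your proof goes through.

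The paper reaches the same reduction --- show $\wt(\widehat{P}_r(\gamma)) \subseteq \gamma + \ZZ\Psi$ and conclude via the projective cover --- but verifies it by a different and somewhat shorter device, bypassing both $\cF(\Delta)$ and BGG reciprocity. It considers the induced module $P_\gamma := \Dist(G_r)\!\otimes_{\Dist(T_r)}\!k_\gamma$; as a $T$-module this is $\Dist(U_r^-)\Dist(U_r)\!\otimes_k\!k_\gamma$, whose weights visibly lie in $\gamma + \ZZ\Psi$. Frobenius reciprocity shows $P_\gamma$ is projective in $\modd G_rT$, and since it surjects onto $\widehat{L}_r(\gamma)$ the indecomposable cover $\widehat{P}_r(\gamma)$ is a direct summand and inherits the weight bound. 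So the paper uses only the triangular decomposition, whereas your route invokes the slightly heavier machinery of the $\widehat{Z}_r$-filtration and reciprocity; on the other hand, your approach isolates the weight statement for $\widehat{Z}_r(\nu)$ as the core computation, which is conceptually natural given how Verma modules already pervade the surrounding section.
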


\begin{proof} We let $T$ act on $\Dist(G_r)$ via the adjoint representation and put $A := \Dist(U_r^-)\Dist(U_r)$. Then $A$ is a $T$-submodule of $\Dist(G_r)$ with $\wt(A) \subseteq \ZZ\Psi$ (cf.\ \cite[(II.1.19)]{Ja3}).

Given $\gamma \in X(T)$, we consider the $G_r$-module
\[P_\gamma := \Dist(G_r)\!\otimes_{\Dist(T_r)}\!k_\gamma.\]
In view of \cite[(II.1.12)]{Ja3}, the adjoint action of $T$ endows $P_\gamma$ with the structure of a $G_rT$-module such that
\[ P_\gamma|_T \cong A\!\otimes_k\!k_\gamma.\]
Frobenius reciprocity yields $\Ext^1_{G_r}(P_\gamma,-)\cong \Ext^1_{T_r}(k_\gamma,-) = 0$, so that each $P_\gamma$ is a projective $G_rT$-module with $\wt(P_\gamma) \subseteq
\gamma + \ZZ\Psi$, see \cite[(II.9.4)]{Ja3}. The canonical surjection $P_\gamma \lra \widehat{Z}_r(\gamma)$ induces a surjective map $P_\gamma \lra \widehat{L}_r(\gamma)$.
Consequently, the projective cover $\widehat{P}_r(\gamma)$ of $\widehat{L}_r(\gamma)$ has weights $\wt(\widehat{P}_r(\gamma)) \subseteq \gamma + \ZZ\Psi$.

Let $\widehat{P}(M)$ be the projective cover of the $G_rT$-module $M$. By the above, we have $\wt(\widehat{P}(M)) \subseteq \lambda +\ZZ\Psi$, whence
\[ \wt(\Omega_{G_rT}(M)) \subseteq \wt(\widehat{P}(M)) \subseteq \lambda +\ZZ\Psi,\]
as desired. \end{proof}

\bigskip

\begin{Thm} \label{VM3} Suppose that $G$ is defined over $\FF_p$ and that $p$ is good for $G$. Let $\lambda, \mu \in X(T)$ be characters such that there exists $m >0$ with
\[ \Omega^{2m}_{G_rT}(\widehat{Z}_r(\lambda)) \cong \widehat{Z}_r(\mu).\]
Then the following statements hold:

{\rm (1)} \ We have $\dep(\lambda)=\dep(\mu) = r$, and there exists a simple root $\alpha \in \Sigma\setminus \Psi^r_\lambda$ such that $\mu = \lambda + mp^r\alpha$.

{\rm (2)} \ $\Omega^{2n}_{G_rT}(\widehat{Z}_r(\lambda)) \cong \widehat{Z}_r(\lambda\!+\!np^r\alpha)$ for all $n \in \ZZ$. \end{Thm}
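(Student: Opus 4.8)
The plan is to reduce the statement to Theorem~\ref{MC1}, Corollary~\ref{MC2} and the depth computation of Proposition~\ref{PH5}; the essential work will be to show that the hypothesis forces $\cx_{G_rT}(\widehat{Z}_r(\lambda))=1$ together with $\dep(\lambda)=r$, after which everything falls out mechanically.

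First I would collect the easy preliminaries. The baby Verma module $\widehat{Z}_r(\lambda)$ is indecomposable, as it has the simple top $\widehat{L}_r(\lambda)$, and it is not projective: otherwise $\Omega^{2m}_{G_rT}(\widehat{Z}_r(\lambda))=(0)$, which is absurd since $m>0$ and $\widehat{Z}_r(\mu)\ne(0)$. Consequently $\widehat{Z}_r(\lambda)\not\cong\widehat{Z}_r(\mu)$, for an isomorphism would make $\widehat{Z}_r(\lambda)$ a periodic module, contradicting Corollary~\ref{MC2}(2). By Lemma~\ref{VM1} the module $\Omega^{2m}_{G_rT}(\widehat{Z}_r(\lambda))\cong\widehat{Z}_r(\mu)$ lies in $\cF(\Delta)$ with $\widehat{Z}_r$-factors of the form $\widehat{Z}_r(\nu)$, $\nu>\lambda$; being the single baby Verma $\widehat{Z}_r(\mu)$, it has exactly one such factor, so $\mu>\lambda$. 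Finally, non-projectivity of $\widehat{Z}_r(\lambda)$ and \cite[(5.6)]{FR} give $\dep(\lambda)\le r$, so that Proposition~\ref{PH5} applies.

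The heart of the argument is the claim that $\cx_{G_rT}(\widehat{Z}_r(\lambda))=1$ and $\dep(\lambda)=r$, which I would prove by induction on $r$. Write $\dep(\lambda)=d+1$. If $d\ge 1$, then by \cite[(6.2)]{FR} there is a weight $\nu$ with $\dep(\nu)=1$ and a $G_rT$-isomorphism $\widehat{Z}_r(\lambda)\cong\widehat{Z}_{r-d}(\nu)^{[d]}\otimes_k\St_d$; the functor $M\mapsto\St_d\otimes_k M^{[d]}$ is a Morita equivalence between a block of $\modd G_{r-d}T$ and a block of $\modd G_rT$ (the graded analogue of Lemma~\ref{Rep1}), so it commutes with the Heller operator and carries baby Verma modules to baby Verma modules. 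Hence $\Omega^{2m}_{G_{r-d}T}(\widehat{Z}_{r-d}(\nu))$ is again a single baby Verma, and the inductive hypothesis yields $\dep(\nu)=r-d$; combined with $\dep(\nu)=1$ this gives $d=r-1$, so $\dep(\lambda)=r$ and $\cx_{G_rT}(\widehat{Z}_r(\lambda))=\cx_{G_1T}(\widehat{Z}_1(\nu))=1$ by the case $r=1$. The case $d=0$, i.e. $\dep(\lambda)=1$, reduces to $r=1$, where the fact that a baby Verma module a power of whose Heller translate is again a baby Verma has complexity $1$ is due to \cite{Fa5}. Once $\cx_{G_rT}(\widehat{Z}_r(\lambda))=1$ is known, the rank-variety description of baby Verma modules (\cite[\S5]{FR}; compare the proof of Proposition~\ref{PH5}) shows $\cV_{G_r}(Z_r(\lambda))$ is a line, and Proposition~\ref{PH5}(1), applied to every $\alpha\in\Sigma\setminus\Psi^{\dep(\lambda)}_\lambda$ (each contributing the nonzero line $\cV_{(U_\alpha)_r}(\St_{\dep(\lambda)-1})$ to this one-dimensional variety), forces $\Sigma\setminus\Psi^{\dep(\lambda)}_\lambda$ to consist of a single root; since $p$ is good, $\Psi^{\dep(\lambda)}_\lambda$ is a subsystem, so its complement is $\{\pm\alpha\}$, which is only possible if $\alpha\in\Sigma$. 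Thus $\dep(\lambda)=r$ and $\Sigma\setminus\Psi^r_\lambda=\{\alpha\}$ with $\alpha$ simple, and by Proposition~\ref{PH5}(2) one gets $\ph(\widehat{Z}_r(\lambda))=r$.

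Finally I would assemble the conclusion. Since $\cx_{G_rT}(\widehat{Z}_r(\lambda))=1$ and $\ph(\widehat{Z}_r(\lambda))=r$, Theorem~\ref{MC1} together with Corollary~\ref{MC2}(1) yields a root $\beta\in\Psi$ with $\Omega^{2}_{G_rT}(\widehat{Z}_r(\lambda))\cong\widehat{Z}_r(\lambda)\otimes_k k_{p^r\beta}=\widehat{Z}_r(\lambda+p^r\beta)$, and inspection of the proofs of Propositions~\ref{PH4} and \ref{PH5} (the class $\zeta$ has weight $-p^r\beta$, where $\beta$ is the character of $\cU_{\widehat{Z}_r(\lambda)}$) identifies $\beta$ with the simple root $\alpha\in\Sigma\setminus\Psi^r_\lambda$ found above. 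Using $\Omega^n_{G_rT}(X\otimes_k k_\gamma)\cong\Omega^n_{G_rT}(X)\otimes_k k_\gamma$ and the invertibility of $\Omega_{G_rT}$ on the stable category, an immediate induction in both directions gives $\Omega^{2n}_{G_rT}(\widehat{Z}_r(\lambda))\cong\widehat{Z}_r(\lambda+np^r\alpha)$ for all $n\in\ZZ$, which is (2). In particular $\widehat{Z}_r(\mu)\cong\widehat{Z}_r(\lambda+mp^r\alpha)$, so $\mu=\lambda+mp^r\alpha$ since a baby Verma module of $G_rT$ is determined by its highest weight (\cite[\S II.11]{Ja3}); as $\langle\mu+\rho,\gamma^\vee\rangle\equiv\langle\lambda+\rho,\gamma^\vee\rangle\pmod{p^r}$ for every root $\gamma$, one has $\Psi^s_\mu=\Psi^s_\lambda$ for all $s\le r$, whence $\dep(\mu)=r$ and $\alpha\in\Sigma\setminus\Psi^r_\mu$. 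The main obstacle is the inductive step of the third paragraph: a single application of $\Omega^{2m}$ returning a baby Verma gives a module of minimal possible dimension among the Heller translates, but does not by itself bound the complexity, so pinning down $\cx_{G_rT}(\widehat{Z}_r(\lambda))=1$ (and hence $\dep(\lambda)=r$) genuinely requires the structural input of \cite{FR} and \cite{Fa5}.
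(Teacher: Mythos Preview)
Your reduction for $d\ge 1$ via the Steinberg-twist functor parallels the paper's use of the inverse functor $\Phi$ from \cite[(6.2),(6.6)]{FR}, and once $\cx_{G_rT}(\widehat{Z}_r(\lambda))=1$ and $\dep(\lambda)=r$ are in hand, your assembly of the conclusion via Theorem~\ref{MC1} and Corollary~\ref{MC2} is essentially how part~(2) is proved. The genuine gap is the case $d=0$ in your inductive step: you write that ``$\dep(\lambda)=1$ reduces to $r=1$'' and cite \cite{Fa5}, but \cite{Fa5} treats only $G_1T$-modules and contains no statement forcing $r$ to equal~$1$. Your induction on $r$ therefore does not close: for $r>1$ with $\dep(\lambda)=1$ you must derive a contradiction from the hypothesis $\Omega^{2m}_{G_rT}(\widehat{Z}_r(\lambda))\cong\widehat{Z}_r(\mu)$, and nothing in your outline does this. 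You yourself note in the final sentence that a single Heller power returning a baby Verma does not by itself bound the complexity --- that is exactly the obstruction, and it bites precisely in this case.

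This missing case is the bulk of the paper's proof. When $\dep(\lambda)=1$, the paper takes the class $\zeta\in(\HH^2(G_r,k)_{\rm red})_{-p^r\alpha}$ supplied by Proposition~\ref{PH4}(2), forms the Carlson module $\widehat{L}_\eta$ of $\eta=\zeta^m$, and tensors its defining sequence with $\widehat{Z}_r(\lambda)$ to obtain the short exact sequence~$(\ast)$. A dichotomy on whether the projective summand surjects onto $\widehat{Z}_r(\lambda)$, together with weight bounds (step~(b)) and Lemma~\ref{VM1}, forces $\mu-\lambda\in\NN\alpha$ (step~(c)); a Krull--Remak--Schmidt argument on the restriction to $L_rT$ transfers the hypothesis to the rank-one Levi $L$ (step~(d)); a further restriction to $L_1T$ combined with \cite[(2.4)]{Fa5} yields $\mu=\lambda+pm\alpha$ (step~(f)); a second weight bound then gives $m=1$ whenever $r>1$ (step~(h)); and finally, assuming $r\ge 2$, an explicit computation with $\SL(2)_r$-projectives and BGG reciprocity (step~(i)) produces incompatible ordering and dimension constraints on $\widehat{P}_r(\lambda)$ and $\widehat{P}_r(\gamma)$. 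None of this can be replaced by an appeal to \cite{Fa5}, and without it the conclusion $\dep(\lambda)=r$ --- and hence everything downstream --- is unproved.
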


\begin{proof} (1) We first assume that $\dep(\lambda) = 1$. According to \cite[(5.2)]{FR}, there exists a simple root $\alpha \in \Sigma\setminus \Psi_\lambda^1$ with
$\cV_{(U_\alpha)_r}(k) \subseteq \cV_{G_r}(\widehat{Z}_r(\lambda))$.

Proposition \ref{PH4}(2) provides an element $\zeta \in (\HH^2(G_r,k)_{\rm red})_{-p^r\alpha}$ such that $Z(\zeta)\cap \cV_{G_r}(\widehat{Z}_r(\lambda)) \subsetneq
\cV_{G_r}(\widehat{Z}_r(\lambda))$. Then $\eta := \zeta^m \in (\HH^{2m}(G_r,k)_{\rm red})_{-mp^r\alpha}$ also has this property, and there results a short exact sequence
\[ (0) \lra \widehat{L}_\eta \lra \Omega^{2m}_{G_rT}(k)\!\otimes_k\!k_{-mp^r\alpha} \stackrel{\hat{\eta}}{\lra} k \lra (0).\]
By tensoring this sequence with $\widehat{Z}_r(\lambda)$ while observing \cite[(II.9.2)]{Ja3}, we obtain a short exact sequence
\[ (\ast) \ \ \ \ \ \ \ \  (0) \lra \widehat{L}_\eta\! \otimes_k\!\widehat{Z}_r(\lambda) \stackrel{\binom{g_1}{g_2}}{\lra} \widehat{Z}_r(\mu\!-\!mp^r\alpha)\oplus
(\text{proj.})\stackrel{(f_1,f_2)}{\lra}  \widehat{Z}_r(\lambda) \lra (0)\]
of $G_rT$-modules. In particular, we have
\[ \widehat{Z}_r(\lambda) = f_1(\widehat{Z}_r(\mu\!-\!mp^r\alpha)) + f_2((\text{proj.})).\]
Since the baby Verma module $\widehat{Z}_r(\lambda)$ has a simple top, at least one summand has to coincide with $\widehat{Z}_r(\lambda)$.

\medskip
(a) \ {\it If $f_2(({\rm proj.})) \ne \widehat{Z}_r(\lambda)$, then $r=1$ and $\mu =\lambda+mp\alpha$}.

\smallskip
\noindent
In this case, we have $f_1(\widehat{Z}_r(\mu\!-\!mp^r\alpha)) = \widehat{Z}_r(\lambda)$, so that equality of dimensions implies
\[\widehat{Z}_r(\mu\!-\!mp^r\alpha) \cong \widehat{Z}_r(\lambda),\]
whence $\mu = \lambda+mp^r\alpha$. In view of \cite[(II.3.7(9))]{Ja3}, we thus have $Z_r(\mu) \cong Z_r(\lambda)$, while our assumption implies
\[ Z_r(\lambda) \cong Z_r(\mu) \cong \Omega^{2m}_{G_r}(Z_r(\lambda)).\]
Consequently, $\cx_{G_r}(Z_r(\lambda))=1$ and the inequality
\[ r = \dim \cV_{(U_\alpha)_r}(k) \le \dim \cV_{G_r}(Z_r(\lambda)) = 1\]
gives $r=1$ and $\mu = \lambda\!+\!mp\alpha$. \ \ \ \ $\Diamond$

\medskip
\noindent
{\it In view of (a) we henceforth assume that $f_2(({\rm proj.})) = \widehat{Z}_r(\lambda)$}.

\medskip
(b) \ {\it We have $\lambda-2(p^r-1)\rho \le \mu-mp^r\alpha \le \lambda$}.

\smallskip
\noindent
If $f_1 = 0$, then our exact sequence ($\ast$) yields
\[ \widehat{L}_\eta\!\otimes_k\!\widehat{Z}_r(\lambda) \cong \ker(0,f_2) \cong \widehat{Z}_r(\mu\!-\!mp^r\alpha)\oplus \Omega_{G_rT}(\widehat{Z}_r(\lambda))
\oplus (\text{proj.}),\]
whence
\begin{eqnarray*}
Z(\eta)\cap \cV_{G_r}(\widehat{Z}_r(\lambda)) & = & \cV_{G_r}(\widehat{Z}_r(\mu\!-\!mp^r\alpha)) \cup \cV_{G_r}(\Omega_{G_rT}(\widehat{Z}_r(\lambda)))\\
& = & \cV_{G_r}(\widehat{Z}_r(\mu)) \cup \cV_{G_r}(\Omega_{G_rT}(\widehat{Z}_r(\lambda)))\\
& = & \cV_{G_r}(\widehat{Z}_r(\lambda)),
\end{eqnarray*}
a contradiction. Thus, $f_1 \ne 0$ and $\widehat{Z}_r(\lambda)_{\mu-mp^r\alpha} \ne (0)$, so that \cite[(II.9.2(6))]{Ja3} implies $\lambda-2(p^r-1)\rho \le \mu-mp^r\alpha \le
\lambda$. \ \ \ \ $\Diamond$

\medskip
(c) \ {\it There exists $0<n \le mp^r$ such that $\mu = \lambda+n\alpha$}.

\smallskip
\noindent
Since $m>0$, it readily follows from Lemma \ref{VM1} that $\mu>\lambda$. In view of (b), we therefore have
\[ \mu-mp^r\alpha \le \lambda < \mu\]
so that there exist non-negative integers $n_\beta$, $m_\beta$ with
\[ \mu = \lambda + \sum_{\beta \in \Sigma} n_\beta\beta \ \ \text{and} \ \ \lambda = \mu-mp^r\alpha + \sum_{\beta \in \Sigma}m_\beta \beta.\]
Consequently, $n_\alpha+m_\alpha = mp^r$ while $n_\beta + m_\beta = 0$ for every simple root $\beta \ne \alpha$. \ \ \  \ $\Diamond$

\medskip
\noindent
We let $L$ be the Levi subgroup of $G$ that is defined by the simple root $\alpha$. The baby Verma module of $L_rT$, associated to the weight $\lambda \in X(T)$ will be denoted
$\widehat{Z}_r^L(\lambda)$.

\medskip
(d) \ {\it We have $\Omega^{2m}_{L_rT}(\widehat{Z}_r^L(\lambda)) \cong \widehat{Z}_r^L(\mu)$}.

\smallskip
\noindent
Assuming $L\ne G$, we consider the triangular decomposition $G_r = N^{-}_rL_rN_r$ of $G_r$, see \cite[(II.3.2)]{Ja3}. For $\gamma \in X(T)$, we have an isomorphism
\[ \widehat{Z}_r( \gamma)|_{L_rT} \cong \Dist(N_r^-)_{\rm ad}\!\otimes _k\!\widehat{Z}_r^L(\gamma) \cong \widehat{Z}_r^L(\gamma)\oplus W(\gamma),\]
with $W(\gamma) := \Dist(N_r^-)_{\rm ad}^\dagger\!\otimes _k\!\widehat{Z}_r^L(\gamma)$ being defined via the augmentation ideal of $\Dist(N^-_r)$. The subscript indicates that
$L_rT$ acts via the adjoint representation (cf.\ \cite[(II.3.6(2))]{Ja3}). Consequently, (c) yields
\[ \wt(W(\mu)) \subseteq \bigcup_{\gamma \in X(T)\setminus (\mu +\ZZ\alpha)} \gamma + \ZZ\alpha = \bigcup_{\gamma \in X(T)\setminus (\lambda+\ZZ\alpha)} \gamma +
\ZZ\alpha.\]
General properties of the Heller operator give rise to
\[ \widehat{Z}_r^L(\mu) \oplus W(\mu) \cong \widehat{Z}_r(\mu)|_{L_rT} \cong \Omega^{2m}_{L_rT}(\widehat{Z}_r^L(\lambda))\oplus \Omega^{2m}_{L_rT}(W(\lambda))
\oplus (\text{proj.}).\]
According to \cite[(4.2.1)]{NPV}, we obtain
\[ \cV_{L_r}(\widehat{Z}_r^L(\lambda)) =  \cV_{L_r}(\widehat{Z}_r(\lambda)) = \cV_{G_r}(\widehat{Z}_r(\lambda))\cap \cV_{L_r}(k) \supseteq \cV_{(U_\alpha)_r}(k),\]
so that $\widehat{Z}_r^L(\lambda)$ is not projective and $\Omega^{2m}_{L_rT}(\widehat{Z}_r^L(\lambda))$ is indecomposable. Since $\wt(\widehat{Z}_r^L(\lambda)) \subseteq
\lambda\! +\!\ZZ\alpha$, Lemma \ref{VM2} ensures that $\wt(\Omega^{2m}_{L_rT}(\widehat{Z}_r^L(\lambda))) \subseteq \lambda + \ZZ\alpha$. As a result, the indecomposable
$L_rT$-module $\Omega^{2m}_{L_rT}(\widehat{Z}_r^L(\lambda))$ is not a direct summand of $W(\mu)$.  The Theorem of Krull-Remak-Schmidt thus yields
$\Omega^{2m}_{L_rT}(\widehat{Z}_r^L(\lambda)) \cong \widehat{Z}_r^L(\mu)$. \ \ \ \ $\Diamond$

\medskip
(e) \ {\it Let $\gamma \in X(T)$. If $\widehat{Z}_r^L(\gamma)$ is not projective, then $\widehat{Z}_r^L(\gamma)|_{L_1T}$ has no non-zero projective summands.}

\smallskip
\noindent
The semi-simple part of $L$ has rank $1$ and is therefore isomorphic to $\SL(2)$ or ${\rm PSL}(2)$, see \cite[(8.2.4)]{Sp}. (Since parabolic subgroups are connected (cf.\
\cite[(7.3.8)]{Sp}), so are Levi subgroups.) In view of \cite[(II.9.7)]{Ja3}, the category $\modd {\rm PSL}(2)_rT$ is a sum of blocks of $\modd \SL(2)_rT$ (see also \cite[(3.5)]{FR}). We may therefore assume without loss of generality that $L_rT \cong \SL(2)_rT$.

Let $\widehat{P}$ be a projective indecomposable $L_1T$-module, which is a direct summand of $\widehat{Z}_r^L(\gamma)|_{L_1T}$. If $\widehat{P}$ is not simple, then standard
$\SL(2)_1T$-theory (cf.\ \cite[(12.2)]{Hu4}) shows that $\widehat{P}$ possesses weights of multiplicity $\ge 2$. Since every weight of $\widehat{Z}_r^L(\gamma)|_{L_1T}$ has
multiplicity $1$, we conclude that $\widehat{P}$ is simple, and hence is of the form $\widehat{Z}^L_1(\omega)$ with $\langle \omega+\rho,\alpha^\vee\rangle \in p\ZZ$, see
\cite[(II.11.8)]{Ja3}. Writing $\gamma = \gamma_0 + p\gamma_1$ with $\gamma_0 \in X_1(T)$, it follows from \cite[(5.4)]{FR} that $\widehat{Z}_r^L(\gamma)|_{L_1}$ belongs to the block $\cB_1(\omega) \subseteq \Dist(L_1)$, defined by $\omega$. Since $\cB_1(\omega)$ is simple, another application of \cite[(5.4)]{FR} implies the projectivity of $\widehat{Z}_r^L(\gamma)|_{L_1}$, which, by \cite[(II.9.4)]{Ja3} and \cite[(II.11.8)]{Ja3}, yields a contradiction. \ \ \ \ $\Diamond$

\medskip
(f) \ {\it We have $\mu = \lambda +pm\alpha$}.

\smallskip
\noindent
Owing to (d), the module $\widehat{Z}_r^L(\mu)$ is not projective. Standard properties of the Heller operator in conjunction with (e) thus yield
\[ \widehat{Z}_r^L(\mu)|_{L_1T} \cong \Omega^{2m}_{L_rT}(\widehat{Z}_r^L(\lambda))|_{L_1T} \cong \Omega^{2m}_{L_1T}(\widehat{Z}_r^L(\lambda)|_{L_1T}).\]
By the same token, every indecomposable summand of $\widehat{Z}_r^L(\lambda)|_{L_1T}$ is non-projective and hence of complexity $1$. Thanks to \cite[(2.4)]{Fa5}, we thus have
\[ \Omega^{2m}_{L_1T}(\widehat{Z}_r^L(\lambda)|_{L_1T}) \cong \widehat{Z}_r^L(\lambda)|_{L_1T}\!\otimes_k\!k_{pm\alpha},\]
whence
\[ \widehat{Z}_r^L(\mu)|_{L_1T}  \cong \widehat{Z}_r^L(\lambda)|_{L_1T}\!\otimes_k\!k_{pm\alpha}.\]
Since the weights of the former module are bounded above by $\mu$, while those of the latter are $\le \lambda\!+\!pm\alpha$, our assertion follows. \ \ \ \ $\Diamond$

\medskip
(g) \ {\it We have $\dep(\mu)=1$.}

\smallskip
\noindent
In light of (f), we have
\[ \langle \mu+\rho,\alpha^\vee\rangle = \langle \lambda +\rho,\alpha^\vee\rangle + pm\langle \alpha,\alpha^\vee\rangle \equiv \langle \lambda +\rho,\alpha^\vee\rangle \ \
\modd p\ZZ.\]
Since $ \langle \lambda +\rho,\alpha^\vee\rangle \not \in p\ZZ$, it follows that $\dep(\mu) = 1$. \ \ \ \ $\Diamond$

\medskip
(h) \ {\it If $r>1$, then $m=1$.}

\smallskip
\noindent
Let $\dep_L(\lambda)$ denote the depth of $\lambda$, viewed as a weight of $L$. Since $\alpha$ is a simple root, we have
\[ \langle\rho_L,\alpha^\vee \rangle = 1 = \langle\rho,\alpha^\vee \rangle,\]
so that the choice of $\alpha$ implies $\dep_L(\lambda) = \dep(\lambda) = 1$. Thanks to (d), it suffices to verify the result for $L$, so that $\rho = \frac{1}{2}\alpha$. By (a), (b) and (f) it follows that
\[ \lambda -(p^r\!-\!1)\alpha \le \mu-mp^r\alpha = \lambda +mp\alpha -mp^r\alpha,\]
whence
\[ 1 \le p(m-(m\!-\!1)p^{r-1}).\]
As $r\ge 2$, this only holds for $m=1$. \ \ \ \ $\Diamond$

\medskip
(i) \ {\it We have $r=1$.}

\smallskip
\noindent
Suppose that $r \ge 2$. Then (h) implies $m=1$. As before, we will be working with the Levi subgroup $L$, defined by the simple root $\alpha \in \Sigma$. Identifying weights with integers, we may assume that $\lambda \in \{0,\ldots,p^r\!-\!1\}$. Note that $\rho_L$ and $\alpha$ correspond to $1$ and $2$, respectively. As $r\ge 2$, a consecutive application of (a), (d) and (f) implies
\[ \Omega^2_{L_rT}(\widehat{Z}_r^L(\lambda)) \cong \widehat{Z}_r^L(\lambda\!+\!2p).\]
Let $\gamma := 2(p^r\!-\!1)-\lambda-2p$ and write $\gamma = \gamma_0+p^r\gamma_1$, where $\gamma_0 \in \{0,\ldots,p^r\!-\!1\}$. Thanks to \cite[(II.9.6(5))]{Ja3} and \cite[(II.9.7(1))]{Ja3}, we have
\[ \Soc_{L_rT}(\widehat{Z}_r^L(\lambda\!+\!2p)) \cong \widehat{L}_r(\gamma\!-\!2p^r\gamma_1),\]
so that the above leads to a short exact sequence
\[ (\ast\ast) \ \ \ \ \ \ \ \ \ \ (0) \lra \widehat{Z}_r^L(\lambda\!+\!2p) \lra \widehat{P}_r(\gamma\!-\!2p^r\gamma_1) \lra \widehat{P}_r(\lambda) \lra \widehat{Z}_r^L(\lambda) \lra (0)\]
of $L_rT$-modules. Two cases arise:

($i_a$) \ $\lambda+2p \le p^r-2$.

\noindent
Then we have $\gamma_0 = p^r-2-\lambda-2p$ and $\gamma_1=1$, so that $\gamma-2p^r = -(\lambda+2p+2)$. It follows from our sequence ($\ast\ast$) that $\widehat{Z}_r^L(-(\lambda+2p+2))$ is a filtration factor of $\widehat{P}_r(\lambda)$, so that BGG reciprocity \cite[(II.11.4)]{Ja3} implies
\[ -(\lambda+2p+2)> \lambda,\]
relative to the partial ordering given by the positive root, a contradiction.

($i_b$) \ $\lambda+2p \ge p^r-1$.

\noindent
Since $r\ge 2$, we obtain $\gamma_1 = 0$, so that
\[ \Soc_{L_rT}(\widehat{Z}_r^L(\lambda\!+\!2p)) \cong \widehat{L}_r(\gamma).\]
BGG reciprocity yields $\gamma>\lambda$, whence
\[ (\dagger) \ \ \ \ \ \ \ \ \ \ \ \ \lambda < p^r-p-1\]
relative to the ordering of the natural numbers. As $\widehat{Z}_r^L(\lambda\!+\!2p)$ is not projective, we actually have $\lambda+2p \ge p^r$, whence
\[ p^r-2p \le \lambda < p^r-p.\]
Standard $\SL(2)_1$-theory in conjunction with \cite[(1.1)]{HJ} then yields
\[ \dim_k \widehat{P}_r(\lambda) = 4p^r.\]
On the other hand, the inequality ($\dagger$) yields
\[\gamma = 2p^r-2-\lambda-2p > 2p^r-2-2p-p^r+p+1 = p^r-p-1,\]
so that another application of \cite[(1.1)]{HJ} implies
\[ \dim_k \widehat{P}_r(\gamma) = 2p^r.\]
The exact sequence ($\ast\ast$), however, yields $\dim_k\widehat{P}_r(\lambda) = \dim_k\widehat{P}_r(\gamma)$, a contradiction. \ \ \ \ $\Diamond$

\medskip
\noindent
As an upshot of the above, our result holds for weights of depth $\dep(\lambda) = 1$, and we now suppose that $2 \le d+1 = \dep(\lambda) \le r$.

We consider the case, where $G$ is semi-simple and simply connected. Since $\widehat{Z}_r(\mu) \cong \Omega^{2m}_{G_rT}(\widehat{Z}_r(\lambda))$, we may apply Proposition \ref{PH5} to see
that
\[ \dep(\mu) = \ph_\Sigma(Z_r(\mu)) = \ph_\Sigma(Z_r(\lambda)) = \dep(\lambda).\]
According to \cite[(6.2),(6.6)]{FR}, the functor
\[ \Phi : \modd G_rT \lra \modd G_{r-d}T \ \ ; \ \ M \mapsto \Hom_{G_d}(\St_d,M)^{[-d]}\]
sends $\widehat{Z}_r(\lambda)$ and $\widehat{Z}_r(\mu)$ to the modules $\widehat{Z}_{r-d}(\lambda')$ and $\widehat{Z}_{r-d}(\mu')$, defined by weights of depth $1$, respectively.
Moreover, we have
\[ \widehat{Z}_{r-d}(\mu') = \Phi(\widehat{Z}_r(\mu)) \cong \Phi(\Omega^{2m}_{G_rT}(\widehat{Z}_r(\lambda))) \cong \Omega^{2m}_{G_{r-d}T}(\Phi(\widehat{Z}_r(\lambda)))
\cong  \Omega^{2m}_{G_{r-d}T}(\widehat{Z}_{r-d}(\lambda')).\]
The first part of the proof now implies $r\!-\!d =1$ and provides a simple root $\alpha \in \Sigma\setminus \Psi^1_{\lambda'}$ such that $\mu' =\lambda' + mp\alpha$. The identities $\lambda = p^d\lambda' + (p^d\!-\!1)\rho$ and $\mu = p^d\mu' + (p^d\!-\!1)\rho$ thus yield
\[ \mu = p^d\lambda' +mp^{d+1}\alpha + (p^d\!-\!1)\rho = \lambda + mp^r\alpha\]
as well as
\[ \langle\lambda+\rho,\alpha\rangle = p^d\langle\lambda'+\rho,\alpha\rangle \not \in p^{d+1}\ZZ=p^r\ZZ,\]
so that $\alpha \in \Sigma \setminus \Psi^r_\lambda$.

Suppose the result holds for a covering group $\tilde{G}$ of $G$ with maximal torus $\tilde{T}$ such that the canonical morphism sending $\tilde{G}$ to $G$ maps $\tilde{T}$ onto $T$. 
Owing to \cite[(3.5)]{FR}, $\modd G_rT$ is the sum of those blocks of $\modd \tilde{G}_r\tilde{T}$, whose characters belong to $X(T) \subseteq X(\tilde{T})$. Consequently, our result 
then also holds for $G$ and $T$. The proof may now be completed by repeated application of this argument (cf.\ \cite[(6.6)]{FR}).  

(2) In view of (1), an application of \cite[(II.9.2)]{Ja3} gives $\Omega^{2m}_{G_rT}(\widehat{Z}_r(\lambda)) \cong  \widehat{Z}_r(\lambda)\!\otimes_k\!k_{mp^r\alpha}$. Thus,
$Z_r(\lambda)$ is periodic, and $\cx_{G_rT}(\widehat{Z}_r(\lambda)) = \cx_{G_r}(Z_r(\lambda)) = 1$. Hence Theorem \ref{MC1} and Theorem \ref{MC3} provide an element $\beta \in 
\Psi\cup \{0\}$ and $s \in  \{0,\ldots, r\!-\!1\}$ such that

(a) \ $m = \ell p^s$ for some $\ell>0$, and

(b) \ $\Omega^{2p^s}_{G_rT}(\widehat{Z}_r(\lambda)) \cong \widehat{Z}_r(\lambda)\!\otimes_k\!k_{p^r\beta}$.

\noindent
Consequently,
\[ \widehat{Z}_r(\lambda\!+\!\ell p^r\beta) \cong \widehat{Z}_r(\lambda)\!\otimes_k\!k_{\ell p^r\beta} \cong \Omega^{2m}(\widehat{Z}_r(\lambda) \cong 
\widehat{Z}_r(\lambda\!+\!mp^r\alpha),\]
so that $\ell p^r\beta = mp^r\alpha$, whence $\beta = p^s\alpha \ne 0$. Since $\alpha$ and $\beta$ are roots, we conclude that $s=0$ and $\beta = \alpha$. As a result, (b) gives 
$\Omega^2_{G_rT}(\widehat{Z}_r(\lambda)) \cong \widehat{Z}_r(\lambda)\!\otimes_k\!k_{p^r\alpha}$, and our assertion follows. \end{proof}

\bigskip
\noindent
Given $\lambda \in X(T)$, we recall that $P_r(\lambda)$ denotes the projective cover of the simple $G_r$-module $L_r(\lambda)$. If $L_r(\lambda)$ is not projective,  we let $\Ht_r(\lambda) = \Rad(P_r(\lambda))/\Soc(P_r(\lambda))$ be its \emph{heart}. Recall that $\Gamma_s(G_r)$ denotes the stable Auslander-Reiten quiver of the self-injective algebra $kG_r=\Dist(G_r)$.

We record an immediate consequence of Theorem \ref{VM3}, which generalizes and corrects \cite[(4.3)]{Fa5}.

\bigskip

\begin{Cor} \label{VM4} Suppose that $G$ is defined over $\FF_p$ and that $p$ is good for $G$. Let $\lambda \in X(T)$ be a weight of depth $\dep(\lambda)\le r$. If $Z_r(\lambda)$ and $Z_r(\mu)$ belong to the same component of $\Gamma_s(G_r)$, then $Z_r(\mu) \cong Z_r(\lambda)$.\end{Cor}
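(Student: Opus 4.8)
The plan is to derive Corollary \ref{VM4} from Theorem \ref{VM3} by translating a statement about the forgetful image into a statement about $G_rT$-modules. First I would recall that by \cite[(4.4),(4.5)]{FR} the baby Verma module $Z_r(\lambda)$ is either projective or quasi-simple, and the same holds for $\widehat{Z}_r(\lambda)$ in $\modd G_rT$; since $\dep(\lambda)\le r$, the projectivity criterion (e.g.\ \cite[(5.6)]{FR}) shows $Z_r(\lambda)$ is not projective, so it lies at the end of a component of tree class $A_\infty$. If $Z_r(\mu)$ lies in the same stable component $\Psi\subseteq\Gamma_s(G_r)$, then since the quasi-simple modules at the end of an $A_\infty$-component form a single $\Omega^2_{G_r}$-orbit (the orbit of the unique quasi-simple vertex under the automorphism $\Omega^2_{G_r}$ of $\Psi$), and since $Z_r(\lambda),Z_r(\mu)$ are both quasi-simple in $\Psi$, there exists $m$ with $\Omega^{2m}_{G_r}(Z_r(\lambda))\cong Z_r(\mu)$; replacing $m$ by $m+$ a multiple of the period if necessary, we may take $m>0$.

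Next I would lift this isomorphism to $\modd G_rT$. By the Gordon--Green results recalled at the start of Section \ref{S:AR}, the forgetful functor $\fF$ satisfies $\fF\circ\Omega^{2m}_{G_rT}=\Omega^{2m}_{G_r}\circ\fF$, so $\fF(\Omega^{2m}_{G_rT}(\widehat{Z}_r(\lambda)))\cong\Omega^{2m}_{G_r}(Z_r(\lambda))\cong Z_r(\mu)\cong\fF(\widehat{Z}_r(\mu))$. Since $\Omega^{2m}_{G_rT}(\widehat{Z}_r(\lambda))$ and $\widehat{Z}_r(\mu)$ are indecomposable $G_rT$-modules with the same forgetful image, \cite[(4.1)]{GG1} provides $\nu\in X(T)$ with $\Omega^{2m}_{G_rT}(\widehat{Z}_r(\lambda))\cong\widehat{Z}_r(\mu)\otimes_k k_\nu$. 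But both sides of this are $G_rT$-modules, hence $\nu\in p^rX(T)$, say $\nu=p^r\gamma$; and $\widehat{Z}_r(\mu)\otimes_k k_{p^r\gamma}\cong\widehat{Z}_r(\mu+p^r\gamma)$ by \cite[(II.9.2)]{Ja3}. Setting $\mu':=\mu+p^r\gamma$ we get $\Omega^{2m}_{G_rT}(\widehat{Z}_r(\lambda))\cong\widehat{Z}_r(\mu')$ with $\mu'\equiv\mu\pmod{p^rX(T)}$, and moreover $Z_r(\mu')\cong Z_r(\mu)$ by \cite[(II.3.7(9))]{Ja3}.

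Now I would apply Theorem \ref{VM3}(1) to the pair $(\lambda,\mu')$: it gives a simple root $\alpha\in\Sigma\setminus\Psi^r_\lambda$ with $\mu'=\lambda+mp^r\alpha$. Restricting along $\fF$ and using $Z_r(\lambda+mp^r\alpha)\cong Z_r(\lambda)$ (again \cite[(II.3.7(9))]{Ja3}, since $mp^r\alpha\in p^rX(T)$), we conclude $Z_r(\mu)\cong Z_r(\mu')\cong Z_r(\lambda)$, as claimed. The main obstacle is the first step: one must be careful that ``lying in the same component of $\Gamma_s(G_r)$'' genuinely forces an isomorphism of the form $\Omega^{2m}_{G_r}(Z_r(\lambda))\cong Z_r(\mu)$ with $m>0$ — this rests on the fact that a component of tree class $A_\infty$ has exactly one $\Omega^2$-orbit of quasi-simple (end) vertices, so that once both baby Verma modules are known to be quasi-simple in that component (via \cite[(4.4),(4.5)]{FR}) they must be related by an even power of the Heller operator; the remaining steps are bookkeeping with the functor $\fF$ and the identifications \cite[(4.1)]{GG1} and \cite[(II.3.7(9)),(II.9.2)]{Ja3}.
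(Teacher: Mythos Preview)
Your argument is correct and essentially identical to the paper's proof: both use \cite[(4.4)]{FR} to see the baby Verma modules are quasi-simple, deduce $\Omega^{2m}_{G_r}(Z_r(\lambda))\cong Z_r(\mu)$, lift via $\fF$ and \cite[(4.1)]{GG1} to an isomorphism $\Omega^{2m}_{G_rT}(\widehat{Z}_r(\lambda))\cong\widehat{Z}_r(\mu+p^r\gamma)$, and then invoke Theorem \ref{VM3} to force $\mu-\lambda\in p^rX(T)$. One small slip: the phrase ``replacing $m$ by $m+$ a multiple of the period'' presupposes $Z_r(\lambda)$ is periodic, which fails when $\cx_{G_r}(Z_r(\lambda))\ge 2$; the clean fix for $m<0$ is simply to swap the roles of $\lambda$ and $\mu$ (since $Z_r(\mu)$ lies in the same component it is non-projective, hence $\dep(\mu)\le r$, and Theorem \ref{VM3} applies with the roles reversed)---the paper is equally terse here, writing $m\in\ZZ\setminus\{0\}$ and citing Theorem \ref{VM3} without comment.
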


\begin{proof} According to \cite[(4.4)]{FR}, the baby Verma modules $Z_r(\lambda)$ and $Z_r(\mu)$ are quasi-simple. Since $kG_r$ is symmetric, this implies the existence of $m \in \ZZ\setminus \{0\}$ with
\[ \Omega_{G_r}^{2m}(Z_r(\lambda)) \cong Z_r(\mu).\]
Consequently,
\[ \fF(\widehat{Z}_r(\mu)) \cong Z_r(\mu) \cong \Omega_{G_r}^{2m}(\fF(\widehat{Z}_r(\lambda))) \cong \fF( \Omega_{G_rT}^{2m}(\widehat{Z}_r(\lambda))),\]
and \cite[(4.1)]{GG1} provides $\gamma \in X(T)$ such that
\[ \widehat{Z}_r(\mu\!+\!p^r\gamma) \cong \Omega_{G_rT}^{2m}(\widehat{Z}_r(\lambda)).\]
Theorem \ref{VM3} gives $\mu + p^r\gamma - \lambda \in p^rX(T)$, so that \cite[(II.3.7(9)]{Ja3} implies $Z_r(\mu) \cong Z_r(\mu\!+\!p^r\gamma) \cong Z_r(\lambda)$. \end{proof}

\bigskip
\noindent
The analogue of Corollary \ref{VM4} for $\Gamma_s(G_rT)$ does not hold. The following example falsifies \cite[(4.3(1))]{Fa5}, whose proof is based on an incorrect citation of \cite[(II.11.7)]{Ja3}).

\bigskip

\begin{Example} Let $G = \SL(2)$ and consider a non-projective $\SL(2)_1T$-module $\widehat{Z}_1(\lambda)$. Then $\widehat{Z}_1(\lambda)$ has complexity $\cx_{\SL(2)_1T}(\widehat{Z}_1(\lambda)) = 1$, and
Theorem \ref{MC1} implies
\[ \Omega^2_{\SL(2)_1T}(\widehat{Z}_1(\lambda)) \cong \widehat{Z}_1(\lambda)\!\otimes_k\! k_{p\alpha} \cong \widehat{Z}_1(\lambda\!+\!p\alpha),\]
where $\alpha$ denotes the positive root of $\SL(2)$. As a result, the component of $\Gamma_s(\SL(2)_1T)$ containing $\widehat{Z}_1(\lambda)$ contains infinitely many baby Verma modules. \end{Example}

\bigskip
\noindent
Theorem \ref{VM3} actually shows that $\mu = \lambda + mp^r\alpha$ for every $\alpha \in \Sigma \setminus \Psi^r_\lambda$, so that $\Sigma\setminus \Psi^r_\lambda$ is a singleton. This means that the foregoing example is essentially the only exception.

\bigskip

\begin{Cor} \label{VM5} Suppose that $G$ defined over $\FF_p$ with $p\ge 7$. Let $\lambda \in X(T)$ be a weight of depth $\dep(\lambda) \le r$ 
such that there exist $\mu \in X(T)$ and $m \in \NN$ with $\Omega^{2m}_{G_rT}(\widehat{Z}_r(\lambda)) \cong \widehat{Z}_r(\mu)$. Then the following statements hold:

{\rm (1)} \ $G_r \cong S_r \times H_r$, with $S \cong \SL(2)$ and $H$ being reductive.

{\rm (2)} \ There is a functor $\fG : \modd S_rT' \lra \modd G_rT$ and a weight $\lambda' \in X(T')$ that $\fG$ sends $\widehat{Z}_r(\lambda')$ onto $\widehat{Z}_r(\lambda)$ and 
induces an isomorphism $\Theta_r(\lambda') \cong \Theta_r(\lambda)$ between the AR-components containing $\widehat{Z}_r(\lambda')$ and $\widehat{Z}_r(\lambda)$, respectively. 

{\rm (3)} \ There exists a simple root $\alpha \in \Psi$ such that $\{\widehat{Z}_r(\lambda\!+\!np^r\alpha \ ; \ n \in \ZZ\}$ is the set of those baby Verma modules that belong to 
$\Theta(\lambda)$.\end{Cor}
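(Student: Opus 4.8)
\emph{Proof strategy.} The plan is to extract the combinatorial skeleton from Theorem \ref{VM3}, then to deduce from it a product decomposition $G_r\cong S_r\times H_r$, and finally to reduce parts (2) and (3) to the $\SL(2)_rT$-theory of Sections \ref{S:Rep} and \ref{S:AR}. Since $p\ge7$ is good for every reductive $G$, Theorem \ref{VM3} applies to the isomorphism $\Omega^{2m}_{G_rT}(\widehat{Z}_r(\lambda))\cong\widehat{Z}_r(\mu)$: it yields $\dep(\lambda)=\dep(\mu)=r$, a simple root $\alpha\in\Sigma\setminus\Psi^r_\lambda$ with $\mu=\lambda+mp^r\alpha$, and $\Omega^{2n}_{G_rT}(\widehat{Z}_r(\lambda))\cong\widehat{Z}_r(\lambda+np^r\alpha)$ for all $n\in\ZZ$. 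Applying the same result to any $\alpha'\in\Sigma\setminus\Psi^r_\lambda$ gives $\mu=\lambda+mp^r\alpha'$, whence $\Sigma\setminus\Psi^r_\lambda=\{\alpha\}$; and $\Omega^{2m}_{G_r}(Z_r(\lambda))\cong Z_r(\lambda)$ shows $\cx_{G_rT}(\widehat{Z}_r(\lambda))=\cx_{G_r}(Z_r(\lambda))=1$.

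For (1), observe that for $\gamma\in\Psi$ one has $\langle\lambda+\rho,\gamma^\vee\rangle\in p^r\ZZ$ if and only if $p$ divides the coefficient of $\alpha^\vee$ in the expansion of $\gamma^\vee$ (this uses $\dep(\lambda)=r$ together with $\Sigma\cap\Psi^r_\lambda=\Sigma\setminus\{\alpha\}$); since $p\ge7$ exceeds every such coefficient, this forces $\Psi^r_\lambda=\Psi_J$, the parabolic subsystem generated by $J:=\Sigma\setminus\{\alpha\}$. Writing $Z_r(\lambda)\cong Z_1(\mu_0)^{[r-1]}\otimes_k\St_{r-1}$ with $\dep(\mu_0)=1$ as in \cite[(6.2)]{FR}, and using the Frobenius-compatibility of support varieties (cf.\ the proof of Proposition \ref{PH5}), one gets $\cV_{G_1}(Z_1(\mu_0))\cong\cV_{G_r}(Z_r(\lambda))$. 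By the computation of support varieties of baby Verma modules of $G_1$ (\cite{NPV}; see also \cite[(5.2),(5.6)]{FR}), the dimension of $\cV_{G_1}(Z_1(\mu_0))$ equals the number of positive roots of $G$ having non-zero $\alpha$-coefficient; hence $\cx_{G_r}(Z_r(\lambda))=1$ forces $\alpha$ to be an isolated node of the Dynkin diagram of $G$ (equivalently, for a neighbour $\beta$ of $\alpha$ one could restrict $Z_r(\lambda)$ to the rank-two Levi subgroup attached to $\{\alpha,\beta\}$ and obtain, via \cite[(4.2.1)]{NPV}, a closed subvariety of $\cV_{G_r}(Z_r(\lambda))$ of dimension $\ge2$). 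Thus $G=\bar S\bar H$ is an almost direct product with $\bar S$ the rank-one factor attached to $\alpha$ and $\bar H$ reductive; since $p$ is odd, $\bar S\cap\bar H$ is \'etale and therefore meets the infinitesimal group $G_r$ trivially, so $G_r\cong\bar S_r\times\bar H_r$ with $\bar S_r\cong\SL(2)_r$. Taking $S\cong\SL(2)$ with $S_r\cong\bar S_r$ and $H:=\bar H$ proves (1).

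For (2) and (3), fix compatible maximal tori $T=T'\cdot T_H$ and Borel subgroups, so that $\widehat{Z}_r(\lambda)$ becomes the outer tensor product $\widehat{Z}_r^S(\lambda')\otimes_k\widehat{Z}_r^H(\lambda_H)$ of baby Verma modules of $S_rT'$ and $H_rT_H$. Support varieties multiply over the direct product $G_r=S_r\times H_r$, so one tensor factor is projective; it cannot be $\widehat{Z}_r^S(\lambda')$, which is a non-Steinberg baby Verma of $\SL(2)_r$ because $\langle\lambda+\rho,\alpha^\vee\rangle\notin p^r\ZZ$. Hence $\widehat{Z}_r^H(\lambda_H)$ is projective and $\cx_{S_r}(\widehat{Z}_r^S(\lambda'))=1$. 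As $kH_r$ is symmetric and $\widehat{Z}_r^H(\lambda_H)$ is simultaneously projective and a baby Verma, BGG reciprocity gives $[\widehat{Z}_r^H(\lambda_H):\widehat{L}_r^H(\lambda_H)]=1$, so its top and socle — both $\widehat{L}_r^H(\lambda_H)$ — can coincide only if $\widehat{Z}_r^H(\lambda_H)$ is simple. Arguing as in the proof of Theorem \ref{WT3}(1), the functor $\fG:\modd S_rT'\to\modd G_rT$, $N\mapsto N\otimes_k\widehat{Z}_r^H(\lambda_H)$, restricts to an equivalence between the block of $\widehat{Z}_r^S(\lambda')$ and that of $\widehat{Z}_r(\lambda)$, with inverse $Y\mapsto\Hom_{kH_r}(\widehat{Z}_r^H(\lambda_H),Y)$; it sends $\widehat{Z}_r^S(\lambda')$ to $\widehat{Z}_r(\lambda)$, commutes with $\Omega$, and carries baby Verma modules to baby Verma modules, the inverse doing the same (the $H$-component of any baby Verma in that block must be the simple module $\widehat{Z}_r^H(\lambda_H)$). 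This gives (2) and reduces (3) to the claim that the baby Verma modules of the $\SL(2)_rT'$-component $\Theta_r(\lambda')$ are precisely $\{\widehat{Z}_r^S(\lambda'+np^r\alpha):n\in\ZZ\}$. But $\cx_{S_r}(\widehat{Z}_r^S(\lambda'))=1\ne2$, so Proposition \ref{WT2} together with Corollary \ref{MC2}(2) gives $\Theta_r(\lambda')\cong\ZZ[A_\infty]$; by \cite[(4.5)]{FR} baby Verma modules are quasi-simple, hence lie at the tip of this component, and $\tau_{S_rT'}=\Omega^2_{S_rT'}$ by Corollary \ref{HA3}(2), so the tip is the single orbit $\{\Omega^{2n}_{S_rT'}(\widehat{Z}_r^S(\lambda')):n\in\ZZ\}=\{\widehat{Z}_r^S(\lambda'+np^r\alpha):n\in\ZZ\}$ (the last equality by Theorem \ref{VM3}(2)). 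Transporting back through $\fG$ yields (3).

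\emph{Main obstacle.} The crux is (1): deducing the rigid splitting $G_r\cong S_r\times H_r$ from the periodicity of one baby Verma module. This rests on the precise description of the support varieties of baby Verma modules of $G_1$, and it is exactly here that the hypothesis $p\ge7$ enters — it guarantees that $\Psi^r_\lambda$ is a \emph{parabolic} subsystem of corank one, which is what makes $\cx_{G_r}(Z_r(\lambda))$ equal to the number of positive roots involving $\alpha$ and hence forces $\alpha$ to be an isolated node. Everything after (1) is a reasonably routine reduction to the $\SL(2)_rT$-theory.
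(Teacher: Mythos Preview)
Your proof of parts (2) and (3) is essentially the paper's, reorganised: the paper also produces the decomposition $\widehat Z_r(\lambda)\cong\widehat Z_r(\lambda')\otimes_kP$ with $P$ simple projective (citing \cite[(7.3)]{FR} rather than arguing via BGG reciprocity), defines $\fG(M)=M\otimes_kP$, and for (3) uses quasi-simplicity of baby Vermas together with $\tau_{G_rT}=\Omega^2_{G_rT}$ --- working directly in $\modd G_rT$ rather than first transferring to $S_rT'$, but to the same effect.

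Part (1) is where you diverge. You try to force $\alpha$ to be an isolated node by computing $\dim\cV_{G_1}(Z_1(\mu_0))$ and showing it exceeds $1$ whenever $\alpha$ has a neighbour. The formula you invoke --- that this dimension equals $|\Psi^+\setminus\Psi_J^+|$ --- is plausible, but neither \cite{NPV} nor \cite[(5.2),(5.6)]{FR} gives it directly: \cite[(5.2)]{FR} only supplies the one-dimensional piece $\cV_{(U_\alpha)_1}(k)$, and \cite{NPV} treats induced modules $H^0(\lambda)$ rather than baby Vermas. Your rank-two Levi alternative has the same lacuna: you still need to know that $\cx_{L_1}(Z_1^L(\nu))\ge 2$ for a rank-two simple $L$ with exactly one simple root outside $\Psi^{1,L}_\nu$, and that is not in the cited references either. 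So the argument as written has a genuine gap at this step, even if the conclusion is correct.

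The paper sidesteps the dimension computation entirely. Having obtained $\cx_{G_r}(Z_r(\lambda))=1$ and (via Proposition~\ref{PH5} and Theorem~\ref{PH3}) the identification $\cU_{\widehat Z_r(\lambda)}=(U_\alpha)_r$, it observes that $Z_r(\lambda)^{(b)}\cong Z_r(\lambda)$ for every $b\in B$, so the rank variety $V_r(G)_{Z_r(\lambda)}$ is $B$-invariant; hence $(U_\alpha)_r$ is $B$-stable, and \cite[(7.3)]{FR} then yields the almost-direct-product decomposition $G=SH$ with $S$ simple of rank one. This is shorter and avoids any appeal to explicit support-variety formulas for $Z_1$; on the other hand, your route has the merit of making the role of $p\ge 7$ completely transparent (it guarantees that no coroot coefficient is divisible by $p$, whence $\Psi^r_\lambda=\Psi_J$ is parabolic).
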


\begin{proof} (1) Thanks to Theorem \ref{VM3}, we have $\dep(\lambda) = r$ as well as $\mu = \lambda + mp^r\alpha$ for some simple root $\alpha \in \Sigma$. The proof of Proposition \ref{PH5} now yields $\ph_{(U_\alpha)_r}(\widehat{Z}_r(\lambda)) = r$.

In view of \cite[(II.3.7)]{Ja3}, our assumption implies
\[ Z_r(\lambda) \cong Z_r(\mu) \cong \fF(\widehat{Z}_r(\mu)) \cong \fF(\Omega^{2m}_{G_rT}(\widehat{Z}_r(\lambda))) \cong \Omega^{2m}_{G_r}(\fF(\widehat{Z}_r(\lambda)))
\cong \Omega^{2m}_{G_r}(Z_r(\lambda)),\]
so that $Z_r(\lambda)$ is a periodic module. In particular, the module $\widehat{Z}_r(\lambda)$ has complexity $\cx_{G_rT}(\widehat{Z}_r(\lambda)) =1$. Consequently,
$V_r(G)_{\widehat{Z}_r(\lambda)}$ is a one-dimensional, irreducible variety and
\[ V_r(G)_{\widehat{Z}_r(\lambda)} = V_r(U_\alpha)_{\widehat{Z}_r(\lambda)}.\]
In view of Theorem \ref{PH3}, the group $\cU_{\widehat{Z}_r(\lambda)}$ is a subgroup of $(U_\alpha)_r$ of height $r$, whence $\cU_M = (U_\alpha)_r$.

The Borel subgroup $B$ acts on $kG_r$ via the adjoint representation. Since the twist $Z_r(\lambda)^{(b)}$ of a baby Verma module $Z_r(\lambda)$ by $b \in B$ is isomorphic to
$Z_r(\lambda)$, it follows that the variety $V_r(G)_{\widehat{Z}_r(\lambda)}$ is $B$-invariant. This readily implies $B\dact (U_\alpha)_r = B\dact \cU_{\widehat{Z}_r(\lambda)}
\subseteq \cU_{\widehat{Z}_r(\lambda)} = (U_\alpha)_r$, and the arguments of \cite[(7.3)]{FR} yield a decomposition $G = SH$ as a semidirect product, with $S$ being simple of rank 
$1$. It follows that $G_r \cong S_r\times H_r$. 

(2) Let $T = T'T''$ be the corresponding decomposition of the chosen maximal torus $T$ of $G$. The arguments of \cite[(7.3)]{FR} also provide a decomposition
\[ \widehat{Z}_r(\lambda) \cong \widehat{Z}_r(\lambda')\!\otimes_k\!P\]
as an outer tensor product of the $S_rT'$-module $\widehat{Z}_r(\lambda')$ and the simple projective $H_rT''$-module $P$. We now consider
\[ \fG : \modd S_rT' \lra \modd G_rT \ \ ; \ \ M \mapsto M\!\otimes_k\!P.\]
As observed in the proof of Theorem \ref{WT3}, this functor identifies $\modd S_rT'$ with a sum of blocks of $\modd G_rT$ and in particular induces an isomorphism $\Theta(\lambda') 
\cong \Theta(\lambda)$.

(3) Let $\cA := \{\widehat{Z}_r(\lambda\!+\!np^r\alpha) \ ; \ n \in \ZZ\}$. According to Theorem \ref{VM3}(2), $\cA = \{\Omega^{2n}_{G_rT}(\widehat{Z}_r(\lambda) ) \ ; \ n \in 
\ZZ\}$ is contained in $\Theta(\lambda)$. 

Suppose that $\widehat{Z}_r(\nu)$ belongs to $\Theta(\lambda)$. By virtue of \cite[(4.5)]{FR}, the modules $\widehat{Z}_r(\lambda)$ and $\widehat{Z}_r(\nu)$ are quasi-simple. Thanks to Corollary \ref{HA3},  there thus exists $n \in \ZZ$ such that $\Omega_{G_rT}^{2n}(\widehat{Z}_r(\lambda)) \cong \widehat{Z}_r(\nu)$. If $n>0$, then Theorem \ref{VM3} provides a simple root $\beta \in \Psi$ such that

(a) \ $\nu = \lambda+np^r\beta$, and

(b) \ $\widehat{Z}_r(\lambda\!+\!p^r\alpha) \cong \Omega^2_{G_rT}(\widehat{Z}_r(\lambda)) \cong \widehat{Z}_r(\lambda\!+\!p^r\beta)$.

\noindent
Thus, $\beta = \alpha$ and $\widehat{Z}_r(\nu)$ belongs to $\cA$. 

Alternatively, $\Omega_{G_rT}^{-2n}(\widehat{Z}_r(\nu)) \cong \widehat{Z}_r(\lambda)$ and the foregoing arguments yield $\lambda = \nu-np^r\alpha$. \end{proof}

\bigskip

\bigskip

\begin{center}

\bf Acknowledgement

\end{center}
Parts of this paper were written while the author was visiting the Isaac Newton Institute in Cambridge. He would like to take this opportunity to thank the members of the Institute for their
hospitality and support.

\bigskip

\bigskip

\end{document}